\documentclass[11pt,letterpaper]{amsart}
\makeatletter
\renewcommand\normalsize{%
	\@setfontsize\normalsize{11.7}{14pt plus .3pt minus .3pt}%
	\abovedisplayskip 10\p@ \@plus4\p@ \@minus4\p@
	\abovedisplayshortskip 6\p@ \@plus2\p@
	\belowdisplayshortskip 6\p@ \@plus2\p@
	\belowdisplayskip \abovedisplayskip}
\renewcommand\small{%
	\@setfontsize\small{9.5}{12\p@ plus .2\p@ minus .2\p@}%
	\abovedisplayskip 8.5\p@ \@plus4\p@ \@minus1\p@
	\belowdisplayskip \abovedisplayskip
	\abovedisplayshortskip \abovedisplayskip
	\belowdisplayshortskip \abovedisplayskip}
\renewcommand\footnotesize{%
	\@setfontsize\footnotesize{8.5}{9.25\p@ plus .1pt minus .1pt}%
	\abovedisplayskip 6\p@ \@plus4\p@ \@minus1\p@
	\belowdisplayskip \abovedisplayskip
	\abovedisplayshortskip \abovedisplayskip
	\belowdisplayshortskip \abovedisplayskip}
\setlength\parindent    {30\p@}
\setlength\textwidth    {412\p@}
\setlength\textheight   {570\p@}
\paperwidth=210mm
\paperheight=260mm
\ifdefined\pdfpagewidth
\setlength{\pdfpagewidth}{\paperwidth}
\setlength{\pdfpageheight}{\paperheight}
\else
\setlength{\pagewidth}{\paperwidth}
\setlength{\pageheight}{\paperheight}
\fi
\calclayout
\makeatother

\usepackage{amssymb}
\usepackage[]{epsf,epsfig,amsmath,amssymb,amsfonts,latexsym}
\usepackage{enumerate}

\usepackage{tikz}
\usetikzlibrary{arrows,decorations.markings, calc, matrix, shapes,automata,fit,patterns}
\usepackage{color}
\usepackage{cancel}
\usepackage{amsmath}
\usepackage[normalem]{ulem}
\usepackage{amsfonts}
\usepackage{bbm}
\usepackage{amsthm}
\usepackage{wasysym}

\usepackage[mathscr]{eucal}
\bibliographystyle{plain}

\usepackage[active]{srcltx}
\usepackage{fancyhdr}
\usepackage{verbatim}
\usepackage{fullpage}
\usepackage{hyperref}
\usepackage{mathtools}
\hypersetup{
	colorlinks = true, %
	urlcolor = blue, %
	linkcolor = red, %
	citecolor = blue %
}
\allowdisplaybreaks

\def \NN{\mathbb N}
\def \ZZ{\mathbb Z}
\def \ZZd{{\mathbb Z}^d}

\def \RR{\mathbb R}

\def \ag{A}
\def \FF{\mathcal F}

\newcommand{\Aut}{\operatorname{Aut}}
\newcommand{\Out}{\operatorname{Out}}
\newcommand{\htop}{h_{\mathrm{top}}}

\newcommand{\agb}{\{\symb{0},\symb{1}\}}
\newcommand{\symb}[1]{\mathtt{#1}}

\newcommand{\define}[1]{\textbf{#1}}
	
\newcommand{\isdef}{=}

\newcommand{\paradox}{\mathbf{P}}
\newcommand{\dont}{\textvisiblespace}

\newtheorem{theorem}{Theorem}[section]
\newtheorem{lemma}[theorem]{Lemma}
\newtheorem{claim}[theorem]{Claim}
\newtheorem{proposition}[theorem]{Proposition}
\newtheorem{corollary}[theorem]{Corollary}
\newtheorem{definition}[theorem]{Definition}
\theoremstyle{remark}
\newtheorem{remark}[theorem]{Remark}
\newtheorem{example}[theorem]{Example}
\newtheorem{question}[theorem]{Question}

\newcommand\xqed[1]{%
	\leavevmode\unskip\penalty9999 \hbox{}\nobreak\hfill
	\quad\hbox{#1}}
\newcommand\qee{\xqed{$\fullmoon$}}

\title{Self-simulable groups}

\author{
	Sebasti\'an Barbieri, Mathieu Sablik and Ville Salo
}

\newcommand{\Addresses}{{
		\bigskip

		\hskip-\parindent   S.~Barbieri, \textsc{DMCC, Universidad de Santiago de Chile.}\par\nopagebreak
		\textit{E-mail address}: \texttt{sebastian.barbieri@usach.cl}
		
		\medskip
		
		\hskip-\parindent   M.~Sablik, \textsc{Universit\'e Paul Sabatier.}\par\nopagebreak
		\textit{E-mail address}: \texttt{mathieu.sablik@math.univ-toulouse.fr}
		
		\medskip
		
		\hskip-\parindent   V.~Salo, \textsc{University of Turku.}\par\nopagebreak
		\textit{E-mail address}: \texttt{vosalo@utu.fi}
}}

\date{}

\begin{document}
	\large

\begin{abstract}
	 We say that a finitely generated group $\Gamma$ is self-simulable if every effectively closed action of $\Gamma$ on a closed subset of $\{\symb{0},\symb{1}\}^{\NN}$ is the topological factor of a $\Gamma$-subshift of finite type. We show that self-simulable groups exist, that any direct product of non-amenable finitely generated groups is self-simulable, that under technical conditions self-simulability is inherited from subgroups, and that the subclass of self-simulable groups is stable under commensurability and quasi-isometries of finitely presented groups.
	 
	 Some notable examples of self-simulable groups obtained are the direct product $F_k \times F_k$ of two free groups of rank $k \geq 2$, non-amenable finitely generated branch groups, the simple groups of Burger and Mozes, Thompson's $V$, the groups $\operatorname{GL}_n(\ZZ)$, $\operatorname{SL}_n(\ZZ)$, $\Aut(F_n)$ and $\Out(F_n)$ for $n \geq 5$; The braid groups $B_m$ for $m \geq 7$, and certain classes of RAAGs. We also show that Thompson's $F$ is self-simulable if and only if $F$ is non-amenable, thus giving a computability characterization of this well-known open problem. We also exhibit a few applications of self-simulability on the dynamics of these groups, notably, that every self-simulable group with decidable word problem admits a nonempty strongly aperiodic subshift of finite type. 
	 
	\medskip

	\noindent
	\emph{Keywords:} group actions, symbolic dynamics, effectively closed actions, non-amenable groups, subshifts of finite type, Thompson's groups.

	\smallskip
	
	\noindent
	\emph{MSC2010:} \textit{Primary:}
	37B10, %
	\textit{Secondary:}
	37B05,  %
	20F10.  %
	
\end{abstract}

\maketitle

\section{Introduction}
	A result which ensures that every ``complicated'' computable object from a class can be embedded into a ``simple'' computable object from another class is called a simulation theorem. A beautiful illustration is the result by Higman~\cite{Highman1961}, which states that every recursively presented group can be obtained as a subgroup of a finitely presented group. 

In the category of dynamical systems, an important simulation theorem due to Hochman~\cite[Theorem 1.6]{Hochman2009b} states that any effectively closed action of $\mathbb{Z}$ on a subset of $\{\symb{0},\symb{1}\}^{\mathbb{N}}$ (roughly speaking, an action that can be approximated up to an arbitrary precision by a Turing machine) can be realized as a topological factor of the $(\ZZ \times \{(0,0)\})$-subaction of a $\mathbb{Z}^3$-subshift of finite type.

Simulation theorems can shed light on complicated aspects of the ``simple'' objects. In the setting of symbolic dynamics, the term ``swamp of undecidability'' was coined by Lind~\cite{Lind2004} to describe the lack of simple algebraic procedures to gain information about the properties of multidimensional subshifts of finite type. It was only after the result by Hochman and further developments~\cite{AubrunSablik2010,DurandRomashchenkoShen2010,Miller2012,simpson_medvedev_2012} that the origin of this swamp of undecidability was truly understood. Namely, the existence of simple algebraic descriptions can be proscribed due to the fact that every effectively closed action of $\ZZ$ can be obtained as a topological factor of a subaction of a multidimensional subshift of finite type; and (expansive) effectively closed actions of $\ZZ$ can have arbitrary effectively closed Medvedev degrees~\cite{Miller2012}. This intuitively means that the ``simplest'' configurations in a multidimensional subshift of finite type may already be quite complex in terms of computability.

Simulation results can be used as black boxes to obtain interesting results. A wonderful illustration of this method is that the famous result proven independently by Novikov and Boone~\cite{novikov1955,Boone1958}, which states that there exist finitely presented groups with undecidable word problem, can be obtained as a relatively simple corollary from Higman's theorem. In fact, Higman's theorem can be used to prove something much stronger. Namely, the existence of a universal finitely presented group, that is, one whose finitely generated subgroups are up to isomorphism precisely all finitely generated recursively presented groups (for a proof, see~\cite[Theorem 7.6]{LyndonSchupp1977}). 

In recent work by two of the authors, two generalizations of Hochman's theorem have been developed for actions of arbitrary finitely generated groups. The first one~\cite[Theorem 4.8]{BS2018} states that for any finitely generated group $\Gamma$ and every semidirect product $ \ZZ^d\rtimes_{\varphi} \Gamma$ with $d \geq 2$ and $\varphi$ a homomorphism from $\Gamma$ to $\operatorname{GL}_d(\ZZ)$, every effectively closed action of $\Gamma$ on a subset of $\{\symb{0},\symb{1}\}^{\NN}$ can be realized as a topological factor of the $(\{\vec{0}\} \times \Gamma )$-subaction of a $(\ZZ^d\rtimes_{\varphi} \Gamma)$ subshift of finite type. The second one~\cite[Theorem 3.1]{Barbieri_2019_DA} shows that for any triple of infinite and finitely generated groups $\Gamma,H,V$, every effectively closed action of $\Gamma$ on a subset of $\{\symb{0},\symb{1}\}^{\NN}$ can be realized as a topological factor of the $(\Gamma \times \{1_{H},1_{V}\})$-subaction of a $(\Gamma \times H \times V)$-subshift of finite type.

In this work we shall study groups where there is no need to take proper subactions to recover the effectively closed dynamics. That is, groups whose computable zero-dimensional dynamics are completely described as topological factors of subshifts of finite type.

\begin{definition}\label{def:selfsimulable}
	We say that a group $\Gamma$ is \define{self-simulable} if every effectively closed action $\Gamma \curvearrowright X \subseteq \{\symb{0},\symb{1}\}^{\NN}$ is a topological factor of a $\Gamma$-subshift of finite type.
\end{definition}

A more precise, albeit longer, name for this class of groups would be ``group with self-simulable effective zero-dimensional dynamics''. The reader should bear in mind that the name ``self-simulable'' is always referring to the zero-dimensional dynamics of the group. The name should be then considered as an analogy to Hochman's simulation result, with the caveat that both groups involved coincide.

Self-simulable groups have striking dynamical consequences. For instance, in Corollary~\ref{cor:stronglyaperiodic} we show that every self-simulable group with decidable word problem admits a nonempty strongly aperiodic subshift of finite type, that is, one on which the group acts freely. Also, we show in Corollary~\ref{cor:sunnysideupsofic} that the natural action of a self-simulable group over its one point compactification is topologically conjugate to a sofic subshift for every self-simulable group with decidable word problem.

Both in Hochman's original result and in the two subsequent generalizations, the group which simulates the subaction is ``large''. In the case of Hochman's result, $\ZZ^3$ cannot replaced by $\ZZ^2$: an unpublished example by Jeandel, the horizontal shift action on the mirror shift, (see Remark~\ref{rem:mirrorshift} or the paragraph after~\cite[Theorem 2.14]{ABS2017} for the definition) shows that there exist effectively closed $\ZZ$-actions which are not a topological factor of a $(\ZZ \times \{0\})$-subaction of any $\ZZ^2$-subshift of finite type. This suggests that in order to obtain a dynamical simulation theorem one should require a strictly larger group. Indeed, it is not hard to see that under mild computability assumptions, there are effectively closed actions with infinite topological entropy, and thus no amenable group satisfying the assumptions can be self-simulable because topological entropy decreases under factor maps (see Proposition~\ref{prop:SelfSimuImpNonAmenable}).

In the case where the effectively closed action of $\ZZ$ is also expansive, i.e.\ topologically conjugate to a subshift, the role of $\ZZ^3$ in Hochman's result can be replaced by $\ZZ^2$. Specifically, a result proven independently by Aubrun and Sablik~\cite{AubrunSablik2010}, and Durand, Romashchenko and Shen~\cite{DurandRomashchenkoShen2010} shows that every effectively closed $\mathbb{Z}$-subshift is topologically conjugate to the $(\ZZ \times \{0\})$-subaction of a sofic $\ZZ^2$-subshift. However, even if we restrict the scope of a simulation theorem to cover only expansive actions, it can be shown, under mild computability assumptions, that no amenable group nor any group with infinitely many ends is self-simulable~\cite{ABS2017} (See also Propositions~\ref{prop:amenable} and~\ref{prop:infends}). Moreover, there exist one-ended non-amenable groups, such as $F_k \times \ZZ$ for $k \geq 2$ (see Proposition~\ref{prop:villexample}) which are not self-simulable. We shall provide a series of obstructions to self-simulability in Section~\ref{sec:counterexamples}.

The previous obstructions make it plausible to believe that self-simulable groups may not exist at all. The main result of this article is that there is a large family of non-amenable groups which are self-simulable. 

\begin{theorem}\label{thm:selfsimulation}
	The direct product $\Gamma = \Gamma_1 \times \Gamma_2$ of any pair of finitely generated non-amenable groups $\Gamma_1$ and $\Gamma_2$ is self-simulable.
\end{theorem}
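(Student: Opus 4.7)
The plan is to reduce the full simulation problem to the three-group simulation theorem of~\cite{Barbieri_2019_DA} and then absorb the two auxiliary directions into the two non-amenable factors. Given an effectively closed action $\Gamma \curvearrowright X \subseteq \{\symb{0},\symb{1}\}^{\NN}$, I would first invoke~\cite[Theorem 3.1]{Barbieri_2019_DA} with source group $\Gamma = \Gamma_1\times\Gamma_2$ and two auxiliary infinite finitely generated groups $H$ and $V$, obtaining a $(\Gamma\times H\times V)$-SFT $\widehat Y$ whose $(\Gamma\times\{(1_H,1_V)\})$-subaction admits a continuous $\Gamma$-equivariant surjection $\widehat{\pi}$ onto $X$. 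What remains is to reabsorb the two auxiliary directions $H$ and $V$ into the product $\Gamma_1\times\Gamma_2$ itself, so that the whole construction lives inside a $\Gamma$-SFT rather than a subaction of a larger SFT.

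For the absorption step, I would use the non-amenability of each $\Gamma_i$ to construct, via finitely many local rules, a hierarchical ``coordinate layer'' on an SFT over $\Gamma_1$ that plays the role of $H$-orbits, and analogously one over $\Gamma_2$ for $V$-orbits. The non-amenability is the essential ingredient here: on the one hand it removes the entropy obstructions and opens the door to strongly aperiodic SFTs over $\Gamma_i$ carrying a rich self-similar structure, and on the other hand Proposition~\ref{prop:SelfSimuImpNonAmenable} shows that this kind of freedom is not merely convenient but necessary. Once these virtual coordinate layers are in place, I would choose $H$ and $V$ so that their actions can be carried along the layers (natural candidates are $\ZZ$ or a free semigroup on two generators), pull back the local rules of $\widehat{Y}$ along the virtual coordinates, and define a $(\Gamma_1\times\Gamma_2)$-SFT $Y$ whose projection onto the data sublayer composes with $\widehat{\pi}$ to give the desired factor map $\pi:Y\to X$.

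The main obstacle I expect is the construction of the hierarchical coordinate layers on a bare non-amenable group and the proof that Turing-machine-style computations from $\widehat{Y}$ can be faithfully transported along them. Non-amenable groups need not contain copies of $H$ or $V$ as subgroups (the Tarski monsters of Ol'shanskii show that not every non-amenable group contains $F_2$), so $H$ and $V$ cannot simply be selected as commuting subgroups of $\Gamma_i$; the transport must run along the softer, SFT-enforced skeletons. The essential technical work therefore amounts to designing these skeletons as self-simulating hierarchical SFTs adapted to an arbitrary non-amenable finitely generated group, and to showing that the skeletons on the two factors interleave consistently under the product action so that every forbidden pattern enumerated by the defining Turing machine of $X$ is eventually prohibited by a local rule of $Y$. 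Surjectivity of $\pi$ should then follow by lifting each $x\in X$ first to a configuration of $\widehat{Y}$ and then to a configuration of $Y$ in which the skeleton layers are chosen freely.
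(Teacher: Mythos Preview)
Your proposal is a research plan rather than a proof: you correctly isolate the crux---absorbing the auxiliary $H$- and $V$-directions into $\Gamma_1$ and $\Gamma_2$---and then explicitly label it ``the main obstacle'' without carrying it out. Everything you write before that point is bookkeeping (the invocation of~\cite[Theorem~3.1]{Barbieri_2019_DA} is fine but buys nothing, since all the work in that theorem is precisely the grid-building and computation-transport you would then have to redo along your ``virtual coordinates''). So as it stands there is a genuine gap: the proof contains no mechanism for producing, by local rules alone, an $\NN$-indexed family of disjoint infinite tracks inside an \emph{arbitrary} non-amenable finitely generated group.

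Your suggested mechanism---``self-simulating hierarchical SFTs adapted to an arbitrary non-amenable finitely generated group''---is not known to exist and is probably the wrong tool here. Hierarchical or fixed-point constructions (Robinson-type tilings, substitution systems, the Durand--Romashchenko--Shen fixed-point method) depend on geometric regularity that a bare non-amenable group need not have; think again of the Tarski monsters you yourself mention. The paper takes a completely different, and much more direct, route that avoids hierarchy entirely: it uses the characterization of non-amenability by the existence of a bounded $2$-to-$1$ surjection $\varphi\colon\Gamma_i\to\Gamma_i$ (Proposition~\ref{prop_ceccSilb}). The space of all such maps is encoded as a nonempty $\Gamma_i$-SFT, the \emph{paradoxical subshift} $\paradox$, and any configuration $\rho\in\paradox$ yields an injective assignment $(g,n)\mapsto\gamma_g(n,\rho)$ from $\Gamma_i\times\NN$ into $\Gamma_i$ by following preimage branches of $\varphi$. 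Taking the product of the paradoxical subshifts on $\Gamma_1$ and $\Gamma_2$ gives every element of $\Gamma$ its own disjoint $\NN^2$-grid, on which one can run the Wang-tile encoding of a Turing machine verifying membership in the set representation $Y_{\Gamma\curvearrowright X,S}$. The synchronization between grids is then enforced by a handful of local ``coherence rules''. The construction is explicitly non-hierarchical, and the paradoxical-decomposition idea is the missing ingredient in your outline.
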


In the case where the direct product $\Gamma$ is a recursively presented group, a corollary of Theorem~\ref{thm:selfsimulation} is that the class of sofic $\Gamma$-subshifts coincides with the class of effectively closed $\Gamma$-subshifts. This answers a question of two of the authors~\cite[Section 3.4]{ABS2017}.

Seward proved in~\cite{Seward2014} that every action of a countable non-amenable group is the topological factor of a subshift. Our result shows that there is a subclass of  non-amenable groups for which that subshift can always be chosen of finite type as long as the action is effectively closed.

The proof of Theorem~\ref{thm:selfsimulation} is given in Section~\ref{sec:selfsimulable} and relies on two main ingredients which can be summarized as follows

\begin{itemize}
	\item The first ingredient comes from a well-known characterization of non-amenability: a group $\Gamma$ is
	non-amenable if and only if it admits a $2$-to-$1$ surjective map $\varphi \colon \Gamma \to \Gamma$ for which $g^{-1}\varphi(g)$ is in a fixed finite set $K$ for every $g \in \Gamma$. For a fixed set $K$, we encode the space of all such maps in a $\Gamma$-subshift of finite type that we call the \define{paradoxical subshift}, and use its configurations to encode families of disjoint one-sided infinite paths indexed by the elements of the group in a local manner. By taking the direct product of two non-amenable groups, we are able to encode families of disjoint $\NN^2$-grids indexed by the elements of the group.
	\item The second ingredient is the well-known fact that the space-time diagrams of Turing machines can be encoded in $\NN^2$ using \define{Wang tiles}. In particular, given an effectively closed set $X \subseteq \{0,1\}^{\NN}$, we can define a finite set of Wang tiles in such a way that the associated valid tilings of $\NN^2$ correspond to elements of $x$.
\end{itemize}

The rest of the proof essentially consists in showing that all of these $\NN^2$-grids can communicate their information locally. Contrary to the other dynamical simulation results mentioned above, this construction does not rely on any sort of hierarchical or self-similar structure. The construction of the paradoxical subshift is given in Section~\ref{sec:paradox}, where we also provide an effective version of Seward's result in the case where the action is effectively closed (Theorem~\ref{thm:seward_new}).

Theorem~\ref{thm:selfsimulation} provides the first examples of self-simulable groups, however, the result might strike as fairly artificial as every example is a direct product. A natural question is whether we can obtain examples which are not direct products through stability properties of the class of self-simulable groups. The next two sections are devoted to this question.

In Section~\ref{sec:stability_properties} we provide a general criterion for a group $\Gamma$ which contains a self-simulable subgroup $\Delta$ to be self-simulable itself. More precisely, we say that $\Delta$ is a \define{mediated} subgroup, if there exists a finite generating set $T$ of $\Gamma$ such that for every $t \in T$ the subgroup $\Delta \cap t\Delta t^{-1}$ is nonamenable. This condition can also be described in terms of paradoxical path-covers which is what we use to prove our results. With this notion we can show the following result.

\begin{theorem}\label{thm:mediaintroduction}
	Let $\Gamma$ be a finitely generated and recursively presented group which contains a mediated self-simulable subgroup $\Delta$. Then $\Gamma$ is self-simulable.
\end{theorem}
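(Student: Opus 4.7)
The plan is to use self-simulability of $\Delta$ to produce a $\Delta$-SFT factoring onto the $\Delta$-restriction of the given action, and then to combine coset-wise copies of this SFT into a $\Gamma$-SFT by exploiting the paradoxical path-covers furnished by the mediated hypothesis to enforce $\Gamma$-equivariance across cosets of $\Delta$. Given an effectively closed action $\Gamma \curvearrowright X \subseteq \{\symb{0},\symb{1}\}^{\NN}$, the restriction to $\Delta$ is again effectively closed: since $\Gamma$ is recursively presented and $\Delta$ is finitely generated, a Turing machine specifying the $\Gamma$-action precomposes with word-evaluation of $\Delta$'s generators in those of $\Gamma$ to specify the $\Delta$-action. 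Self-simulability of $\Delta$ then yields a $\Delta$-SFT $Y_\Delta \subseteq B^\Delta$ with a $\Delta$-equivariant factor map $\pi \colon Y_\Delta \to X$.

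Next I would form the coinduced $\Gamma$-SFT $\widetilde{Y} \subseteq B^\Gamma$ consisting of configurations $y$ whose coset traces $\delta \mapsto y(g\delta)$ on $\Delta$ lie in $Y_\Delta$ for every $g \in \Gamma$. Finite forbidden patterns of $Y_\Delta$ translate to finite forbidden patterns supported within each left coset $g\Delta$, so $\widetilde{Y}$ is a $\Gamma$-SFT; shift-invariance is automatic because the $\gamma$-shift of $y$ merely sends the trace at $g\Delta$ to the trace at $(\gamma^{-1}g)\Delta$. Writing $z_g$ for the trace at $g\Delta$ and $x_g = \pi(z_g) \in X$, the $\Delta$-equivariance of $\pi$ gives $x_{g\delta} = \delta^{-1} \cdot x_g$ within each coset. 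Setting $\widetilde{\pi}(y) = x_1$, the map $\widetilde{\pi}$ is $\Gamma$-equivariant if and only if $x_g = g^{-1} \cdot x_1$ for all $g \in \Gamma$; by the within-coset consistency it suffices to check this for generators $t$ in the mediating set $T$, which amounts to requiring $\pi(z_t) = t^{-1} \cdot \pi(z_1)$ for every $t \in T$.

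Here is where the mediated hypothesis is used. Non-amenability of $H_t = \Delta \cap t\Delta t^{-1}$ gives, via its paradoxical path-cover reformulation, an SFT-encodable system of bounded-range correspondences that link elements of $\Delta$ to their $t$-translates in $t\Delta$ through the $H_t$-orbits. I would augment $\widetilde{Y}$ with an auxiliary layer recording such a path-cover for each $t \in T$ and use it to transport, bit by bit between the cosets $\Delta$ and $t\Delta$, the output of the $\pi$-computation internal to $Y_\Delta$ together with the output of an effective evaluation of the $t^{-1}$-action on $X$ (computable because the $\Gamma$-action is effectively closed). The equality $\pi(z_t) = t^{-1} \cdot \pi(z_1)$ then reduces to a finite family of local agreement rules between corresponding output cells on the two cosets, encoded into the $\Gamma$-SFT structure.

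The main obstacle I anticipate is this last step: each $x_g \in \{\symb{0},\symb{1}\}^{\NN}$ is an infinite sequence whose coordinates must all be synchronized across cosets using only local rules. This likely requires endowing $Y_\Delta$ with an internal spatial device, analogous to the $\NN^2$-grid and Wang-tile organization powering Theorem~\ref{thm:selfsimulation}, allocating to each bit index $n$ a dedicated ``lane'' in $\Delta$ along which the $n$-th bit of the $\pi$-image is computed and can be read off by the paradoxical $H_t$-bridges; the path-cover formulation of mediation, rather than just the non-amenability of intersections, is presumably exactly what makes this organization realizable within an SFT. Nonemptiness of $\widetilde{Y}$ and surjectivity of $\widetilde{\pi}$ onto $X$ then follow by starting from any $x \in X$, setting $x_g = g^{-1} \cdot x$, lifting each coset trace to $Y_\Delta$ via $\pi$, and filling in compatible paradoxical data on each $H_t$-layer.
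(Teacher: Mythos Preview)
Your outline has the right ingredients but applies them in the wrong order, and this creates a genuine gap. You first invoke self-simulability of $\Delta$ to obtain a black-box SFT cover $Y_\Delta \xrightarrow{\pi} X$, then coinduce and try to impose the cross-coset coherence $\pi(z_t) = t^{-1}\cdot\pi(z_1)$ by local rules. But $\pi$ is an abstract continuous map onto a non-expansive system: the $n$-th bit of $\pi(z)$ may depend on an unbounded region of $z$, so there is no way to compare these bits across cosets by finitely many forbidden patterns. You correctly flag this as the obstacle, but your proposed fix---retrofitting ``lanes'' into $Y_\Delta$---cannot work as stated, because $Y_\Delta$ was produced by an opaque application of self-simulability and carries no such spatial structure.

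The paper reverses the order. First it writes the bits of the simulated point $x$ \emph{explicitly} along paradoxical path-covers of $\Gamma$ whose moves lie in $K_t \subseteq \Delta\cap \Delta^{t^{-1}}$ (one layer $\paradox_t\times\{\symb{0},\symb{1}\}^\Gamma$ for each $t\in T$ and a formal-inverse layer for $\bar t$). Because $K_t\subseteq\Delta$, each such layer lives entirely within individual $\Delta$-cosets. The coherence across cosets is then a single \emph{SFT} rule (Constraint~3): the $\bar t$-layer at $gt$ is the $t$-conjugate of the $t$-layer at $g$; this is precisely where $K_t^{\,t}\subseteq\Delta$ is used. What remains---``the sequence read along each path lies in $X$'' and ``the sequence along the $\bar t$-path equals $t$ applied to the simulated point''---are effectively closed constraints supported within a single $\Delta$-coset. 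Only now is self-simulability of $\Delta$ invoked, to turn this effectively closed $\Delta$-subshift into a sofic one, and hence the whole construction into a sofic $\Gamma$-subshift. In short: lay out the bits first along paths with moves in $\Delta$, impose coset-to-coset coherence as an SFT rule via conjugate path-covers, and call self-simulability of $\Delta$ last.
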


A consequence of this result is that any finitely generated and recursively presented group which admits a normal self-simulable subgroup is self-simulable (Corollary~\ref{cor:normalSS}).

Next we study rigid classes of self-simulable groups. Namely, we show that self-simulability is a commensurability invariant, and that it is preserved under quasi-isometries of finitely presented groups.

\begin{theorem}\label{thm:commensurable}
	Let $\Gamma_1,\Gamma_2$ be two finitely generated groups which are commensurable. Then $\Gamma_1$ is self-simulable if and only if $\Gamma_2$ is self-simulable.
\end{theorem}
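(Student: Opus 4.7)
The plan is to show that self-simulability both descends to finite-index subgroups and lifts to finite-index overgroups. The theorem then follows from the definition of commensurability: if $\Gamma_1$ and $\Gamma_2$ share isomorphic finite-index subgroups $\Delta_1 \leq \Gamma_1$ and $\Delta_2 \leq \Gamma_2$, and since self-simulability is manifestly invariant under group isomorphism, one obtains the chain of equivalences $\Gamma_1$ self-simulable $\iff \Delta_1$ self-simulable $\iff \Delta_2$ self-simulable $\iff \Gamma_2$ self-simulable. I therefore fix a finite-index inclusion $\Delta \leq \Gamma$ and argue both directions separately.

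For the descent direction, suppose $\Gamma$ is self-simulable and let $\Delta \curvearrowright X \subseteq \{\symb{0},\symb{1}\}^{\NN}$ be effectively closed. The plan is to form the coinduced $\Gamma$-space
\[\tilde X = \mathrm{CoInd}_\Delta^\Gamma X = \{f \colon \Gamma \to X \mid f(\delta g) = \delta \cdot f(g) \text{ for all } \delta \in \Delta,\, g \in \Gamma\},\]
with $\Gamma$-action $(g' \cdot f)(h) = f(hg')$. A choice of right transversal $T = \{t_1,\ldots,t_n\}$ identifies $\tilde X$ with $X^n$ as sets, and the $\Gamma$-action decomposes as the (finite, hence computable) permutation action of $\Gamma$ on $\Gamma/\Delta$ combined with the given $\Delta$-action on coordinates; in particular $\tilde X$ is effectively closed. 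Self-simulability of $\Gamma$ then supplies a $\Gamma$-SFT $\tilde Y$ and a $\Gamma$-factor $\tilde\pi \colon \tilde Y \to \tilde X$. The bijection $A^\Gamma \cong (A^T)^\Delta$ intertwines $\Delta$-shifts and converts finite forbidden $\Gamma$-patterns into finite forbidden $\Delta$-patterns over the alphabet $A^T$, so $\tilde Y$ becomes a $\Delta$-SFT. Composing $\tilde\pi$ with the $\Delta$-equivariant surjection $\tilde X \to X$, $f \mapsto f(1)$ (surjectivity and equivariance both follow from the defining relation of $\tilde X$), produces the required $\Delta$-SFT factor onto $X$.

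For the lifting direction, suppose $\Delta$ is self-simulable. The plan is to invoke Theorem~\ref{thm:mediaintroduction} on the inclusion $\Delta \leq \Gamma$. The key check is that $\Delta$ is a mediated subgroup of $\Gamma$: for any finite generating set $T$ of $\Gamma$ and any $t \in T$, the conjugate $t\Delta t^{-1}$ is finite-index in $\Gamma$, so $\Delta \cap t\Delta t^{-1}$ is finite-index in $\Delta$; since $\Delta$ is non-amenable (Proposition~\ref{prop:SelfSimuImpNonAmenable}) and non-amenability is inherited by finite-index subgroups, this intersection is non-amenable. Finite generation of $\Gamma$ is automatic, and $\Gamma$ is recursively presented because $\Delta$ is (an implicit requirement of being self-simulable) and recursive presentability is preserved under finite-index extensions, so the hypotheses of Theorem~\ref{thm:mediaintroduction} are met.

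The main obstacle lies in the effectiveness bookkeeping. Specifically, one must verify (i) that $\mathrm{CoInd}_\Delta^\Gamma X$ of an effectively closed $\Delta$-action is indeed an effectively closed $\Gamma$-action, which reduces to encoding the finite coset cocycle together with the given effective $\Delta$-action; (ii) that the transversal recoding $A^\Gamma \cong (A^T)^\Delta$ genuinely turns a $\Gamma$-SFT into a $\Delta$-SFT, requiring careful tracking of finite windows and of patterns that constrain only partial slices of the recoded alphabet; and (iii) that self-simulability of $\Delta$ implicitly forces recursive presentability of $\Delta$ (and hence of $\Gamma$), so that Theorem~\ref{thm:mediaintroduction} can be applied without additional hypotheses. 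All three points are expected to follow routinely from the conventions established earlier in the paper.
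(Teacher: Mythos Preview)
Your descent direction via coinduction is essentially what the paper does in Lemma~\ref{lem:normalfiniteindex_isSS}, so that half is fine.

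The lifting direction, however, has a genuine gap. You want to apply Theorem~\ref{thm:mediaintroduction}, whose hypotheses include that $\Gamma$ be recursively presented; you also invoke Proposition~\ref{prop:SelfSimuImpNonAmenable} to get non-amenability of $\Delta$, and that proposition likewise assumes recursive presentation. Neither hypothesis appears in the statement of Theorem~\ref{thm:commensurable}. Your point~(iii) --- that self-simulability of $\Delta$ ``implicitly forces recursive presentability of $\Delta$'' --- is not established anywhere in the paper, and there is no obvious reason it should hold: self-simulability is a condition on \emph{all} effectively closed actions of the group, and a group with sufficiently wild word problem may simply have very few such actions, so the implication you need does not follow from anything developed here. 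The paper is explicitly aware of this issue: the remark immediately following Lemma~\ref{lem:SS_passesup} notes that the recursively presented case would follow from Corollary~\ref{cor:normalSS}, but then proves Lemma~\ref{lem:SS_passesup} by a direct construction precisely to avoid that extra assumption.

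Concretely, the paper's lifting argument (Lemma~\ref{lem:SS_passesup}) works only for \emph{normal} finite-index subgroups, and the final proof of Theorem~\ref{thm:commensurable} threads through normal cores: from $\Gamma_1$ descend to a normal finite-index $N_1 \leq H_1$, then lift to $H_1$, transfer across the isomorphism to $H_2$, descend again to a normal $N_2$, and lift to $\Gamma_2$. This zigzag is what lets the paper avoid any computability hypothesis. Your mediation shortcut is cleaner when recursive presentation is available, but as written it does not prove the theorem in the stated generality.
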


\begin{theorem}\label{thm:quasiisometry}
	Let $\Gamma_1,\Gamma_2$ be two finitely presented groups which are quasi-isometric. Then $\Gamma_1$ is self-simulable if and only if $\Gamma_2$ is self-simulable.
\end{theorem}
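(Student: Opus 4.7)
By the symmetry of quasi-isometry, it suffices to prove one direction: assume $\Gamma_2$ is self-simulable and $\Gamma_1$ is quasi-isometric to $\Gamma_2$ (both being finitely presented), and show that $\Gamma_1$ is self-simulable. Fix a quasi-isometry $f\colon \Gamma_1 \to \Gamma_2$ with constants $(K,C)$ and a quasi-inverse $g\colon \Gamma_2 \to \Gamma_1$, and let $\alpha\colon \Gamma_1 \curvearrowright X \subseteq \agb^{\NN}$ be an arbitrary effectively closed action that we need to realise as the topological factor of a $\Gamma_1$-SFT.

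The plan is to encode the quasi-isometry itself inside an SFT. The first and most technical step is to construct a nonempty $\Gamma_1$-SFT $W$ whose configurations $w$ locally parametrise a quasi-isometry $\psi_w \colon \Gamma_1 \to \Gamma_2$ by means of pointers taking values in a fixed finite neighbourhood of $1_{\Gamma_2}$. The finite presentation of $\Gamma_2$ is what makes the global consistency of these pointers expressible as a finite list of SFT rules: it suffices to check, in a bounded ball of $\Gamma_1$, that the pointer loops around each defining relator of $\Gamma_2$ multiply to the identity; the finite presentation of $\Gamma_1$ is used to express the equivariance of the pointer data locally. Nonemptiness of $W$ is guaranteed by the existence of the given quasi-isometry, suitably rounded to the pointer alphabet.

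With $W$ at our disposal we carry out a two-way transfer between actions of the two groups. First, we package $\alpha$ into an effectively closed $\Gamma_2$-action $\beta\colon \Gamma_2 \curvearrowright Y$: points of $Y$ are the $\Gamma_2$-coherent lifts of configurations of $X$ along a representative fibre of the QI-encoded structure, with $\Gamma_2$ acting by translation of the lift. Effective closedness of $Y$ is inherited from that of $X$ combined with the computability of the quasi-isometry constants. Self-simulability of $\Gamma_2$ then yields a $\Gamma_2$-SFT $Z$ together with a factor map $Z \twoheadrightarrow Y$. Second, we pull $Z$ back to $\Gamma_1$ by setting
\[
Z^{\ast} \;=\; \{(w,\widetilde{z}) : w \in W,\ \widetilde{z}(\gamma) = z(\psi_w(\gamma))\ \text{for some } z \in Z\},
\]
which is itself a $\Gamma_1$-SFT because the $Z$-forbidden patterns, transported through $\psi_w$ inside bounded neighbourhoods, together with the local rules of $W$, can be checked jointly within uniformly bounded balls of $\Gamma_1$. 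Composing the natural projections $Z^{\ast} \twoheadrightarrow Y \times W \twoheadrightarrow X$ produces the desired $\Gamma_1$-SFT factor.

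The main obstacle will be the design and correctness of $W$: it must simultaneously enforce that $\psi_w$ is a bona fide quasi-isometry, is locally compatible with $\Gamma_2$-multiplication, and transforms equivariantly under the $\Gamma_1$-shift. This is in the spirit of the SFT encodings of quasi-isometries developed to prove that the existence of strongly aperiodic SFTs is a quasi-isometry invariant among finitely presented groups, and here finite presentation of \emph{both} groups appears to be essential. A secondary, but equally important, difficulty is to set up the lifted action $\beta$ so that the composition of the pull-back with $\Gamma_2$-self-simulation recovers $X$ on the nose, rather than merely some proper quotient; this is where the fibrewise description of $Y$ as coherent lifts, rather than a naive product, plays its role.
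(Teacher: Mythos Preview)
Your outline follows the same Cohen-style strategy as the paper (encode the quasi-isometry in an SFT, transfer the action to $\Gamma_2$, self-simulate there, pull back), and the paper indeed cites Cohen's construction as the backbone. However, two points in your proposal are either missing or incorrectly justified, and both matter.

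First, your description of the auxiliary $\Gamma_2$-action $\beta$ is where the real content lies, and your justification for its effective closedness is wrong. You write that effectiveness ``is inherited from that of $X$ combined with the computability of the quasi-isometry constants,'' and describe points of $Y$ as lifts ``along a representative fibre of the QI-encoded structure.'' This suggests you are fixing the external quasi-isometry $f$ when defining $\beta$. That cannot work: $f$ need not be computable, and knowing the constants $(K,C)$ does not let you compute how $\Gamma_2$ should move points of $X$. The paper's resolution is that the cocycle data itself is part of the $\Gamma_2$-space: one first builds a $\Gamma_2$-SFT $Y\subseteq (S^T)^{\Gamma_2}$ whose configurations are abstract ``$\Gamma_1$-valued cocycles'' (well-defined thanks to the finite presentation of $\Gamma_1$), and then lets $\Gamma_2$ act on $Y\times X$ by shifting $Y$ and acting on $X$ through the element of $S$ read off the cocycle at the origin. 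This action is effectively closed because it uses only visible configuration data and the effectively closed action $\Gamma_1\curvearrowright X$; the particular $f$ enters only later, to prove surjectivity. Your proposal needs this mechanism, and as written it does not have it.

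Second, you omit a reduction the paper uses at the outset: since self-simulable recursively presented groups are non-amenable, both $\Gamma_1,\Gamma_2$ may be assumed non-amenable, and then Whyte's theorem upgrades the quasi-isometry to a bilipschitz \emph{bijection}. This is what makes the pullback well-behaved: in the paper's $\Gamma_1$-SFT $W$, one needs the induced right $\Gamma_2$-action on $\Gamma_1$ to be transitive (Constraint~2), and bijectivity of $f$ is exactly what makes the witness configuration satisfy it. With a mere quasi-isometry your map $\psi_w\colon\Gamma_1\to\Gamma_2$ defined by following pointers need not be surjective, and your $Z^\ast$ as written (reading $z$ only along $\psi_w(\Gamma_1)$) would not see enough of $z$ to force it into $Z$, nor would the factor map hit all of $X$.
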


The proof of Theorems~\ref{thm:commensurable} and~\ref{thm:quasiisometry} are given in Section~\ref{sec:QIrigidity}. The latter is essentially based on a construction of Cohen~\cite{Cohen2014} which shows that quasi-isometric finitely presented groups can be embedded in a cocycle-like manner in subshifts of finite type on either group.

In Section~\ref{sec:applications} we employ the results obtained in the last sections to exhibit several explicit examples of groups which are self-simulable. More precisely, we show

\begin{theorem}
	The following groups are self-simulable.
	\begin{itemize}
		\item Finitely generated non-amenable branch groups (Corollary~\ref{cor:branches}).
		\item The finitely presented simple groups of Burger and Mozes~\cite{BurgerMozes2020} (Corollary~\ref{cor:Burgermozes}).
		\item Thompson's group $V$ and higher-dimensional Brin-Thompson's groups $nV$ (Corollary~\ref{cor:ThompsonV}).
		\item The general linear groups $\operatorname{GL}_n(\ZZ)$ and special linear groups $\operatorname{SL}_n(\ZZ)$ for $n \geq 5$ (Corollary~\ref{cor:GL}).
		\item The automorphism group $\Aut(F_n)$ and outer automorphism group $\Out(F_n)$ of the free group on at least $n \geq 5$ generators (Corollary~\ref{cor:AutFn}).
		\item Braid groups $B_n$ on at least $n \geq 7$ strands (Corollary~\ref{cor:braidgroups}).
		\item Right-angled Artin groups associated to the complement of a finite connected graph for which there are two edges at distance at least $3$ (Corollary~\ref{cor:raag1}).
\end{itemize}\end{theorem}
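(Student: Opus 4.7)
The strategy for the omnibus theorem is uniform: for each family of groups $\Gamma$ in the list, the plan is to exhibit inside $\Gamma$ a direct product $\Delta_1 \times \Delta_2$ of two finitely generated non-amenable groups, which is self-simulable by Theorem~\ref{thm:selfsimulation}, and then invoke one of the three stability results (Theorem~\ref{thm:mediaintroduction}, Theorem~\ref{thm:commensurable}, or Theorem~\ref{thm:quasiisometry}, or the normal subgroup corollary of Theorem~\ref{thm:mediaintroduction}) to transfer self-simulability from the subgroup to $\Gamma$. Since all the target groups are finitely presented (and hence recursively presented), the recursive presentation hypothesis of Theorem~\ref{thm:mediaintroduction} is essentially free.

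Concretely, my plan for each family is as follows. For \emph{finitely generated non-amenable branch groups}, the natural candidate is a rigid level stabilizer $\mathrm{Rist}_\Gamma(n)$, which splits as a direct product indexed by the $n$-th level of the tree. By taking $n$ large enough one obtains a direct product of at least two isomorphic copies of a non-amenable group, which is normal in $\Gamma$; one concludes via the normal subgroup corollary. For the \emph{Burger--Mozes} simple groups, which are cocompact lattices in a product $\mathrm{Aut}(T_m) \times \mathrm{Aut}(T_n)$, I would find a finite-index subgroup that projects as a free product structure and contains a product of two non-abelian free groups mediated by a suitable generating set. For \emph{Thompson's $V$ and $nV$}, one uses the standard embedding of $V \times V$ (or $V \times (n-1)V$) acting on disjoint clopen pieces of the Cantor set; the key step is checking that the generating set realizing Thompson-like moves across the two halves makes this a mediated subgroup. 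For $\operatorname{GL}_n(\ZZ)$, $\operatorname{SL}_n(\ZZ)$ with $n\geq 5$, one uses the block-diagonal subgroup $\operatorname{SL}_{n-2}(\ZZ) \times \operatorname{SL}_2(\ZZ)$ (both non-amenable for $n\geq 5$) and verifies mediation using a generating set of elementary matrices whose conjugation action preserves enough block structure. For $\operatorname{Aut}(F_n)$ and $\operatorname{Out}(F_n)$ with $n\geq 5$, the analogous subgroup is a block-like stabilizer of a decomposition $F_n = F_{n-2} * F_2$, yielding $\operatorname{Aut}(F_{n-2}) \times \operatorname{Aut}(F_2)$. For the braid group $B_n$ with $n\geq 7$, one uses the commuting pair of subgroups $\langle \sigma_1,\sigma_2\rangle \cong B_3$ and $\langle \sigma_4,\ldots,\sigma_{n-1}\rangle \cong B_{n-3}$, both non-amenable when $n\geq 7$, giving $B_3 \times B_{n-3} \leq B_n$. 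Finally, for a RAAG $A(G)$ whose defining graph $G$ is the complement of a graph containing two edges at distance $\geq 3$, one extracts two commuting non-abelian subgroups (each generated by a pair of non-adjacent vertices in $G$) whose respective generators commute across the two pairs, giving a $F_2 \times F_2$ subgroup; the distance-$3$ hypothesis is what forces the needed full commutation pattern.

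In each case the required work splits into two essentially independent tasks: (i) identifying the product subgroup, which is largely a matter of citing a standard structural result about the family, and (ii) verifying the relevant stability hypothesis. For the branch group case the stability is automatic because the subgroup is normal, so that case is the easiest. I expect the \textbf{main obstacle} to be the \emph{mediation check}: showing that the direct-product subgroup $\Delta = \Delta_1 \times \Delta_2$ can be generated together with finitely many $t_1,\ldots,t_k \in \Gamma$ such that each intersection $\Delta \cap t_i \Delta t_i^{-1}$ is non-amenable. For nicely ``diagonal'' subgroups (block matrices, commuting blocks of braid strands, disjoint-support Thompson actions) this is typically straightforward because conjugation by a generator only alters a few coordinates and the intersection still contains a large subproduct. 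The trickier cases are $\operatorname{Aut}(F_n)$, $\operatorname{Out}(F_n)$, and the Burger--Mozes groups, where the analog of ``block structure'' is less rigid; there, one likely needs to enlarge $\Delta$ by finite-index considerations or, where $\Gamma$ is finitely presented, fall back on Theorem~\ref{thm:quasiisometry} applied to a suitable finite-index subgroup that is literally a direct product up to commensurability.
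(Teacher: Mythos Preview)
Your overall architecture is exactly right and matches the paper: in every case one locates a product of two finitely generated non-amenable groups and then promotes self-simulability via mediation, commensurability, or quasi-isometry. Your case-by-case plans for $\operatorname{SL}_n(\ZZ)$, $\Aut(F_n)/\Out(F_n)$, braid groups, Thompson's $V$, and RAAGs are essentially what the paper does, except that the paper packages the repeated mediation check into two general lemmas (one for groups acting on a Cartesian product with non-amenable ``two-coordinate'' subgroups, and an abstract ``suborthogonality'' criterion), so that each individual verification becomes a two-line application rather than a bespoke argument.

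There is one genuine gap. Your blanket claim that ``all the target groups are finitely presented'' fails for branch groups: a finitely generated non-amenable branch group need not be recursively presented, so you cannot invoke the normal-subgroup corollary of Theorem~\ref{thm:mediaintroduction}. The paper instead uses only that a branch group has a finite-index subgroup which is a direct product of at least two copies of a group $L$; non-amenability of $\Gamma$ forces $L$ non-amenable, and then Theorem~\ref{thm:commensurable} (which has no computability hypothesis) finishes. Your route via a normal rigid level stabilizer would work only under an extra hypothesis you have not assumed.

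For Burger--Mozes, your proposed route (find a finite-index subgroup and check mediation) would require nontrivial work on the internal structure of these lattices. The paper instead bypasses this entirely: since a Burger--Mozes group acts geometrically on a product of two regular trees, \v{S}varc--Milnor gives a quasi-isometry with $F_2\times F_2$, and Theorem~\ref{thm:quasiisometry} applies directly. This is both shorter and avoids the very difficulty you flag as ``trickier''.
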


Furthermore, we show a conditional result on Thompson's groups $F$ and $T$ (Corollary~\ref{cor:FT}). Namely, that Thompson's group $F$ is self-simulable if and only if it is non-amenable, and that if $F$ is non-amenable then $T$ is self-simulable. This has the interesting consequence of providing a computability criterion for the well-known open problem of the amenability of $F$. 

\begin{theorem}
	Thompson's group $F$ is amenable if and only if there exists an effectively closed action of $F$ which is not the topological factor of an $F$-subshift of finite type.
\end{theorem}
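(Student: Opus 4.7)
The plan is to recognize that the stated equivalence is a computability reformulation of Corollary~\ref{cor:FT}, which asserts that Thompson's $F$ is self-simulable if and only if it is non-amenable. I would therefore prove the two implications separately, using the general obstructions of Section~\ref{sec:counterexamples} on one side and the stability machinery of Sections~\ref{sec:selfsimulable} and~\ref{sec:stability_properties} on the other.

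For the direction ``$F$ amenable $\Rightarrow$ a witness action exists'', I would invoke Proposition~\ref{prop:amenable} together with Proposition~\ref{prop:SelfSimuImpNonAmenable}. Since $F$ is finitely presented, and hence recursively presented, the mild computability hypothesis of those results is satisfied. One then exhibits an effectively closed $F$-action of infinite topological entropy; because topological entropy does not increase under factor maps and every $F$-subshift of finite type has finite topological entropy when $F$ is amenable, this effectively closed action cannot be the topological factor of any $F$-subshift of finite type, providing the required witness.

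For the converse, assume $F$ is non-amenable. I would use the classical embedding $\Delta \cong F \times F \hookrightarrow F$, where $\Delta$ is the subgroup of $F$ consisting of elements fixing $\tfrac{1}{2}$ and the two factors act independently on $[0,\tfrac{1}{2}]$ and $[\tfrac{1}{2},1]$; each factor is isomorphic to $F$ and hence non-amenable, so Theorem~\ref{thm:selfsimulation} makes $\Delta$ self-simulable. To apply Theorem~\ref{thm:mediaintroduction} I then need to show that $\Delta$ is \emph{mediated} in $F$, i.e.\ that some finite generating set $T$ of $F$ satisfies that $\Delta \cap t\Delta t^{-1}$ is non-amenable for every $t \in T$. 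The main obstacle is verifying this condition: for the standard Thompson generators $t$, the conjugate $t\Delta t^{-1}$ consists of elements fixing the dyadic rational $t(\tfrac{1}{2})$ and splitting independently on the two sub-intervals it determines, so that $\Delta \cap t\Delta t^{-1}$ fixes both $\tfrac{1}{2}$ and $t(\tfrac{1}{2})$ and contains a copy of $F$ acting on each of the resulting sub-intervals of $[0,1]$; this yields a copy of $F\times F\times F$ inside every such intersection, which is non-amenable under our hypothesis on $F$.

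Once the mediated condition is established, the finite presentability of $F$ lets Theorem~\ref{thm:mediaintroduction} conclude that $F$ is self-simulable, closing the reverse implication and hence the biconditional.
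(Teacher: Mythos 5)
Your proposal is correct, and you correctly recognize the theorem as a contrapositive restatement of Corollary~\ref{cor:FT}. The forward direction (amenability $\Rightarrow$ non-self-simulability) is exactly the paper's: invoke Proposition~\ref{prop:SelfSimuImpNonAmenable} using the infinite-entropy effectively closed action (Proposition~\ref{prop:amenable} is redundant here, since you only need the non-expansive witness, but it does no harm).

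For the converse, you take a genuinely different route. The paper proves it in one line by applying Lemma~\ref{lem:acts_on_0d} to the natural faithful action of $F$ on $\{\symb{0},\symb{1}\}^\NN$, noting that the rigid stabilizer of any nonempty cylinder is isomorphic to $F$, hence non-amenable; that lemma internally packages the ``find a clopen partition, embed a product, verify mediation'' argument once and for all (and handles $T$ and $V$ uniformly). You instead verify the mediated condition directly: take $\Delta \cong F \times F$ as the point-stabilizer of $\tfrac12$, note it is self-simulable via Theorem~\ref{thm:selfsimulation}, and check for each standard generator $t$ that $\Delta \cap t\Delta t^{-1}$ fixes $\{\tfrac12, t(\tfrac12)\}$ and therefore contains a copy of $F$ acting on some dyadic subinterval, hence is non-amenable under the hypothesis. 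This is sound, though your assertion that the intersection always contains $F\times F\times F$ is off for a generator $t$ with $t(\tfrac12) = \tfrac12$ (e.g.\ $x_1$ in the standard presentation), where the intersection is simply $\Delta \cong F\times F$; the conclusion is unaffected since $F \times F$ is still non-amenable, but the counting should be corrected. Overall, your hands-on version buys concreteness specific to $F$, while the paper's Lemma~\ref{lem:acts_on_0d} buys generality and a uniform treatment of the three Thompson groups.
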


In order to obtain the previous results, we produce a toolbox which provides simple criteria to check that a group is self-simulable. We provide an abstract set of orthogonality conditions (Lemmas~\ref{lem:acts_on_product} and~\ref{lem:suborthogonal}) ensuring self-simulability, which works for most of the groups we list. These roughly generalize the idea of a group acting on a direct product, in such a way that the subactions that act on a bounded number of coordinates generate the group, and are non-amenable (for example, the general linear group is generated by elementary matrices, and $\mathrm{GL}_2(\ZZ)$ is non-amenable).

Another tool is Lemma~\ref{lem:acts_on_0d} which states that if a finitely generated recursively presented group acts faithfully on a zero-dimensional space such that for every non-empty open set the subgroup of elements which act trivially on the complement is non-amenable, then the group is self-simulable. We note that it is a common theme in the theory of infinite groups that if a group acts in a sufficiently complex way on small open sets, then we can say something about the abstract group. In particular this is common in proofs of simplicity of infinite groups, see~\cite{Higman1954,Epstein1970,Br04,Matui2015}, and a similar assumption appears in an important theorem of Rubin~\cite{Rubin1989}.

In section~\ref{sec:questions} we present a few perspectives and questions we were not able to solve ourselves. Particularly, we discuss potential dynamical properties of the subshift of finite type extension and the class of self-simulable groups. 

\textbf{Acknowledgments}: The authors wish to thank Nathalie Aubrun for many fruitful discussions on a potential candidate for self-simulable group, which were a guiding influence for many of the ideas in this paper. We also thank three anonymous reviewers whose comments greatly improved the presentation of this article, particularly shortening Section~\ref{sec:QIrigidity} considerably. S. Barbieri was supported by the FONDECYT grants 11200037 and 1240085, and the ANR projects CoCoGro (ANR-16-CE40-0005) and CODYS (ANR-18-CE40-0007). M. Sablik was supported by ANR project Difference (ANR-20-CE40-0002) and the project Computability of asymptotic properties of dynamical systems from CIMI Labex (ANR-11-LABX-0040). V. Salo was supported by the Academy of Finland project 2608073211.

\section{Preliminaries}

We denote by $\NN$ the set of non-negative integers and use the notation $A \Subset B$ to denote that $A$ is a finite subset of $B$.

We will begin with a short introduction to computability, mainly to set the notation for the rest of the article. We refer readers interested in a more in-depth introduction to~\cite{arora2009computational,sipser2006introduction,Rogers1987theory}.

A \define{Turing machine} $\mathcal{M}$ is given by a tuple \[ \mathcal{M} = (Q,\Sigma, q_0, q_F,\delta), \]
where $Q$ is a finite set of \define{states}, $\Sigma$ is a finite set called the \define{tape alphabet}, $q_0\in Q$ and $q_F \in Q$ are the \define{initial} and \define{final} state respectively and $\delta \colon Q \times \Sigma \to Q \times \Sigma \times \{-1,0,1\}$ is the \define{transition function}.

Let us briefly describe the dynamics of Turing machines on one-sided tapes. Let $\mathcal{M}$ be a Turing machine as above. Given $(q,x,n)\in Q \times \Sigma^{\NN} \times  \NN$, we interpret $x \in \Sigma^{\NN}$ as an infinite tape filled with symbols from $\Sigma$, and $(q,n) \in Q\times \NN$ as a working ``head'' which is on position $n$ of the tape at state $q$. If $\delta(q,x(n)) = (r,a,d)$ and $n+d \geq 0$, then $\mathcal{M}(q,x,n) = (r,x',n+d)$ where $x'$ is the configuration which coincides with $x$ in $\NN \setminus \{n\}$ and $x'(n)=a$. If $n+d <0$ then $\mathcal{M}(q,x,n) = (q,x,n)$. In other words, the head reads the symbol at position $n$, and according to its transition function, changes said symbol to $a \in \Sigma$, changes its state to $r \in Q$, and moves itself by $d \in \{-1,0,1\}$ if it can do it without exiting the tape. A Turing machine \define{accepts} an \define{input} $x \in \Sigma^{\NN}$ if starting from $(q_0,x,0)$ it eventually reaches the final state $q_F$, that is, there is $t \in \NN$ so that $\mathcal{M}^t(q_0,x,0) \in \{q_F\} \times \Sigma^{\NN} \times \NN$. 

Let $A$ be a finite set and denote by $\sqcup \notin A$ a \define{blank symbol}. Let $\mathcal{M}$ be a Turing machine whose tape alphabet contains $A \cup \{\sqcup\}$. We say that $\mathcal{M}$ accepts a word $w = w_0\dots w_{n-1} \in A^*$ if it accepts the infinite word $x \in \Sigma^\NN$ given by \[ x(k) = \begin{cases}
	w_k & \mbox{ if } 0 \leq k < n\\
	\sqcup & \mbox{ if } n \leq k.
\end{cases}  \]

Let $L \subset A^*$ be a language. We say $L$ is \define{recursively enumerable} if there exists a Turing machine $\mathcal{M}$ which accepts $w \in A^*$ if and only if $w \in L$. We say that $L$ is \define{co-recursively enumerable} if $A^*\setminus L$ is recursively enumerable. Finally, we say that $L$ is decidable if it is both recursively and co-recursively enumerable.

\begin{remark}
	For all our purposes, we will only consider Turing machines which never move the head outside of the tape. It is a straightforward exercise to show that the notions of decidability do not change with this assumption.
\end{remark}

On what follows we will informally refer to Turing machines which have several inputs on different ``tapes''. This situation can be easily modeled by choosing as tape alphabet the direct product of the alphabets on each tape. 

\subsection{Group actions}

For a group $\Gamma$, we denote its identity by $1_{\Gamma}$, and for a word $w = w_1w_2\dots w_k \in \Gamma^*$, we denote by $\underline{w}$ the element of $\Gamma$ which corresponds to multiplying all the $w_i$ from left to right. If $\Gamma$ is finitely generated, we write $S_{\Gamma}$ for a finite set of $\Gamma$ which generates $\Gamma$ as a monoid, that is, for every $g \in \Gamma$ there is $w \in S_{\Gamma}^*$ such that $g = \underline{w}$. We shall drop the subscript $\Gamma$ if the context is clear. Throughout the article, we assume all groups are infinite unless stated otherwise.

The \define{word problem} of a group $\Gamma$ with respect to $S \Subset \Gamma$ is the language \[ \texttt{WP}_{S}(\Gamma) = \{ w \in S^* :  \underline{w} = 1_{\Gamma}   \}.  \]

A finitely generated group $\Gamma$ is \define{recursively presented} (with respect to $S \Subset \Gamma$) if $\texttt{WP}_{S}(\Gamma)$ is recursively enumerable. A group is said to have \define{decidable word problem} (with respect to $S \Subset \Gamma$) if $\texttt{WP}_{S}(\Gamma)$ is decidable. These notions do not depend upon the choice of generating set $S$.

Let $\Gamma \curvearrowright X$ and $\Gamma \curvearrowright Y$ be (left) actions by homeomorphisms of a group $\Gamma$ on two compact metrizable spaces $X$ and $Y$. A map $\phi\colon X \to Y$ is called a \define{topological morphism} if it is continuous and $\Gamma$-equivariant, that is, if $\phi(gx) = g\phi(x)$ for every $g \in \Gamma$ and $x \in X$. A topological morphism which is onto is called a \define{topological factor map}. If there exists a topological factor map $\phi\colon X \to Y$ we say that $\Gamma \curvearrowright X$ is an \define{extension} of $\Gamma \curvearrowright Y$ and that $\Gamma \curvearrowright Y$ is a \define{factor} of $\Gamma \curvearrowright X$. If the topological morphism is a bijection, we say that $\Gamma \curvearrowright X$ is \define{topologically conjugate} to $\Gamma\curvearrowright Y$.

Given an action $\Gamma \curvearrowright X$ and a subgroup $H \leqslant \Gamma$, the \define{$H$-subaction} of $\Gamma \curvearrowright X$ is the action $H \curvearrowright X$ defined by the restriction of $\Gamma \curvearrowright X$ to $H$. 

\subsection{Effectively closed actions}
Let $A$ be a finite set, for instance $A = \{\symb{0},\symb{1}\}$. For a word $w = w_0w_1\dots w_{n-1} \in A^*$ we denote by $[w]$ the cylinder set of all $x \in A^{\NN}$ such that $x_i = w_i$ for $0 \leq i \leq n-1$. We endow $A^{\NN}$ with the prodiscrete topology whose basis is given by the cylinders $[w]$ for $w
\in A^*$.  

\begin{definition}
	A closed subset $X \subseteq A^{\NN}$ is \define{effectively closed} if there exists a Turing machine which accepts $w \in A^*$ on input if and only if $[w] \cap X = \varnothing$.
\end{definition}

Note that if $X$ is effectively closed and $[w]\cap X \neq \varnothing$, then the Turing machine run on $w$ will just loop indefinitely. Intuitively, a set $X$ is effectively closed if there is an algorithm which provides a sequence of approximations of the complement of $X$ as a union of cylinders. The next definition endows effectively closed sets with a computable action of a finitely generated group on $X$.

Let $X \subseteq A^\NN$, $\Gamma \curvearrowright X$ and $S$ be a finite generating set of $\Gamma$ which contains the identity. Consider the alphabet $B = A^S$ and for $y \in B^{\NN}$ and $s \in S$ let $\pi_s y = \{ y(n)(s)\}_{n \in \NN} \in A^\NN$. The \define{set representation} of $\Gamma \curvearrowright X$ determined by $S$ is the set $Y_{\Gamma \curvearrowright X, S} \subseteq B^{\NN}$ given by \[ Y_{\Gamma \curvearrowright X, S} = \{ y \in B^\NN : \pi_{1_{\Gamma}}y \in X, \mbox{ and for every } s \in S, \pi_s y = s (\pi_{1_{\Gamma}}y) \}.  \]

\begin{definition}\label{def:ecaction}
	Let $\Gamma$ be a group generated by $S \Subset \Gamma$ and $X \subseteq A^{\NN}$. An action $\Gamma \curvearrowright X$ is \define{effectively closed} if the set representation $Y_{\Gamma \curvearrowright X, S}$ is an effectively closed subset of $(A^S)^{\NN}$.
\end{definition}

This definition is the natural generalization to finitely generated groups of the definition of Hochman~\cite[Definition 1.2]{Hochman2009b} for actions of $\ZZd$. Intuitively, an action $\Gamma \curvearrowright X$ is effectively closed if there is a Turing machine which on input $y \in (A^S)^{\NN}$ can detect whether $\pi_{1_{\Gamma}}y \notin X$, or if there is $s \in S$ such that $\pi_s y \neq s (\pi_{1_{\Gamma}}y)$. 

\begin{remark}
	Some authors use the term ``effective action'' to mean an action with is faithful (i.e.\ that every $g \in \Gamma\setminus \{1_{\Gamma}\}$ acts non-trivially). We do not use this term with this meaning. For instance, the trivial action of any non-trivial group on an effectively closed set is an effectively closed action which is not faithful.
\end{remark}

While Definition~\ref{def:ecaction} is exactly what we will use to prove our results, sometimes we will make use of an equivalent statement which is useful for showing that a particular action is effectively closed. Namely, that effectively closed actions are precisely those for which the set is effectively closed and every generator induces a computable map.

\begin{definition}\label{def:ecaction2}
	Let $\Gamma$ be a group generated by $S \Subset \Gamma$ and $X \subseteq A^{\NN}$ be effectively closed. An action $\Gamma \curvearrowright X$ is \define{effectively closed} if there exists a Turing machine which on inputs $a \in A$, $s \in S$, $n \in \NN$ and $x\in X$, accepts if and only if $a =(sx)_n$.
\end{definition}

A way to think about an effectively closed action is as an action such that for every $s \in S$, there is a Turing machine with two tapes. One tape contains all the symbols of some $x \in X$ in order and the other tape is empty. The machine can use the other tape to perform computation and sequentially produce the symbols of $sx$ while reading coordinates of $x$. Notice that in order to output a symbol $(sx)_n$ the Turing machine only visits finitely many positions of $x$. 

\begin{remark}
	Every effectively closed action $\Gamma \curvearrowright X$ is continuous. Indeed, if it were not continuous, there would exist $m \in \NN$, $s \in S$ so that for every $n \in \NN$ there are $x^n,y^n \in X$ such that $x^n_k = y^n_k$ for every $k \leq n$ and $(sx^n)_m \neq (s{y^n})_m$. It follows that the algorithm computing $(sx^n)_m$ must take at least $n+1$ steps. Taking a limit point $\bar{x}\in X$ of the sequence of $x^n$ yields a point for which the algorithm cannot compute $\bar{x}_m$ in any number of steps, therefore contradicting the assumption that $\Gamma \curvearrowright X$ is effectively closed.
\end{remark}

\begin{remark}
	In the definition of effectively closed action the Turing machine computes the image of $x \in X$ under a generator $s \in S$ of $\Gamma$. The algorithm can of course be adapted so that it computes the image of $x$ under any $w \in S^*$. That is, there is a Turing machine which on input $w \in S^*$, $n \in \NN$ and $x \in X$ computes $(\underline{w}x)_n$. In particular the notion of effectively closed action does not depend upon the choice of generators, and it can be naturally be extended to uniformly computable actions of countable groups which are not finitely generated but whose elements can be recursively enumerated.
\end{remark}

\begin{remark} In the definition of effectively closed action we may consider just $A = \{\symb{0},\symb{1}\}$ and obtain the same class of actions up to topological conjugacy. Indeed, every finite set $A$ can be coded as binary words in $\{\symb{0},\symb{1}\}^{\lceil \log(|A|) \rceil}$ and thus we may recode any action on a subset of $A^{\NN}$ as an action on a subset of $\agb^\NN$ by replacing every symbol of $A$ by its coding.
\end{remark}

\begin{remark}\label{remark_ec_BS}
	There is a third equivalent notion of effectively closed action which is used in~\cite{BS2018,Barbieri_2019_DA} and is an analogue of Definition~\ref{def:ecaction2} which only performs computation on words. Namely, $\Gamma \curvearrowright X \subseteq A^{\NN}$ is effectively closed if the action is continuous and there exists a Turing machine which on input $s \in S$ and $u,v \in A^*$ accepts if and only if $[v] \cap s([u] \cap X)  = \varnothing$. Notice that in this definition we assume that the action is continuous.
\end{remark}

The definition of effectively closed action does not require anything on the acting group other than being finitely generated. However, in order to be able to naturally construct actions which are effectively closed, we shall often ask that the acting group is recursively presented or that it has decidable word problem. To justify this, we shall show that even admitting a single faithful effectively closed action imposes strong computability constraints on the word problem of the acting group.

To this end, we remark that effectively closed sets are called $\Pi_1^0$ sets in the arithmetical hierarchy and their complements are called $\Sigma_1^0$. The $\Sigma_2^0$ sets are those for which there is a Turing machine with an oracle to $\Pi_1^0$ which accepts if the element belongs to that set, see~\cite[Chapter 14]{Rogers1987theory} for further background. $\Pi_2^0$ sets are the complements of those in $\Sigma_2^0$. In terms of languages, the word problem of a group would be in $\Pi_2^0$ if there is a Turing machine with an oracle to $\Pi_1^0$ which accepts a word on a given set of generators if and only if it is not the identity of the group.

\begin{proposition}\label{prop:comp_restriction_ec}
	Suppose $\Gamma$ is a finitely generated group which admits an effectively closed faithful action $\Gamma \curvearrowright X$. Then the word problem of $\Gamma$ is in $\Pi_2^0$.
\end{proposition}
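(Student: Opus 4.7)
The plan is to show that the complement $\{w \in S^* : \underline{w} \neq 1_{\Gamma}\}$ of the word problem lies in $\Sigma_2^0$, which is equivalent to $\texttt{WP}_S(\Gamma) \in \Pi_2^0$. The strategy is to rewrite the condition ``$\underline{w} \neq 1_\Gamma$'' as an $\exists \Pi_1^0$ predicate by combining faithfulness of the action with the effective dynamics provided by Remark~\ref{remark_ec_BS}.

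First I would use faithfulness to rephrase ``$\underline{w} \neq 1_{\Gamma}$'' as the assertion that there exist $x \in X$ and $n \in \NN$ with $(\underline{w}x)_n \neq x_n$. I then convert this infinitary statement into a finitary one by proving the equivalence
\[ \underline{w} \neq 1_{\Gamma} \iff \exists\, u,v \in A^*,\, n \in \NN \;\; \bigl( u_n \neq v_n \text{ and } [v] \cap \underline{w}([u] \cap X) \neq \varnothing \bigr). \]
The forward direction is routine: given a witnessing $x$ and $n$, let $u$ and $v$ be the length-$(n+1)$ prefixes of $x$ and $\underline{w}x$ respectively; then $u_n = x_n \neq (\underline{w}x)_n = v_n$ and $\underline{w}x \in [v] \cap \underline{w}([u] \cap X)$. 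The converse is immediate by picking any $y \in [v] \cap \underline{w}([u] \cap X)$, writing $y = \underline{w} x$ for some $x \in [u] \cap X$, and observing that $x_n = u_n \neq v_n = y_n$.

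Next I would argue that the predicate inside the quantifier is uniformly $\Pi_1^0$ in $(w,u,v)$. This follows from Remark~\ref{remark_ec_BS}, lifted from single generators $s \in S$ to arbitrary words $w \in S^*$ (a lift explicitly noted in the excerpt): there is a Turing machine that on input $(w,u,v)$ accepts precisely when $[v] \cap \underline{w}([u] \cap X) = \varnothing$. Hence its negation is uniformly $\Pi_1^0$, while the clause $u_n \neq v_n$ is decidable. Combining these facts with the displayed equivalence yields that $\{w : \underline{w} \neq 1_\Gamma\}$ is in $\Sigma_2^0$, which is what was needed.

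The only conceptually subtle point is the appeal to faithfulness to convert an algebraic question about elements of $\Gamma$ into a dynamical one about $X$; this is what forces the arithmetic complexity of $\texttt{WP}_S(\Gamma)$ to be controlled by the effective closedness of the action. Without faithfulness there is no such control, as witnessed for instance by the trivial action of any group, which is effectively closed regardless of the complexity of the word problem.
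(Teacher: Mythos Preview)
Your proof is correct and follows essentially the same idea as the paper's: both use faithfulness to translate $\underline{w}\neq 1_\Gamma$ into the existence of a point moved by $\underline{w}$, and then invoke the effective nature of the action to place this in $\Sigma_2^0$. The paper packages this as an explicit oracle algorithm based on Definition~\ref{def:ecaction2} (computing $(\underline{w}x)_n$ from longer and longer prefixes, with a $\Pi_1^0$ oracle to test $[u]\cap X\neq\varnothing$), whereas you give a cleaner syntactic argument via the cylinder formulation of Remark~\ref{remark_ec_BS}; one small caveat is that the ``lift to words $w\in S^*$'' that the paper states explicitly is for Definition~\ref{def:ecaction2}, not Remark~\ref{remark_ec_BS}, so strictly speaking you should note that the analogous extension of Remark~\ref{remark_ec_BS} to words follows by the same reasoning (or by iterating through intermediate cylinders).
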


\begin{proof}
	Let $S$ be a set of generators for $\Gamma$ and $\Gamma \curvearrowright X\subseteq \{\symb{0},\symb{1}\}^{\NN}$ a faithful effectively closed action. Consider the algorithm which on input $w\in S^*$, initializes a variable $n = 1$, enumerates all words $u \in \{\symb{0},\symb{1}\}^{n}$. For each word, it calls the oracle to $\Pi_1^0$ to determine if $[u]\cap X \neq \varnothing$. If this is not the case, it tries to compute the image under $\underline{w}$ of any configuration $x\in \{\symb{0},\symb{1}\}^{\NN}$ such that $x_0\dots x_{n-1} =u$. If at some point the algorithm requires coordinates of $x$ outside of $\{0,\dots,n-1\}$, it increases the value of $n$ and restarts the process. If at any point the algorithm computes a coordinate $(\underline{w}x)_m \neq x_m$ then we know $\underline{w}x \neq x$ and thus that $\underline{w}\neq 1_{\Gamma}$. Such an instance always exists due to the action being faithful. 
\end{proof}

In particular, the only effectively closed actions of a simple group whose word problem is not on $\Pi_0^2$ would be the trivial actions on effectively closed sets. Notice that there are countably many $\Pi_2^0$ sets, and therefore (as there are uncountably many non-isomorphic simple finitely generated groups~\cite{Olshanskii1981}) there are uncountably many finitely generated groups which do not admit any nontrivial effectively closed action.

\begin{remark}
	If $\Gamma$ admits a faithful effectively closed action on $\{\symb{0},\symb{1}\}^{\NN}$, then we can ignore the call to $\Pi_1^0$ in the previous proof and conclude that the word problem of $\Gamma$ is co-recursively enumerable.
\end{remark}

\subsection{Shift spaces}

Let $\ag$ be a finite set and $\Gamma$ be a group. The set $\ag^\Gamma = \{ x\colon \Gamma \to \ag\}$ equipped with the left \define{shift} action $\Gamma \curvearrowright \ag^{\Gamma}$ by left multiplication given by 
\[ gx(h) \isdef x(g^{-1}h) \qquad \mbox{  for every } g,h \in \Gamma 
\mbox{ and } x \in \ag^\Gamma, \]
is the \define{full $\Gamma$-shift}. The elements $a \in \ag$ and $x \in \ag^\Gamma$ are called \define{symbols} and \define{configurations} respectively. We endow $\ag^\Gamma$ with the prodiscrete topology generated by the clopen subbase given by the \define{cylinders} $[a]_g \isdef \{x \in \ag^\Gamma : x(g) = a\}$ where $g \in \Gamma$ (we just denote by $[a]$ the cylinder $[a]_{1_{\Gamma}}$). A \define{support} is a finite subset $F \Subset \Gamma$. Given a support $F$, a \define{pattern} with support $F$ is an element $p \in \ag^F$. We denote the cylinder generated by $p$ by $[p] = \bigcap_{h \in F}[p(h)]_{h}$. Given two patterns $p,q$ we say that $q$ \define{occurs} in $p$ if $[p]\subseteq g[q]$ for some $g \in \Gamma$.

\begin{definition}
	A subset $X \subseteq \ag^\Gamma$ is a \define{$\Gamma$-subshift} if and only if it is $\Gamma$-invariant and closed in the prodiscrete topology. 
\end{definition}

If the context is clear, we drop the $\Gamma$ from the notation and speak plainly of a subshift.  Equivalently, $X$ is a subshift if and only if there exists a set of forbidden patterns $\FF$ such that \[X=X_\FF \isdef  {\ag^\Gamma \setminus \bigcup_{p \in \FF, g \in \Gamma} g[p]}.\]

Let us briefly recall that an action $\Gamma \curvearrowright X$ on a compact metrizable space is expansive if there is a constant $C>0$ so that whenever $d(gx,gy)< C$ for every $g \in \Gamma$ then $x=y$. It is a well known fact that an action $\Gamma \curvearrowright X$ on a closed subset $X$ of $\{\symb{0},\symb{1}\}^{\NN}$ is topologically conjugate to a $\Gamma$-subshift if and only if the action is expansive~\cite{Hedlund1969}.

\begin{definition}
	A subshift $X \subseteq \ag^\Gamma$ is a \define{subshift of finite type (SFT)} if there exists a finite set of forbidden patterns $\FF$ such that $X = X_{\FF}$.
\end{definition}

It is clear that if we fix $F \Subset \Gamma$ and $\mathcal{L} \subseteq A^{F}$, we may define a subshift of finite type $X \subseteq \ag^{\Gamma}$ by declaring that $x \in X$ if and only if for every $g \in \Gamma$, $(x(gh))_{h \in F} \in \mathcal{L}$. In other words, we may also define subshifts of finite type by saying which patterns are allowed within a finite support.

\begin{definition}
	A subshift $Y \subseteq \ag^\Gamma$ is \define{sofic} if it is the topological factor of a subshift of finite type.
\end{definition}

It will be useful to describe topological morphisms between subshifts in an explicit way. The following classical theorem provides said description. A modern proof for actions of countable groups may be found in~\cite[Theorem 1.8.1]{ceccherini-SilbersteinC09}.

\begin{theorem}[Curtis-Lyndon-Hedlund~\cite{Hedlund1969}]\label{thm:curtis_lyndon_hedlund}
	Let $\Gamma$ be a countable group, $\ag_X$ and $\ag_Y$ be two finite sets  and $X \subseteq \ag_X^\Gamma, Y \subseteq \ag_Y^\Gamma$ be two $\Gamma$-subshifts. A map $\phi\colon X \to Y$ is a topological morphism if and only if there exists a finite set $F \Subset \Gamma$ and $\Phi\colon \ag_X^F \to \ag_Y$ such that for every $x \in X$ and $g \in \Gamma$ then $(\phi(x))(g) = \Phi((g^{-1}x)|_{F})$.
\end{theorem}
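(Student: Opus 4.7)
The plan is to handle the two directions separately, starting with the easier one. For the ``if'' direction, assume $\phi$ is defined by a local rule $\Phi\colon\ag_X^F\to\ag_Y$ via $(\phi(x))(g)=\Phi((g^{-1}x)|_F)$. First I would check continuity: if $x,y\in X$ satisfy $x|_{gF}=y|_{gF}$ for all $g$ in a fixed finite set $F'$, then $(\phi(x))(g)=(\phi(y))(g)$ for all $g\in F'$, and continuity in the prodiscrete topology follows. Then I would verify equivariance by a direct computation using $(hx)(k)=x(h^{-1}k)$: for $h\in\Gamma$, $(\phi(hx))(g)=\Phi((g^{-1}hx)|_F)=\Phi(((h^{-1}g)^{-1}x)|_F)=(\phi(x))(h^{-1}g)=(h\phi(x))(g)$.

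For the harder ``only if'' direction, suppose $\phi$ is continuous and $\Gamma$-equivariant. The first step is to use continuity at the identity: for each symbol $a\in\ag_Y$, the preimage $\phi^{-1}([a])$ is a clopen subset of $X$. Since $X\subseteq\ag_X^\Gamma$ is compact and the cylinders over finite supports form a basis of clopen sets, for each $a\in\ag_Y$ there is a finite $F_a\Subset\Gamma$ and a set $L_a\subseteq\ag_X^{F_a}$ such that $\phi^{-1}([a])=\{x\in X:x|_{F_a}\in L_a\}$. Taking $F=\bigcup_{a\in\ag_Y}F_a$ (a finite union, since $\ag_Y$ is finite), we obtain a single finite window such that the symbol $(\phi(x))(1_\Gamma)$ is completely determined by $x|_F$.

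Next I would define $\Phi\colon\ag_X^F\to\ag_Y$ by $\Phi(p)=(\phi(x))(1_\Gamma)$ for any $x\in X$ with $x|_F=p$, and extend $\Phi$ arbitrarily to patterns that do not occur in $X$ (this is well-defined by the previous step). The final step is to verify the formula at every $g\in\Gamma$ using equivariance: since $(\phi(x))(g)=(g^{-1}\phi(x))(1_\Gamma)=\phi(g^{-1}x)(1_\Gamma)=\Phi((g^{-1}x)|_F)$, the sliding block code $\Phi$ realizes $\phi$ as claimed.

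I do not expect any genuine obstacle here; this is a standard compactness argument. The only point requiring a little care is the extraction of a single finite support $F$ that works uniformly for all output symbols, which relies crucially on the finiteness of $\ag_Y$ together with compactness of $X$, and on the fact that cylinders of arbitrarily large supports form a neighborhood basis of any point in the prodiscrete topology.
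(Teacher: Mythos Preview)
Your proof is correct and is the standard compactness-plus-equivariance argument for the Curtis--Lyndon--Hedlund theorem. The paper does not actually give its own proof of this statement; it simply cites~\cite[Theorem 1.8.1]{ceccherini-SilbersteinC09}, where the same argument you outline can be found.
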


In the above definition, a map $\phi\colon X \to Y$ for which there is $\Phi\colon \ag_X \to \ag_Y$  so that $(\phi(x))(g) = \Phi(x(g))$ for every $g \in \Gamma$ is called a \define{$1$-block map}. If $Y$ is a sofic subshift, it is always possible to construct an SFT $\widetilde{X}$ and a $1$-block topological factor map $\widetilde{\phi}\colon \widetilde{X} \to Y$. Furthermore, if $\Gamma$ is generated by a finite set $S$, one can ask that the SFT $\widetilde{X}$ is \define{nearest neighbor} with respect to $S$, that is, it is described by a set of forbidden patterns whose supports are of the form $\{1_{\Gamma},s\}$ where $s$ is a generator of $\Gamma$. In particular, if $A_{\widetilde{X}}$ is the alphabet of $\widetilde{X}$ and $1_{\Gamma}\in S$, there is $\mathcal{L} \subseteq (A_{\widetilde{X}})^S$ such that $\widetilde{x} \in \widetilde{X}$ if and only if $(g\widetilde{x})|_{S} \in \mathcal{L}$ for every $g \in \Gamma$. A proof of this elementary fact can be found in~\cite[Proposition 1.7]{tesis}.

\subsection{Effectively closed subshifts}

When working with subshifts it is often useful to give an explicit notion of effectively closed in order to be able to work with the space $A^{\Gamma}$ instead of $A^{\NN}$. In the case of $\ZZ$-subshifts, patterns (with convex support) can be naturally identified with words, and thus it is natural to think of effectively closed $\ZZ$-subshifts as those that can be described by a recursively enumerable set of forbidden words. For a general finitely generated group $\Gamma$, we need to codify patterns in such a way that a Turing machine might perform computation on them. The following formalism was introduced in~\cite{ABS2017}.

\begin{definition}
	Let $\Gamma$ be a finitely generated group, $S$ a finite set of generators for $\Gamma$ and $\ag$ an alphabet. A function $c\colon W \to \ag$ from a finite subset $W \Subset S^*$ is called a \define{pattern coding}. The \define{cylinder} defined by a pattern coding $c$ is given by
	\[ [c] = \bigcap_{w \in W} [c(w)]_{\underline{w}}.  \]
\end{definition}

A pattern coding can be thought of as pattern on free monoid $S^*$. It can be represented on the tape of a Turing machine as a finite sequence of tuples $\{(w_1,a_1),(w_2,a_2),\dots,(w_k,a_k)\}$ where $w_i \in S^*, a_i \in \ag$ and $c(w_i)=a_i$. A set $\mathcal{C}$ of pattern codings defines a $\Gamma$-subshift $X_{\mathcal{C}}$ by setting \[X_{\mathcal{C}} = \ag^\Gamma \setminus \bigcup_{g \in \Gamma, c \in \mathcal{C}} g[c].\]

\begin{definition}
	A $\Gamma$-subshift $X$ is \define{effectively closed by patterns} if there exists a recursively enumerable set of pattern codings $\mathcal{C}$ such that $X = X_{\mathcal{C}}$. 
\end{definition}

A Turing machine which accepts a pattern coding $c$ if and only if $c \in \mathcal{C}$ is said to \define{define} $X = X_{\mathcal{C}}$. Note that neither $\mathcal{C}$ nor the Turing machine which defines $X$ are unique.

It is important to stress that, unintuitively, the property of being an effectively closed subshift by patterns is weaker than the property of being an effectively closed expansive action. For instance, the full $\Gamma$-shift is a faithful action which is effectively closed by patterns for any group, but by Proposition~\ref{prop:comp_restriction_ec} there are groups which do not admit any faithful effectively closed action. However, these two notions coincide whenever the acting group is recursively presented.

\begin{proposition}\label{prop:conjugacy_rec_presented_effsubshift}
	Let $\Gamma$ be a finitely generated and recursively presented group. Then, up to topological conjugacy, expansive effectively closed actions coincide with subshifts which are effectively closed by patterns.
\end{proposition}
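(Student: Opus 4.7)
My plan is to prove both directions of the equivalence separately, with recursive presentation of $\Gamma$ entering crucially in each.

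For the direction ($\Leftarrow$), suppose $X \subseteq A^\Gamma$ is effectively closed by a recursively enumerable set $\mathcal{C}$ of pattern codings. Shift actions are always expansive, so I only need to show that the action $\Gamma \curvearrowright X$ is topologically conjugate to an effectively closed action on a subset of $A^\NN$. Fix the computable surjection $\pi\colon \NN \to \Gamma$ obtained by enumerating $S^*$ and applying $\underline{\cdot}$, and define $\iota\colon A^\Gamma \to A^\NN$ by $\iota(x)(n) = x(\pi(n))$. This is a topological conjugacy onto its image. Since $\Gamma$ is recursively presented, the set $\{(m,n) : \pi(m) = \pi(n)\}$ is recursively enumerable, from which it follows that $\iota(A^\Gamma)$ is effectively closed. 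Combining this with the RE enumeration of $\mathcal{C}$ and its $\Gamma$-translates (again enumerable via $S^*$), we conclude that $\iota(X)$ is effectively closed. For the action, given $s \in S$ and $n \in \NN$, we can RE-search for some $m$ with $\pi(m) = s^{-1}\pi(n)$; such an $m$ exists by surjectivity, and produces $(s \cdot \iota(x))(n) = \iota(x)(m)$, yielding the Turing machine required by Definition~\ref{def:ecaction2}.

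For the direction ($\Rightarrow$), let $\Gamma \curvearrowright X \subseteq A^\NN$ be expansive and effectively closed, with expansivity constant $C > 0$. Choose $r \in \NN$ so that any two configurations in $A^\NN$ agreeing on $\{0,\ldots,r-1\}$ are at distance $< C$, let $B = A^r$, and define $\phi\colon X \to B^\Gamma$ by $\phi(x)(g) = (g^{-1}x)|_{\{0,\ldots,r-1\}}$. A standard expansivity argument shows that $\phi$ is a topological conjugacy between $\Gamma \curvearrowright X$ and the shift action on $Y = \phi(X)$. To see that $Y$ is effectively closed by patterns, I would enumerate all pattern codings $c\colon W \to B$ with $W \Subset S^*$ and, for each, semidecide whether $c$ is forbidden. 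Being forbidden is equivalent to the emptiness of
\[
Z_c := \bigl\{x \in X : (\underline{w}^{-1}x)|_{\{0,\ldots,r-1\}} = c(w) \text{ for all } w \in W\bigr\}.
\]
Using the effective closure of the action via Remark~\ref{remark_ec_BS}, for each $w \in W$ and each basic cylinder $[u]$ of $A^\NN$ we can semidecide whether $[u] \cap \underline{w}^{-1}([c(w)]) \cap X = \varnothing$, so the cylinders of $A^\NN$ that miss $Z_c$ form a recursively enumerable family. Since $A^\NN$ is compact, $Z_c$ is empty if and only if finitely many such cylinders already cover $A^\NN$, a condition that can be verified at each stage of the enumeration; this gives the desired RE enumeration of forbidden pattern codings.

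The main obstacle I anticipate is the careful bookkeeping in both directions needed to keep every construction recursively enumerable, given that equality in $\Gamma$ is only semidecidable. In particular, in ($\Leftarrow$) the surjection $\pi$ is never injective, so both the embedding $\iota$ and the induced action rely on searches for equivalent representatives in $S^*$; these must be handled uniformly so that verification conditions remain $\Sigma_1^0$ rather than drifting to $\Sigma_2^0$. Similarly, in ($\Rightarrow$) one must ensure that iterating the effectively closed action of generators to obtain the action of an arbitrary $\underline{w} \in \Gamma$ preserves the semidecidability of emptiness checks for $Z_c$, which is where compactness of $A^\NN$ together with the RE enumeration of missing cylinders plays the key role.
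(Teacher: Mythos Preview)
Your argument is correct and follows essentially the same route as the paper. For ($\Leftarrow$) your encoding via the surjection $\pi\colon\NN\to\Gamma$ is exactly the paper's construction (which defers details to \cite[Theorem~4.3]{BS2018}); for ($\Rightarrow$) the paper simply invokes \cite[Proposition~4.1]{BS2018}---that any subshift factor of an effectively closed action is effectively closed by patterns, a statement valid regardless of recursive presentation---of which your direct compactness argument is a correct special case.
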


\begin{proof}
	In~\cite[Proposition 4.1]{BS2018} it is shown that every subshift which is the factor of an effectively closed action is effectively closed by patterns, so that direction holds regardless of the acting group. Conversely, if $\Gamma$ is recursively presented and generated by $S \Subset \Gamma$, given a computable bijection $\rho \colon \NN \to S^*$ there is an algorithm which accepts $(n,m) \in \NN^2$ if and only if $\underline{\rho(n)}=\underline{\rho(m)}$. This computable bijection and algorithm can be used to encode configurations $x \in A^{\Gamma}$ as sequences in $A^{\NN}$ as $x_{\underline{\rho(0)}}x_{\underline{\rho(1)}}x_{\underline{\rho(2)}}\dots$ and it can be shown that an algorithm to construct forbidden pattern codings can be translated into an algorithm to enumerate the complement of the set representation of the encoded action. The details of this construction are given in~\cite[Theorem 4.3]{BS2018}.
\end{proof}

In consequence with Proposition~\ref{prop:conjugacy_rec_presented_effsubshift}, whenever we have a recursively presented group we shall just say ``effectively closed subshift''  and omit the suffix ``by patterns''.

\section{Self-simulable groups}\label{sec:counterexamples}

Let us recall from the introduction the definition of self-simulable group.

\begin{definition}[Definition~\ref{def:selfsimulable}]
	We say that a group $\Gamma$ is \define{self-simulable} if every effectively closed action $\Gamma \curvearrowright X \subseteq \{\symb{0},\symb{1}\}^{\NN}$ is a topological factor of a $\Gamma$-subshift of finite type.
\end{definition}

In this brief section we will present two dynamical consequences of self-simulation and a list of obstructions for a group to have this property. We remark that the results of this section are not needed elsewhere in the article, thus a reader interested only in our main result can directly skip to Section~\ref{sec:paradox}.

\subsection{Two dynamical consequences of self-simulation}

\begin{definition}
	A $\Gamma$-subshift is \define{strongly aperiodic} if the shift action of $\Gamma$ is free. 
\end{definition}

Namely, A $\Gamma$-subshift is strongly aperiodic if the shift action satisfies that whenever $gx = x$ for some $x \in X$ then $g = 1_{\Gamma}$.

We shall use the following result from~\cite{ABT2018} which ensures that every finitely generated group with decidable word problem admits a nonempty effectively closed subshift which is strongly aperiodic. The proof relies on a probabilistic argument of~\cite{alonetal_nonrepetitivecoloringofgraphs} which shows that every graph of bounded degree can be colored with finitely many colors in such a way that the sequence of colors of every non self-intersecting path does not contain two subsequent occurrences of the same sequence of colors.

\begin{theorem}[Theorem 2.6 of~\cite{ABT2018}]\label{thm:ABT}
	For every finitely generated group with decidable word problem there exists a nonempty strongly aperiodic and effectively closed subshift.
\end{theorem}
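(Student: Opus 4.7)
The plan is to construct a strongly aperiodic effectively closed subshift via a non-repetitive (Thue-type) coloring of the Cayley graph. Fix a finite symmetric generating set $S$ of $\Gamma$ and consider the Cayley graph $\mathrm{Cay}(\Gamma,S)$, whose vertex degree is bounded by $|S|$. By the Alon--Grytczuk--Ha\l uszczak--Riordan theorem alluded to in the statement, there exists a finite alphabet $A$ (depending only on $|S|$) and a coloring $c\colon \Gamma \to A$ with the property that along any non-self-intersecting path $(g_0,\dots,g_n)$ in $\mathrm{Cay}(\Gamma,S)$, the color word $c(g_0)\cdots c(g_n)$ contains no factor of the form $ww$ with $w\in A^+$. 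I would take $X \subseteq A^\Gamma$ to be the set of \emph{all} configurations having this non-repetitive property. This set is closed and clearly $\Gamma$-invariant since the left-shift action permutes non-self-intersecting paths in the Cayley graph, and it is nonempty by the existence of $c$.

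For strong aperiodicity, suppose $gx = x$ with $g \neq 1_\Gamma$, so that $x(g^k h) = x(h)$ for all $k \in \ZZ$ and $h \in \Gamma$. Take a geodesic path $1 = v_0, v_1, \dots, v_\ell = g$ in $\mathrm{Cay}(\Gamma,S)$ and consider its translates $(g^k v_0,\dots,g^k v_\ell)$ concatenated at the common endpoints into a long path visiting $1, g, g^2, \dots, g^N$. When $g$ has infinite order, for large $N$ the orbit points $g^k$ are all distinct; a pigeonhole-style argument on the first self-intersection of the concatenated path lets me extract a non-self-intersecting subpath which, by $g$-invariance of $x$, reads a color word of the form $ww$, contradicting $x \in X$. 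The finite-order case is the main obstacle: here one cannot iterate freely, but a compactness argument on the orbit $\{g^k : k \in \ZZ\}$, combined with choosing $h$ (and the geodesic) to avoid the finitely many bad vertices, should still yield an embedded $ww$-pattern in $x$.

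Finally, I would verify that $X$ is effectively closed by patterns. A pattern coding $p\colon W \to A$ (with $W\Subset S^*$) is forbidden precisely when some sequence of words $w_0,w_1,\dots,w_n \in W$ traces a non-self-intersecting path in $\mathrm{Cay}(\Gamma,S)$ along which $p$ reads a $ww$-pattern. This is semi-decidable given an algorithm that decides equality in $\Gamma$: for each tuple of words one checks (i) that consecutive $w_i,w_{i+1}$ represent elements differing by a generator, which uses the decidable word problem, (ii) that the elements $\underline{w_i}$ are pairwise distinct, again using decidability of the word problem applied to $\underline{w_i}\underline{w_j}^{-1}$, and (iii) that the color word $p(w_0)\cdots p(w_n)$ contains a factor $ww$, which is purely combinatorial. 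Thus the collection of forbidden pattern codings is recursively enumerable, so $X$ is an effectively closed subshift by patterns. Together with Proposition~\ref{prop:conjugacy_rec_presented_effsubshift} (applied since decidable word problem implies recursively presented), this identifies $X$ with a nonempty strongly aperiodic effectively closed subshift, as required.
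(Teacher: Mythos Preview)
The paper does not prove this theorem: it is quoted from \cite{ABT2018} and only accompanied by a one-sentence description of the method (non-repetitive colorings of the Cayley graph via the Alon--Grytczuk--Ha\l uszczak--Riordan theorem). Your sketch follows exactly that outline, and your argument that $X$ is effectively closed by patterns is correct.

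The genuine gap is in the aperiodicity step, and it is not where you think. The torsion/infinite-order dichotomy is a red herring; the real issue is that your ``pigeonhole on the first self-intersection'' does not by itself produce a square. Concretely, with $1=v_0,\dots,v_\ell=g$ a geodesic, the walk $v_0,\dots,v_\ell,gv_1,\dots,gv_{\ell-1}$ may fail to be simple, and when it does, the simple prefix carries a color word $c_0\cdots c_{\ell-1}c_0\cdots c_{i-1}$ with $i<\ell$, which need not contain a square. What saves the argument is that a self-intersection forces $gv_i=v_j$ with $i+j=\ell$ and $i\neq j$ (by comparing distances to $g$), hence $g':=v_i^{-1}gv_i=v_i^{-1}v_j$ satisfies $|g'|=|j-i|<\ell$ and $g'(v_i^{-1}x)=v_i^{-1}x$. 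Inducting on $|g|$ (base case $|g|=1$: the edge $1,g$ already has repeated color) then yields a square along a simple path in every case, regardless of the order of $g$. Once you replace your hand-wavy step with this induction, the sketch is complete and matches the approach the paper alludes to.
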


A clear consequence of Theorem~\ref{thm:ABT} is the following.

\begin{corollary}\label{cor:stronglyaperiodic}
	Every finitely generated self-simulable group with decidable word problem admits a nonempty strongly aperiodic subshift of finite type.
\end{corollary}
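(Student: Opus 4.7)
The plan is to combine Theorem~\ref{thm:ABT} with self-simulability, using the elementary fact that freeness of a group action lifts through topological factor maps. Let $\Gamma$ be a finitely generated self-simulable group with decidable word problem. Since the word problem is decidable, $\Gamma$ is in particular recursively presented.

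First I would apply Theorem~\ref{thm:ABT} to obtain a nonempty effectively closed $\Gamma$-subshift $X \subseteq A^{\Gamma}$ on which $\Gamma$ acts freely. Because $\Gamma$ is recursively presented, Proposition~\ref{prop:conjugacy_rec_presented_effsubshift} ensures that $X$ (as an effectively closed subshift by patterns) is topologically conjugate to an expansive effectively closed action of $\Gamma$ on a closed subset of $\{\symb{0},\symb{1}\}^{\NN}$. Applying self-simulability of $\Gamma$ to this action, there exists a $\Gamma$-subshift of finite type $Y$ and a topological factor map $\phi \colon Y \to X$.

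It then remains to observe that $\Gamma$ acts freely on $Y$. Indeed, if $g \in \Gamma$ and $y \in Y$ satisfy $gy = y$, then by $\Gamma$-equivariance of $\phi$ we have $g\phi(y) = \phi(gy) = \phi(y)$, and since $\Gamma$ acts freely on $X$ this forces $g = 1_{\Gamma}$. Since $Y$ is nonempty (as $X$ is nonempty and $\phi$ is surjective), $Y$ is the desired nonempty strongly aperiodic $\Gamma$-subshift of finite type.

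There is no genuine obstacle here beyond quoting the right results in the right order; the only point worth flagging is the routine verification that freeness lifts through topological factor maps, which is immediate from equivariance.
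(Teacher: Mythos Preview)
Your proof is correct and follows essentially the same route as the paper's own proof: apply Theorem~\ref{thm:ABT} to obtain a nonempty strongly aperiodic effectively closed subshift, use self-simulability to get an SFT extension, and observe that freeness lifts along factor maps by equivariance. The only difference is that you spell out explicitly the use of Proposition~\ref{prop:conjugacy_rec_presented_effsubshift} to pass from an effectively closed subshift by patterns to an effectively closed action, which the paper leaves implicit in the parenthetical ``(and recursively presented)''.
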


\begin{proof}
	Let $\Gamma$ be a finitely generated self-simulable group with decidable word problem. By Theorem~\ref{thm:ABT} there exists a strongly aperiodic and effectively closed $\Gamma$-subshift $Y$. As $\Gamma$ is self-simulable (and recursively presented), there exists a nonempty $\Gamma$-subshift of finite type $X$ and a factor map $\phi\colon X \to Y$. Let $g \in \Gamma$ and $x \in X$ so that $gx = x$. Then we have that $g\phi(x) = \phi(gx) = \phi(x)$. As $Y$ is strongly aperiodic, we conclude that $g = 1_{\Gamma}$ and thus $X$ is strongly aperiodic.
\end{proof}

In particular, there exist nonempty strongly aperiodic subshifts of finite type on every group with decidable word problem which we present in Section~\ref{sec:applications}.

\begin{definition}
	Let $\Gamma$ be a group. The sunny-side up subshift is given by \[X_{\leq 1} = \{ x \in \{\symb{0},\symb{1}\}^{\Gamma} : \mbox{ if } x(g)=x(h)=1 \mbox{ then } g = h\}. \]
\end{definition}

In words, the sunny-side up subshift consists of all configurations on the alphabet $\{\symb{0},\symb{1}\}$ which have at most one position marked by the symbol $\symb{1}$. This object is the symbolic representation of the translation action of $\Gamma$ on its one-point compactification $\Gamma \cup \{\infty\}$. It is well known that if $\Gamma$ is an infinite and finitely generated group, then $X_{\leq 1}$ is not a subshift of finite type (see~\cite[Proposition 2.10]{ABS2017}). An interesting problem is figuring out for which groups $\Gamma$ the sunny-side up subshift is sofic. It has been shown by Dahmani and Yaman in~\cite{DahmaniYaman2008} that (1) if $\Gamma$ splits in a short exact sequence $1 \to N \to \Gamma \to H \to 1$ and $X_{\leq 1}$ is sofic for both $N$ and $H$ then it is also sofic for $\Gamma$, (2) soficity of the sunny-side up subshift is an invariant of commensurability and that (3) the sunny-side up subshift is sofic for finitely generated free groups, finitely generated abelian groups, hyperbolic groups and poly-hyperbolic groups.

The first examples of finitely generated groups where the sunny-side up subshift is not sofic were given in~\cite{ABS2017}. Namely~\cite[Proposition 2.11]{ABS2017} states that if $\Gamma$ is a recursively presented group then $X_{\leq 1}$ is effectively closed if and only if the word problem of $\Gamma$ is decidable. It follows that if $\Gamma$ is a recursively presented group with undecidable word problem then $X_{\leq 1}$ is not effectively closed and therefore is also not sofic. To the knowledge of the authors, it is still unknown whether there exists a finitely generated group $\Gamma$ with decidable word problem for which the sunny-side up subshift is not sofic. 

An obvious consequence of a group being self-simulable, is that if the group has decidable word problem, then the sunny-side up subshift is sofic, as all effectively closed subshifts are automatically sofic.

\begin{corollary}\label{cor:sunnysideupsofic}
	Let $\Gamma$ be a finitely generated, self-simulable group with decidable word problem. The sunny-side up subshift on $\Gamma$ is sofic.
\end{corollary}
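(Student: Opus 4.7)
The plan is to combine three ingredients already established in the paper: the fact that decidable word problem makes $X_{\leq 1}$ effectively closed by patterns, Proposition~\ref{prop:conjugacy_rec_presented_effsubshift} which converts effectively closed subshifts (by patterns) into effectively closed expansive actions on subsets of $\{\symb{0},\symb{1}\}^{\NN}$ when the group is recursively presented, and finally the self-simulability hypothesis which factors any such action out of a $\Gamma$-SFT.

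First I would verify that $X_{\leq 1}$ is effectively closed by patterns. The forbidden patterns are exactly those pattern codings $c \colon W \to \{\symb{0},\symb{1}\}$ with $W \Subset S^*$ for which there exist $w_1, w_2 \in W$ with $\underline{w_1} \neq \underline{w_2}$ and $c(w_1) = c(w_2) = \symb{1}$. Since the word problem of $\Gamma$ is decidable, one can algorithmically test whether $\underline{w_1} = \underline{w_2}$, and hence the set of such forbidden pattern codings is recursively enumerable (in fact decidable). So $X_{\leq 1}$ is an effectively closed $\Gamma$-subshift by patterns.

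Second, since $\Gamma$ has decidable word problem it is in particular recursively presented, so by Proposition~\ref{prop:conjugacy_rec_presented_effsubshift} the subshift $X_{\leq 1}$ is topologically conjugate to an effectively closed expansive action $\Gamma \curvearrowright \widetilde{X} \subseteq \{\symb{0},\symb{1}\}^{\NN}$ via some homeomorphism $\psi\colon \widetilde{X} \to X_{\leq 1}$ which is $\Gamma$-equivariant.

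Third, since $\Gamma$ is self-simulable, the effectively closed action $\Gamma \curvearrowright \widetilde{X}$ is a topological factor of some $\Gamma$-subshift of finite type $Y$, via a factor map $\pi \colon Y \to \widetilde{X}$. The composition $\psi \circ \pi \colon Y \to X_{\leq 1}$ is continuous, $\Gamma$-equivariant, and surjective, so it is a topological factor map from the SFT $Y$ onto $X_{\leq 1}$. Hence $X_{\leq 1}$ is sofic. There is no real obstacle here: every step is an immediate application of results already proved earlier in the paper, the only mild verification being that the forbidden patterns for $X_{\leq 1}$ are recursively enumerable under the decidable word problem assumption.
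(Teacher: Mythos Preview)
Your proof is correct and follows essentially the same approach as the paper, which simply remarks that decidable word problem makes $X_{\leq 1}$ effectively closed (citing~\cite[Proposition 2.11]{ABS2017}) and then invokes self-simulability. You have just spelled out the details more explicitly, including a direct verification of the effectively-closed-by-patterns claim and the passage through Proposition~\ref{prop:conjugacy_rec_presented_effsubshift}.
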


\subsection{Obstructions to self-simulation}

In this brief section, we present classes of groups which are not self-simulable. As we mentioned before, our focus is on groups which are both infinite and finitely generated. Let us briefly justify that self-simulability is not interesting on finite groups, or groups which are not finitely generated.

If $\Gamma$ is a finite group, it follows that every subshift $X\subset A^{\Gamma}$ is finite. In particular, the trivial action on an infinite effectively closed set can not be the factor of a subshift on $\Gamma$, and thus $\Gamma$ is not self-simulable. On the other hand, every $\Gamma$-subshift is of finite type, and thus expansive effectively closed actions of $\Gamma$ are trivially factors of SFTs.

On the other hand, while the definition of effectively closed action (Definition~\ref{def:ecaction2}) can be naturally extended to countable groups with a recursive presentation, non-finitely generated groups can never be self-simulable for trivial reasons. Notice that the trivial action of a group on a finite set is always expansive and effectively closed, and thus it is topologically conjugate to an effectively closed subshift on that group. In particular, if said trivial action were the factor of a subshift of finite type, it would be topologically conjugate to a sofic subshift with finitely many configurations. However, it is an immediate consequence of the definition that in any countable non-finitely generated group, sofic subshifts are either trivial or uncountable.

Let us turn our attention back to infinite and finitely generated groups. We will first show that amenability is an obstruction to self-simulability. Recall that a group $\Gamma$ is amenable if for every $\delta>0$ and $K\Subset \Gamma$ there is $F\Subset \Gamma$ such that \[ |KF \triangle F |\leq \delta|F|, \textrm{ where }KF \triangle F=(KF\cup F)\smallsetminus (KF\cap F).   \]

To every action $\Gamma \curvearrowright X$ of a countable amenable group on a compact metrizable space $X$ we can associate a non-negative extended real number $\htop(\Gamma \curvearrowright X)$ called the topological entropy of $\Gamma \curvearrowright X$. (See section 9 of ~\cite{KerrLiBook2016}). It is well known that if $\Gamma \curvearrowright Y$ is a factor of $\Gamma \curvearrowright X$ then \[ \htop(\Gamma \curvearrowright X) \geq \htop(\Gamma \curvearrowright Y).  \]

For every recursively presented and finitely generated amenable group $\Gamma$ there are effectively closed actions with infinite topological entropy. Indeed, let $S$ be a finite symmetric set of generators of $\Gamma$. Identify $\{\symb{0},\symb{1}\}^{\NN}$ with $\{\symb{0},\symb{1}\}^{S^* \times \NN}$ through a computable bijection and let $X \subseteq \{\symb{0},\symb{1}\}^{S^* \times \NN}$ be such that $x \in X$ if and only if for every $n \in \NN$ and $w,w'\in S^*$ such that $\underline{w} = \underline{w'}$ we have $x(w,n)=x(w',n)$. Consider the action $\Gamma \curvearrowright X$ given by \[ (g(x))(w,n) = x(g^{-1}w,n)  \mbox{ for every } g \in \Gamma, (w,n)\in S^* \times \NN.  \] 
Where $g^{-1}w$ stands for any word in $S^*$ representing the element $g^{-1}\underline{w}$.

We may think of this action as a full $\Gamma$-shift on an infinite alphabet. A direct computation shows that this action has infinite topological entropy. On the other hand, as $\Gamma$ is recursively presented it follows that the set presentation of $\Gamma \curvearrowright X$ determined by $S$ is effectively closed, and thus $\Gamma \curvearrowright X$ is an effectively closed action.

\begin{proposition}
	\label{prop:SelfSimuImpNonAmenable}
	Let $\Gamma$ be a finitely generated and recursively presented group. If $\Gamma$ is self-simulable then it is non-amenable.
\end{proposition}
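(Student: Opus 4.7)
The plan is to argue by contradiction, leveraging the construction of an infinite entropy effectively closed action that has just been described in the text. Suppose $\Gamma$ is finitely generated, recursively presented, and amenable. I want to produce an effectively closed action of $\Gamma$ that cannot be a topological factor of any $\Gamma$-subshift of finite type.

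First I would take the action $\Gamma \curvearrowright X \subseteq \{\symb{0},\symb{1}\}^{S^* \times \NN}$ constructed in the paragraph preceding the statement: namely, the ``full $\Gamma$-shift on an infinite alphabet,'' where configurations are $S^* \times \NN$-indexed bits that are constant on fibers of the map $w \mapsto \underline{w}$, and $g$ acts by translating the first coordinate. As noted in the text, recursive presentability of $\Gamma$ is exactly what is needed to algorithmically detect when a putative configuration violates the constancy constraint on fibers (by enumerating word equalities) or violates equivariance under a generator, so $\Gamma \curvearrowright X$ is effectively closed. A direct Følner computation shows $\htop(\Gamma \curvearrowright X) = +\infty$: restricting to a single ``column'' $\ZZ$-coordinate already gives an identification of $X$ with $\{\symb{0},\symb{1}\}^{\Gamma \times \NN}$ equipped with the shift on the first factor, and for any Følner set $F_n \Subset \Gamma$ the number of patterns of $\Gamma \curvearrowright X$ over $F_n$ is at least $2^{k |F_n|}$ for every $k$, so the entropy per unit of Følner volume is unbounded.

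Now assume for contradiction that $\Gamma$ is self-simulable. Then $\Gamma \curvearrowright X$ is a topological factor of some $\Gamma$-subshift of finite type $Y \subseteq B^{\Gamma}$ for a finite alphabet $B$. Since $\Gamma$ is amenable, topological entropy is well-defined and non-increasing under topological factor maps, so
\[
\htop(\Gamma \curvearrowright Y) \;\geq\; \htop(\Gamma \curvearrowright X) \;=\; +\infty .
\]
On the other hand, for any $\Gamma$-subshift $Y \subseteq B^{\Gamma}$ with $\Gamma$ amenable and $B$ finite one has the trivial bound $\htop(\Gamma \curvearrowright Y) \leq \log|B| < +\infty$, obtained by bounding the number of patterns over any Følner set $F$ by $|B|^{|F|}$. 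This contradicts the previous inequality, so no such factor map can exist, and hence $\Gamma$ is not self-simulable.

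There is no real obstacle here: the proof is essentially a one-line entropy argument on top of the infinite-entropy effectively closed action that has already been made explicit in the excerpt. The only thing to be mildly careful about is to verify that effectivity of $\Gamma \curvearrowright X$ genuinely uses recursive presentability (to decide the constancy relation among words representing the same group element), which is precisely the hypothesis of the proposition.
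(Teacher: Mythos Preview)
Your proof is correct and follows essentially the same approach as the paper: both use the infinite-entropy effectively closed action constructed just before the statement, the fact that entropy is non-increasing under factor maps of amenable group actions, and the finite entropy bound for subshifts over finite alphabets. The paper's proof is just more terse, citing the relevant facts rather than spelling out the F{\o}lner computations.
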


\begin{proof}
	Assume that $\Gamma$ is amenable. If $\Gamma \curvearrowright X$ is a subshift, then $\htop(\Gamma \curvearrowright X) < \infty$ (see Example 9.41 of~\cite{KerrLiBook2016}). On the other hand, as $\Gamma$ is recursively presented there exist non-expansive effectively closed actions with infinite {topological entropy} which can thus never be factors of a subshift.
\end{proof}

The previous argument strongly relies on non-expansive actions. One might wonder whether if one limits the notion of self-simulation to effectively closed subshifts then it might be possible for amenable groups to be self-simulable. A partial answer is that as long as the group has decidable word problem then there are always effectively closed subshifts which are not sofic. The proof is based on a counting argument and may be found in~\cite[Theorem 2.16]{ABS2017}.

\begin{proposition}[Theorem 2.16 of~\cite{ABS2017}]\label{prop:amenable}
	Let $\Gamma$ be a finitely generated amenable group with decidable word problem. There exists an effectively closed $\Gamma$-subshift which is not sofic.
\end{proposition}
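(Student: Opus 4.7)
My plan is a diagonalization against the countable family of sofic $\Gamma$-subshifts, using the decidable word problem for effectiveness and amenability to clear the main technical hurdle. First I would use decidability of the word problem to enumerate all sofic $\Gamma$-subshifts $Y_0, Y_1, Y_2, \ldots$ over the alphabet $\{\symb{0},\symb{1}\}$: each such subshift is determined by the finite data of an SFT cover given by a finite set of forbidden pattern codings, together with a block map with finite neighborhood, and decidability of the word problem lets us identify pattern codings that describe the same finite pattern.

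Next I would record a basic computability observation: the predicate ``$[p] \cap Y_n = \varnothing$'' in the pattern coding $p$ is recursively enumerable uniformly in $n$. Indeed, by compactness this emptiness is witnessed by some finite enlargement of $\mathrm{supp}(p)$ on which every completion either violates the SFT rules of the cover $X_n$ or fails to be in the image of the block map $\Phi_n$, and such a witness is located by an exhaustive search enabled by the decidable word problem.

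With these preparations in hand, I would build $X$ by recursively enumerating a set $\mathcal{C}$ of forbidden pattern codings stage by stage, and set $X = X_{\mathcal{C}}$. At stage $n$ the goal is to commit to a pattern coding $p_n$ which is still allowed by the currently enumerated $\mathcal{C}$, but which will end up belonging to the language of $Y_n$, so that $[p_n] \cap X = \varnothing \neq [p_n] \cap Y_n$ and hence $X \neq Y_n$. The difficulty is that ``$[p] \cap Y_n \neq \varnothing$'' is only a $\Pi_1^0$-condition, so the diagonal cannot be closed by direct confirmation. Here amenability enters: along a computable F{\o}lner exhaustion $F_1 \subset F_2 \subset \cdots$ of $\Gamma$, a finitary pattern-counting argument shows that among the $F_m$-patterns for which the non-language semi-decision of the previous paragraph has not yet fired after $m$ steps, the count is large enough, for $m$ sufficiently large, that at least one such pattern must genuinely lie in the language of $Y_n$. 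A careful scheduling of these counts across all stages, combined with the decidable word problem to keep the forbidden list r.e., produces the desired $p_n$.

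The main obstacle is exactly the last point: the $\Pi_1^0$ nature of the pattern language of a sofic subshift prevents a naive diagonalization, and the finitary F{\o}lner counting made possible by amenability (together with the need to keep $X$ nonempty, which again is controlled via F{\o}lner pattern counts) is what carries the argument through. This is also where the amenability hypothesis becomes essential rather than cosmetic. The detailed execution of this counting step is the content of~\cite[Theorem~2.16]{ABS2017}.
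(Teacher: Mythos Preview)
The paper itself does not prove this proposition: it merely says that ``the proof is based on a counting argument'' and defers to \cite[Theorem~2.16]{ABS2017}. Your proposal does the same, outlining a diagonalization against an enumeration of sofic $\Gamma$-subshifts and explicitly sending the crucial step to the same reference. In that sense the two treatments are aligned, and the extra scaffolding you supply (the enumeration of sofic covers via finite SFT data and block maps, the observation that ``$[p]\cap Y_n=\varnothing$'' is uniformly $\Sigma^0_1$, the appeal to a computable F{\o}lner exhaustion made possible by the decidable word problem) is correct.

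One sentence in your sketch is worth tightening. You write that ``among the $F_m$-patterns for which the non-language semi-decision has not yet fired after $m$ steps, the count is large enough \ldots\ that at least one such pattern must genuinely lie in the language of $Y_n$.'' Whenever $Y_n\neq\varnothing$ this is true for the trivial reason that every genuine $L(Y_n)$-pattern survives all finite stages of the rejection procedure; no counting is needed for that. The obstruction you correctly identified is not the \emph{existence} of a survivor lying in $L(Y_n)$ but the \emph{effective selection} of one to forbid, and the quoted sentence does not address that. The counting in the cited argument is used rather to compare the number of $F_m$-patterns still available to $X$ against (an upper approximation of) $|L_{F_m}(Y_n)|$, exploiting the F{\o}lner property to leave enough room to force $L_{F_m}(X)\neq L_{F_m}(Y_n)$ while keeping $X$ nonempty---not to certify a single specified member of $L(Y_n)$. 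Since you defer this step anyway your outline is not wrong, but the quoted sentence mislocates where amenability actually does its work.
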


Recall that the number of ends of a finitely generated group is the limit as $n$ tends to infinity of the number of connected components obtained by considering its Cayley graph with respect to a generating set and removing all elements of length $n$ with respect to the word metric induced by that generating set. The free group $F_k$ of rank $k \geq 2$ is an example of a group with infinitely many ends. The next proposition shows that these non-amenable groups are not self-simulable.

\begin{proposition}[Theorem 2.17 of~\cite{ABS2017}]\label{prop:infends}
	Let $\Gamma$ be a finitely generated group with decidable word problem and infinitely many ends. There exists an effectively closed $\Gamma$-subshift which is not sofic.
\end{proposition}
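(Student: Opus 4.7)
The plan is to combine a counting constraint satisfied by every sofic subshift on $\Gamma$, specific to the infinitely-many-ends structure, with a diagonal construction against the countable enumeration of sofic subshifts made effective by decidability of $\texttt{WP}$.

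First I would exploit the end structure. Fix a finite symmetric generating set $S$. Using decidable $\texttt{WP}$, the Cayley graph of $\Gamma$ is effectively constructible; since $\Gamma$ has infinitely many ends, for every $k \geq 1$ I can effectively compute a finite separator $F_k \Subset \Gamma$ such that the complement $\Gamma \setminus F_k$ has at least $k$ pairwise distinct infinite connected components $E_1^{(k)}, \dots, E_k^{(k)}$, together with a ``deep'' element $g_i^{(k)} \in E_i^{(k)}$ at graph distance at least $2$ from $F_k$ and from every other $g_j^{(k)}$. The structural observation about sofic subshifts is the following: if $Y \subseteq A^\Gamma$ is the image of a nearest-neighbor SFT $X \subseteq B^\Gamma$ under a $1$-block map, then the number of distinct $Y$-patterns occurring on the set $T_k = F_k \cup \{g_1^{(k)}, \dots, g_k^{(k)}\}$ is at most $|B|^{|F_k| + k}$, with the crucial point that $|B|$ depends only on the presentation of $Y$ and not on $k$, and that the deep elements $g_i^{(k)}$ are not directly tied to each other by the nearest-neighbor constraints.

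Next, I would enumerate the sofic subshifts $(Y_n)_n$ effectively as triples $(A_n, X_n, \Phi_n)$, where $X_n$ is a nearest-neighbor SFT over alphabet $B_n$ and $\Phi_n$ is a $1$-block map. For each $n$ I pick $k(n)$ large enough and a translate $\gamma_n T_{k(n)}$ disjoint from previously chosen witnesses so that the family of patterns that $Z$ is designed to admit on $\gamma_n T_{k(n)}$ exceeds $|B_n|^{|F_{k(n)}|+k(n)}$ in cardinality. The set of forbidden pattern codings for $Z$ is then defined by forbidding every pattern that, on some translate $\gamma_n T_{k(n)}$, fails to match a prescribed shape; this set is recursively enumerable because decidable $\texttt{WP}$ lets a Turing machine recognize each relevant support from a given pattern coding. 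By the sofic counting observation, no $Y_n$ can admit this many patterns on the corresponding support, so $Z \neq Y_n$ for every $n$, and $Z$ is therefore not sofic while being effectively closed by construction.

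The main obstacle lies in ensuring that the diagonalisation beats the sofic bound uniformly: the constant $|B_n|$ depends on $n$, whereas the alphabet of $Z$ is fixed, so one cannot outgrow $|B_n|^{|F_{k(n)}|+k(n)}$ merely by taking $k(n)$ large. The argument must exploit a finer feature of the sofic bound, namely that the patterns of $Y_n$ on $T_{k(n)}$ are already constrained to a structured subfamily of $A_n^{T_{k(n)}}$ inherited from the nearest-neighbor SFT, so an unrestricted effectively closed family of patterns designed on $T_{k(n)}$ cannot coincide with this subfamily. Making this comparison rigorous, while keeping the forbidden pattern codings of $Z$ recursively enumerable --- which is where decidability of $\texttt{WP}$ is essential, since without it the witness supports $\gamma_n T_{k(n)}$ could not be effectively recognised within arbitrary pattern codings --- is the technical heart of the construction.
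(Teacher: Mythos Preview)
The paper does not actually prove this proposition; it is merely cited as Theorem~2.17 of~\cite{ABS2017}. So there is no in-paper proof to compare against. That said, your proposal has a genuine gap that you yourself flag but do not close.

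Your ``structural observation'' that a sofic $Y$ with nearest-neighbor cover $X\subseteq B^{\Gamma}$ admits at most $|B|^{|F_k|+k}$ patterns on $T_k=F_k\cup\{g_1^{(k)},\dots,g_k^{(k)}\}$ is just the trivial bound coming from $|T_k|=|F_k|+k$; it does not exploit soficity at all, and any subshift on a fixed alphabet $A$ with $|A|\leq|B|$ already satisfies it. So there is nothing here for the diagonalisation to beat. You recognise this in your final paragraph, but the promised ``finer feature of the sofic bound'' is never identified, and as stated the argument cannot succeed: since $|B_n|$ is unbounded in $n$, no fixed-alphabet $Z$ can out-count every $Y_n$ on supports of the form $T_{k(n)}$.

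The actual structural fact one uses about multi-ended groups is not a pattern count but an \emph{independence} property: in a nearest-neighbor SFT, once the content on the finite separator $F_k$ is fixed, the valid extensions into the distinct infinite components $E_1^{(k)},\dots,E_k^{(k)}$ can be chosen independently of one another. One then builds a single explicit effectively closed subshift whose defining constraints \emph{correlate} the components through the separator in a way no such independent gluing can reproduce --- exactly the mechanism you can see carried out in detail in the paper's Proposition~\ref{prop:villexample} for $F_k\times\ZZ$ (the mirror shift). No diagonalisation over sofic presentations is needed, and the decidable word problem is used only to make the explicit subshift effectively closed. Your plan would need to be restructured around this correlation-versus-independence idea rather than a raw counting diagonalisation.
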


In views of Theorem~\ref{thm:selfsimulation} and Propositions~\ref{prop:amenable} and~\ref{prop:infends} one might wonder if maybe all it takes to be self-simulable is to be a $1$-ended non-amenable group. The following result shows that this is not the case. 

\begin{proposition}\label{prop:villexample}
	Let $k \geq 1$, $F_k$ be the free group of rank $k$ and $\Gamma = F_k \times \ZZ$. There exists an effectively closed $\Gamma$-subshift which is not sofic. In particular $F_k \times \ZZ$ is not self-simulable.
\end{proposition}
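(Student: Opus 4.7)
The argument splits by the value of $k$. For $k = 1$ we have $\Gamma = F_1 \times \ZZ \cong \ZZ^2$, which is finitely generated, amenable, and has decidable word problem; Proposition~\ref{prop:amenable} then directly yields an effectively closed $\Gamma$-subshift that is not sofic.

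For $k \geq 2$, the free group $F_k$ is finitely generated, has decidable word problem, and has infinitely many ends, so by Proposition~\ref{prop:infends} there exists an effectively closed $F_k$-subshift $Y \subseteq A^{F_k}$ which is not sofic. Form the $\Gamma$-subshift
\[X = \bigl\{x \in A^{F_k \times \ZZ} : x(g,n) = x(g,0) \text{ for all } (g,n) \in \Gamma, \text{ and } (g \mapsto x(g,0)) \in Y\bigr\},\]
consisting of configurations that are constant along each $\ZZ$-fiber and whose $F_k$-slice lies in $Y$. The constancy is nearest-neighbor in the $\ZZ$-generator, and the condition on the $F_k$-slice is effectively closed by translating the pattern codings defining $Y$ along the inclusion $F_k \hookrightarrow \Gamma$; hence $X$ is an effectively closed $\Gamma$-subshift.

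It remains to show $X$ is not sofic. Assume for contradiction that $X = \phi(\tilde X)$ for some nearest-neighbor $\Gamma$-SFT $\tilde X \subseteq \tilde A^\Gamma$ and a $1$-block factor code $\phi$ induced by $\Phi\colon \tilde A \to A$. The strategy is to extract an $F_k$-SFT that factors onto $Y$, contradicting the choice of $Y$. Consider the subsystem $\tilde X^{\ZZ} \subseteq \tilde X$ of $\ZZ$-invariant configurations. Identifying each $\tilde x \in \tilde X^{\ZZ}$ with its restriction $\tilde x|_{F_k \times \{0\}} \in \tilde A^{F_k}$, one verifies that $\tilde X^{\ZZ}$ is an $F_k$-SFT: the $\ZZ$-generator constraints of $\tilde X$ collapse to single-vertex restrictions on the alphabet, while the $F_k$-generator constraints transfer verbatim. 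Restricting $\phi$ to $\tilde X^{\ZZ}$ then produces an $F_k$-block code into $X|_{F_k \times \{0\}} = Y$.

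The main obstacle is to show that this induced $F_k$-code surjects onto $Y$. For each $y \in Y$, writing $\bar y \in X$ for its stretched lift, the preimage $\phi^{-1}(\bar y) \cap \tilde X$ is a non-empty closed $\ZZ$-invariant $\Gamma$-sub-SFT, obtained by intersecting $\tilde X$ with the full shift on the sub-alphabets $\Phi^{-1}(y(g))$ at each $g$; however, it need not contain a $\ZZ$-fixed point, so $\tilde X^{\ZZ}$ alone may fall short. To fix this, observe that each $\ZZ$-fiber of any configuration in this preimage lies in a non-empty $\ZZ$-SFT on $\tilde A$ and hence admits a $\ZZ$-periodic representative of period at most $|\tilde A|$. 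Passing to the larger sub-SFT $\tilde X_p \subseteq \tilde X$ of configurations with $\ZZ$-period dividing a uniform $p$ (for instance $p = |\tilde A|!$), which is still an $F_k$-SFT on the enlarged alphabet $\tilde A^p$, a pigeonhole/gluing argument on the finitely many $\ZZ$-periodic patterns available on each fiber---combined with the $F_k$-tree consistency provided by the SFT constraints of $\tilde X$---should show that $\phi$ restricted to $\tilde X_p$ surjects onto $Y$. This makes $Y$ sofic, yielding the desired contradiction.
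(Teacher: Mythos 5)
Your $k=1$ case is fine, but the $k\geq 2$ argument has a genuine gap at the crucial step, and in fact the reduction you are attempting cannot work in the form you describe.

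Your strategy is: take a non-sofic effectively closed $F_k$-subshift $Y$, lift it to the $\Gamma$-subshift $X$ of $\ZZ$-constant configurations with $F_k$-slice in $Y$, and then argue that if $X$ were sofic, $Y$ would be sofic as an $F_k$-subshift. The second implication is exactly what you fail to establish, and it is almost certainly false. The tell is that the same reduction, applied with $F_k$ replaced by $\ZZ$ (i.e.\ for $\ZZ^2$), is \emph{provably} false: by the theorem of Aubrun--Sablik and Durand--Romashchenko--Shen cited in the introduction, for \emph{every} effectively closed $\ZZ$-subshift $Y$ the $\ZZ^2$-subshift of $\ZZ$-constant configurations with slice in $Y$ \emph{is} sofic, even when $Y$ is not. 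So the implication ``constant-lift of $Y$ is sofic $\Rightarrow$ $Y$ is sofic'' is simply wrong in the $\ZZ^2$ setting, and your argument never uses the tree structure of $F_k$ in a way that would distinguish it from $\ZZ$. The concrete failure point is your pigeonhole/gluing step. You correctly observe that each individual $\ZZ$-fiber of a preimage configuration sits in a non-empty nearest-neighbor $\ZZ$-SFT and hence admits a periodic representative of bounded period; but you cannot replace fibers independently, because the $F_k$-direction SFT constraints couple adjacent fibers. Even in the $\ZZ^2$ case the Aubrun--Sablik cover uses a hierarchical self-similar structure that has \emph{no} configuration which is periodic in the vertical direction, so there is no $\tilde X_p$ onto which $\phi$ restricts surjectively. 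The existence of such a uniformly periodic sub-SFT is not a technical detail left to the reader — it is exactly the content that would have to be false in any simulation theorem, and you have not given any reason for it to hold here.

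For comparison, the paper's proof takes a completely different route: rather than lifting a pre-existing non-sofic $F_k$-subshift, it builds a bespoke $\Gamma$-subshift $X$ (a mirror shift with an infinite $\ZZ$-pillar of $\symb{\star}$'s acting as an $F_k$-reflection) whose non-soficity is shown directly by a counting/compactness argument: the amount of information that must pass across a width-$3$ strip around the pillar (controlled by $|A'|^{3m}$ for a putative SFT cover) is strictly smaller than the $2^{Nm}$ many configurations that must be distinguished, and a replacement argument on the nearest-neighbor cover then produces a non-mirror-symmetric image, a contradiction. That counting argument is where the amenability of the $\ZZ$-factor and the multi-endedness of $F_k$ actually enter, and it is what your approach is missing. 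If you want to salvage your plan, you would need to replace the periodic-preimage claim with a genuine information-theoretic bound of this type.

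Finally, a small point: the proposition is stated for all $k \geq 1$, and the paper gives a single argument covering $k=1$ as well (see Figure~\ref{fig:villexample}, right). Your case split is legitimate, but be aware that invoking Proposition~\ref{prop:amenable} for $k=1$ produces a different, unspecified non-sofic subshift, whereas the paper's construction is uniform.
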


\begin{proof}
	Denote the generators of $F_k$ by $a_1,\dots,a_k$.  Denote the elements of $\Gamma$ as ordered pairs $(g,n)$ where $g \in F_k$ and $n \in \ZZ$. Consider the alphabet $A = \{\symb{1},\symb{2},\symb{\star}\}$ and the subshifts $X_1,X_2,X_3$ of $A^{\Gamma}$ given by \[ X_1 = \{ x \in A^{\Gamma} : \mbox{ if } x(g,n)=x(h,n)=\symb{\star} \mbox{ then } g=h \},   \]
	\[ X_2 = \{ x \in A^{\Gamma} : \mbox{ if }x(g,n)=\symb{\star} \mbox{ then } x(g,m) =\symb{\star} \mbox{ for every } m \in \ZZ    \},   \]
	\[ X_3 = \{ x \in A^{\Gamma} : \mbox{ if }x(g,n)=\symb{\star} \mbox{ then } x(gh,n) = x(gh^{-1},n) \mbox{ for every } h \in F_k    \}.   \]
	
	Let $X = X_1 \cap X_2 \cap X_3 \subseteq A^{\Gamma}$. The subshift $X$ consists of all configurations on the alphabet $A$ such that if the symbol $\symb{\star}$ occurs at some position $(g,n)$, then it occurs at position $(g,m)$ for every $m \in \ZZ$, and does not occur at any other $(g',k)$ for $g' \neq g$. Therefore if $\symb{\star}$ occurs, it occurs solely on an infinite ``pillar''. Moreover, if $\symb{\star}$ occurs, it acts as a mirror in the sense that the configuration is symmetric with respect to taking inverses relative to the pillar, that is, for every $h \in F_k$, we have that $x(gh,m) = x(gh^{-1},m)$ for every $m \in \ZZ$. In other words, following $h$ or $h^{-1}$ from the pillar leads to the same symbol. An illustration of part of a configuration in $X$ can be seen in Figure~\ref{fig:villexample}.
	
	\begin{figure}[ht!]
		\centering
			\begin{tikzpicture}
	\def \c{0.4}
	\def \b{0.3}
	\def \a{0.2}
	\begin{scope}[scale = 0.75, shift={(-5.5,0)},rotate=0]

	\draw [->] (0,0) to (-5,0);
	\draw [->] (0,0) to (5,0);
	\draw [->] (0,0) to (-3,-4);
	\draw [->] (0,0) to (3,4);
	
	\node at (1.5,0.3) {\scalebox{0.7}{$a_1$}};
	\node at (0.7,1.6) {\scalebox{0.7}{$a_2$}};
	
	\draw [->] (-3,0) to (-4,-4/3);
	\draw [->] (-3,0) to (-2, 4/3);
	
	\draw [->] (3,0) to (2,-4/3);
	\draw [->] (3,0) to (4, 4/3);
	
	\draw [->] (-2,-8/3) to (-11/3,-8/3);
	\draw [->] (-2,-8/3) to (-1/3,-8/3);
	
	\draw [->] (2,8/3) to (11/3,8/3);
	\draw [->] (2,8/3) to (1/3,8/3);
	
	\draw [->] (0,0) to (0,5);
	\draw [->] (0,0) to (0,-5);

	\draw[fill = white] (0,0) circle (\c);
	\draw[fill = white] (0,1.5) circle (\c);
	\node at (0,1.5) {\scalebox{1.5}{$\symb{\star}$}};
	\draw[fill = white] (0,3) circle (\c);
	\node at (0,0) {\scalebox{1.5}{$\symb{\star}$}};
	\draw[fill = white] (0,-1.5) circle (\c);
	\node at (0,-1.5) {\scalebox{1.5}{$\symb{\star}$}};
	\draw[fill = white] (0,-3) circle (\c);
	\node at (0,-3) {\scalebox{1.5}{$\symb{\star}$}};
	\node at (0,3) {\scalebox{1.5}{$\symb{\star}$}};

	\draw[fill = white] (-3,0) circle (\b);
	\node at (-3,0) {\scalebox{1}{$\symb{1}$}};
	\draw[fill = white] (3,0) circle (\b);
	\node at (3,0){\scalebox{1}{$\symb{1}$}};
	\draw[fill = white] (2,8/3) circle (\b);
	\node at (2,8/3) {\scalebox{1}{$\symb{0}$}};
	\draw[fill = white] (-2,-8/3) circle (\b);
	\node at (-2,-8/3) {\scalebox{1}{$\symb{0}$}};

	\draw [->] (-4,0) to (-3.5,4/6);
	\draw [->] (-4,0) to (-4.5,-4/6);
	\draw[fill = white] (-4,0) circle (\a);
	\node at (-4,0) {\scalebox{0.6}{$\symb{0}$}};
	\draw [->] (-3.5,-2/3) to (-4.3,-2/3);
	\draw [->] (-3.5,-2/3) to (-2.7,-2/3);
	\draw[fill = white] (-3.5,-2/3) circle (\a);
	\node at (-3.5,-2/3) {\scalebox{0.6}{$\symb{0}$}};
	\draw [->] (-2.5,2/3) to (-3.3,2/3);
	\draw [->] (-2.5,2/3) to (-1.7,2/3);
	\draw[fill = white] (-2.5,2/3) circle (\a);
	\node at (-2.5,2/3) {\scalebox{0.6}{$\symb{1}$}};

	\draw [->] (4,0) to (3.5,-4/6);
	\draw [->] (4,0) to (4.5,4/6);
	\draw[fill = white] (4,0) circle (\a);
	\node at (4,0) {\scalebox{0.6}{$\symb{0}$}};
	\draw [->] (3.5,2/3) to (4.3,2/3);
	\draw [->] (3.5,2/3) to (2.7,2/3);
	\draw[fill = white] (3.5,2/3) circle (\a);
	\node at (3.5,2/3) {\scalebox{0.6}{$\symb{1}$}};
	\draw [->] (2.5,-2/3) to (3.3,-2/3);
	\draw [->] (2.5,-2/3) to (1.7,-2/3);
	\draw[fill = white] (2.5,-2/3) circle (\a);
	\node at (2.5,-2/3) {\scalebox{0.6}{$\symb{1}$}};

	\draw [->] (-2.5,-10/3) to (-3.3,-10/3);
	\draw [->] (-2.5,-10/3) to (-1.7,-10/3);
	\draw[fill =white] (-2.5,-10/3) circle (\a);
	\node at (-2.5,-10/3) {\scalebox{0.6}{$\symb{1}$}};
	\draw [->] (-3,-8/3) to (-3.5,-10/3);
	\draw [->] (-3,-8/3) to (-2.5,-6/3);
	\draw[fill = white] (-3,-8/3) circle (\a);
	\node at (-3,-8/3) {\scalebox{0.6}{$\symb{1}$}};
	\draw [->] (-1,-8/3) to (-1.5,-10/3);
	\draw [->] (-1,-8/3) to (-0.5,-6/3);
	\draw[fill = white] (-1,-8/3) circle (\a);
	\node at (-1,-8/3) {\scalebox{0.6}{$\symb{1}$}};

	\draw [->] (2.5,10/3) to (3.3,10/3);
	\draw [->] (2.5,10/3) to (1.7,10/3);
	\draw[fill = white] (2.5,10/3) circle (\a);
	\node at (2.5,10/3) {\scalebox{0.6}{$\symb{1}$}};
	\draw [->] (3,8/3) to (3.5,10/3);
	\draw [->] (3,8/3) to (2.5,6/3);
	\draw[fill = white] (3,8/3) circle (\a);
	\node at (3,8/3) {\scalebox{0.6}{$\symb{0}$}};
	\draw [->] (1,8/3) to (1.5,10/3);
	\draw [->] (1,8/3) to (0.5,6/3);
	\draw[fill = white] (1,8/3) circle (\a);
	\node at (1,8/3) {\scalebox{0.6}{$\symb{1}$}};
	\end{scope}

	\begin{scope}[scale = 0.75, shift={(5,0)},rotate=0]
	\draw [<->] (-4,0) to (4,0);
	\draw [<->] (-4,1.5) to (4,1.5);
	\draw [<->] (-4,3) to (4,3);
	\draw [<->] (-4,-1.5) to (4,-1.5);
	\draw [<->] (-4,-3) to (4,-3);
	\draw [<->] (0,-4) to (0,4);
	\draw [<->] (-1.5,-4) to (-1.5,4);
	\draw [<->] (1.5,-4) to (1.5,4);
	\draw [<->] (3,-4) to (3,4);
	\draw [<->] (-3,-4) to (-3,4);
	\node at (0.75,0.3) {\scalebox{0.7}{$a_1$}};

	\foreach \x in { -3, -1.5, 0, 1.5, 3 }{
		\foreach \y in { -3, -1.5, 0, 1.5, 3 }{
			\draw[fill = white] (\x,\y) circle (\c);	
		}
	}

	\node at (-3,3) {\scalebox{1.5}{$\symb{1}$}};
	\node at (-3,1.5) {\scalebox{1.5}{$\symb{0}$}};
	\node at (-3,0) {\scalebox{1.5}{$\symb{0}$}};
	\node at (-3,-1.5) {\scalebox{1.5}{$\symb{1}$}};
	\node at (-3,-3) {\scalebox{1.5}{$\symb{1}$}};
	
	\node at (-1.5,3) {\scalebox{1.5}{$\symb{0}$}};
	\node at (-1.5,1.5) {\scalebox{1.5}{$\symb{0}$}};
	\node at (-1.5,0) {\scalebox{1.5}{$\symb{1}$}};
	\node at (-1.5,-1.5) {\scalebox{1.5}{$\symb{1}$}};
	\node at (-1.5,-3) {\scalebox{1.5}{$\symb{0}$}};
	
	\node at (0,3) {\scalebox{1.5}{$\symb{\star}$}};
	\node at (0,1.5) {\scalebox{1.5}{$\symb{\star}$}};
	\node at (0,0) {\scalebox{1.5}{$\symb{\star}$}};
	\node at (0,-1.5) {\scalebox{1.5}{$\symb{\star}$}};
	\node at (0,-3) {\scalebox{1.5}{$\symb{\star}$}};
	
	\node at (1.5,3) {\scalebox{1.5}{$\symb{0}$}};
	\node at (1.5,1.5) {\scalebox{1.5}{$\symb{0}$}};
	\node at (1.5,0) {\scalebox{1.5}{$\symb{1}$}};
	\node at (1.5,-1.5) {\scalebox{1.5}{$\symb{1}$}};
	\node at (1.5,-3) {\scalebox{1.5}{$\symb{0}$}};
	
	\node at (3,3) {\scalebox{1.5}{$\symb{1}$}};
	\node at (3,1.5) {\scalebox{1.5}{$\symb{0}$}};
	\node at (3,0) {\scalebox{1.5}{$\symb{0}$}};
	\node at (3,-1.5) {\scalebox{1.5}{$\symb{1}$}};
	\node at (3,-3) {\scalebox{1.5}{$\symb{1}$}};
	
	\end{scope}
	
	\end{tikzpicture}
	
		\caption{Part of a configuration in $X$ for $\Gamma = F_2 \times \ZZ$ (left) and $\Gamma =F_1 \times \ZZ= \ZZ \times \ZZ$ (right).}
		\label{fig:villexample}
	\end{figure}
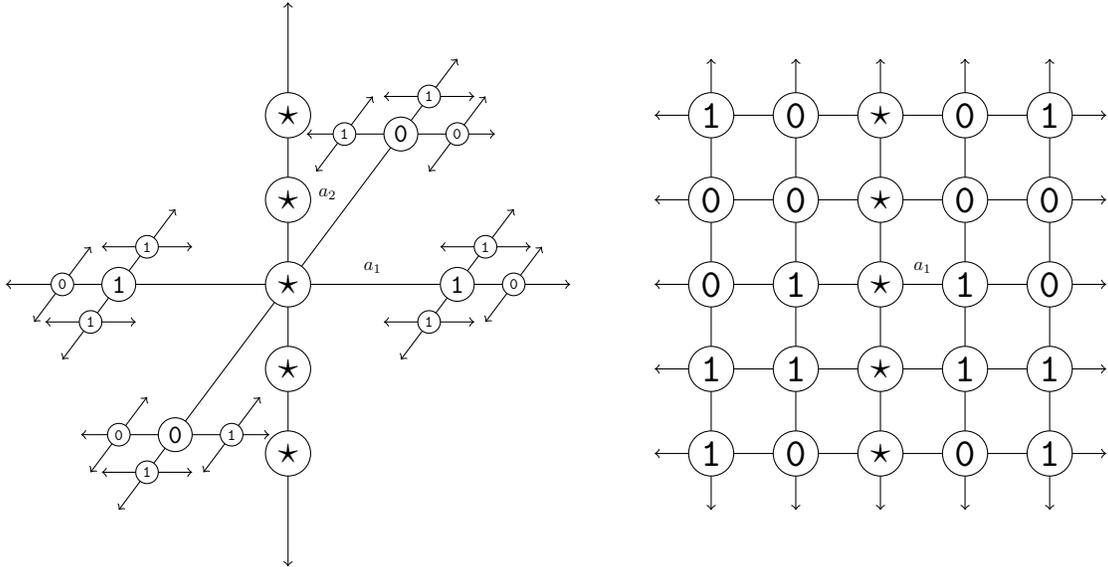

	It is straightforward to check that $X$ is effectively closed. Suppose that $X$ is a sofic subshift. Then there exists a finite alphabet $A'$, a nearest neighbor subshift of finite type $Y\subseteq (A')^{\Gamma}$, and a $1$-block factor map $\phi\colon Y \to X$. Let $N$ be an integer such that \[ N > 3\log_2(|A'|).  \]
	
	We claim that there exist two distinct configurations $y,y' \in Y$ such that \begin{enumerate}
		\item $\phi(y)(1_{F_k},0) = \phi(y')(1_{F_k},0) = \symb{\star}$.
		\item $y(1_{F_k},n) = y'(1_{F_k},n)$ for every $n \in \ZZ$.
		\item There exists $n \in \{1,\dots,N\}$ such that $\phi(y)(a_1^n,0) \neq \phi(y')(a_1^n,0)$.
	\end{enumerate}
	
	We argue by compactness. Let $m \in \NN$ and let \[  D_m = \{  (a_1^n,k) : n \in \{1,\dots,N\}, k \in \{ 0,\dots,m-1 \}  \}, \] \[ H_m = \{ (1_{F_k},k) : k \in \{-m,\dots,2m-1\}.  \]
	
	Let $p \colon D_m \to \{\symb{0},\symb{1}\}$ be an arbitrary function. By construction of $X$ there exists a configuration $x_p \in X$ such that $x_p|_{D_m} = p$ and $x_p|_{H_m}$ is identically $\symb{\star}$. In particular, for every such $p$ there must exist $y_p \in Y$ such that $\phi(y_p)=x_p$.
	Notice that as $N \geq 3\log_2(|A'|)$, then \[ |\{\symb{0},\symb{1}\}^{D_m}| = 2^{Nm} > |A'|^{3m} = |(A')^{H_m}|.  \]
	We deduce that there must exist two distinct $p,p' \colon D_m \to \{\symb{0},\symb{1}\}$ such that $y_p|_{H_m} = y_{p'}|_{H_m}$. Let $(a_1^n,r)\in D_m$ such that $\phi(y_p)(a_1^n,r)\neq \phi(y_{p'})(a_1^n,r)$, then the configurations \[ y_m = (1_{F_k},r)y_p \mbox{ and } y'_m = (1_{F_k},r)y_{p'},   \]
	given by
	\[ y_m(g,n) = y_p(g,n-r) \mbox{ and } y'_m(g,n) = y'_p(g,n-r) \mbox{ for every } (g,n) \in F_k \times \ZZ,  \]
	satisfy that: \begin{enumerate}
		\item $\phi(y_m)(1_{F_k},0) = \phi(y'_m)(1_{F_k},0) = \symb{\star}$.
		\item $y_m(1_{F_k},k) = y'_m(1_{F_k},k)$ for every $k \in \{-m,\dots,m\}$.
		\item There exists $n \in \{1,\dots,N\}$ such that $\phi(y_m)(a_1^n,0) \neq \phi(y'_m)(a_1^n,0)$.
	\end{enumerate}
	Any accumulation point of the sequence $(y_m,y'_m)_{m \in \NN}$ yields a pair $y,y' \in Y$ which satisfies the three properties above, thus proving our claim. 
	
	Let us now reach a contradiction. Let us define the configuration $y^* \in (A')^{\Gamma}$ given by \[ y^*(g,n) = \begin{cases}
		y(g,n) & \mbox{ if } g \mbox{ as a reduced word begins with }a_1\\
		y'(g,n) & \mbox{ otherwise.}
	\end{cases}    \] 
	As $Y$ is a nearest neighbor subshift of finite type and $y(1_{F_k},n) = y'(1_{F_k},n)$ for every $n \in \ZZ$, it follows that $y^*$ contains no forbidden patterns and thus $y^* \in Y$. On the other hand, we have that $\phi(y^*)(1_{F_k},0) = \symb{\star}$ and, as there is $n \in \{1,\dots,N\}$ such that $\phi(y)(a_1^n,0) \neq \phi(y')(a_1^n,0)$, we have that $\phi(y^*)(a_1^n,0) \neq \phi(y^*)(a_1^{-n},0)$. Therefore $\phi(y^*) \notin X$, contradicting the fact that $\phi \colon Y \to X$ is a factor map.\end{proof}

\begin{remark}\label{rem:mirrorshift} 
	The subshift $X$ in the proof of Proposition~\ref{prop:villexample} is usually called the \define{Mirror shift} when $k = 1$.
\end{remark}

\section{The paradoxical subshift}\label{sec:paradox}

The purpose of this section is to exploit the structure of finitely generated non-amenable groups to construct a family of subshifts of finite type with a remarkable structural property. More precisely, let $\Gamma$ be a finitely generated non-amenable group. We shall construct a nonempty $\Gamma$-subshift of finite type $\paradox$, which we call the \define{paradoxical subshift}, that has the property that any configuration $\rho\in \paradox$ induces an injective map from $\Gamma \times \NN$ to $\Gamma$ which is ``Lipschitz'' on the second component. In other words, the configuration assigns a one-sided path with bounded jumps to every element of $\Gamma$, and the paths do not intersect. 

We remark that constructions of this nature have already appeared in the literature (although not explicitly encoded in the form of a subshift of finite type). A good instance is the proof of Whyte that every non-amenable group admits a translation-like action by the free group~\cite{whyte_amenability_1999}. See also~\cite{Seward2014_translationlike}.

The existence of this subshift of finite type relies on the well-known characterization of non-amenable groups by the existence of paradoxical decompositions in terms of a convenient statement on the existence of bounded surjective $2$-to-$1$ maps.

\begin{proposition}[Theorem 4.9.2 of~\cite{ceccherini-SilbersteinC09}]\label{prop_ceccSilb}
	A group $\Gamma$ is non-amenable if and only if there exists a $2$-to-$1$ surjective map $\varphi \colon \Gamma \to \Gamma$ and a finite set $K \Subset \Gamma$ so that $g^{-1}\varphi(g)\in K$ for every $g \in \Gamma$.
\end{proposition}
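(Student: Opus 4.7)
The plan is to handle the two directions separately, with all of the content in the forward one. For the \emph{easy direction}, suppose a $2$-to-$1$ surjection $\varphi$ and a finite $K$ with $g^{-1}\varphi(g)\in K$ exist. The hypothesis rewrites as $\varphi(g)\in gK$, so for every finite $F\Subset\Gamma$ one has $\varphi^{-1}(F)\subseteq FK^{-1}$. Since $\varphi$ is $2$-to-$1$, $|\varphi^{-1}(F)|=2|F|$, giving the uniform bound
\[ |FK^{-1}|\geq 2|F|\quad\text{for every finite }F\Subset\Gamma, \]
which contradicts amenability (the right-sided Følner condition, equivalent to the left-sided one stated in the preliminaries, would supply finite sets $F$ with $|FK^{-1}|\leq(1+\tfrac{1}{2})|F|$).

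For the \emph{hard direction}, assume $\Gamma$ is non-amenable. The first step is to upgrade non-amenability into a quantitative doubling estimate: failure of the Følner condition gives a finite $K_0\Subset\Gamma$ (which we may assume to contain $1_{\Gamma}$) and $\varepsilon>0$ with $|K_0 F|\geq(1+\varepsilon)|F|$ for every finite $F$, and iterating yields $|K_0^n F|\geq(1+\varepsilon)^n|F|$; so for $n$ large the finite symmetric set $K=K_0^n\cup K_0^{-n}$ satisfies $|KF|\geq 2|F|$ for all finite $F\Subset\Gamma$. The second step is a marriage argument. Consider the locally finite bipartite graph $\mathcal{G}$ with left side $L=\Gamma\times\{1,2\}$ and right side $R=\Gamma$, in which $(h,i)\in L$ is adjacent to $g\in R$ iff $g\in hK$. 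Hall's condition holds on $L$: for a finite $S\subseteq L$ with projection $S'=\{h:(h,i)\in S\text{ for some }i\}$, the neighborhood $N(S)=S'K$ satisfies $|N(S)|\geq 2|S'|\geq|S|$. The Hall condition on $R$ is immediate: for finite $T\subseteq R$, the neighborhood in $L$ is $TK^{-1}\times\{1,2\}$, of size at least $2|T|$.

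To finish, I would invoke the standard extension of Hall's marriage theorem to locally finite bipartite graphs (either a short compactness argument on matchings of finite subgraphs, or a direct appeal to the K\"onig--Rado theorem) to obtain a perfect matching $M$ of $\mathcal{G}$ saturating both $L$ and $R$. Defining $\varphi(g)=h$ whenever $g$ is matched to some $(h,i)$ under $M$ yields a map defined on all of $\Gamma$ (since $M$ saturates $R$), which is $2$-to-$1$ (each $h$ is reached through both $(h,1)$ and $(h,2)$), surjective (since $M$ saturates $L$), and satisfies $g^{-1}\varphi(g)\in K^{-1}=K$ by symmetry of $K$. The main obstacle I anticipate is the correct bookkeeping for the infinite version of Hall's theorem, and the care required to align the bipartite-graph adjacency with the algebraic condition $g^{-1}\varphi(g)\in K$; the expansion-to-doubling iteration is a routine numerical step.
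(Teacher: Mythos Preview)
The paper does not supply its own proof of this proposition: it is quoted verbatim as Theorem~4.9.2 of Ceccherini-Silberstein and Coornaert and used as a black box. Your argument is correct and is essentially the standard one (a doubling estimate from the failure of the F\o lner condition, followed by an infinite Hall--Rado matching on the bipartite graph $\Gamma\times\{1,2\}$ versus $\Gamma$), so there is nothing substantive to compare.

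One bookkeeping point: in the hard direction you derive the left-sided doubling $|KF|\geq 2|F|$, but in the Hall verification you use the right-sided inequality $|S'K|\geq 2|S'|$. This is harmless once noted, since taking inverses converts one into the other and your $K$ is symmetric, but you should make that one-line conversion explicit (or set up the adjacency as $g\in Kh$ instead of $g\in hK$) so the algebra lines up with the stated condition $g^{-1}\varphi(g)\in K$.
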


The paradoxical subshift shall be the main tool in the proof of Theorem~\ref{thm:selfsimulation}. As an immediate application of this object, we shall provide at the end of this section an alternative proof of a result of Seward~\cite{Seward2014} which states that every action of a countable non-amenable group admits an extension which is a subshift. Furthermore, we shall show that under certain computability restrictions the subshift can always be picked effectively closed, see Theorem~\ref{thm:seward_new}.

\subsection{The paradoxical subshift}

Given a countable group $\Gamma$ and a symmetric set $K \Subset \Gamma$ (that is, so that $K = K^{-1}$), we define the alphabet $A_{K} = K^3 \times \{ \symb{G},\symb{B}\}$.  Given $a = ((a_1,a_2,a_3),\symb{t}) \in A_{K}$, we write $L_{\symb{G}}(a) = a_1$, $L_{\symb{B}}(a) = a_2$, $R_{\symb{t}}(a) = a_3$ and $\symb{t}(a) = \symb{t}$. The value $\symb{t}(a) \in \{ \symb{G},\symb{B}\}$ is called the \define{color} of the symbol $a \in A_K$, we interpret the color as being either \define{green} ($\symb{G}$) or \define{blue} ($\symb{B}$). The letters $L$ and $R$ stand for \define{left} and \define{right} respectively.

\begin{definition}\label{def:paradoxical_subshift}
	Let $\Gamma$ be a countable group and $K \Subset \Gamma$. The \define{paradoxical subshift} is the $\Gamma$-subshift $\paradox \subseteq (A_K)^{\Gamma}$ defined by the condition that $\rho \in \paradox$ if and only if for every $g \in \Gamma$ if we have $\rho(g) = a$ then:
	\begin{enumerate}
		\item $b = \rho(gL_{\symb{G}}(a))$ is of color $\texttt{G}$ and $R_{\symb{G}}(b) = (L_{\symb{G}}(a))^{-1}$,
		\item $c = \rho(gL_{\symb{B}}(a))$ is of color $\texttt{B}$ and $R_{\symb{B}}(c) = (L_{\symb{B}}(a))^{-1}$,
		\item If $d = \rho(gR_{\symb{t}}(a))$, then $L_{\symb{t}}(d) = (R_{\symb{t}}(a))^{-1}$.
	\end{enumerate}
\end{definition}

\begin{figure}[ht!]
	\begin{tikzpicture}
		\node (AA) at (3,0) {$\textcircled{d}$};
		\node (A) at (0,0) {$\textcircled{a}$};
		\node (B) at (-3, 1) {$\textcircled{b}$};
		\node (C) at (-3,-1) {$\textcircled{c}$};
		\node (BA) at (-6, 1.5) {$\bigcirc$};
		\node (BB) at (-6, 0.5) {$\bigcirc$};
		\node (CA) at (-6, -0.5) {$\bigcirc$};
		\node (CB) at (-6,-1.5) {$\bigcirc$};
		\draw[->, very thick, green!50!black] (A) -- (AA) node [midway, below] {$R_{\texttt{G}}(a)$};
		\draw[->, thick, green!50!black, dashed, bend right=20] (A) to node[auto, above] {$L_{\texttt{G}}(a)$} (B);
		\draw[->, very thick, green!50!black] (B) -- (A) node [midway, below] {$R_{\texttt{G}}(b)$};
		\draw[->, thick, blue, dashed, bend left=20] (A) to node[auto, below] {$L_{\texttt{B}}(a)$} (C);
		\draw[->, very thick, blue] (C) -- (A) node [midway, above] {$R_{\texttt{B}}(c)$};
		
		\draw[->, thick, green!50!black, dashed, bend right=20] (AA) to node[auto, above] {$L_{\texttt{G}}(d)$} (A);

		\draw[->, thick, green!50!black, dashed, bend right=20] (B) to node[auto, below] {$L_{\texttt{G}}(b)$} (BA);
		\draw[->, thick, blue, dashed, bend left=20] (B) to node[auto, below] {$L_{\texttt{B}}(b)$} (BB);
		\draw[->, thick, green!50!black, dashed, bend right=20] (C) to node[auto, below] {$L_{\texttt{G}}(c)$} (CA);
		\draw[->, thick, blue, dashed, bend left=20] (C) to node[auto, below] {$L_{\texttt{B}}(c)$} (CB);

	\end{tikzpicture}
	\caption{The local structure of the paradoxical subshift. The two first components of the alphabet are drawn with dashed arrows, while the third component is drawn with a thick arrow. The three conditions in the definition simply correspond to the property that following an arrow of a certain color and then following the inverse arrow of the same color amounts to no movement at all.}
	\label{fig_paradoxical}
\end{figure}
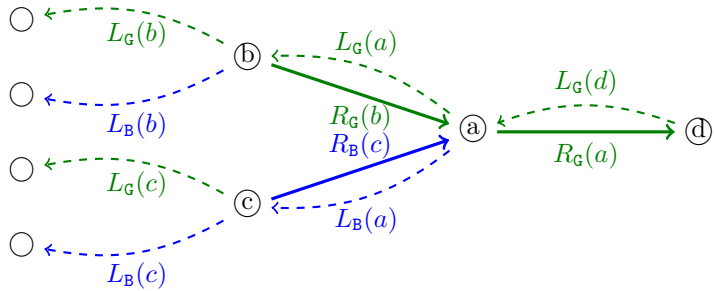

A representation of the paradoxical subshift is given on Figure~\ref{fig_paradoxical}, note that it induces a directed graph where each vertex has exactly two different ancestors, one labeled by $\symb{G}$ and another by $\symb{B}$. A finite set of forbidden patterns which defines the paradoxical subshift is the set of all patterns $p \in (A_K)^{(K \cup \{1_{\Gamma}\})}$ so that at least one of the three conditions in the definition is not satisfied for $g = 1_{\Gamma}$. It follows that the paradoxical subshift is a $\Gamma$-subshift of finite type.

\begin{proposition}\label{prop:paradox_nonempty}
	Let $\Gamma$ be a non-amenable group. Then there exists a symmetric set $K \Subset \Gamma$ containing the identity so that the paradoxical subshift $\paradox$ is nonempty.
\end{proposition}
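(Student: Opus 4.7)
The plan is to use the characterization of non-amenability from Proposition~\ref{prop_ceccSilb} as a direct blueprint for a configuration in $\paradox$. Apply that proposition to obtain a $2$-to-$1$ surjective map $\varphi \colon \Gamma \to \Gamma$ together with $K_0 \Subset \Gamma$ satisfying $g^{-1}\varphi(g) \in K_0$ for every $g \in \Gamma$, and set $K = K_0 \cup K_0^{-1} \cup \{1_\Gamma\}$ so that $K$ is symmetric and contains the identity.

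Since $\varphi$ is $2$-to-$1$ and surjective, the fibers $\varphi^{-1}(h)$ partition $\Gamma$ into unordered pairs. Choose arbitrarily one element of each pair to be green and the other blue; this defines a coloring $c \colon \Gamma \to \{\symb{G}, \symb{B}\}$, and for each $h \in \Gamma$ we let $h_{\symb{G}}$ and $h_{\symb{B}}$ denote its green and blue preimages under $\varphi$. Now define $\rho \in (A_K)^\Gamma$ by
\[ \rho(g) = \bigl( ( g^{-1}g_{\symb{G}},\; g^{-1}g_{\symb{B}},\; g^{-1}\varphi(g) ), \; c(g) \bigr), \]
so the right-arrow at $g$ points to its parent $\varphi(g)$ (tagged with the color of $g$), while the two left-arrows at $g$ point to its green and blue children. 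To see that $\rho$ truly takes values in $A_K$, note that $g_{\symb{G}}^{-1}g = g_{\symb{G}}^{-1}\varphi(g_{\symb{G}}) \in K_0$, hence $g^{-1}g_{\symb{G}} \in K_0^{-1} \subseteq K$, and analogously for $g^{-1}g_{\symb{B}}$ and $g^{-1}\varphi(g)$.

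The verification of the three conditions of Definition~\ref{def:paradoxical_subshift} is then a direct unravelling. For (3), if $a = \rho(g)$ and $d = \rho(\varphi(g))$, then the unique preimage of $\varphi(g)$ of color $c(g)$ is $g$ itself, giving $L_{c(g)}(d) = \varphi(g)^{-1} g = (R_{c(g)}(a))^{-1}$. For (1), with $b = \rho(g_{\symb{G}})$, the color of $b$ is $\symb{G}$ by construction and $R_{\symb{G}}(b) = g_{\symb{G}}^{-1}\varphi(g_{\symb{G}}) = g_{\symb{G}}^{-1}g = (L_{\symb{G}}(a))^{-1}$; condition (2) is identical with blue replacing green. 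No substantive obstacle arises; the only mildly delicate point is the bookkeeping of inverses, which is precisely why we symmetrized $K_0$ at the outset, and the use of the axiom of choice (on a countable set) to split every fiber into a green and a blue element.
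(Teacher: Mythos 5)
Your proof is correct and takes essentially the same approach as the paper: apply Proposition~\ref{prop_ceccSilb} to get the $2$-to-$1$ map $\varphi$ and a finite set, symmetrize it and adjoin the identity, split each fiber of $\varphi$ into a green and a blue preimage, and define $\rho(g)$ by packing the two children and the parent into the alphabet with the color of $g$ as the tag. Your $g_{\symb{G}}, g_{\symb{B}}$ are exactly the paper's $\varphi_{\symb{G}}^{-1}(g), \varphi_{\symb{B}}^{-1}(g)$; you merely spell out the verification of the three conditions that the paper leaves as a straightforward computation.
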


\begin{proof}
	As $\Gamma$ is non-amenable, Proposition~\ref{prop_ceccSilb} ensures that there exists a $2$-to-$1$ surjective map $\varphi \colon \Gamma \to \Gamma$ and a finite set $K \Subset \Gamma$ so that $g^{-1}\varphi(g)\in K$ for every $g \in \Gamma$. Potentially taking a larger set $K$, we can assume that $K$ is symmetric and contains the identity, and so $\varphi(g)^{-1}g\in K$ as well.
	
	As $\varphi$ is $2$-to-$1$ and surjective, there is a partition $\Gamma = \Gamma_{\symb{G}} \cup  \Gamma_{\symb{B}}$ so that the restrictions $\varphi_{\symb{G}}$ and $\varphi_{\symb{B}}$ of $\varphi$ to $\Gamma_{\symb{G}}$ and $\Gamma_{\symb{B}}$ respectively are bijections onto $\Gamma$. Let $\rho = \{\rho(g)\}_{g \in \Gamma} \in (A_K)^{\Gamma}$ be defined by
	
	\[ \rho(g) = \begin{cases}
		\left(\left(g^{-1}\varphi_{\symb{G}}^{-1}(g), g^{-1}\varphi_{\symb{B}}^{-1}(g), g^{-1}\varphi(g)   \right), \symb{G} \right)   & \mbox{ if } g \in  \Gamma_{\symb{G}}   \\
		\left(\left(g^{-1}\varphi_{\symb{G}}^{-1}(g), g^{-1}\varphi_{\symb{B}}^{-1}(g), g^{-1}\varphi(g)   \right), \symb{B} \right)  \\
	\end{cases},   \]

	Notice that letting $h = \varphi_{\symb{G}}^{-1}(g)$ we have $\varphi(h)^{-1}h = (\varphi(\varphi_{\symb{G}}^{-1}(g)   )   )^{-1}\varphi_{\symb{G}}^{-1}(g) = g^{-1}\varphi_{\symb{G}}^{-1}(g) \in K$. Similarly, $g^{-1}\varphi_{\symb{B}}^{-1}(g) \in K$. This shows that $\rho(g) \in A_K$ for every $g \in \Gamma$. It is a straightforward computation to show that $\rho$ satisfies the three conditions in Definition~\ref{def:paradoxical_subshift}.\end{proof}

Given a color $\symb{t} \in \{ \symb{G}, \symb{B}  \}$, denote by $\overline{\symb{t}}$ the opposite color. We can associate to each $g \in \Gamma$ a continuous map $\gamma_{g} \colon \NN \times \paradox \to \Gamma$ by following the left arrows of the opposite color as that of $\rho(g)$ as follows:

\[   \gamma_g(n,\rho) = \begin{cases}
	gL_{\overline{\symb{t}}}(\rho(g)) & \mbox{ if } n = 0 \mbox { and } \rho(g) \mbox{ has color } \symb{t} \\
	\gamma_{g}(n-1,\rho)L_{\overline{\symb{t}}}(\rho(\gamma_{g}(n-1,\rho))) & \mbox{ if } n >0 \mbox { and } \rho(g) \mbox{ has color } \symb{t}\end{cases}.
\]

In simple words, the map $\gamma_g$ assigns a path to $g$ by ``following the left arrows of the opposite color of $g$''. It is clear that $\gamma_g$ is continuous, as $\gamma_g(n,\rho)$ only depends upon the values of $\rho$ in $gK^n$. Notice also that for every $g \in \Gamma$ we have \begin{align}\label{formula:bonita}
	\gamma_g(n,\rho) = g\gamma_{1_{\Gamma}}(n,g^{-1}\rho).\tag{$\heartsuit$}
\end{align}

\begin{remark}\label{rem:inverse_equation}
	Notice that if $\rho(g)$ has color $\symb{G}$, then for every $n \in \NN$ the symbol $\rho(\gamma_{g}(n,\rho))$ has color $\symb{B}$. Similarly, if $\rho(g)$ has color $\symb{B}$, then for every $n \in \NN$ the symbol $\rho(\gamma_{g}(n,\rho))$ has color $\symb{G}$. In particular, for every $n >0$,
	\[ \gamma_g(n-1,\rho) = \gamma_g(n,\rho)R_{\symb{t}}(\rho(\gamma_g(n,\rho))).\]
\end{remark}

The function $\gamma_g$ assigns a one sided path to $g$. The next lemma says that all these paths are disjoint.

\begin{lemma}\label{lem:injectivepaths}
	For every $\rho \in \paradox$, the map $(n,g) \mapsto \gamma_g(n,\rho)$ for $(n,g)\in \NN \times \Gamma$ is injective.
\end{lemma}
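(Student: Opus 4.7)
My plan is to introduce an auxiliary map $\varphi_\rho \colon \Gamma \to \Gamma$ given by $\varphi_\rho(h) = hR_{\symb{t}}(\rho(h))$ --- the choice of $\symb{t}$ is irrelevant since both $R_{\symb{G}}$ and $R_{\symb{B}}$ extract the third coordinate of the alphabet symbol --- and to show that it acts as a one-sided inverse of a single step along the path $\gamma_g$. More precisely, I would first check that $\varphi_\rho(\gamma_g(n,\rho)) = \gamma_g(n-1,\rho)$ for all $n \geq 1$, which is essentially a rewriting of Remark~\ref{rem:inverse_equation}, and then verify the base case $\varphi_\rho(\gamma_g(0,\rho)) = g$. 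The latter comes from condition (2) of Definition~\ref{def:paradoxical_subshift} applied to $a = \rho(g)$ when $g$ has color $\symb{G}$ (and symmetrically from condition (1) when $g$ has color $\symb{B}$). Iterating then yields $\gamma_g(n-k,\rho) = \varphi_\rho^k(\gamma_g(n,\rho))$ for $0 \leq k \leq n$, and in particular $g = \varphi_\rho^{n+1}(\gamma_g(n,\rho))$.

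Second, I would record the color-alternation property already announced at the start of Remark~\ref{rem:inverse_equation}: if $\rho(g)$ has color $\symb{t}$ then $\rho(\gamma_g(n,\rho))$ has color $\overline{\symb{t}}$ for every $n \geq 0$. This is a straightforward induction on $n$ using conditions (1) and (2) of Definition~\ref{def:paradoxical_subshift}, since the recursive step of $\gamma_g$ always follows an $L_{\overline{\symb{t}}}$ arrow, whose target symbol is forced to have color $\overline{\symb{t}}$.

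With both ingredients in place, the injectivity should follow cleanly. Suppose $h = \gamma_g(n,\rho) = \gamma_{g'}(n',\rho)$. The color of $h$ is then simultaneously the opposite of the colors of both $g$ and $g'$, so these colors agree; call this common color $\symb{t}$. Assume without loss of generality that $n \leq n'$. By the first step we have $g = \varphi_\rho^{n+1}(h)$, and if $n<n'$ I would substitute $k = n+1 \leq n'$ in the iterated formula applied to $\gamma_{g'}$, obtaining $\gamma_{g'}(n'-n-1,\rho) = \varphi_\rho^{n+1}(h) = g$. But the left-hand side has color $\overline{\symb{t}}$ while $g$ has color $\symb{t}$, a contradiction. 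Hence $n = n'$ and then $g = \varphi_\rho^{n+1}(h) = g'$. The main obstacle I foresee is spotting the right definition of $\varphi_\rho$ and verifying cleanly that it inverts a single $\gamma$-step; once that and the color-alternation property are both in hand, injectivity is forced by the incompatibility of colors between $g$ and $\gamma_{g'}(n'-n-1,\rho)$.
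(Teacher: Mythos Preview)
Your proposal is correct and follows essentially the same approach as the paper's proof: both exploit the one-step inverse map (your $\varphi_\rho$, the paper's use of Remark~\ref{rem:inverse_equation}) together with the color-alternation property to reduce to a color contradiction. The only difference is cosmetic: the paper first uses the inverse relation to reduce to the case $\min(n,m)=0$ and then argues the base case, whereas you iterate $\varphi_\rho$ all the way and compare $g$ directly with $\gamma_{g'}(n'-n-1,\rho)$; the content is identical.
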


\begin{proof}
	Suppose there are $g,h \in \Gamma$ and $n,m$ so that $\gamma_g(n,\rho) = \gamma_h(m,\rho)$. By Remark~\ref{rem:inverse_equation} we have that $\rho(g)$ and $\rho(h)$ have the same color. Furthermore, repeatedly using the relation in the remark, we may assume that either $n = 0$ or $m=0$. Without loss of generality let $n = 0$.
	
	Let $v = \gamma_g(0,\rho)$. We have that $g = vR_{\symb{t}}(\rho(v))$ and thus $\rho(v)$ and $\rho(g)$ have different color. If $m >0$, then $\rho(v)$ is of the same color as $\rho(\gamma_h(m-1,\rho))= \rho(vR_{\symb{t}}(\rho(v)) = \rho(g)$, therefore we conclude that $m = 0$ and thus $\gamma_h(0,\rho)R_{\symb{t}}(\gamma_h(0,\rho)) = h$. Hence $g =h$.\end{proof}

\subsection{Existence of effective subshift extensions}

Before finishing this section we shall show a simple application of our construction. In~\cite[Theorem 1.2]{Seward2014} the author shows that every action of a countable group on a compact metrizable space admits a subshift extension. The proof is essentially based on theorem of Whyte~\cite{whyte_amenability_1999} on the existence of translation-like actions of $F_2$ on non-amenable groups and a generalization of the classical factor map between the full $2$-shift and the full $4$-shift in $F_2$ (See Section C of the appendix in~\cite{OrnsteinWeiss1987}).

We shall provide an alternative proof of that theorem. Furthermore, we shall show that whenever $\Gamma$ is finitely generated and $\Gamma \curvearrowright X$ is effectively closed, then the subshift can be chosen to be effectively closed (by patterns).

\begin{theorem}\label{thm:seward_new}
	Every action $\Gamma \curvearrowright X$ of a countable non-amenable group over a compact metrizable space $X$ is the factor of some $\Gamma$-subshift $Z$.
	
	Furthermore, if $\Gamma$ is finitely generated, $X \subseteq A^{\NN}$ for some finite $A$ and $\Gamma \curvearrowright X$ is effectively closed, then $Z$ can be chosen effectively closed by patterns.
\end{theorem}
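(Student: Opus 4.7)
My plan is to exploit the disjoint system of paths $\{\gamma_g(\cdot,\rho)\}_{g\in\Gamma}$ provided by Lemma~\ref{lem:injectivepaths} on a nonempty paradoxical subshift $\paradox$ (whose existence comes from Proposition~\ref{prop:paradox_nonempty}) in order to broadcast a single orbit of $X$ across the entire group. Working over the alphabet $B=A_K\times A$, I would define
\[
Z \isdef \left\{(\rho,y)\in B^{\Gamma}\;:\;\rho\in\paradox\text{ and }\exists\,x\in X,\;\forall g\in\Gamma,n\in\NN,\;y(\gamma_g(n,\rho))=(g^{-1}x)(n)\right\}.
\]
Specializing the defining condition to $g=1_{\Gamma}$ forces $x(n)=y(\gamma_{1_{\Gamma}}(n,\rho))$, so the witness $x$ is uniquely determined by $(\rho,y)$, and the natural candidate for the factor map onto $X$ is $\phi(\rho,y)\isdef y\circ\gamma_{1_{\Gamma}}(\cdot,\rho)$.

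I would then verify the basic properties of $Z$ and $\phi$ one by one. Closedness of $Z$ is a limit argument using compactness of $X$: any accumulation point of a sequence of witnesses $x_k$ works for the limit pair, because $\gamma_g(n,\rho_k)$ is eventually constant in $k$ and the action is continuous. $\Gamma$-invariance reduces to the identity $(\heartsuit)$, which yields $\gamma_h(n,g\rho)=g\,\gamma_{g^{-1}h}(n,\rho)$ and hence that if $(\rho,y)\in Z$ has witness $x$ then $g\cdot(\rho,y)\in Z$ has witness $gx$. For nonemptiness of $Z$ and surjectivity of $\phi$, I would fix any $\rho\in\paradox$ and any $x\in X$; the injectivity part of Lemma~\ref{lem:injectivepaths} makes the prescription $y(\gamma_g(n,\rho))\isdef(g^{-1}x)(n)$ well-defined, and we may extend $y$ arbitrarily off the image of the paths. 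Continuity and equivariance of $\phi$ are then immediate from the continuity of $\gamma_{1_{\Gamma}}(n,\cdot)$ and a further application of $(\heartsuit)$.

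For the ``furthermore'' statement, assume $\Gamma$ is finitely generated and $\Gamma\curvearrowright X$ is effectively closed. I would present $Z$ by enumerating forbidden pattern codings of two flavors. Flavor (i) consists of pattern codings whose $A_K$-part witnesses a local violation of $\paradox$'s defining SFT rules, which is plainly recursively enumerable. Flavor (ii) consists of codings $c\colon W\to B$ for which, using only the $A_K$-values specified by $c$, one can trace initial segments of finitely many paths $\gamma_g(\cdot,\rho)$ (including $g=1_{\Gamma}$) entirely within positions where $c_A$ is defined, and such that the resulting finite family of constraints of the form ``the prefix of $x$ is $w$'' and ``the $n$-th symbol of $g^{-1}x$ equals $a$'' is unsatisfiable in $X$. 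Semi-deciding this last condition is where the effectively closed action enters, via the formulation of Remark~\ref{remark_ec_BS}, which enumerates triples $(g,u,v)$ with $[v]\cap\underline{g}([u]\cap X)=\varnothing$. The point I expect to be the main obstacle is verifying that flavors (i) and (ii) together carve out exactly $Z$: any $(\rho,y)\notin Z$ must either violate a local $\paradox$ rule on a finite window, or have $\rho\in\paradox$ yet admit no $X$-witness compatible with its path constraints, in which case compactness of the set of potential witnesses in $X$ should localize the inconsistency to a finite piece of the paths that triggers a coding of flavor (ii).
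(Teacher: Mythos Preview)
Your construction of $Z$ and the factor map $\phi$ is the same as the paper's, just with $Z$ described via the existence of a global witness $x\in X$ rather than via the paper's two local consistency constraints; these descriptions are equivalent, and your verifications of closedness, invariance, surjectivity and equivariance match the paper's almost verbatim.

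The difference lies in the effectively closed part. You propose to forbid any pattern coding on which some finite system of constraints of the form ``$x\in[w]$ and $(g_i^{-1}x)_{n_i}=a_i$ for $i=1,\dots,k$'' is unsatisfiable, and you invoke Remark~\ref{remark_ec_BS} to semi-decide this. That remark, however, only lets you enumerate \emph{pairs} $(g,u,v)$ with $[v]\cap g([u]\cap X)=\varnothing$; when $k\geq 2$ the unsatisfiability of the joint system is not a Boolean combination of such pairwise conditions, so a little more is needed (for instance, build an iterated set representation encoding all $wx$ for $|w|\leq N$ and observe it is still effectively closed). This is fixable but you have not quite said how.

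The paper sidesteps the issue entirely: it only forbids codings witnessing $[v]\cap s^{-1}([u]\cap X)=\varnothing$ where $u$ is the prefix read along the path from $1_\Gamma$ and $v$ along the path from a single \emph{generator} $s\in S$. This is exactly what Remark~\ref{remark_ec_BS} handles directly. The resulting set of forbidden codings a priori defines a larger subshift, but since $S$ generates $\Gamma$ the pairwise consistency for generators propagates to consistency for all $h\in\Gamma$, so one recovers $Z$. This reduction to generators is the one idea in the paper's argument that is absent from yours.
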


\begin{proof}
	It is well known that every action of a countable group $\Gamma$ over a compact metrizable space $X$ admits a zero-dimensional extension, for instance, as an inverse limit of symbolic covers of radius $\tfrac{1}{n}$ of $X$ (see for instance~\cite[Lemma 4.1]{Seward2014}). We shall thus assume, without loss of generality, that $X$ is a closed subset of $A^{\NN}$.
	
	As $\Gamma$ is a countable non-amenable group, there exists $K \Subset \Gamma$ such that the paradoxical subshift $\paradox \subseteq (A_K)^{\Gamma}$ is nonempty. Recall that $\paradox$ comes equipped with a family of continuous maps $\gamma_{g}\colon \NN \times \paradox\to \Gamma$.
	
	Consider the subshift $Z \subseteq \paradox \times A^{\Gamma}$ which consists on all configurations $z = (\rho,y)\in \paradox \times A^{\Gamma}$ which satisfy the following two constraints.
	\begin{enumerate}
		\item For every $n \in \NN$ and $g \in \Gamma$, if we consider \[ u = y(\gamma_g(0,\rho))y(\gamma_g(1,\rho))\dots y(\gamma_g(n-1,\rho)) \in A^n,  \]
		then $[u]\cap X \neq \varnothing$.
		\item For every $n,m \in \NN$ and $g,h \in \Gamma$ if we consider \[ u = y(\gamma_g(0,\rho))y(\gamma_g(1,\rho))\dots y(\gamma_g(n-1,\rho)) \in A^n,  \]\[ v = y(\gamma_{gh}(0,\rho))y(\gamma_{gh}(1,\rho))\dots y(\gamma_{gh}(m-1,\rho)) \in A^m,  \]
		then $[v]\cap h^{-1}([u]\cap X) \neq \varnothing$.
	\end{enumerate}

	Clearly $Z$ is a closed $\Gamma$-invariant set, and therefore a $\Gamma$-subshift. Consider the map $\phi \colon Z \to A^{\NN}$ given by \[ \phi(\rho,y)_n = y(\gamma_{1_{\Gamma}}(n,\rho)) \mbox{ for every } n \in \NN.  \]
	The map $\phi$ is obviously continuous. By the first constraint, we have that $[\phi(\rho,y)|_{\{0,\dots,n-1\}}] \cap X \neq \varnothing$ for every $n \in \NN$. As $X$ is closed this means that $\phi(\rho,y)\in X$. 
	
	Let $g \in \Gamma$ and $k \in \NN$. From here we obtain, \[\phi(g\rho,gy)_k = (gy)(\gamma_{1_{\Gamma}}(k,g\rho)) = y(g^{-1}\gamma_{1_{\Gamma}}(k,g\rho)) = y( \gamma_{g^{-1}}(k,\rho)  ). \]
	
	The second constraint implies that for every $n,m \in \NN$ we have that \[ [\phi(g\rho,gy)|_{\{0,\dots,m-1\}}]\cap g([\phi(\rho,y)|_{\{0,\dots,n-1\}}]\cap X) \neq \varnothing.  \]
	Thus $\phi(g\rho,gy) = g\phi(\rho,y)$, hence the map $\phi$ is $\Gamma$-equivariant.
	
	Finally, let us show that $\phi$ is surjective. Let $x \in X$ and choose some $\rho \in \paradox$. Let $y \in A^{\Gamma}$ be any configuration such that \[ y(\gamma_g(n,\rho)) = (g^{-1}x)_n.  \]
	Such a configuration $y$ exists due to the map $(n,g)\mapsto \gamma_g(n,\rho)$ being injective. It is direct from the definition that $(\rho,y)\in Z$ as it satisfies the two constraints, and that $\phi(\rho,y)=x$. This shows that $\Gamma \curvearrowright X$ is a factor of the $\Gamma$-subshift $Z$.
	
	Now suppose that $\Gamma$ is finitely generated and that $\Gamma \curvearrowright X$ is effectively closed. We will show that $Z$ can be defined through a recursively enumerable set of forbidden pattern codings. As $\paradox \times A^{\Gamma}$ is a $\Gamma$-subshift of finite type, it is effectively closed, so we only need to show that we can enforce conditions (1) and (2) in this way.
	
	Let $S\Subset \Gamma$ be a finite symmetric set of generators of $\Gamma$ such that $K \subseteq S$. Denote by $\varepsilon$ the empty word on $S^*$. We define $\mathcal{C}$ as the set of pattern codings $c \colon \bigcup_{k \leq n}S^k \to A_K \times A$ for some $n > 1$ which fail to satisfy the following conditions:
	\begin{enumerate}
		\item[(1')] For every $w \in \bigcup_{k \leq n-1}S^k$ the conditions from Definition~\ref{def:paradoxical_subshift} (replacing $g$ by $w$) are satisfied.
		Notice that if condition (1') holds, then for every $w \in \bigcup_{k \leq n-1}S^k$ we may properly define on $c$ a local analogue $\widetilde{\gamma}_c^{w} \colon \NN \to \bigcup_{k \leq n}S^k$ of the map $\gamma_{\underline{w}} \colon \NN \times \paradox \to \Gamma$ on $c$, that is, such that \[\widetilde{\gamma}^{w}_c(k) = \gamma_{\underline{w}}(k,\rho) \mbox{ for every } k \leq n-|w|-1 \mbox{ and } \rho \in [c]\cap \paradox.   \]
		\item[(2')] Let $k \leq n$. If $u = \widetilde{\gamma}^{\varepsilon}_c(0)\widetilde{\gamma}^{\varepsilon}_c(1)\dots \widetilde{\gamma}^{\varepsilon}_c(k)$ then $[u]\cap X \neq \varnothing$.
		\item[(3')] Let $s \in S$ and $\ell,\ell' \leq n-2$. Let  \[u = \widetilde{\gamma}^{\varepsilon}_c(0)\widetilde{\gamma}^{\varepsilon}_c(1)\dots \widetilde{\gamma}^{\varepsilon}_c(\ell)\] \[v =  \widetilde{\gamma}^{s}_c(0)\widetilde{\gamma}^{s}_c(1)\dots \widetilde{\gamma}^{s}_c(\ell')\]
		Then $[v]\cap s^{-1}([u]\cap X)\neq \varnothing$.
	\end{enumerate}
	
	Failure of condition (1') can easily be checked with an algorithm. Algorithms to check conditions (2') and (3') exist due to the action being effectively closed and Remark~\ref{remark_ec_BS}. It is a straightforward exercise that the set of pattern codings above defines a subshift $\widetilde{Z}$ which satisfies condition (1) and condition (2) with $h\in \Gamma$ replaced by some $s \in S$. As $\Gamma$ is generated by $S$ this suffices in this case.
\end{proof}

Note that in the second part of Theorem~\ref{thm:seward_new} we do not ask that the group is recursively presented. Let us stress again that effectively closed subshifts (by patterns) coincide up to topological conjugacy with expansive effectively closed actions only if the group is recursively presented, therefore if we wish to interpret our effectively closed subshift extension as an effectively closed expansive action on a closed subset of $\{\symb{0},\symb{1} \}^{\NN}$, we need to ask that the group is recursively presented.

\section{Self-simulable groups}\label{sec:selfsimulable}

The purpose of this section is to prove Theorem~\ref{thm:selfsimulation}. Our main tool shall be the paradoxical subshift as defined in Section~\ref{sec:paradox}. The main idea is quite similar to the application shown on the previous section, namely, we saw that the possibility to associate paths to group elements allows us to encode arbitrary actions in subhifts by putting sequences on the path. By taking a direct product of two finitely generated groups, we will be able to associate not only a path but an actual $\NN^2$-grid to each group element. This will enable us to verify with local rules that a configuration is on some effectively closed set by implementing Turing machines via Wang tilesets.

More precisely, given an effectively closed action $\Gamma \curvearrowright X \subseteq \{\symb{0},\symb{1}\}^{\NN}$, we use a classical embedding of Turing machines through seeded Wang tilings to construct a tiling of $\NN^2$, so that every configuration in which a special symbol occurs at $(0,0)$ encodes in a straightforward manner a configuration on the set representation of $\Gamma \curvearrowright X$. This idea is quite old and can be traced back to~\cite{Wang:1960:PTP:367177.367224,Wang1961} and~\cite{KahrMooreWang62}. These ideas were famously used by Berger to settle the undecidability of the domino problem~\cite{Berger1966}. The grids shall encode a Turing machine which verifies that the configuration indeed belongs to the set representation.

Finally, we shall encode a seeded instance of our tiling of $\NN^2$ on top of the $\NN^2$-grids of a product of two paradoxical subshifts on each group, in such a way that the seed occurs at the origin of every grid. We then endow this object with local rules ensuring that all the $\NN^2$-grids communicate information coherently with the action of $\Gamma$. From here the factor map will be defined naturally using the grids and the set representation.

\subsection{The computation tilespace}

In this section we shall construct a set of tilings of $\NN^2$ which is defined through local rules, and which has the property that the subset of tilings such that a particular type of symbol, called the \define{seed}, occurs at $(0,0)$, maps surjectively into a fixed effectively closed set. 

Let us describe how to code the dynamics of a Turing machine $\mathcal{M} = \{Q,\Sigma, q_0,q_F, \delta \}$ in a seeded tiling of $\NN^2$. Consider the alphabet $\mathcal{W}_{\mathcal{M}}$ given by the square tiles drawn on Figure~\ref{fig:Wang_turingo_no_machina}. Notice that we give special names to three of the tiles on the top row.

\begin{figure}[ht!]
	\centering
	\begin{tikzpicture}[scale =1.8]

	\begin{scope}[shift = {(0,0)} ]
		\node at (0,-0.65) {\textbf{$\texttt{seed}$}};
		\clip (-0.5,-0.5) rectangle (+0.5,+0.5);
		\draw[black, fill = black!80] (-0.5,0.5)--(0,0)--(-0.5,-0.5)--cycle;
		\draw[black, fill=black!80] (-0.5,-0.5)--(0,0)--(+0.5,-0.5)--cycle;
		\draw[black, fill=white] (0.5,-0.5)--(0,0)--(+0.5,+0.5)--cycle;
		\draw[ fill = black] (0.25,0) -- (0.5,0.25) -- (0.5,-0.25) -- cycle;
		\draw[black, fill=black!50] (0.5,0.5)--(0,0)--(-0.5,+0.5)--cycle;
	\end{scope}
	
	\begin{scope}[shift = {(1.5,0)} ]
	\node at (0,-0.65) {\textbf{$\texttt{tile}_1(a)$}};
	\clip (-0.5,-0.5) rectangle (+0.5,+0.5);
	\draw[black, fill=white] (-0.5,0.5)--(0,0)--(-0.5,-0.5)--cycle;
	\draw[ fill = black] (-0.25,0) -- (-0.5,0.25) -- (-0.5,-0.25) -- cycle;
	\draw[black, fill=black!80] (-0.5,-0.5)--(0,0)--(+0.5,-0.5)--cycle;
	\draw[black, fill=black!50] (0.5,-0.5)--(0,0)--(+0.5,+0.5)--cycle;
	\draw[black, fill=black!15] (0.5,0.5)--(0,0)--(-0.5,+0.5)--cycle;
	\node at (0,0.35) {\textbf{$(q_0,a)$}};
	\end{scope}

	\begin{scope}[shift = {(3,0)} ]
	\node at (0,-0.65) {\textbf{$\texttt{tile}_2(a)$}};
	\clip (-0.5,-0.5) rectangle (+0.5,+0.5);
	\draw[black, fill=black!50] (-0.5,0.5)--(0,0)--(-0.5,-0.5)--cycle;
	\draw[black, fill=black!80] (-0.5,-0.5)--(0,0)--(+0.5,-0.5)--cycle;
	\draw[black, fill=black!50] (0.5,-0.5)--(0,0)--(+0.5,+0.5)--cycle;
	\draw[black, fill=white] (0.5,0.5)--(0,0)--(-0.5,+0.5)--cycle;
	\node at (0,0.35) {\textbf{$a$}};
	\end{scope}

	\begin{scope}[shift = {(4.5,0)} ]
		\clip (-0.5,-0.5) rectangle (+0.5,+0.5);
		\draw[black, fill = black!80] (-0.5,0.5)--(0,0)--(-0.5,-0.5)--cycle;
		\draw[black, fill=black!50] (-0.5,-0.5)--(0,0)--(+0.5,-0.5)--cycle;
		\draw[black] (0.5,-0.5)--(0,0)--(+0.5,+0.5)--cycle;
		\draw[black, fill=black!50] (0.5,0.5)--(0,0)--(-0.5,+0.5)--cycle;
	\end{scope}

	\begin{scope}[shift ={(0,-1.5)}]
	
		\begin{scope}[shift = {(-1.5,0)} ]
		\clip (-0.5,-0.5) rectangle (+0.5,+0.5);
		\draw[black, fill=white] (-0.5,0.5)--(0,0)--(-0.5,-0.5)--cycle;
		\draw[black, fill=white] (-0.5,-0.5)--(0,0)--(+0.5,-0.5)--cycle;
		\draw[black, fill=white] (0.5,-0.5)--(0,0)--(+0.5,+0.5)--cycle;
		\draw[black, fill=white] (0.5,0.5)--(0,0)--(-0.5,+0.5)--cycle;
		\node at (0,0.35) {\textbf{$a$}};
		\node at (0,-0.35) {\textbf{$a$}};
		\end{scope}
	
	\begin{scope}[shift = {(0,0)} ]
	\clip (-0.5,-0.5) rectangle (+0.5,+0.5);
	\draw[black, fill=white] (-0.5,0.5)--(0,0)--(-0.5,-0.5)--cycle;
	\draw[black, fill=black!15] (-0.5,-0.5)--(0,0)--(+0.5,-0.5)--cycle;
	\draw[black, fill=white] (0.5,-0.5)--(0,0)--(+0.5,+0.5)--cycle;
	\draw[black, fill=black!15] (0.5,0.5)--(0,0)--(-0.5,+0.5)--cycle;
	\node at (0,0.35) {\textbf{$(s',b')$}};
	\node at (0,-0.35) {\textbf{$(s,b)$}};
	\end{scope}
	
	\begin{scope}[shift = {(1.5,0)} ]
	\clip (-0.5,-0.5) rectangle (+0.5,+0.5);
	\draw[black, fill=black!15] (-0.5,0.5)--(0,0)--(-0.5,-0.5)--cycle;
	\draw[black, fill=black!15] (-0.5,-0.5)--(0,0)--(+0.5,-0.5)--cycle;
	\draw[black, fill=white] (0.5,-0.5)--(0,0)--(+0.5,+0.5)--cycle;
	\draw[black, fill=white] (0.5,0.5)--(0,0)--(-0.5,+0.5)--cycle;
	\node at (0,0.35) {\textbf{$c'$}};
	\node at (-0.35,0) {\textbf{$\stackrel{\ell'}{\leftarrow}$}};
	\node at (0,-0.35) {\textbf{$(\ell,c)$}};
	\end{scope}
	
	\begin{scope}[shift = {(3,0)} ]
	\clip (-0.5,-0.5) rectangle (+0.5,+0.5);
	\draw[black, fill=white] (-0.5,0.5)--(0,0)--(-0.5,-0.5)--cycle;
	\draw[black, fill=black!15] (-0.5,-0.5)--(0,0)--(+0.5,-0.5)--cycle;
	\draw[black, fill=black!15] (0.5,-0.5)--(0,0)--(+0.5,+0.5)--cycle;
	\draw[black, fill=white] (0.5,0.5)--(0,0)--(-0.5,+0.5)--cycle;
	\node at (0,0.35) {\textbf{$d'$}};
	\node at (0.35,0) {\textbf{$\stackrel{r'}{\rightarrow}$}};
	\node at (0,-0.35) {\textbf{$(r,d)$}};
	\end{scope}

		\begin{scope}[shift = {(4.5,0)} ]
		\clip (-0.5,-0.5) rectangle (+0.5,+0.5);
		\draw[black, fill=black!15] (-0.5,0.5)--(0,0)--(-0.5,-0.5)--cycle;
		\draw[black, fill=white] (-0.5,-0.5)--(0,0)--(+0.5,-0.5)--cycle;
		\draw[black, fill=white] (0.5,-0.5)--(0,0)--(+0.5,+0.5)--cycle;
		\draw[black, fill=black!15] (0.5,0.5)--(0,0)--(-0.5,+0.5)--cycle;
		\node at (0,0.35) {\textbf{$(q,a)$}};
		\node at (-0.35,0) {\textbf{$\stackrel{q}{\rightarrow}$}};
		\node at (0,-0.35) {\textbf{$a$}};
		\end{scope}
		
		\begin{scope}[shift = {(6,0)} ]
		\clip (-0.5,-0.5) rectangle (+0.5,+0.5);
		\draw[black, fill=white] (-0.5,0.5)--(0,0)--(-0.5,-0.5)--cycle;
		\draw[black, fill=white] (-0.5,-0.5)--(0,0)--(+0.5,-0.5)--cycle;
		\draw[black, fill=black!15] (0.5,-0.5)--(0,0)--(+0.5,+0.5)--cycle;
		\draw[black, fill=black!15] (0.5,0.5)--(0,0)--(-0.5,+0.5)--cycle;
		\node at (0,0.35) {\textbf{$(q,a)$}};
		\node at (0.35,0) {\textbf{$\stackrel{q}{\leftarrow}$}};
		\node at (0,-0.35) {\textbf{$a$}};
		\end{scope}

	\end{scope}
\end{tikzpicture}
	\caption{The alphabet $\mathcal{W}_{\mathcal{M}}$ associated to a Turing machine $\mathcal{M} = \{Q,\Sigma, q_0,q_F, \delta \}$. $a \in \Sigma$ is an arbitrary symbol and $q \in Q$ is an arbitrary state. $(s,b),(\ell,c),(r,d) \in Q \times \Sigma$ are any pairs such that $\delta(s,b)=(s',b',0)$, $\delta(\ell,c)=(\ell',c',-1)$ and $\delta(r,d)=(r',d',+1)$.}
	\label{fig:Wang_turingo_no_machina}
\end{figure}
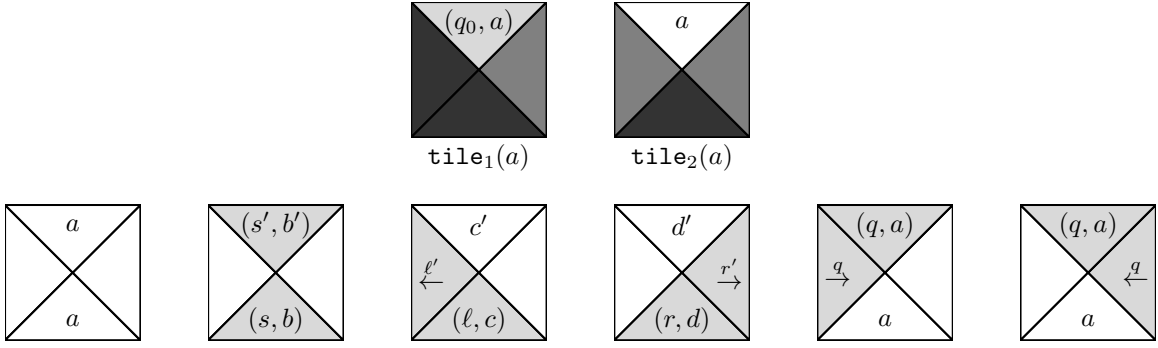

Given a Turing machine $\mathcal{M}$, we define its associated set of $\NN^2$-tilings as the set $\mathcal{T}_{\mathcal{M}}$ of all maps $\tau \colon \NN^2 \to \mathcal{W}_{\mathcal{M}}$ so that whenever two tiles are horizontally or vertically adjacent to each other, both the color and the symbol on their adjacent sides coincide. See Figure~\ref{fig:Turingo_no_exampluru} for an example of pattern satisfying this constraint.

\begin{remark}
	The $\NN^2$-tilings which are described by square tiles satisfying the above kind of constraints are called \define{Wang tilings} after Hao Wang, who introduced them in~\cite{Wang1961}.
\end{remark}

The idea behind this construction is that for any $x \in \Sigma^\NN$, if we provide a horizontal row of tiles of Figure~\ref{fig:Wang_turingo_no_machina} given by \[\texttt{seed}, \texttt{tile}_1(x_0),\texttt{tile}_2(x_1),\texttt{tile}_2(x_2),\texttt{tile}_2(x_3),\dots,\] then the local rules of the tileset force any configuration to encode the dynamics of the Turing machine on the infinite word $x$.

\begin{example}\label{ex:turingo_no_machina}
	Let us consider the following simple Turing machine. Let $Q = \{a,b\}$, $\Sigma = \{0,1,\sqcup\}$, $q_0 = a$ and let $\delta$ be given by
	\begin{align}
		\delta(a,\sqcup) & = (b,0,+1) & \delta(a,0) &= (b,1,+1) & \delta(a,1) &= (a,0,+1) \\
		\delta(b,\sqcup) & = (a,0,-1) & \delta(b,0) &= (b,1,0) & \delta(b,1) &= (a,0,+1).
	\end{align}
	A $\NN^2$-tiling associated to this Turing machine is shown in Figure~\ref{fig:Turingo_no_exampluru}. \qee
\end{example}

\begin{figure}[h!]
	\centering
	\begin{tikzpicture}[scale =1.2]

\newcommand{\seeding}[2]{
	\begin{scope}[shift = {(#1,#2)}]
		\clip (-0.5,-0.5) rectangle (+0.5,+0.5);
		\draw[black, fill = black!80] (-0.5,0.5)--(0,0)--(-0.5,-0.5)--cycle;
		\draw[black, fill=black!80] (-0.5,-0.5)--(0,0)--(+0.5,-0.5)--cycle;
		\draw[black, fill=white] (0.5,-0.5)--(0,0)--(+0.5,+0.5)--cycle;
		\draw[ fill = black] (0.25,0) -- (0.5,0.25) -- (0.5,-0.25) -- cycle;
		\draw[black, fill=black!50] (0.5,0.5)--(0,0)--(-0.5,+0.5)--cycle;
	\end{scope}	
}	

\newcommand{\vertical}[2]{
	\begin{scope}[shift = {(#1,#2)}]
		\clip (-0.5,-0.5) rectangle (+0.5,+0.5);
		\draw[black, fill = black!80] (-0.5,0.5)--(0,0)--(-0.5,-0.5)--cycle;
		\draw[black, fill=black!50] (-0.5,-0.5)--(0,0)--(+0.5,-0.5)--cycle;
		\draw[black] (0.5,-0.5)--(0,0)--(+0.5,+0.5)--cycle;
		\draw[black, fill=black!50] (0.5,0.5)--(0,0)--(-0.5,+0.5)--cycle;
	\end{scope}	
}	
	
\newcommand{\negrito}[2]{
	\begin{scope}[shift = {(#1,#2)}]
	\clip (-0.5,-0.5) rectangle (+0.5,+0.5);
	\draw[black, fill=black!80] (-0.5,0.5)--(0,0)--(-0.5,-0.5)--cycle;
	\draw[black, fill=black!80] (-0.5,-0.5)--(0,0)--(+0.5,-0.5)--cycle;
	\draw[black, fill=black!80] (0.5,-0.5)--(0,0)--(+0.5,+0.5)--cycle;
	\draw[black, fill=black!80] (0.5,0.5)--(0,0)--(-0.5,+0.5)--cycle;
	\end{scope}	
}

\newcommand{\negritoL}[2]{
	\begin{scope}[shift = {(#1,#2)}]
	\clip (-0.5,-0.5) rectangle (+0.5,+0.5);
	\draw[black, fill=black!80] (-0.5,0.5)--(0,0)--(-0.5,-0.5)--cycle;
	\draw[black, fill=black!80] (-0.5,-0.5)--(0,0)--(+0.5,-0.5)--cycle;
	\draw[black, fill=white] (0.5,-0.5)--(0,0)--(+0.5,+0.5)--cycle;
	\draw[black, fill=black!80] (0.5,0.5)--(0,0)--(-0.5,+0.5)--cycle;
	\end{scope}	
}

\newcommand{\base}[2]{
	\begin{scope}[shift = {(#1,#2)}]
	\clip (-0.5,-0.5) rectangle (+0.5,+0.5);
	\draw[black, fill=black!50] (-0.5,0.5)--(0,0)--(-0.5,-0.5)--cycle;
	\draw[black, fill=black!80] (-0.5,-0.5)--(0,0)--(+0.5,-0.5)--cycle;
	\draw[black, fill=black!50] (0.5,-0.5)--(0,0)--(+0.5,+0.5)--cycle;
	\draw[black, fill=white] (0.5,0.5)--(0,0)--(-0.5,+0.5)--cycle;
	\node at (0,0.35) {\textbf{$\sqcup$}};
	\end{scope}	
}

\newcommand{\basecero}[2]{
	\begin{scope}[shift = {(#1,#2)}]
	\clip (-0.5,-0.5) rectangle (+0.5,+0.5);
	\draw[black, fill=white] (-0.5,0.5)--(0,0)--(-0.5,-0.5)--cycle;
	\draw[ fill = black] (-0.25,0) -- (-0.5,0.25) -- (-0.5,-0.25) -- cycle;
	\draw[black, fill= black!80] (-0.5,-0.5)--(0,0)--(+0.5,-0.5)--cycle;
	\draw[black, fill=black!50] (0.5,-0.5)--(0,0)--(+0.5,+0.5)--cycle;
	\draw[black, fill=black!15] (0.5,0.5)--(0,0)--(-0.5,+0.5)--cycle;
	\node at (0,0.35) {\scalebox{0.8}{\textbf{$(a,\sqcup)$}}};
	\end{scope}	
}

\newcommand{\transmit}[3]{
	\begin{scope}[shift = {(#1,#2)}]
	\clip (-0.5,-0.5) rectangle (+0.5,+0.5);
	\draw[black, fill=white] (-0.5,0.5)--(0,0)--(-0.5,-0.5)--cycle;
	\draw[black, fill=white] (-0.5,-0.5)--(0,0)--(+0.5,-0.5)--cycle;
	\draw[black, fill=white] (0.5,-0.5)--(0,0)--(+0.5,+0.5)--cycle;
	\draw[black, fill=white] (0.5,0.5)--(0,0)--(-0.5,+0.5)--cycle;
	\node at (0,0.35) {\textbf{#3}};
	\node at (0,-0.35) {\textbf{#3}};
	\end{scope}	
}

\seeding{0}{1}
\basecero{1}{1}
\base{2}{1}
\base{3}{1}
\base{4}{1}
\base{5}{1}

\vertical{0}{2}
\begin{scope}[shift = {(1,2)} ]
\clip (-0.5,-0.5) rectangle (+0.5,+0.5);
\draw[black, fill=white] (-0.5,0.5)--(0,0)--(-0.5,-0.5)--cycle;
\draw[black, fill=black!15] (-0.5,-0.5)--(0,0)--(+0.5,-0.5)--cycle;
\draw[black, fill=black!15] (0.5,-0.5)--(0,0)--(+0.5,+0.5)--cycle;
\draw[black, fill=white] (0.5,0.5)--(0,0)--(-0.5,+0.5)--cycle;
\node at (0,0.35) {\textbf{$0$}};
\node at (0.35,0) {\textbf{$\stackrel{b}{\rightarrow}$}};
\node at (0,-0.35) {\scalebox{0.8}{\textbf{$(a,\sqcup)$}}};
\end{scope}
\begin{scope}[shift = {(2,2)} ]
\clip (-0.5,-0.5) rectangle (+0.5,+0.5);
\draw[black, fill=black!15] (-0.5,0.5)--(0,0)--(-0.5,-0.5)--cycle;
\draw[black, fill=white] (-0.5,-0.5)--(0,0)--(+0.5,-0.5)--cycle;
\draw[black, fill=white] (0.5,-0.5)--(0,0)--(+0.5,+0.5)--cycle;
\draw[black, fill=black!15] (0.5,0.5)--(0,0)--(-0.5,+0.5)--cycle;
\node at (0,0.35) {\scalebox{0.8}{\textbf{$(b,\sqcup)$}}};
\node at (-0.35,0) {\textbf{$\stackrel{b}{\rightarrow}$}};
\node at (0,-0.35) {\textbf{$\sqcup$}};
\end{scope}
\transmit{3}{2}{$\sqcup$}
\transmit{4}{2}{$\sqcup$}
\transmit{5}{2}{$\sqcup$}

\vertical{0}{3}
\begin{scope}[shift = {(1,3)} ]
\clip (-0.5,-0.5) rectangle (+0.5,+0.5);
\draw[black, fill=white] (-0.5,0.5)--(0,0)--(-0.5,-0.5)--cycle;
\draw[black, fill=white] (-0.5,-0.5)--(0,0)--(+0.5,-0.5)--cycle;
\draw[black, fill=black!15] (0.5,-0.5)--(0,0)--(+0.5,+0.5)--cycle;
\draw[black, fill=black!15] (0.5,0.5)--(0,0)--(-0.5,+0.5)--cycle;
\node at (0,0.35) {\scalebox{0.8}{\textbf{$(a,0)$}}};
\node at (0.35,0) {\textbf{$\stackrel{a}{\leftarrow}$}};
\node at (0,-0.35) {\textbf{$0$}};
\end{scope}
\begin{scope}[shift = {(2,3)} ]
\clip (-0.5,-0.5) rectangle (+0.5,+0.5);
\draw[black, fill=black!15] (-0.5,0.5)--(0,0)--(-0.5,-0.5)--cycle;
\draw[black, fill=black!15] (-0.5,-0.5)--(0,0)--(+0.5,-0.5)--cycle;
\draw[black, fill=white] (0.5,-0.5)--(0,0)--(+0.5,+0.5)--cycle;
\draw[black, fill=white] (0.5,0.5)--(0,0)--(-0.5,+0.5)--cycle;
\node at (0,0.35) {\textbf{$0$}};
\node at (-0.35,0) {\textbf{$\stackrel{a}{\leftarrow}$}};
\node at (0,-0.35) {\scalebox{0.8}{\textbf{$(b,\sqcup)$}}};
\end{scope}
\transmit{3}{3}{$\sqcup$}
\transmit{4}{3}{$\sqcup$}
\transmit{5}{3}{$\sqcup$}

\vertical{0}{4}
\begin{scope}[shift = {(1,4)} ]
\clip (-0.5,-0.5) rectangle (+0.5,+0.5);
\draw[black, fill=white] (-0.5,0.5)--(0,0)--(-0.5,-0.5)--cycle;
\draw[black, fill=black!15] (-0.5,-0.5)--(0,0)--(+0.5,-0.5)--cycle;
\draw[black, fill=black!15] (0.5,-0.5)--(0,0)--(+0.5,+0.5)--cycle;
\draw[black, fill=white] (0.5,0.5)--(0,0)--(-0.5,+0.5)--cycle;
\node at (0,0.35) {\textbf{$1$}};
\node at (0.35,0) {\textbf{$\stackrel{b}{\rightarrow}$}};
\node at (0,-0.35) {\scalebox{0.8}{\textbf{$(a,0)$}}};
\end{scope}
\begin{scope}[shift = {(2,4)} ]
\clip (-0.5,-0.5) rectangle (+0.5,+0.5);
\clip (-0.5,-0.5) rectangle (+0.5,+0.5);
\draw[black, fill=black!15] (-0.5,0.5)--(0,0)--(-0.5,-0.5)--cycle;
\draw[black, fill=white] (-0.5,-0.5)--(0,0)--(+0.5,-0.5)--cycle;
\draw[black, fill=white] (0.5,-0.5)--(0,0)--(+0.5,+0.5)--cycle;
\draw[black, fill=black!15] (0.5,0.5)--(0,0)--(-0.5,+0.5)--cycle;
\node at (0,0.35) {\scalebox{0.8}{\textbf{$(b,0)$}}};
\node at (-0.35,0) {\textbf{$\stackrel{b}{\rightarrow}$}};
\node at (0,-0.35) {\textbf{$0$}};
\end{scope}
\transmit{3}{4}{$\sqcup$}
\transmit{4}{4}{$\sqcup$}
\transmit{5}{4}{$\sqcup$}

\vertical{0}{5}
\transmit{1}{5}{$1$}
\begin{scope}[shift = {(2,5)} ]
\clip (-0.5,-0.5) rectangle (+0.5,+0.5);
\draw[black, fill=white] (-0.5,0.5)--(0,0)--(-0.5,-0.5)--cycle;
\draw[black, fill=black!15] (-0.5,-0.5)--(0,0)--(+0.5,-0.5)--cycle;
\draw[black, fill=white] (0.5,-0.5)--(0,0)--(+0.5,+0.5)--cycle;
\draw[black, fill=black!15] (0.5,0.5)--(0,0)--(-0.5,+0.5)--cycle;
\node at (0,0.35) {\scalebox{0.8}{\textbf{$(b,1)$}}};
\node at (0,-0.35) {\scalebox{0.8}{\textbf{$(b,0)$}}};
\end{scope}
\transmit{3}{5}{$\sqcup$}
\transmit{4}{5}{$\sqcup$}
\transmit{5}{5}{$\sqcup$}

\vertical{0}{6}
\transmit{1}{6}{$1$}
\begin{scope}[shift = {(2,6)} ]
\clip (-0.5,-0.5) rectangle (+0.5,+0.5);
\draw[black, fill=white] (-0.5,0.5)--(0,0)--(-0.5,-0.5)--cycle;
\draw[black, fill=black!15] (-0.5,-0.5)--(0,0)--(+0.5,-0.5)--cycle;
\draw[black, fill=black!15] (0.5,-0.5)--(0,0)--(+0.5,+0.5)--cycle;
\draw[black, fill=white] (0.5,0.5)--(0,0)--(-0.5,+0.5)--cycle;
\node at (0,0.35) {\textbf{$0$}};
\node at (0.35,0) {\textbf{$\stackrel{a}{\rightarrow}$}};
\node at (0,-0.35) {\scalebox{0.8}{\textbf{$(b,1)$}}};
\end{scope}
\begin{scope}[shift = {(3,6)} ]
\clip (-0.5,-0.5) rectangle (+0.5,+0.5);
\draw[black, fill=black!15] (-0.5,0.5)--(0,0)--(-0.5,-0.5)--cycle;
\draw[black, fill=white] (-0.5,-0.5)--(0,0)--(+0.5,-0.5)--cycle;
\draw[black, fill=white] (0.5,-0.5)--(0,0)--(+0.5,+0.5)--cycle;
\draw[black, fill=black!15] (0.5,0.5)--(0,0)--(-0.5,+0.5)--cycle;
\node at (0,0.35) {\scalebox{0.8}{\textbf{$(a,\sqcup)$}}};
\node at (-0.35,0) {\textbf{$\stackrel{a}{\rightarrow}$}};
\node at (0,-0.35) {\textbf{$\sqcup$}};
\end{scope}
\transmit{4}{6}{$\sqcup$}
\transmit{5}{6}{$\sqcup$}

\end{tikzpicture}
	\caption{A pattern of the Wang tiling associated to the machine of Example~\ref{ex:turingo_no_machina}. Each row, starting from the bottom, can be associated to an iteration of the Turing machine on an empty tape.}
	\label{fig:Turingo_no_exampluru}
\end{figure}

\begin{remark}
	If the tile $\texttt{seed}$ occurs at position $(0,0)$, then necessarily $\texttt{tile}_1(a)$ occurs at $(1,0)$ for some $a \in \Sigma$, and tiles $\texttt{tile}_2(b_n)$ for some $b_n \in \Sigma$ must occur at every position $(n,0)$ for $n \geq 2$. The tiles at the remaining positions of $\NN^2$ are uniquely determined by these tiles.
\end{remark}

Now consider an effectively closed set $Y \subseteq A^{\NN}$. By definition, there exists a Turing machine $\mathcal{M}_{Y}$ so that $\mathcal{M}_{Y}$ accepts a word $w \in A^*$ if and only if $[w]\cap Y = \varnothing$. From this point forward for every effectively closed set $Y$ we fix such a machine $\mathcal{M}_{Y}$.

Let us consider a new Turing machine $\mathcal{M}_{\texttt{search},Y}$ whose tape alphabet and behavior we describe as follows:

\begin{itemize}
	\item The tape alphabet of $\mathcal{M}_{\texttt{search},Y}$ consists of tuples in $A \times \Sigma'$, where $\Sigma' \supset A$ is the working alphabet of an universal Turing machine which contains a special symbol $\sqcup$. The symbols in the first component are not modified by the machine, but can be read.
	\item The dynamics of $\mathcal{M}_{\texttt{search},Y}$ when run on the sequence $(y_n,\sqcup)_{n \in \NN}$ are the following:
	\begin{enumerate}
		\item Define $k = 1$.
		\item For every word $w \in \bigcup_{j \leq k}A^{j}$, run the Turing machine $\mathcal{M}_{Y}$ on input $w$ for $k$ steps.
		\begin{itemize}
			\item If $\mathcal{M}_{Y}$ accepts $w$, check whether $y_0\dots y_{|w|-1} = w$. If it is the case, accept (go to state $q_{F}$).
		\end{itemize}
		\item Redefine $k$ as $k+1$ and go back to step (2).
	\end{enumerate}
\end{itemize}

As $Y$ is effectively closed, it follows that $\mathcal{M}_{\texttt{search},Y}$ run on the sequence $(y_n,\sqcup)_{n \in \NN}$ reaches its final state $q_F$ if and only if $(y_n)_{n \in \NN}\notin Y$.

Given $s \in \mathcal{W}_{\mathcal{M}}$, the set of \define{seeded tilings} of $\mathcal{T}_{\mathcal{M}}$ is the set of $\tau \in \mathcal{T}_{\mathcal{M}}$ such that $\tau(0,0)=s$. 

\begin{definition}
	Let $Y$ be an effectively closed set. The \define{search tilespace} $\mathcal{T}_{\texttt{Search},Y}$ is the set which consists on all seeded tilings of $\mathcal{T}_{\mathcal{M}_{\texttt{Search},Y}}$ such that 
	\begin{enumerate}
		\item The seed is given by the tile $\texttt{seed}$.
		\item We forbid the tiles $\mathtt{tile}_1(a,b)$ and $\mathtt{tile}_2(a,b)$ where $b \neq \sqcup$. 
		\item We forbid any tile carrying the final state $q_F$.
	\end{enumerate}
\end{definition}

In simpler words, we keep only tilings associated to the machine $\mathcal{M}_{\texttt{Search},Y}$ whose bottom row is of the form
\[
\begin{tikzpicture}[scale =1.5]
	\begin{scope}[shift = {(0,0)}]
		\clip (-0.5,-0.5) rectangle (+0.5,+0.5);
		\draw[black, fill = black!80] (-0.5,0.5)--(0,0)--(-0.5,-0.5)--cycle;
		\draw[black, fill=black!80] (-0.5,-0.5)--(0,0)--(+0.5,-0.5)--cycle;
		\draw[black, fill=white] (0.5,-0.5)--(0,0)--(+0.5,+0.5)--cycle;
		\draw[ fill = black] (0.25,0) -- (0.5,0.25) -- (0.5,-0.25) -- cycle;
		\draw[black, fill=black!50] (0.5,0.5)--(0,0)--(-0.5,+0.5)--cycle;
	\end{scope}
	\begin{scope}[shift = {(1,0)}]
		\clip (-0.5,-0.5) rectangle (+0.5,+0.5);
		\draw[black, fill=white] (-0.5,0.5)--(0,0)--(-0.5,-0.5)--cycle;
		\draw[ fill = black] (-0.25,0) -- (-0.5,0.25) -- (-0.5,-0.25) -- cycle;
		\draw[black, fill= black!80] (-0.5,-0.5)--(0,0)--(+0.5,-0.5)--cycle;
		\draw[black, fill=black!50] (0.5,-0.5)--(0,0)--(+0.5,+0.5)--cycle;
		\draw[black, fill=black!15] (0.5,0.5)--(0,0)--(-0.5,+0.5)--cycle;
		\node at (0,0.35) {\scalebox{0.8}{\textbf{$(y_0,\sqcup)$}}};
	\end{scope}	
	\begin{scope}[shift = {(2,0)}]
		\clip (-0.5,-0.5) rectangle (+0.5,+0.5);
		\draw[black, fill=black!50] (-0.5,0.5)--(0,0)--(-0.5,-0.5)--cycle;
		\draw[black, fill=black!80] (-0.5,-0.5)--(0,0)--(+0.5,-0.5)--cycle;
		\draw[black, fill=black!50] (0.5,-0.5)--(0,0)--(+0.5,+0.5)--cycle;
		\draw[black, fill=white] (0.5,0.5)--(0,0)--(-0.5,+0.5)--cycle;
		\node at (0,0.35) {\scalebox{0.8}{\textbf{$(y_1,\sqcup)$}}};
	\end{scope}	
	\begin{scope}[shift = {(3,0)}]
		\clip (-0.5,-0.5) rectangle (+0.5,+0.5);
		\draw[black, fill=black!50] (-0.5,0.5)--(0,0)--(-0.5,-0.5)--cycle;
		\draw[black, fill=black!80] (-0.5,-0.5)--(0,0)--(+0.5,-0.5)--cycle;
		\draw[black, fill=black!50] (0.5,-0.5)--(0,0)--(+0.5,+0.5)--cycle;
		\draw[black, fill=white] (0.5,0.5)--(0,0)--(-0.5,+0.5)--cycle;
		\node at (0,0.35) {\scalebox{0.8}{\textbf{$(y_2,\sqcup)$}}};
	\end{scope}	
	\begin{scope}[shift = {(4,0)}]
		\clip (-0.5,-0.5) rectangle (+0.5,+0.5);
		\draw[black, fill=black!50] (-0.5,0.5)--(0,0)--(-0.5,-0.5)--cycle;
		\draw[black, fill=black!80] (-0.5,-0.5)--(0,0)--(+0.5,-0.5)--cycle;
		\draw[black, fill=black!50] (0.5,-0.5)--(0,0)--(+0.5,+0.5)--cycle;
		\draw[black, fill=white] (0.5,0.5)--(0,0)--(-0.5,+0.5)--cycle;
		\node at (0,0.35) {\scalebox{0.8}{\textbf{$(y_3,\sqcup)$}}};
	\end{scope}	
	\begin{scope}[shift = {(5,0)}]
		\clip (-0.5,-0.5) rectangle (+0.5,+0.5);
		\draw[black, fill=black!50] (-0.5,0.5)--(0,0)--(-0.5,-0.5)--cycle;
		\draw[black, fill=black!80] (-0.5,-0.5)--(0,0)--(+0.5,-0.5)--cycle;
		\draw[black, fill=black!50] (0.5,-0.5)--(0,0)--(+0.5,+0.5)--cycle;
		\draw[black, fill=white] (0.5,0.5)--(0,0)--(-0.5,+0.5)--cycle;
		\node at (0,0.35) {\scalebox{0.8}{\textbf{$(y_4,\sqcup)$}}};
	\end{scope}	
	\begin{scope}[shift = {(6,0)}]
		\clip (-0.5,-0.5) rectangle (+0.5,+0.5);
		\draw[black, fill=black!50] (-0.5,0.5)--(0,0)--(-0.5,-0.5)--cycle;
		\draw[black, fill=black!80] (-0.5,-0.5)--(0,0)--(+0.5,-0.5)--cycle;
		\draw[black, fill=black!50] (0.5,-0.5)--(0,0)--(+0.5,+0.5)--cycle;
		\draw[black, fill=white] (0.5,0.5)--(0,0)--(-0.5,+0.5)--cycle;
		\node at (0,0.35) {\scalebox{0.8}{\textbf{$(y_5,\sqcup)$}}};
	\end{scope}	
	\begin{scope}[shift = {(7,0)}]
		\node at (0,0) {\scalebox{1.5}{$\cdots$}};
	\end{scope}
\end{tikzpicture}\]

for some $(y_n)_{n \in \NN} \in A^{\NN}$. Furthermore, the third condition imposes that in any such tiling the machine never reaches the accepting state, that is, we will have that $(y_n)_{n \in \NN} \in Y$.

Next we will define an auxiliary tileset which will be useful in the part of proof of Theorem~\ref{thm:selfsimulation} which deals with the synchronization of the $\NN^2$-grids.

\begin{definition}
	Let $A$ be a finite alphabet. The \define{synchronization tilespace} $\mathcal{T}_{\texttt{Sync}}$ is the set of tilings $\sigma \colon \NN^2\to A$ with the property that
	\[ \sigma(u+(0,1)) = \sigma(u+(1,0)) \mbox{ for every } u \in \NN^2.  \]
\end{definition}

In other words, $\mathcal{T}_{\texttt{Sync}}$ consists on all tilings in which the antidiagonals are constant.

Finally, we use the two previous constructions to define the computation tileset $\mathcal{T}_{\texttt{Comp},Y}$ of an effectively closed set $Y$ as follows.

\begin{definition}\label{def.ComputationSub}
	The \define{computation tilespace} $\mathcal{T}_{\texttt{Comp},Y}$ associated to an effectively closed set $Y \subseteq A^{\NN}$ is the set of maps $(\tau,\sigma) \in \mathcal{T}_{\texttt{Search},Y} \times \mathcal{T}_{\texttt{Sync}}$ such that for every $u \in \NN^2$, $i \in \{1,2\}$ and $a \in A$, if $\tau(u+(1,0)) = \texttt{tile}_i(a,\sqcup)$ then $\sigma(u)=a$.
\end{definition}

We shall write down the two important properties of tilings in $\mathcal{T}_{\texttt{Comp},Y}$ in the following two propositions.

\begin{proposition}\label{prop.Comp}
	Let $(\tau,\sigma) \in \mathcal{T}_{\texttt{Comp},Y}$. We have that $(\sigma(0,n))_{n \in \NN} = (\sigma(n,0))_{n \in \NN} \in Y$.
\end{proposition}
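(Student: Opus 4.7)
The plan is to prove the two assertions separately. For the equality $(\sigma(0,n))_{n \in \NN} = (\sigma(n,0))_{n \in \NN}$ I use only the property $\sigma \in \mathcal{T}_{\texttt{Sync}}$: a short induction on $k \in \{0,\dots,n\}$, applying the antidiagonal relation $\sigma(u+(0,1)) = \sigma(u+(1,0))$ at $u = (k, n-k-1)$, gives the chain
\[
\sigma(0,n) = \sigma(1,n-1) = \cdots = \sigma(n,0).
\]

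For the second assertion, set $y_n := \sigma(n,0)$. The strategy is to identify the bottom row of $\tau$ with the encoded initial tape of $\mathcal{M}_{\texttt{Search},Y}$ on input $(y_n,\sqcup)_{n\in\NN}$, and then exploit the prohibition of the accepting state. Starting from $\tau(0,0) = \texttt{seed}$ and inspecting the edge labels of $\mathcal{W}_{\mathcal{M}_{\texttt{Search},Y}}$, the Wang matching rules force $\tau(1,0) = \texttt{tile}_1(a_0, b_0)$ and $\tau(n,0) = \texttt{tile}_2(a_{n-1}, b_{n-1})$ for $n \geq 2$, for some $a_n \in A$ and $b_n \in \Sigma'$; the forbidden-tile clause of $\mathcal{T}_{\texttt{Search},Y}$ then pins every $b_n$ to $\sqcup$. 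Applying the compatibility condition of Definition~\ref{def.ComputationSub} at $u = (n,0)$ gives $y_n = \sigma(n,0) = a_n$, so the bottom row of $\tau$ is precisely the encoded initial configuration of $\mathcal{M}_{\texttt{Search},Y}$ on input $(y_n, \sqcup)_{n \in \NN}$, with the head in state $q_0$ at position $0$.

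By the standard Wang-tile simulation of Turing machines built into $\mathcal{W}_{\mathcal{M}}$, the entire tiling $\tau$ then encodes the space-time diagram of $\mathcal{M}_{\texttt{Search},Y}$ run on this input. The prohibition of any tile carrying the final state $q_F$ means the computation never reaches an accepting configuration, and by the construction of $\mathcal{M}_{\texttt{Search},Y}$ recalled just before Definition~\ref{def.ComputationSub}, this non-acceptance is equivalent to $(y_n)_{n \in \NN} \in Y$. I do not expect any serious difficulty here; the one point that deserves care is checking that the bottom row propagates deterministically through $\tau$ to produce the space-time diagram, which is a classical property already built into the tileset $\mathcal{W}_{\mathcal{M}}$.
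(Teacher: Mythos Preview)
Your proposal is correct and follows essentially the same approach as the paper: first use the antidiagonal constraint of $\mathcal{T}_{\texttt{Sync}}$ to get $\sigma(0,n)=\sigma(n,0)$, then read off the bottom row of $\tau$ via the seeding and compatibility rules to identify $(\sigma(n,0))_{n\in\NN}$ with the input to $\mathcal{M}_{\texttt{Search},Y}$, and conclude from the absence of $q_F$ that this sequence lies in $Y$. If anything, your indexing of the bottom row (with $\texttt{seed}$ at $(0,0)$ and $\texttt{tile}_1$ at $(1,0)$) is stated more carefully than in the paper's own write-up.
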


\begin{proof}
	Let $(\tau,\sigma) \in \mathcal{T}_{\texttt{Comp},Y}$. From the definition of $\mathcal{T}_{\texttt{Sync}}$ it follows that $\sigma(0,n)=\sigma(n,0)$ for all $n\in\NN$, therefore it suffices to show that $(\sigma(n,0))_{n \in \NN} \in Y$
	
	By definition of $\mathcal{T}_{\texttt{Search},Y}$, we have $\tau(0,0) = \texttt{tile}_1(a_0,\sqcup)$ for some $a_0 \in A$. By the local rules of the tiling we have that for $n\geq 1$, $\tau(n,0)= \texttt{tile}_2(a_n)$ for some $a_n\in A$. Therefore the tiling $\tau$ corresponds to the space time diagram of the computation of $\mathcal{M}_{\texttt{search},Y}$ on the input $(a_n,\sqcup)_{n \in \NN}$. As we removed the symbol $q_F$, it follows that $(a_n)_{n \in \NN} \in Y$. Since $a_n=\sigma(n,0)$ for every $n \in \NN$ by Definition~\ref{def.ComputationSub}, we deduce that $(\sigma(n,0))_{n \in \NN} \in Y$. 
\end{proof}

\begin{proposition}\label{prop.Complete}
	For every $y \in Y$ there exists $(\tau,\sigma) \in \mathcal{T}_{\texttt{Comp},Y}$ so that $y = (\sigma(n,0))_{n \in \NN}$
\end{proposition}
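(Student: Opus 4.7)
The plan is to directly construct $(\tau,\sigma)$ from a given $y = (y_n)_{n \in \NN} \in Y$, using the standard encoding of the space-time diagram of a Turing machine as a Wang tiling.

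First I would define $\tau$ as the Wang tiling corresponding to the run of $\mathcal{M}_{\texttt{search},Y}$ on the input sequence $(y_n,\sqcup)_{n \in \NN}$. Concretely, $\tau(0,0) = \texttt{seed}$, $\tau(1,0) = \texttt{tile}_1(y_0,\sqcup)$, and $\tau(n,0) = \texttt{tile}_2(y_{n-1},\sqcup)$ for $n \geq 2$; the remaining rows are then uniquely determined by reading off successive configurations of the machine and translating the head position, state, and tape symbols into the tiles of Figure~\ref{fig:Wang_turingo_no_machina}. By construction the local Wang constraints are satisfied. To check $\tau \in \mathcal{T}_{\texttt{Search},Y}$, I would verify the three restrictions in turn: the seed appears at the origin by definition; the tiles $\texttt{tile}_i(a,b)$ in $\tau$ only occur on the bottom row (their color pattern is incompatible with the rest), and there each such tile has $b = \sqcup$ since the second component of the input is $\sqcup$; finally, because $y \in Y$, the machine $\mathcal{M}_{\texttt{search},Y}$ never reaches $q_F$ on this input, so no tile carrying $q_F$ appears in $\tau$.

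Next I would define $\sigma \colon \NN^2 \to A$ by $\sigma(i,j) = y_{i+j}$. This trivially lies in $\mathcal{T}_{\texttt{Sync}}$: for every $u = (i,j) \in \NN^2$ we have $\sigma(u + (0,1)) = y_{i+j+1} = \sigma(u + (1,0))$, so the antidiagonals are constant. It remains to verify the compatibility condition in Definition~\ref{def.ComputationSub}. Suppose $\tau(u + (1,0)) = \texttt{tile}_i(a,\sqcup)$; by the preceding paragraph, this forces $u + (1,0) = (n+1,0)$ for some $n \geq 0$, so $u = (n,0)$ and $a = y_n$. Then $\sigma(u) = \sigma(n,0) = y_{n+0} = y_n = a$, as required.

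Hence $(\tau,\sigma) \in \mathcal{T}_{\texttt{Comp},Y}$, and from $\sigma(n,0) = y_n$ for every $n \in \NN$ we recover $y = (\sigma(n,0))_{n \in \NN}$. There is no genuine obstacle here: the only point requiring a little care is observing that the tiles $\texttt{tile}_i(a,\sqcup)$ are pinned to the bottom row by the color constraints, so the compatibility condition only has to be checked there.
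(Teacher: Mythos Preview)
Your proof is correct and follows essentially the same route as the paper's: define $\sigma(i,j)=y_{i+j}$, take $\tau$ to be the unique seeded tiling encoding the run of $\mathcal{M}_{\texttt{search},Y}$ on $(y_n,\sqcup)_{n\in\NN}$, and note that $y\in Y$ prevents $q_F$ from appearing. Your version is simply more explicit in describing the bottom row of $\tau$ and in checking the compatibility condition of Definition~\ref{def.ComputationSub} (the paper leaves this implicit), but there is no substantive difference in approach.
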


\begin{proof}
	Fix $y \in Y$. Let $\sigma \in A^{\NN^2}$ be given by $\sigma(n-k,k)= y_n$ for all $n\in\NN$ and $k \leq n$. It is clear that $\sigma \in \mathcal{T}_{\texttt{Sync}}$.

	Since $y \in Y$, it follows that $\mathcal{M}_{\texttt{search},Y}$ on $(y_n,\sqcup)_{n \in \NN}$ does not reach the state $q_F$ and thus, if we let $\tau$ be the only configuration in $\mathcal{T}_{\texttt{Search},Y}$ which is compatible with $(y_n,\sqcup)_{n \in \NN}$, then the pair $(\tau,\sigma)$ satisfies the constraint of Definition~\ref{def.ComputationSub} with respect to $\sigma$ and therefore $(\tau,\sigma)\in \mathcal{T}_{\texttt{Comp},Y}$.
\end{proof}

In the next step of the proof, we shall overlay the computation tilings in the grids induced by the paradoxical subshifts in a product of two non-amenable groups. The role of the synchronization tilespace shall be to allow communication between these two components.

\subsection{Proof of Theorem~\ref{thm:selfsimulation}} For the remainder of the section, let $\Gamma_H$, $\Gamma_V$ be two finitely generated non-amenable groups. Fix generating sets $S_H, S_V$ and let $\Gamma = \Gamma_H \times \Gamma_V$ be their direct product.

Let $\Gamma \curvearrowright X \subseteq \{\symb{0},\symb{1}\}^\NN$ be an effectively closed action of $\Gamma$. Consider the following set of generators of $\Gamma$ \[S = \left(S_H \times \{1_{\Gamma_V}\} \right)\cup  \left(\{1_{\Gamma_H}\}\times S_V \right).\]  Let $A = \{\symb{0},\symb{1}\}^S$ and $Y_{\Gamma \curvearrowright X, S} \subseteq A^\NN$ be the set representation of $\Gamma \curvearrowright X$. We recall that $Y_{\Gamma \curvearrowright X, S}$ is an effectively closed subset of $A^\NN$ and that, for $s \in S$ and $y \in Y_{\Gamma \curvearrowright X, S}$, we denote by $\pi_s(y)$ the configuration $(y_n(s))_n \in X$. For simplicity of notation, and coherently with the last section, we shall denote $Y_{\Gamma \curvearrowright X, S}$ simply by $Y$.

By Proposition~\ref{prop:paradox_nonempty}, there exist symmetric finite sets $K_H \Subset \Gamma_H$ and $K_V \Subset \Gamma_V$ which contain the identity and so that the paradoxical subshifts defined on $\Gamma_H$ and $\Gamma_V$ with respect to $K_H$ and $K_V$ are nonempty. Let us denote the corresponding paradoxical subshifts respectively by $\paradox_H$ and $\paradox_V$. Consider their trivial extensions to $\Gamma$ as follows \begin{align*}
	\widetilde{\paradox}_H & = \{ \rho \in (A_{K_H})^{\Gamma} : \left(\rho(g,1_{\Gamma_V})\right)_{g \in \Gamma_H} \in \paradox_H, \\ & \quad \quad \mbox{ and } \rho(g,g') = \rho(g,1_{\Gamma_{V}}) \mbox{ for every } (g,g') \in \Gamma  \},\\
	\widetilde{\paradox}_V & = \{ \rho' \in (A_{K_V})^{\Gamma} : \left(\rho'(1_{\Gamma_H},g)\right)_{g \in \Gamma_V}  \in \paradox_V, \\ & \quad \quad  \mbox{ and } \rho'(g,g') = \rho'(1_{\Gamma_{H}},g') \mbox{ for every } (g,g') \in \Gamma \}.
\end{align*}

It is clear that $\widetilde{\paradox}_H$ and $\widetilde{\paradox}_V$ are both $\Gamma$-subshifts of finite type. Consider the $\Gamma$-subshift of finite type $\widetilde{\paradox} = \widetilde{\paradox}_H \times \widetilde{\paradox}_V$. It follows that for every $\widetilde{g} = (g,g')\in \Gamma$ there exists a continuous map $\gamma_{\widetilde{g}}\colon \NN^2 \times \widetilde{\paradox} \to \Gamma$ given by \[ \gamma_{\widetilde{g}}((n,n'),(\rho,\rho')) = \left(\gamma^H_{g}(n, \rho), \gamma^V_{g'}(n', \rho')  \right).   \]
Where $\gamma^H$ and $\gamma^V$ are the maps defined in Section~\ref{sec:paradox} which are determined by the restrictions of $\rho$ and $\rho'$ to $\Gamma_H \times \{1_{V}\}$ and to $\{1_{\Gamma_H}\}\times \Gamma_V$ respectively.

It follows from~Lemma~\ref{lem:injectivepaths} that for every ${\rho} \in \widetilde{\paradox}$ the map $(u,\widetilde{g})\mapsto \gamma_{\widetilde{g}}(u,{\rho})$ is injective, where $u \in \NN^2$ and $\widetilde{g} \in \Gamma$.

Recall that the computation tilespace $\mathcal{T}_{\texttt{Comp},Y}$ is a subset of all tuples $(\tau,\sigma) \in \mathcal{T}_{\texttt{Search},Y} \times \mathcal{T}_{\texttt{Sync}}$. Denote by $B$ the alphabet of all tiles which occur in some tiling in $\mathcal{T}_{\texttt{Search},Y}$. Also recall that given $\symb{t}\in \{\symb{G},\symb{B} \}$, $\overline{\symb{t}}$ denotes the opposite color. For $\rho \in \widetilde{\paradox}$, write $\rho = ({\rho}_H,{\rho}_V)\in \widetilde{\paradox}_H \times \widetilde{\paradox}_V$. Denote also by $\symb{t}_H(g)$,$\symb{t}_V(g)$ the colors of ${\rho}_H(g)$ and ${\rho}_V(g)$ respectively.

\begin{definition}
	The \define{final} $\Gamma$-subshift $Z_{\texttt{Final}}$ is the set of configurations $z=(\rho,\tau,\sigma) \in \widetilde{\paradox}\times B^{\Gamma} \times A^{\Gamma}$ which satisfy the following constraints:
	\begin{enumerate}
		\item \textbf{Embedding rule:} For every $g \in \Gamma$, $z$ respects the local rules of $\mathcal{T}_{\texttt{Comp},Y}$. More precisely, for every $g \in \Gamma$,
		\begin{enumerate}
			\item $\tau( gL_{\overline{\symb{t}_H(g)}}(\rho_H(g)) ,gL_{\overline{\symb{t}_V(g)}}(\rho_V(g))   ) = \texttt{seed}$.
			\item The right side of the tile $\tau(g)$ coincides with the left side of the tile $\tau(gL_{{\symb{t}_H(g)}}(\rho_H(g)))$.
			\item The upper side of the tile $\tau(g)$ coincides with the bottom side of the tile $\tau(gL_{{\symb{t}_V(g)}}(\rho_V(g)))$.
			\item $\sigma(gL_{{\symb{t}_H(g)}}(\rho_H(g))) = \sigma(gL_{{\symb{t}_V(g)}}(\rho_V(g)))$.
			\item If $\tau(gL_{{\symb{t}_H(g)}}(\rho_H(g))) = \texttt{tile}_i(a,\sqcup)$ for some $a \in A$, then $\sigma(g) = a$.
		\end{enumerate} 
		\item \textbf{Coherence rule:} For every $g \in \Gamma$,
		\begin{enumerate}
			\item  For every $s \in S_H\times  \{1_{\Gamma_V}\}$,  \[\sigma(gL_{\overline{\symb{t}_H}}(\rho_H(g)))(s)=\sigma(gs^{-1}L_{\overline{\symb{t}_H(gs^{-1})}}(\rho_H(gs^{-1})))(1_{\Gamma}).\]
			
			\item For every $s' \in  \{1_{\Gamma_H}\} \times S_V$, \[\sigma(gL_{\overline{\symb{t}_V}}(\rho_V(g)))(s')=\sigma(gs'^{-1}L_{\overline{\symb{t}_V(gs'^{-1})}}(\rho_V(gs'^{-1})))(1_{\Gamma}).\]
		\end{enumerate}
	\end{enumerate}
\end{definition}

The following proposition translates the technical description of $Z_{\texttt{Final}}$ into the properties we shall use to prove our result.

\begin{proposition}\label{claim:sft}
	$Z_{\texttt{Final}}$ is a $\Gamma$-subshift of finite type which satisfies:
	\begin{enumerate}
		\item For every $g \in \Gamma$, 	\[ \left(\tau(\gamma_g(u,\rho)),\sigma(\gamma_g(u,\rho))\right)_{u \in \NN^2}\in \mathcal{T}_{\texttt{Comp},Y}.   \]
		\item For every $g \in \Gamma$, $s\in S$ and $u \in \NN^2$ we have \[ \sigma(\gamma_g(u,\rho))(s) = \sigma(\gamma_{gs^{-1}}(u,\rho))(1_{\Gamma}).  \]
	\end{enumerate}
\end{proposition}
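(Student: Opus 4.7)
The plan is to proceed in three parts.

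\emph{First}, I would show $Z_{\texttt{Final}}$ is a $\Gamma$-SFT: each defining rule only mentions positions within a uniformly bounded neighborhood of each $g\in\Gamma$ (determined by the finite set $\{1_\Gamma\}\cup K_H\cup K_V\cup S$), so enumerating the finitely many finite patterns that violate each rule yields a finite set of forbidden patterns over the finite alphabet $A_{K_H}\times A_{K_V}\times B\times A$.

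\emph{Second}, to prove property (1), I would fix $g\in\Gamma$ and set $\widetilde{\tau}(u)=\tau(\gamma_g(u,\rho))$ and $\widetilde{\sigma}(u)=\sigma(\gamma_g(u,\rho))$. The bookkeeping lemma driving everything is the observation that by Remark~\ref{rem:inverse_equation} every vertex $\gamma^H_{g_H}(n,\rho_H)$ carries color $\overline{\symb{t}_H(g)}$, so the $L_{\overline{\symb{t}_H(g)}}$-step by which $\gamma^H$ advances is simultaneously an $L_{\symb{t}_H(\cdot)}$-step in the current vertex's own color. Consequently embedding rule (b) applied at $\gamma_g((n,m),\rho)$ produces exactly the Wang horizontal compatibility between $\widetilde{\tau}(n,m)$ and $\widetilde{\tau}(n+1,m)$; rule (c) handles the vertical case; rule (a) places the seed at $\widetilde{\tau}(0,0)$; and the alphabet $B$ excludes forbidden tiles from the outset, yielding $\widetilde{\tau}\in\mathcal{T}_{\texttt{Search},Y}$. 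The same identification shows that rule (d) forces $\widetilde{\sigma}(n+1,m)=\widetilde{\sigma}(n,m+1)$, so $\widetilde{\sigma}\in\mathcal{T}_{\texttt{Sync}}$, while rule (e) supplies the coupling from Definition~\ref{def.ComputationSub}.

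\emph{Third}, for property (2), rule (d) iterated implies that within each grid $\sigma\circ\gamma_g$ is constant on antidiagonals, so $\sigma(\gamma_g((n,m),\rho))=\sigma(\gamma_g((0,n+m),\rho))$, and analogously after replacing $g$ by $gs^{-1}$. For $s=(s_H,1_{\Gamma_V})\in S_H\times\{1_{\Gamma_V}\}$, coherence rule (a) applied at $\tilde{g}=(g_H,\gamma^V_{g_V}(n+m,\rho_V))$ has its two positions coinciding respectively with $\gamma_g((0,n+m),\rho)$ and $\gamma_{gs^{-1}}((0,n+m),\rho)$, so composing with the antidiagonal sync identities on both sides yields $\sigma(\gamma_g((n,m),\rho))(s)=\sigma(\gamma_{gs^{-1}}((n,m),\rho))(1_\Gamma)$. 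The case $s\in\{1_{\Gamma_H}\}\times S_V$ would be symmetric, using coherence rule (b) at $\tilde{g}=(\gamma^H_{g_H}(n+m,\rho_H),g_V)$ and reducing via sync to $(n+m,0)$ instead.

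\emph{The main obstacle} I expect is the color bookkeeping in the second part: the paradoxical subshift treats $L_{\symb{t}}$ and $L_{\overline{\symb{t}}}$ asymmetrically, and the correspondence between $\gamma_g$-steps and the embedding/coherence rule directions only becomes transparent once one invokes the color-alternation identity of Remark~\ref{rem:inverse_equation}. Once this is in hand, the remaining verification is a mechanical unpacking of the definitions.
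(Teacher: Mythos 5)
Your proof is correct and takes essentially the same route as the paper's. The paper's verification of property (1) also proceeds by identifying $g'L_{\symb{t}_H(g')}(\rho_H(g'))$ and $g'L_{\symb{t}_V(g')}(\rho_V(g'))$ as the right and up neighbors of $g' = \gamma_g(u,\rho)$ in the grid, and its verification of property (2) likewise reduces to the $(0,m)$ (resp.\ $(m,0)$) case via $\mathcal{T}_{\texttt{Sync}}$ before applying coherence rule (2a) (resp.\ (2b)) at $(g_H,\gamma^V_{g_V}(m,\rho_V))$ (resp.\ the symmetric point); the only difference is that you make the color-alternation bookkeeping via Remark~\ref{rem:inverse_equation} explicit, while the paper leaves it implicit in the claim that those positions are ``directly right and up'' in the grid.
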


In other words, a configuration $z=(\rho,\tau,\sigma)$ in $Z_{\texttt{Final}}$ satisfies that (1) for every $g \in \Gamma$, the restriction of $(\rho,\tau)$ to the $\NN^2$-grid $\gamma_g(\NN^2,\rho)$ is a valid seeded tiling of $\mathcal{T}_{\texttt{Comp},Y}$, and (2), when moving by a generator $s$ of $\Gamma$, the component $\pi_{s}(y)$ of the element $y \in Y$ read from the grid of $g$ coincides with the component $\pi_{1_{\Gamma}}(y')$ of the element $y'\in Y$ read from the grid at $gs^{-1}$.

\begin{proof}
	From the definition of $Z_{\texttt{Final}}$ it is clear that it is a $\Gamma$-subshift of finite type. Indeed, $\widetilde{\paradox}$ is a $\Gamma$-subshift of finite type and all of the other rules are clearly local. 
	
	Let $z = (\rho,\tau,\sigma) \in Z_{\texttt{Final}}$ and fix $g \in \Gamma$. In order to prove the first property, notice that the element $\gamma_g((0,0),\rho)$ is given by the pair \[ \gamma_g((0,0),\rho) = ( gL_{\overline{\symb{t}_H(g)}}({\rho}_H(g)) ,gL_{\overline{\symb{t}_V(g)}}({\rho}_V(g))   ).    \]
	
	Therefore rule (1a) is stating that $\tau(\gamma_g((0,0),\rho))=\texttt{seed}$. Similarly, the positions $gL_{{\symb{t}_H(g)}}(\rho_H(g))$ and $gL_{{\symb{t}_V(g)}}(\rho_V(g))$ are respectively the ones directly right and up from position $g$, therefore rules (1b) and (1c) implement the local rules of a Wang tileset, namely, that colors must match horizontally and vertically. As we let $B$ be the set of all tiles which occur in $\mathcal{T}_{\texttt{Search},Y}$, these three rules already imply that $\left(\tau(\gamma_g(u,\rho))\right)_{u \in \NN^2}\in \mathcal{T}_{\texttt{Search},Y}. $ 
	
	The same argument as above shows that rule (1d) implements the fact that antidiagonals are constant in $\mathcal{T}_{\texttt{Sync}}$ and (1e) gives the rule that defines $\mathcal{T}_{\texttt{Comp},Y}$. From here it follows that rules (1a)-(1e) imply that \[\left(\tau(\gamma_g(u,\rho)),\sigma(\gamma_g(u,\rho))\right)_{u \in \NN^2}\in \mathcal{T}_{\texttt{Comp},Y}.\]
	
	Let us now show the second property. Let $s \in S$. Given $(n,n') \in \NN^2$, we need to show that \[ \sigma(\gamma_{g}((n,n'),\rho))(s) = \sigma(\gamma_{gs^{-1}}((n,n'),\rho))(1_{\Gamma}).  \]
	
	By definition of $S$, either $s \in S_H \times \{1_{\Gamma_V}\}$ or $s \in \{1_{\Gamma_H}\} \times S_V$. Suppose $s \in S_H \times \{1_{\Gamma_V}\}$. By definition of $\mathcal{T}_{\texttt{Sync}}$ it suffices to show that for every $m \in \NN$, \[ \sigma(\gamma_{g}((0,m),\rho))(s) = \sigma(\gamma_{gs^{-1}}((0,m),\rho))(1_{\Gamma}).  \]
	
	Write $g = (g_1,g_2)$ and note that, \begin{align*}
		\gamma_{g}((0,m),\rho) & = (\gamma^H_{g_1}(0,\rho_H),\gamma^V_{g_2}(m,\rho_V))\\
		& = (g_1L_{\overline{\symb{t}_H(g)}}({\rho}_H(g)),\gamma^V_{g_2}(m,\rho_V))\\
		& = (g_1,\gamma^V_{g_2}(m,\rho_V))L_{\overline{\symb{t}_H(g)}}({\rho}_H(g)).
	\end{align*} and 
	\begin{align*}
		\gamma_{gs^{-1}}((0,m),\rho) & = (\gamma^H_{g_1s^{-1}}(0,\rho_H),\gamma^V_{g_2}(m,\rho_V))\\
		& = (g_1s^{-1}L_{\overline{\symb{t}_H(gs^{-1})}}({\rho}_H(gs^{-1})),\gamma^V_{g_2}(m,\rho_V))\\
		& = (g_1,\gamma^V_{g_2}(m,\rho_V))s^{-1}L_{\overline{\symb{t}_H(gs^{-1})}}({\rho}_H(gs^{-1})).
	\end{align*}
	
	From rule (2a), we deduce that \begin{align*}
		[\sigma(\gamma_{g}((0,m),\rho))](s) & = [\sigma((g_1,\gamma^V_{g_2}(m,\rho_V))L_{\overline{\symb{t}_H(g)}}({\rho}_H(g)))   ](s)\\
		& = [\sigma( (g_1,\gamma^V_{g_2}(m,\rho_V))s^{-1} L_{\overline{\symb{t}_H(gs^{-1})}}({\rho}_H(gs^{-1})))   ](1_\Gamma)\\
		& = [\sigma( \gamma_{gs^{-1}}((0,m),\rho))   ](1_\Gamma).\\
	\end{align*} 
	Which is exactly what we wanted. An analogous argument using property (2b) shows that if $s' \in \{1_{\Gamma_H}\} \times S_V$, then for every $m \in \NN$ \[ \sigma(\gamma_{g}((m,0),\rho))(s') = \sigma(\gamma_{gs'^{-1}}((m,0),\rho))(1_\Gamma).  \]
	Again, by the definition of $\mathcal{T}_{\texttt{Sync}}$ this is enough to obtain what we want.\end{proof}

Finally, consider the map
\[\begin{array}{crcl}
	\phi\colon &Z_\texttt{Final}&\longrightarrow&\{\symb{0},\symb{1}\}^\NN\\
	&(\rho,\tau,\sigma)&\longmapsto & \left(\sigma(\gamma_{1_\Gamma}((n,0),\rho))(1_{\Gamma})\right)_{n\in\NN}
\end{array}\]

We will show that $\phi$ is a topological factor map from $\Gamma \curvearrowright Z_\texttt{Final}$ to $\Gamma \curvearrowright X$. It is clear from the definition that $\phi$ is a continuous map.
\begin{claim}\label{claim:subset}
	$\phi(Z_\texttt{Final}) \subseteq X$.
\end{claim}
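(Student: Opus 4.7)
The plan is to chain together the two properties already established in Proposition~\ref{claim:sft} with the structural property of the computation tilespace from Proposition~\ref{prop.Comp}, and then invoke the definition of the set representation $Y$ to conclude. Fix $z = (\rho,\tau,\sigma) \in Z_\texttt{Final}$ and set
\[
y = \left( \sigma(\gamma_{1_\Gamma}((n,0),\rho)) \right)_{n \in \NN} \in A^\NN,
\]
where we recall that $A = \{\symb{0},\symb{1}\}^S$, so each $y_n$ is a function $S \to \{\symb{0},\symb{1}\}$. I would like to show that $y \in Y$, since then by definition of the set representation $\pi_{1_\Gamma}(y) \in X$, and unpacking the projection gives precisely $\pi_{1_\Gamma}(y) = \phi(z)$, yielding the claim.

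The first step is to apply property (1) of Proposition~\ref{claim:sft} with $g = 1_\Gamma$: the restriction of $(\tau,\sigma)$ to the $\NN^2$-grid traced out by $\gamma_{1_\Gamma}(\cdot,\rho)$ lies in $\mathcal{T}_{\texttt{Comp},Y}$. Proposition~\ref{prop.Comp} then says that the sequence read along the bottom row of this grid belongs to $Y$; this sequence is exactly $y$, so $y \in Y$.

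The final step is just to observe that $\pi_{1_\Gamma}(y)_n = y_n(1_\Gamma) = \sigma(\gamma_{1_\Gamma}((n,0),\rho))(1_\Gamma)$, which is by definition the $n$-th coordinate of $\phi(z)$. Since $y \in Y$ implies $\pi_{1_\Gamma}(y) \in X$, we conclude $\phi(z) \in X$. I do not anticipate any real obstacle: the work has already been done in setting up the embedding and coherence rules of $Z_\texttt{Final}$ so that property (1) of Proposition~\ref{claim:sft} holds; property (2) is not needed for this containment (it will be needed later, when verifying that $\phi$ is $\Gamma$-equivariant and surjective).
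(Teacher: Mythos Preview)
Your proof is correct and follows essentially the same approach as the paper: apply property~(1) of Proposition~\ref{claim:sft} at $g = 1_\Gamma$ to place the grid in $\mathcal{T}_{\texttt{Comp},Y}$, invoke Proposition~\ref{prop.Comp} to conclude $y \in Y$, and then read off $\phi(z) = \pi_{1_\Gamma}(y) \in X$. Your observation that property~(2) is not needed here (it is used for equivariance in Claim~\ref{claim:equiv}) is also correct.
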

\begin{proof}
	Let $(\rho,\tau,\sigma)\in Z_\texttt{Final}$. By the embedding rule, we have that \[(\tau(\gamma_{1_{\Gamma}}(u,\rho)),\sigma(\gamma_{1_{\Gamma}}(u,\rho)))_{u \in \NN^2}\in \mathcal{T}_{\texttt{comp},Y}.\] By Proposition~\ref{prop.Comp}, we deduce that $\left(\sigma(\gamma_{1_\Gamma}((n,0),\rho))\right)_{n\in\NN}\in Y$. Thus $\phi(\rho,\tau,\sigma)\in X$.
\end{proof}

\begin{claim}\label{claim:equiv}
	$\phi$ is $\Gamma$-equivariant.
\end{claim}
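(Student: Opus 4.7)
The plan is to establish equivariance first at a single generator $s \in S$ and then extend to the full group by induction on monoid word length in $S$. Three ingredients are needed: formula $(\heartsuit)$, property (2) of Proposition~\ref{claim:sft}, and the combination of Proposition~\ref{claim:sft}.(1) with Proposition~\ref{prop.Comp}, which says that the string read along the $(n,0)$-diagonal of the grid anchored at $1_\Gamma$ is an element of the set representation $Y$.

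First I would expand $\phi(g\cdot z)_n$ for arbitrary $g \in \Gamma$ and $z = (\rho,\tau,\sigma) \in Z_{\texttt{Final}}$. Since $(g\sigma)(h) = \sigma(g^{-1}h)$ and since applying $(\heartsuit)$ with $g^{-1}$ in place of $g$ rearranges to $\gamma_{1_\Gamma}(n, g\rho) = g\,\gamma_{g^{-1}}(n, \rho)$, one obtains the compact identity
\[
\phi(g\cdot z)_n \;=\; \sigma\bigl(\gamma_{g^{-1}}((n,0),\rho)\bigr)(1_\Gamma).
\]
Specializing to $g = s \in S$ and invoking property (2) of Proposition~\ref{claim:sft} with the ``$g$'' of that statement set to $1_\Gamma$ turns this into $\phi(s\cdot z)_n = \sigma(\gamma_{1_\Gamma}((n,0),\rho))(s)$.

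Next, set $y^\ast(n) := \sigma(\gamma_{1_\Gamma}((n,0),\rho)) \in A$. Property (1) of Proposition~\ref{claim:sft} places $(\tau(\gamma_{1_\Gamma}(u,\rho)),\sigma(\gamma_{1_\Gamma}(u,\rho)))_u$ in $\mathcal{T}_{\texttt{Comp},Y}$, so Proposition~\ref{prop.Comp} gives $y^\ast \in Y$. By definition of the set representation $Y$ we then have $\pi_s y^\ast = s\,\pi_{1_\Gamma} y^\ast = s\,\phi(z)$; reading off the $n$-th coordinate yields $(s\,\phi(z))_n = y^\ast(n)(s) = \phi(s\cdot z)_n$, hence $\phi(s\cdot z) = s\cdot\phi(z)$ for every $s \in S$.

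Finally, since $S$ generates $\Gamma$ as a monoid, every $g\in\Gamma$ admits a factorization $g = s_k\cdots s_1$ with $s_i \in S$. Using the $\Gamma$-invariance of $Z_{\texttt{Final}}$ together with the equivariance proved at each generator, a routine induction on $k$ gives $\phi(gz)=g\,\phi(z)$ for all $g\in\Gamma$. The only real subtlety is keeping track of inverses when combining $(\heartsuit)$ and property~(2); once those identities are set up correctly, the rest is mechanical.
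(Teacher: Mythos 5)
Your proof is correct and takes essentially the same route as the paper's: the published argument is the same chain of identities --- formula $(\heartsuit)$, property (2) of Proposition~\ref{claim:sft}, and the fact that the word read along the grid anchored at $1_\Gamma$ lies in the set representation $Y$ (via Proposition~\ref{prop.Comp}) --- just written in the reverse direction, starting from $s\phi(z)$ rather than $\phi(sz)$, before invoking generation by $S$. Nothing further is needed.
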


\begin{proof}
	Let $s \in S$ and $(\rho,\tau,\sigma)\in Z_\texttt{Final}$. We have
	\begin{align*}
		s\phi(\rho,\tau,\sigma) & = s\left(\sigma(\gamma_{1_\Gamma}((n,0),\rho)(1_{\Gamma})\right)_{n\in\NN}\\
		& = \left(\sigma(\gamma_{1_\Gamma}((n,0),\rho)(s)\right)_{n\in\NN}\\
		& = \left(\sigma(\gamma_{s^{-1}}((n,0),\rho)(1_{\Gamma})\right)_{n\in\NN}\\
		& = \left(\sigma(s^{-1}\gamma_{1_\Gamma}((n,0),s\rho)(1_{\Gamma})\right)_{n\in\NN}\\
		& = \left(s\sigma(\gamma_{1_\Gamma}((n,0),s\rho)(1_{\Gamma})\right)_{n\in\NN}\\
		& = \phi(s\rho,s\tau,s\sigma).
	\end{align*}
	
	Since $S$ is a generating set for $\Gamma$, it follows that $g\phi(\rho,\tau,\sigma) = \phi(g\rho,g\tau,g\sigma)$ for any $g \in \Gamma$ and thus $\phi$ is $\Gamma$-equivariant.\end{proof}

\begin{claim}\label{claim:surj}
	$\phi$ is surjective.
\end{claim}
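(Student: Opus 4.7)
The plan is to construct, for each $x \in X$, a preimage $(\rho, \tau, \sigma) \in Z_\texttt{Final}$ under $\phi$. I would begin by fixing any $\rho \in \widetilde{\paradox}$, which exists by Proposition~\ref{prop:paradox_nonempty} applied to each of the two factors $\Gamma_H$ and $\Gamma_V$. For every $g \in \Gamma$ I introduce the element $y^g \in Y$ defined by $y^g(n)(s) = (sg^{-1}x)_n$; this lies in $Y$ since $\pi_{1_\Gamma}(y^g) = g^{-1}x \in X$ and $\pi_s(y^g) = s\pi_{1_\Gamma}(y^g)$. By Proposition~\ref{prop.Complete} I select a computation tiling $(\tau^g, \sigma^g) \in \mathcal{T}_{\texttt{Comp}, Y}$ whose bottom row reproduces $y^g$. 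The injectivity of $(g, u) \mapsto \gamma_g(u, \rho)$ (Lemma~\ref{lem:injectivepaths} applied factor-wise) then lets me coherently set $\tau(\gamma_g(u, \rho)) = \tau^g(u)$ and $\sigma(\gamma_g(u, \rho)) = \sigma^g(u)$, leaving positions outside the image of $\gamma$ to be handled separately.

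Next, I would verify that $(\rho, \tau, \sigma) \in Z_\texttt{Final}$. Rule (1a) at $g$ holds because every computation tiling has $\texttt{seed}$ at the origin. For rules (1b)--(1e) at a grid position $g = \gamma_{g_0}((n, m), \rho)$, conditions (1)--(3) of Definition~\ref{def:paradoxical_subshift} together with Remark~\ref{rem:inverse_equation} imply that the neighbors $gL_{\symb{t}_H(g)}(\rho_H(g))$ and $gL_{\symb{t}_V(g)}(\rho_V(g))$ coincide with $\gamma_{g_0}((n+1, m), \rho)$ and $\gamma_{g_0}((n, m+1), \rho)$ respectively, so these rules reduce to the local Wang matching rules of $\mathcal{T}_{\texttt{Search}, Y}$, the antidiagonal sync condition of $\mathcal{T}_{\texttt{Sync}}$, and the identity in Definition~\ref{def.ComputationSub}. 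The coherence rule (2) follows from the identity $y^{gs^{-1}}(n)(1_\Gamma) = ((gs^{-1})^{-1}x)_n = (sg^{-1}x)_n = y^g(n)(s)$, extended to all grid positions via the sync. Finally, $\phi(\rho, \tau, \sigma)_n = \sigma^{1_\Gamma}(n, 0)(1_\Gamma) = y^{1_\Gamma}(n)(1_\Gamma) = x_n$, so $\phi(\rho, \tau, \sigma) = x$.

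The main technical obstacle I anticipate is that the map $(g, u) \mapsto \gamma_g(u, \rho)$ may fail to be surjective onto $\Gamma$, leaving ``orphan'' positions where $\tau$ and $\sigma$ are not yet defined. Fortunately, the neighbor operations $h \mapsto hL_{\symb{t}_H(h)}(\rho_H(h))$ and $h \mapsto hL_{\symb{t}_V(h)}(\rho_V(h))$ preserve grid membership (by Definition~\ref{def:paradoxical_subshift}), so the orphan set is closed under them and disjoint from every grid. One can then fill the orphans with a uniform ``transmission'' tile, whose four edges carry identical labels so that it is self-compatible, together with a constant $\sigma$-value $\sigma_0 \in A$ chosen so that $\sigma_0(s) = \sigma_0(1_\Gamma)$ for every $s \in S$; rules (1b)--(1d) then hold trivially, rule (1e) is vacuous since the transmission tile is neither of the form $\texttt{tile}_1(a, \sqcup)$ nor $\texttt{tile}_2(a, \sqcup)$, and the coherence rule reduces to the chosen equality among constants.
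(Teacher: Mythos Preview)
Your proposal is correct and follows essentially the same approach as the paper: fix $\rho$, use Proposition~\ref{prop.Complete} to place a computation tiling encoding $g^{-1}x$ (equivalently, your $y^g$) on each grid $\gamma_g(\NN^2,\rho)$, fill the remaining positions with a transmission tile and a constant symbol, and verify the embedding and coherence rules. Your treatment of the orphan positions is in fact more careful than the paper's: the paper fixes an arbitrary $a_0 \in A$ for the $\sigma$-value at unreachable positions, whereas you correctly observe that the coherence rule at a position $g$ with $g_2$ outside the range of $\gamma^V$ forces $\sigma_0(s)=\sigma_0(1_\Gamma)$, and you explicitly choose $\sigma_0$ accordingly. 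The claim that the right/up neighbor operations preserve orphan status (so that no mixed orphan--grid adjacency occurs) is the key structural point, and it does follow from the conditions in Definition~\ref{def:paradoxical_subshift} as you indicate.
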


\begin{proof}
	Fix $\rho \in \widetilde{\paradox}$. Let $\mathcal{R}\subseteq \Gamma$ be the set of \define{reachable} $r \in \Gamma$ for which there is $g \in \Gamma$ and $u \in \NN^2$ such that $r = \gamma_g(u,\rho)$. Since the map $(g,u) \mapsto \gamma_g(u,\rho)$ is injective (by Lemma~\ref{lem:injectivepaths}), we may denote unambiguously these values by $u(r)$ and $g(r)$ for $r \in \mathcal{R}$.
	
	Consider an arbitrary $x = (x_n)_{n \in \NN}\in X$ and let $y = (y_n)_{n \in \NN} \in Y$ such that $y_n(1_{\Gamma})=x_n$ for every $n \in \NN$. Consider the natural action $\Gamma \curvearrowright Y$ such that $(gy)_n(1_{\Gamma}) = (g(y_n(1_{\Gamma}))_{n \in \NN})_n$ for every $y \in Y$ and $g \in \Gamma$. By Proposition~\ref{prop.Complete}, for every $g \in \Gamma$ there exists $(\tau^g,\sigma^g) \in \mathcal{T}_{\texttt{Comp},Y}$ so that $\sigma^g(n,0) = (g^{-1}y)_n$ for every $n \in \NN$. 
	
	Let us fix $a_0 \in A$ and $w_0 \in B$ given by \[\begin{tikzpicture}[scale =1.8]
		\node at (-1,0) {$w_0 = $};
		\begin{scope}[shift = {(0,0)}]
			\clip (-0.5,-0.5) rectangle (+0.5,+0.5);
			\draw[black, fill=white] (-0.5,0.5)--(0,0)--(-0.5,-0.5)--cycle;
			\draw[black, fill=white] (-0.5,-0.5)--(0,0)--(+0.5,-0.5)--cycle;
			\draw[black, fill=white] (0.5,-0.5)--(0,0)--(+0.5,+0.5)--cycle;
			\draw[black, fill=white] (0.5,0.5)--(0,0)--(-0.5,+0.5)--cycle;
			\node at (0,0.35) {\scalebox{0.8}{\textbf{($a_0,\sqcup$)}}};
			\node at (0,-0.35) {\scalebox{0.8}{\textbf{($a_0,\sqcup$)}}};
		\end{scope}	
	\end{tikzpicture}.  \]
	
	Let us define $\tau \in B^{\Gamma}$ and $\sigma \in A^{\Gamma}$ as follows.
	\[  \tau(h) = \begin{cases}
		\tau^{g(h)}(u(h)) & \mbox{ if } h \in \mathcal{R}.\\
		w_0 & \mbox{ if } h \notin \mathcal{R}.
	\end{cases}     \]
	\[  \sigma(h) = \begin{cases}
		\sigma^{g(h)}(u(h)) & \mbox{ if } h \in \mathcal{R}.\\
		a_0 & \mbox{ if } h \notin \mathcal{R}.
	\end{cases}     \] 
	
	By construction, it follows that \[ \phi(\rho,\tau,\sigma) = \left(\sigma(\gamma_{1_\Gamma}((n,0),\rho)(1_{\Gamma})\right)_{n\in\NN} = (\sigma^{1_{\Gamma}}(n,0)(1_{\Gamma}))_{n \in \NN} = (y_n (1_{\Gamma}))_{n \in \NN} = (x_n)_{n \in \NN}.  \]
	It suffices to check that $(\rho,\tau,\sigma) \in Z_\texttt{Final}$. It is straightforward to check that the embedding rule holds. Let us check the coherence rule, let $g \in G, s \in S$ and $(n,n') \in \NN^2$. On the one hand we have 
	\[ \sigma(\gamma_g((n,n'),\rho)) = \sigma^g(n,n') = \sigma^g(n+n',0) = (g^{-1}y)_{n+n'}.   \]
	On the other hand we have \[ \sigma(\gamma_{gs^{-1}}((n,n'),\rho)) = \sigma^{gs^{-1}}(n,n') = \sigma^{gs^{-1}}(n+n',0) = (sg^{-1}y)_{n+n'}.  \]
	We obtain that \[ \sigma(\gamma_g((n,n'),\rho))(s) = (g^{-1}y)_{n+n'}(s) = (sg^{-1}y)_{n+n'}(1_{\Gamma}) = \sigma(\gamma_{gs^{-1}}((n,n'),\rho))(1_{\Gamma}). \]
	Which is the coherence rule. It follows that $(\rho,\tau,\sigma) \in Z_\texttt{Final}$ and thus $\phi$ is surjective.\end{proof}

Putting together Claims~\ref{claim:subset},~\ref{claim:equiv} and~\ref{claim:surj} we have that $\phi \colon Z_{\texttt{Final}} \to X$ is a topological factor map. By Claim~\ref{claim:sft} $Z_{\texttt{Final}}$ is a $\Gamma$-SFT. This completes the proof of Theorem~\ref{thm:selfsimulation}.

\section{Extensions of self-simulable groups}\label{sec:stability_properties}

The goal of this section is to study conditions under which a group $\Gamma$, which contains in some form a self-simulable group $\Delta$, is self-simulable. Our main result is that if $\Gamma$ is recursively presented and the ``adjacent'' cosets of a subgroup $\Delta$ under a suitable generating set $T$ of $\Gamma$ have ``enough space to communicate'', then $\Gamma$ is also self-simulable. Intuitively, in order to communicate an entire configuration between elements $g$ and $gt$ of $\Gamma$ we need that the cosets $g\Delta$ and $gt\Delta$ are ``close'' in infinitely many places. Our endeavor will be to find algebraic conditions which ensure that the former property holds. Let us mention that normality of $\Delta$ is enough, and we shall indeed give a less restrictive property which we call ``mediation''. See Definition~\ref{def:mediation}.

\begin{example}\label{ex:trivialite}
	Let us notice that the sole property of $\Gamma$ admitting a self-simulable subgroup is not enough for $\Gamma$ to be self-simulable. For instance, if $\Gamma = (F_2\times F_2)\ast F_2$, where $\ast$ denotes the free product, we have that $\Gamma$ admits the self-simulable subgroup $F_2\times F_2$, however, by Proposition~\ref{prop:infends} it is not self-simulable because it has infinitely many ends.\qee
\end{example}

As in the previous section, we shall encode elements of $A^{\NN}$ on infinite paths, and use copies of the paradoxical subshift to communicate these elements along different paths. Therefore, it will be natural to assume that $\Delta$ admits a covering by paths such that for the path $p\colon \NN \to \Delta$ starting at $h \in \Delta$, $n \mapsto p(n)t$ gives a path in $ht\Delta$. We shall call those coverings $t$-media.

In Section~\ref{sec:paradox}, we avoided the discussion of some technicalities associated to the paradoxical subshift $\paradox$ as they were not necessary in the proof of Theorem~\ref{thm:selfsimulation}. In this section, we shall require the existence of configurations in the paradoxical subshift with special properties. To this end, we shall make the correspondence between elements of $\paradox$ and path covers slightly more explicit.

Recall that given a color $\symb{t}\in \{\symb{G},\symb{B}\}$, we denote by $\overline{\symb{t}}$ the other color.

\begin{definition}
	Let $\Delta$ be a group. A \define{path cover} with moves $K\Subset \Delta$ is a collection of maps $(p_h)_{h \in \Delta}$ called \define{paths}, where $p_h \colon \NN \to \Delta$ is injective, satisfies $p_h(0) = h$ and $p_h(n)^{-1} p_h(n+1) \in K$ for all $h \in \Delta, n \in \NN$, and every $h \in \Delta$ is in the range of at most two paths.
	
	We say a path cover is \define{full} if every $h \in \Delta$ is in the range of two paths, and \define{colored} if it comes with a partition $\Delta$ into $\Delta_{\symb{G}}, \Delta_{\symb{B}}$ such that whenever $h \in \Delta_{\symb{t}}$ for $\symb{t}\in \{\symb{G},\symb{B}\}$, then for every $n \in \NN$ we have that $p_h(n+1) \in \Delta_{\overline{\symb{t}}}$.
\end{definition}

By definition, $p_h$ always has $h$ in its range, so the condition that $h$ is in the range of at most (resp.\ exactly) two paths says precisely that $h$ appears as an intermediate node in at most (resp.\ exactly) one path. Notice that if $\rho \in \paradox$ is a configuration in the Paradoxical subshift associated to some $K \Subset \Delta$, and $(\gamma_h)_{h \in \Delta}$ are the maps defined in Section~\ref{sec:paradox}, then the collection of maps $(p_h)_{h \in \Delta}$ given by \[ p_h(0)=h \mbox{ and } p_h(n)= \gamma_h(n-1,\rho) \mbox{ for every } n \geq 1,  \]
is a colored path cover with moves in $K$ which we call the \define{path cover induced by $\rho$}. It might not necessarily be a full path cover.

\begin{remark}
	If there exists a $2$-to-$1$ map $\varphi \colon \Delta \to \Delta$ such that for every $g \in \Delta$, $g^{-1}\varphi(g)\in K \Subset \Delta$ with $K$ symmetric, then there is a $2$-to-$1$ map $\widetilde{\varphi} \colon \Delta \to \Delta$ such that for every $g \in \Delta$, $g^{-1}\widetilde{\varphi}(g)\in K^2 \setminus \{1_{\Delta}\}$. Indeed, let $L\subseteq \Delta$ be the set of $g \in \Delta$ for which $\varphi(g)=g$. As the map is $2$-to-$1$, for every $g \in L$ we can choose elements $g',g''$ such that $\{g,g',g''\}$ are all distinct, $\varphi(g')=g$ and $\varphi(g'')=g'$. We can then define $\widetilde{\varphi}$ as follows \[\widetilde{\varphi}(h) = \begin{cases}
		g' & \mbox{ if } g \in L, \\
		g & \mbox{ if } h=g'' \mbox{ for some } g \in L, \\
		\varphi(h) & \mbox{ otherwise. }
	\end{cases}   \]
	Clearly $g^{-1}\widetilde{\varphi}(g)\in K^2 \setminus \{1_{\Delta}\}$. In other words, by choosing a suitable symmetric set we may always assume that the $2$-to-$1$ map has no fixed points and thus we may remove the identity from the set of moves.
\end{remark}

Given a configuration $\rho \in \paradox$, we may extract a $2$-to-$1$ map $\varphi_{\rho} \colon \Delta \to \Delta$ and a color function $\symb{C}_{\rho}\colon \Gamma \to \{ \symb{G},\symb{B} \}$ by letting $\varphi_{\rho}(h) = hR_{\symb{t}}(\rho(h))$ and $\symb{C}_{\rho}(h) = \symb{t}(\rho(h))$ for every $h \in \Delta$.

\begin{definition}
	Let $\paradox$ be a paradoxical subshift in $\Delta$ and let $\rho\in \paradox$. Let $\varphi_{\rho}$ and $\symb{C}_{\rho}$ as above. We say $\rho$ is \define{well-founded} if for every $h \in \Delta$ there is $n \geq 1$ such that $\symb{C}_{\rho}(\varphi_{\rho}^n(h)) \neq \symb{C}_{\rho}(h)$.
\end{definition}

In simpler words, a configuration $\rho \in \paradox$ is well-founded if for every element of $\Delta$, following the function $\varphi_{\rho}$ eventually leads to a distinct color. In particular, this means that there are no loops, monochromatic cycles or monochromatic infinite paths. Clearly, if $\rho$ is well-founded, then the path cover induced by $\rho$ is colored and full.

\begin{lemma}
	\label{lem:WellFounded}
	Let $\paradox$ be the paradoxical subshift in $\Delta$ defined by some symmetric set $K \not\ni 1_{\Delta}$. If $\paradox \neq \emptyset$, then it contains a well-founded configuration.
\end{lemma}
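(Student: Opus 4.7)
The plan is to start with an arbitrary $\rho \in \paradox$, extract its underlying $2$-to-$1$ surjection $\varphi := \varphi_\rho \colon \Delta \to \Delta$ (which is fixed-point free because $1_\Delta \notin K$), and then replace the colouring of $\rho$ to produce a well-founded $\rho' \in \paradox$ having the \emph{same} $\varphi$. As in the proof of Proposition~\ref{prop:paradox_nonempty}, specifying such a $\rho'$ is equivalent to choosing a partition $\Delta = \Delta_{\symb{G}} \sqcup \Delta_{\symb{B}}$ such that $\varphi|_{\Delta_{\symb{G}}}$ and $\varphi|_{\Delta_{\symb{B}}}$ are both bijections onto $\Delta$; equivalently, independently picking a green element in each fibre $\varphi^{-1}(h)$.

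The engine of the proof is the following monotonicity principle: for every $g \in \Delta$ and every $N \geq 1$, well-foundedness at $\varphi^N(g)$ implies well-foundedness at $g$. Indeed, if $\symb{C}_{\rho'}(g) \neq \symb{C}_{\rho'}(\varphi^N(g))$ then the index $n = N$ already witnesses a colour change at $g$; otherwise a later colour change witnessed at $\varphi^N(g)$ is automatically also a witness at $g$. It therefore suffices to design the colouring so as to enforce well-foundedness on a forward-absorbing subset of each weakly connected component of the functional graph of $\varphi$. A standard induction on the length of undirected paths shows that any two elements of such a component share a common forward iterate, so the component is either cyclic (containing a unique cycle of length $\geq 2$) or acyclic (with every forward orbit infinite and injective).

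The construction then proceeds component by component. In a cyclic component I colour the cycle so that both colours appear, for instance by assigning two designated cycle elements $z_0, z_1$ to $\Delta_{\symb{G}}$ and $\Delta_{\symb{B}}$ respectively; in an acyclic component I fix any $h$ and colour its forward orbit $\varphi^n(h)$ alternately $\symb{G}, \symb{B}, \symb{G}, \symb{B}, \dots$. In both cases all remaining fibres of the component are coloured arbitrarily subject to the one-of-each rule. Well-foundedness on the distinguished cycle is immediate because both colours are present on it, and on the distinguished ray it is immediate because consecutive elements have opposite colour. The monotonicity principle then propagates well-foundedness to every element of $\Delta$, producing the desired $\rho'$.

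The one point needing care, and what I view as the main obstacle, is a bookkeeping issue: we must check that the fibre-choices in different components are consistent. However, the fibres assigned non-arbitrary colours in a given component are indexed precisely by the cycle or ray nodes of that component and therefore lie entirely inside it, so since the components partition $\Delta$ the decisions made in different components never interfere. A verification identical to the one in the proof of Proposition~\ref{prop:paradox_nonempty} then confirms that the resulting partition $\Delta = \Delta_{\symb{G}} \sqcup \Delta_{\symb{B}}$ produces a configuration $\rho' \in \paradox$, completing the argument.
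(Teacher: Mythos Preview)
Your proof is correct and takes a genuinely different route from the paper. The paper proceeds in two stages: first it uses an iterative swapping procedure together with a compactness argument to produce a configuration with no monochromatic cycles, and then applies the Baire category theorem on that closed subspace to show that the well-founded configurations form a dense $G_\delta$ (hence are nonempty). Your approach instead freezes the underlying $2$-to-$1$ map $\varphi$ and rebuilds the colouring from scratch via an explicit combinatorial analysis of the functional graph of $\varphi$, treating cyclic and acyclic weakly connected components separately and using the backward-propagation (``monotonicity'') principle to push well-foundedness from a single ray or cycle to the whole component. This is more elementary and avoids all topological machinery; it also makes transparent that well-foundedness depends only on how colours are distributed along forward orbits once $\varphi$ is fixed. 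The paper's approach, by contrast, yields the additional information that well-founded configurations are topologically generic among those without monochromatic cycles, which your direct construction does not immediately give.
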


\begin{proof}

	Let $\rho \in \paradox$. We say that $h \in \Delta$ is in a monochromatic cycle of $\rho$ if there is a smallest $n \geq 1$ such that $\varphi_{\rho}^n(h)=h$ and for every $0 \leq i \leq n-1$ we have $\symb{C}_{\rho}(\varphi_{\rho}^i(h)) = \symb{C}_{\rho}(h)$. Notice that we always have $n \geq 2$, as $1_{\Delta} \not\in K$. We first show that we can find a configuration with no monochromatic cycles. Indeed, suppose there is $h \in \Delta$ in a monochromatic cycle. Let $h' \neq h$ such that $\varphi_{\rho}(h')=\varphi_{\rho}(h)$ and swap the colors of $h'$ and $h$. That is, modify the color function $\symb{C}_{\rho}$ to a function $\widetilde{\symb{C}}$ such that $\widetilde{\symb{C}}(h')=\symb{C}_{\rho}(h)$ and $\widetilde{\symb{C}}(h)=\symb{C}_{\rho}(h')$ and is equal to $\symb{C}_{\rho}$ everywhere else. As $n \geq 2$ it follows that $h$ is no longer in a monochromatic cycle, and no new monochromatic cycles were constructed. 
	
	Given $\rho\in \paradox$ and $h \in \Delta$, let $\rho_{h}$ be the configuration such that $\rho_h = \rho$ if $h$ is not in a monochromatic cycle, and such that $\rho_h$ is the configuration with the colors of the incoming edges of $\varphi_{\rho}(h)$ swapped as described above if $h$ is in a monochromatic cycle. Clearly $\rho_h \in \paradox$. Letting $(h_k)_{k \in \NN}$ be an enumeration of $\Delta$, $\rho^{(0)}=\rho$ and $\rho^{(k)} = (\rho^{(k-1)})_{h_{k-1}}$ for $k \geq 1$, it follows that none of the elements $h_0,\dots,h_{k-1}$ are in monochromatic cycles in $\rho^{(k)}$. Any accumulation point of the sequence $(\rho^{(k)})_{k \in \NN}$ gives a configuration in $\paradox$ where no $h \in \Delta$ is in a monochromatic cycle.
	
	Notice that the set $\paradox^{\otimes}$ of all $\rho \in \paradox$ where no $h \in \Delta$ is in a monochromatic cycle is closed and nonempty. Therefore, in order to obtain a well-founded configuration, it suffices to show that, for every $h \in \Delta$, the set of $\rho \in \paradox^{\otimes}$ for which there is $n \geq 1$ such that $\symb{C}_{\rho}(\varphi_{\rho}^n(h)) \neq \symb{C}_{\rho}(h)$ is a dense open set. Indeed, as $\Delta$ is countable, by the Baire category theorem we would have that the intersection of these dense open sets would be a dense $G_\delta$ set, therefore nonempty.
	
	Fix $h \in \Delta$ and denote by $U_h$ the set of all $\rho \in \paradox^{\otimes}$ such that there is $n \geq 1$ such that $\symb{C}_{\rho}(\varphi_{\rho}^n(h)) \neq \symb{C}_{\rho}(h)$. It is clear that $U_h$ is an open set in $\paradox^{\otimes}$ (it is a union of cylinder sets). Let us show that $U_h$ is dense. Let $\rho \in \paradox^{\otimes}$ and $F \Subset \Delta$. If $\rho \in U_h$ we are done, otherwise, we have that $\symb{C}_{\rho}(\varphi_{\rho}^n(h)) = \symb{C}_{\rho}(h)$ for every $n \geq 1$ and as there are no monochromatic cycles in $\rho$, we conclude that the elements $\varphi_{\rho}^n(h)$ are all distinct. Therefore there is $\bar{n} \in \NN$ such that for every $s \in F$, $\varphi_{\rho}(s) \neq \varphi_{\rho}^{\bar{n}}(h)$.
	
	Let $u \in \Delta$ such that $\varphi(u) = \varphi^{\bar{n}}(h)$ and $u \neq \varphi^{\bar{n}-1}(h)$. By the previous argument we have $u \notin F$. Let $\rho^*$ be the configuration where the colors of $\varphi^{\bar{n}-1}(h)$ and $u$ are swapped. As the forward path from $\varphi^{\bar{n}}(h)$ is monochromatic and infinite, it follows that no monochromatic cycles occur in $\rho^*$, thus $\rho^* \in \paradox^{\otimes}$. Furthermore, $C_{\rho^*}(\varphi_{\rho^*}^{\bar{n}-1}(h)) \neq C_{\rho^*}(h)$, therefore $\rho^* \in U_h$. Finally, as no modifications were introduced in $F$, we have that $\rho^* \in [\rho|_F]$. As $\rho$ and $F$ were arbitrary, this shows that $U_h$ is dense in $\paradox^{\otimes}$.\end{proof}

\begin{lemma}
	\label{lem:Connection}
	Let $\Delta$ be a group and $K \Subset \Delta$ a symmetric set with $K \not\ni 1$.
	\begin{itemize}
		\item If the paradoxical subshift $\paradox$ is constructed with moves $K$, then $\paradox$ is nonempty if and only if there is a full and colored path cover of $\Delta$ with moves $K$.
		\item If there is a path cover of $\Delta$ with moves $K$, then there is a full and colored path cover of $\Delta$ with moves $K^2 \setminus \{1_{\Delta}\}$.
	\end{itemize}
\end{lemma}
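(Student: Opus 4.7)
The plan is to prove both halves of part 1 by a direct translation between well-founded configurations of $\paradox$ and full colored path covers, and then to deduce part 2 by combining the injection $\varphi_0(h) = p_h(1)$ obtained from the given path cover with the perturbation technique of the remark preceding the lemma.

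For the $(\Rightarrow)$ direction of part 1, I would pick a well-founded $\rho \in \paradox$ using Lemma~\ref{lem:WellFounded}, define $p_h(0) = h$ and $p_h(n+1) = \gamma_h(n,\rho)$, and partition $\Delta$ by the color function $\symb{t}(\rho(\cdot))$. Moves lie in $K$ by construction and the symmetry of $K$, while injectivity of each $p_h$ follows from a color-parity argument: if $p_h(n) = p_h(m) = x$ for $1 \le n < m$, then successive applications of $\varphi_{\rho}$ give $\gamma_h(k,\rho) = \gamma_h(k + (m-n),\rho)$ for $k = 0, \dots, n-1$ and, taking one further $\varphi_{\rho}$-step, force $h = \gamma_h(m-n-1,\rho)$, which has color $\overline{\symb{t}(h)}$ and so contradicts $h \in \Delta_{\symb{t}(h)}$. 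The colored property is immediate from Remark~\ref{rem:inverse_equation}. For fullness, let $n_h$ be the smallest $n \ge 1$ with $\symb{C}_{\rho}(\varphi_{\rho}^n(h)) = \overline{\symb{C}_{\rho}(h)}$, which is finite by well-foundedness, and let $g = \varphi_{\rho}^{n_h}(h)$: iterating the $L_{\overline{\symb{t}(g)}}$-preimage operation at $g$ produces $p_g(n_h) = h$, and any further path $p_{g'}$ through $h$ at position $n'$ would require $\symb{C}_{\rho}(\varphi_{\rho}^k(h)) = \symb{t}(h)$ for all $k < n'$, forcing $n' = n_h$ and $g' = g$.

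For the $(\Leftarrow)$ direction of part 1, given a full colored cover let $\pi(h) \neq h$ and $k_h \ge 1$ be the unique pair with $h = p_{\pi(h)}(k_h)$, and set $\varphi(h) = p_{\pi(h)}(k_h - 1)$. The two preimages of a fixed $g$ under $\varphi$ are $p_g(1) \in \Delta_{\overline{\symb{t}(g)}}$ and $p_{\pi(g)}(k_g + 1) \in \Delta_{\symb{t}(g)}$, so they lie in opposite color classes and are distinct; moreover, the relevant moves from $g$ all lie in $K$ by symmetry. Define $\rho \in (A_K)^{\Delta}$ by $\symb{t}(\rho(g)) = \symb{t}(g)$, $R_{\symb{t}(g)}(\rho(g)) = g^{-1}\varphi(g)$, and $L_{\symb{G}}(\rho(g))$, $L_{\symb{B}}(\rho(g))$ equal to the offsets from $g$ to its $\symb{G}$- and $\symb{B}$-colored preimages under $\varphi$; the three axioms of Definition~\ref{def:paradoxical_subshift} then follow by direct substitution.

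For part 2, the first step is to observe that $\Delta$ must be non-amenable. For any finite $F \Subset \Delta$ and $n \in \NN$, the set $\{p_h(k) : h \in F,\; 0 \le k < n\}$ is contained in $FK^n$ and has cardinality at least $n|F|/2$, since each element of $\Delta$ is hit by at most two such pairs $(h,k)$; this violates the F\o lner condition for large $n$. The injection $\varphi_0(h) = p_h(1)$ has moves in $K$ and no fixed points; my plan is to upgrade it to a $2$-to-$1$ surjection $\widetilde{\varphi}$ with moves in $K^2 \setminus \{1_{\Delta}\}$ whose preimages split consistently into two color classes, by combining $\varphi_0$ with the ``predecessor in the other path'' map defined on elements lying in two paths, absorbing the elements lying in only one path by taking two $\varphi_0$-steps (thus landing in $K^2$), and then invoking the perturbation trick of the remark preceding the lemma to remove any residual fixed points without leaving $K^2 \setminus \{1_{\Delta}\}$. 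The resulting data encode a configuration in $\paradox_{K^2 \setminus \{1_{\Delta}\}}$, and the $(\Rightarrow)$ direction of part 1 then yields the desired full colored path cover. The main obstacle I anticipate is the bookkeeping needed to simultaneously secure $2$-to-$1$ surjectivity and a consistent bipartition of the preimages while remaining inside $K^2 \setminus \{1_{\Delta}\}$, particularly in the treatment of those elements that lie in only one of the original paths.
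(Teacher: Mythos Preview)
Your treatment of part~1 is correct and matches the paper's argument almost exactly; you simply supply more detail (injectivity of each $p_h$, the well-foundedness argument for fullness) than the paper, which merely cites Lemma~\ref{lem:WellFounded} and says the induced path cover is ``full, colored and uses moves in $K$'' and that conversely a full colored path cover encodes a configuration ``in an obvious way''.

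For part~2 your route is unnecessarily indirect and, as you yourself flag, the key step is not carried out. You propose to (i) deduce non-amenability, (ii) upgrade the injection $\varphi_0(h)=p_h(1)$ to a $2$-to-$1$ surjection with moves in $K^2\setminus\{1_\Delta\}$ and a consistent bipartition of preimages, (iii) encode this as a point of $\paradox_{K^2\setminus\{1_\Delta\}}$, and (iv) appeal to part~1. Step~(i) is correct but plays no role; step~(ii) is precisely the ``bookkeeping'' you identify as the main obstacle, and you do not resolve it. Note also that non-amenability alone would only give a $2$-to-$1$ map with moves in \emph{some} finite set, not in $K^2\setminus\{1_\Delta\}$, so the detour through amenability buys nothing.

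The paper avoids this obstacle entirely by working directly with the paths rather than with $2$-to-$1$ maps. The idea is: to make the cover full, for each $h'$ that is not yet an intermediate node of any path, pick any $h$ with $h'\in hK$ and redefine $p_h$ by inserting $h'$ at position~$1$, i.e.\ $\widetilde p_h(1)=h'$ and $\widetilde p_h(n)=p_h(n-1)$ for $n\ge 2$. The first move stays in $K$, the new move $h'^{-1}p_h(1)$ lies in $K^{-1}K=K^2$ and is nontrivial since $h'$ was not an intermediate node of $p_h$, and a given $h$ is chosen at most $|K|$ times so each path is modified only finitely often. This produces a full path cover with moves in $K^2\setminus\{1_\Delta\}$. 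Finally, a coloring consistent with a full path cover is obtained by a straightforward compactness argument. This direct path-modification is both shorter and sidesteps exactly the bipartition bookkeeping you were worried about.
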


\begin{proof}
	We begin with the first item. Suppose $\paradox$ is nonempty. By Lemma~\ref{lem:WellFounded}, we can find a well-founded configuration $\rho \in \paradox$. As discussed before, it is clear the path cover of $\Delta$ induced by $\rho$ is full, colored and uses moves in $K$. Conversely, if $\Delta$ has a full and colored path cover with moves $K$, then it gives a (well-founded) configuration of $\paradox$ in an obvious way: for every $h \in \Delta$ of color $\symb{t}$ we have $L_{\symb{t}}(h)= h^{-1}p_h(1)$, and for $n \geq 2$ we have  $L_{\overline{\symb{t}}}(p_h(n-1)) = p_h(n-1)^{-1}p_h(n)$. Notice that this also determines the value of $R_{\symb{t}}$ uniquely.
	
	For the second item, from a path cover with moves $K$ we can construct a full and colored path cover with the moves $K^2 \setminus \{1_{\Delta}\}$ as follows: First, to ensure the path cover is full, we make sure that every node $h'$ is an intermediate node of one of the paths $p_h$ for some $h \in \Delta$. To do this, whenever $h'$ is not already an intermediate node of any path, pick $h\in \Delta$ such that $h' \in hK$ and change the path $p_h$ into a path $\widetilde{p}_h$ such that $\widetilde{p}_h(1)=h'$ and $\widetilde{p}_h(n)=p_h(n-1)$ for $n \geq 2$. Notice that as long as $h^{-1}p(1) \in K$ and the rest of the moves of $p_h$ are in $K^2 \setminus \{1_{\Delta}\}$, then $h^{-1}\widetilde{p}_h(1) = h^{-1}h' \in K$ and the rest of the moves of $\widetilde{p}_h$ are in $K^2 \setminus \{1_{\Delta}\}$. Notice a fixed $h\in \Delta$ can be chosen for at most $|K|$ elements of $\Delta$, therefore the path $p_h$ gets modified finitely many times. Therefore if we iterate this process over all $h' \in \Delta$ which are not an intermediate node of any path, we end up with a full path cover.
	
	From a full path cover with moves in $K^2 \setminus \{1\}$ it is easy to obtain a coloring of $\Delta$ which is consistent with the path cover through a compactness argument. \end{proof}

Now we are ready to give the algebraic definition of the class of groups on which our extension scheme applies.

\begin{definition}\label{def:mediation}
	A subgroup $\Delta \leqslant \Gamma$ is called \define{mediated} if there exists a finite generating set $T$ of $\Gamma$ such that for every $t \in T$ the subgroup $\Delta \cap t\Delta t^{-1}$ is nonamenable.
\end{definition}

This algebraic definition is justified in the following definition and proposition.

\begin{definition}
	Let $\Delta \leqslant \Gamma$ and $t \in \Gamma$. A \define{$t$-medium} (on $\Delta$) is a path cover $(p_h)_{h \in \Delta}$ with moves $K \Subset \Delta$ such that $t^{-1}Kt \subseteq \Delta$.
\end{definition}

\begin{proposition}
	A subgroup $\Delta \leqslant \Gamma$ is mediated if and only if there is a finite generating set $T$ for $\Gamma$ such that for each $t \in T$, there exists a $t$-medium on $\Delta$.
\end{proposition}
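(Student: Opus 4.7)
The plan is to prove both directions by translating between nonamenability of $H_t := \Delta \cap t\Delta t^{-1}$ and the existence of a $t$-medium on $\Delta$, using Lemma~\ref{lem:Connection} (linking paradoxical subshifts to full colored path covers) and Proposition~\ref{prop_ceccSilb} (nonamenability via $2$-to-$1$ surjections) as the two bridges.

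For the forward direction, I fix $t \in T$ and use the nonamenability of $H_t$ to invoke Proposition~\ref{prop:paradox_nonempty}, which produces a symmetric finite $K_t \Subset H_t$ (arranged so that $1_{H_t}\notin K_t$) for which the paradoxical subshift on $H_t$ with moves $K_t$ is nonempty. Lemma~\ref{lem:Connection}(1) then supplies a full colored path cover $(p_h)_{h \in H_t}$ of $H_t$ with moves in $K_t$. I extend this to a path cover of $\Delta$ by left-translating along a choice of left coset representatives $\{g_i\}$ of $H_t$ in $\Delta$, setting $\tilde p_{g_i h}(n) := g_i p_h(n)$. Moves are preserved under left translation, and since $K_t \subseteq H_t \subseteq t\Delta t^{-1}$ we have $K_t^t \subseteq \Delta$, so $(\tilde p_{h'})_{h' \in \Delta}$ is a $t$-medium on $\Delta$.

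For the converse, I take a $t$-medium $(p_h)_{h \in \Delta}$ with moves $K$ and observe that the hypothesis $K^t \subseteq \Delta$ is equivalent to $K \subseteq t\Delta t^{-1}$, hence $K \subseteq H_t$. Because every move of every path lies in $K \subseteq H_t$, an immediate induction shows each $p_h$ remains within the left coset $hH_t$, so the subfamily $(p_h)_{h \in H_t}$ is a path cover of $H_t$ with moves in $K$ (the at-most-two property is inherited, since paths starting outside $H_t$ never enter $H_t$). After symmetrizing $K$, Lemma~\ref{lem:Connection}(2) upgrades this to a full colored path cover $(\tilde p_h)_{h \in H_t}$ of $H_t$ with moves $K' := (K \cup K^{-1})^2 \setminus \{1_{H_t}\}$.

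The final step is to extract a $2$-to-$1$ surjection on $H_t$. I define $\varphi \colon H_t \to H_t$ by $\varphi(\tilde p_h(n)) := \tilde p_h(n-1)$ for $n \ge 1$; fullness guarantees that every element of $H_t$ is an intermediate node of exactly one path, so $\varphi$ is well defined, and clearly $g^{-1}\varphi(g) \in (K')^{-1}$ lies in a finite set. The preimages of any $g \in H_t$ under $\varphi$ are the successor $\tilde p_g(1)$ in $g$'s own path, and the successor $\tilde p_h(m+1)$ in the unique other path containing $g$ as an intermediate node (where $g = \tilde p_h(m)$ with $m \ge 1$); if these coincided, the shared element would lie in the range of three distinct paths, contradicting the at-most-two bound. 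The main obstacle is precisely this $2$-to-$1$ verification, as it is the only place where fullness, coloring, and the at-most-two bound interact nontrivially; the remaining work (coset extension in the forward direction, coset restriction in the reverse) is routine bookkeeping. Once $\varphi$ is shown to be $2$-to-$1$ and surjective, Proposition~\ref{prop_ceccSilb} concludes that $H_t$ is nonamenable.
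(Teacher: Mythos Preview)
Your proof is correct and follows the paper's overall strategy: reduce to showing, for each $t$, that $H_t = \Delta \cap t\Delta t^{-1}$ is nonamenable if and only if $\Delta$ admits a $t$-medium. The forward direction is essentially identical to the paper's (your invocation of Lemma~\ref{lem:Connection}(1) to get a \emph{full colored} cover is more than needed --- any path cover on $H_t$ already extends to a $t$-medium by cosets --- but does no harm).

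The converse is where your argument diverges in style. The paper restricts the path cover to the subgroup $\langle K\rangle \leq H_t$ and simply asserts that a group admitting a path cover is nonamenable, leaving that implication implicit (it follows from a short F{\o}lner-counting argument, or from the general theory around the paradoxical subshift). You instead restrict to $H_t$, upgrade to a full colored cover via Lemma~\ref{lem:Connection}(2), and then explicitly build the $2$-to-$1$ surjection $\varphi$ and invoke Proposition~\ref{prop_ceccSilb}. Your route is longer but entirely self-contained, and your verification that $\varphi$ is exactly $2$-to-$1$ (the ``three paths through one point'' contradiction) is clean and correct. The paper's shortcut buys brevity; your version buys transparency about exactly where nonamenability comes from.
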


\begin{proof}
	Let $t\in \Gamma$. We will show that there exists a $t$-medium on $\Delta$ if and only if $\Delta \cap t\Delta t^{-1}$ is nonamenable. The result follows directly from this claim.
	
	Suppose that $\Sigma = \Delta \cap t\Delta t^{-1}$ is nonamenable, then it admits a path cover with moves on some set $K \Subset \Sigma$, which in turn induces a path cover of $\Delta$ with the same moves. Furthermore, as $\Sigma = \Delta \cap t\Delta t^{-1}$ we have $K\Subset \Delta$ and $t^{-1}Kt \subseteq t^{-1}\Sigma t \subseteq \Delta$, and thus we have a $t$-medium on $\Delta$.
	
	Conversely, suppose there exists a $t$-medium on $\Gamma$ with moves $K\Subset \Delta$. Let $H = \langle K \rangle$ be the subgroup of $\Delta$ generated by $K$. As $t^{-1}Kt \Subset \Delta$, we have that $K\Subset t\Delta t^{-1}$ and thus $H \leqslant \Delta \cap t\Delta t^{-1}$. As the moves are in $K$, the restriction of the path cover on $\Delta$ to $H$ is still path cover and thus $H$ is nonamenable, which implies that  $\Delta \cap t\Delta t^{-1}$ is nonamenable.
\end{proof}

\begin{remark}\label{rem:sigmas}
	The condition that $\Delta \cap t\Delta t^{-1}$ is non-amenable is of course equivalent to the condition that there exists a non-amenable subgroup $\Sigma \leqslant \Delta \cap t\Delta t^{-1}$. In most of our applications
	$\Delta \cap t\Delta t^{-1}$ will not be so easy to understand, but the
	assumptions will always provide a natural choice for a subgroup $\Sigma$ for which we
	can check that $\Sigma \leqslant \Delta$ and $t^{-1} \Sigma t \leqslant \Delta$.	
\end{remark}

\begin{remark}
	\label{rem:FillingAndColoring}
	Note that $\Delta \cap t \Delta t^{-1}$ is a subgroup of $\Gamma$, so applying Lemma~\ref{lem:Connection} yields that if $\Delta$ admits a $t$-medium with a symmetric set of moves $K\setminus \{1_{\Delta}\}$, then $\Delta$ admits a full and colored $t$-medium with moves $K^2\setminus \{1_{\Delta}\}$. 
\end{remark}

The main idea behind the definition of $t$-media is the following: suppose $(p_g)_{g \in \Gamma}$ is a path cover of $\Gamma$ with moves $K$, and $t \in \Gamma$, then we can define $q_g \colon \NN \to \Gamma$ by $q_g(n) = p_{gt^{-1}}(n) t$ for every $g \in \Gamma$, and call $(q_g)_{g \in \Gamma}$ the $t$-\define{conjugate path cover} of $(p_g)_{g \in \Gamma}$. This is indeed a path cover with moves in $t^{-1} K t$, since we have just shifted the path cover by a right translation.

Furthermore, if $(p_g)_{g \in \Gamma}$ is a path cover of $\Gamma$ with moves in $K \Subset \Delta$, then $(p_g)_{g \in \Gamma}$ naturally specifies a path cover of $\Delta$ on each coset of $\Delta$: picking representatives $\Gamma = \bigcup_{g \in R} gH$, for each $g \in R$ we obtain a path cover by $q_h(n) = g^{-1} p_{gh}(n)$. Note that the choice of representatives only translates the path covers, and in particular does not affect mediation.

Now we are ready to prove Theorem~\ref{thm:mediaintroduction}. That is, that if $\Gamma$ is a finitely generated and recursively presented group which contains a mediated self-simulable subgroup $\Delta$, then $\Gamma$ is self-simulable.

\begin{proof}[Proof of Theorem~\ref{thm:mediaintroduction}]
	Let $\Gamma \curvearrowright X$ be an effectively closed action and let $\Delta \leqslant \Gamma$ be the mediated self-simulable subgroup. Without loss of generality, suppose that $X \subseteq \{\symb{0},\symb{1}\}^{\NN}$. Let $T \Subset \Gamma$ be a (not necessarily symmetric) finite generating set, and let $\overline{T} = \{\bar t : t \in T\}$ be a disjoint set (one can think of it as a set of formal inverses). As $\Delta$ is mediated, we can associate to each $t \in T$ a symmetric set $K_t \Subset \Delta \cap t \Delta t^{-1}$ such that $1_{\Gamma}\notin K_t$ and there is a $t$-medium for $\Delta$ with moves $K_t$. Let $K_{\bar t} = t^{-1}(K_t) t$.
	
	For each $t \in T \cup \overline{T}$, denote by $\paradox_t$ the paradoxical subshift on $\Gamma$ constructed with moves $M_t = K_t^2 \setminus \{1_{\Gamma}\}$ (using the alphabet $(M_t)^3 \times \{\symb{G}, \symb{B}\}$). Let us define the $\Gamma$-subshift,
	\[ \mathbf{W} = \prod_{t \in T \cup \overline{T}} (\paradox_t \times \{\symb{0},\symb{1}\}^\Gamma). \]
	For every $t \in T \cup \overline{T}$, write the projection to the $t$-th layer as $\pi_t \colon \mathbf{W} \to \paradox_t \times \{\symb{0},\symb{1}\}^{\Gamma}$. For $w \in \mathbf{W}$, if $\pi_t(w) = (\rho,z)\in \paradox_t \times \{\symb{0},\symb{1}\}^{\Gamma}$, let $\gamma_{g,t}(n,w)$ denote the function $\gamma_g(n,\rho)$ and $\beta_{g,t}(n,\rho) = z(\gamma_{g,t}(n,w))$.
	
	Next, we shall define a $\Gamma$-subshift $\mathbf{Y}$ by imposing additional constraints on configurations $w \in \mathbf{W}$.
	
	\begin{itemize}
		\item \textbf{Constraint 1: }For each $g \in \Gamma$, there exists $x \in X$, such that for every $t \in T$ we have $(\beta_{g,t}(n,w))_{n \in \NN} = x$. We call $x$ the \define{simulated configuration at $g$}.
		\item \textbf{Constraint 2: }For each $g \in \Gamma$, if the simulated configuration at $g$ is $x$, then for every $t \in T$ we have that $(\beta_{g,\bar t}(n,w))_{n \in \NN} =  tx$.
		\item \textbf{Constraint 3: }For all $t \in T$ and $g \in \Gamma$, if $\pi_t(w)(g) = ((k_1,k_2,k_3,\symb{c}), b)$, then $\pi_{\overline{t}}(w)(gt) = ((t^{-1}k_1 t,t^{-1} k_2 t,t^{-1} k_3 t,\symb{c}),b)$.
	\end{itemize}
	We argue that $\mathbf{Y}$ is a sofic $\Gamma$-subshift. To see this, notice that the first two constraints can be defined using a recursively enumerable list of forbidden pattern codings whose support is contained in $\Delta$ (because $M_t \subseteq \Delta$ for all $t \in T \cup \overline{T}$). In other words, the configurations in $\mathbf{W}$ which satisfy the first two constraints consist on copies on each coset of $\Delta$ of configurations of an effectively closed subshift (by patterns) on $\Delta$. As $\Gamma$ is recursively presented, so is $\Delta$, and this effectively closed subshift is topologically conjugate to an effectively closed action of $\Delta$ by Proposition~\ref{prop:conjugacy_rec_presented_effsubshift}. As $\Delta$ is self-simulable, it follows that there is a subshift of finite type on $\Delta$ which factors onto this subshift and thus can be extended to a $\Gamma$-subshift of finite type which factors onto the set of configurations of $\mathbf{W}$ which satisfy the first two constraints. The last constraint is a local rule, therefore we conclude that $\mathbf{Y}$ is a sofic $\Gamma$-subshift. 
	
	Therefore, it suffices to show that $\mathbf{Y}$ factors onto $\Gamma \curvearrowright X$. Fix $t \in T$ and let $\phi \colon \mathbf{Y} \to X$ be the map defined by \[\phi(w)_n = \beta_{1_{\Gamma},t}(n,w) \mbox{ for every } w \in \mathbf{Y}.\] The map $\phi$ is obviously continuous, and the codomain is $X$ by the first constraint. We need to show that $\phi$ is $\Gamma$-equivariant and surjective. 
	
	First we show that $\phi$ is $\Gamma$-equivariant. Since $T$ is a generating set, so is $T^{-1}$ and thus it suffices to show that $\phi(t^{-1}w)=t^{-1}\phi(w)$ for every $t \in T$ and $w \in \mathbf{Y}$. 
	
	Fix $w \in \mathbf{Y}$ and let $x =\phi(w)\in X$. Let us denote for $n \in \NN$, $p(n+1)= \gamma_{1_{\Gamma},t}(n,w)$ and $p(0)= 1_{\Gamma}$. Suppose that the color at $\pi_t(w)(1_{\Gamma})$ is blue (the other case is symmetric), and denote by $\dont$ any symbol. Then we have \[ \pi_t(w)(p(0))=((p(1),\dont,\dont,\symb{B}),\dont), \] and \[
	\pi_t(w)(p(n))=((p(n)^{-1}p(n+1),\dont,p(n)^{-1}p(n-1),\symb{G}),x_{n-1}) \mbox{ for every } n \geq 1. \]

	By the third condition we have \[  \pi_{\bar t}(w)(p(0)t) = (((t^{-1}p(1)t, \dont, \dont, \symb{B}), \dont),  \]
	and
	\[ \pi_{\bar t}(w)(p(n)t) = ((t^{-1}(p(n)^{-1}p(n+1))t,\dont,t^{-1}(p(n)^{-1}p(n-1))t, \symb{G}), x_{n-1}). \]
	A direct calculation shows then that $\gamma_{t,\bar t}(n,w) = p(n) t$ for every $n \in \NN$, and thus $(\beta_{t,\bar{t}}(n,w))_{n \in \NN} = x$. By the second constraint, we obtain that the configuration simulated at $t$ is $t^{-1}x$. Therefore, $\phi(t^{-1} w) = t^{-1} x$ as required.
	
	Finally, we show surjectivity. Let $x \in X$ be arbitrary. For $t \in T$, pick a $t$-medium for $\Delta$ with moves $K_t$. By Remark~\ref{rem:FillingAndColoring}, there is a full and colored $t$-medium with moves $M_t = K_t^2 \setminus \{1\}$, and by Lemma~\ref{lem:Connection} the subshift $\paradox_t$ is nonempty. Construct a full and colored $t$-medium on $\Gamma$ with moves $M_t$ by using this path cover on every $\Delta$-coset. Take this as the $\paradox_t$-component of $w$ for each $t \in T$. For $\bar t \in \overline{T}$, use the third constraint to obtain the contents of the $\paradox_{\bar t}$-component. This is valid because the moves on that layer are in $t^{-1} M_t t =  M_{\bar t}$. Postulating that the simulated configuration at $g$ is $g^{-1}x$ specifies all the $\{\symb{0},\symb{1}\}$ symbols on all layers because the configuration is well-founded, From here it is clear that none of the constraints defining $\mathbf{Y}$ are broken.
\end{proof}

Since by Proposition~\ref{prop:SelfSimuImpNonAmenable} a recursively presented self-simulable group is non-amenable, we can always choose $\Sigma = \Delta$ when $\Delta$ is normal, giving the following handy version of the previous result.

\begin{corollary}\label{cor:normalSS}
	Suppose $\Gamma$ is finitely generated, recursively presented and has a normal self-simulable subgroup $\Delta$. Then $\Gamma$ is self-simulable.
\end{corollary}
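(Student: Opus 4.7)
The plan is to deduce the corollary as an immediate application of Theorem~\ref{thm:mediaintroduction}. For that, we must verify that $\Delta$ is a mediated subgroup of $\Gamma$ in the sense of Definition~\ref{def:mediation}, i.e.\ that there is a finite generating set $T$ of $\Gamma$ such that $\Delta \cap t\Delta t^{-1}$ is non-amenable for every $t \in T$. The normality hypothesis makes this trivial once we know $\Delta$ itself is non-amenable: indeed, for every $t \in \Gamma$ we have $t\Delta t^{-1} = \Delta$, so $\Delta \cap t\Delta t^{-1} = \Delta$ regardless of $t$, and we may take $T$ to be any finite generating set of $\Gamma$.

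Thus the only thing to check is that $\Delta$ is non-amenable. For this I would invoke Proposition~\ref{prop:SelfSimuImpNonAmenable}, which asserts that any finitely generated, recursively presented, self-simulable group is non-amenable. Being self-simulable, $\Delta$ is finitely generated by convention. Moreover, $\Delta$ is recursively presented: picking a finite generating set $S$ of $\Delta$ and expressing each element of $S$ as a word in a finite generating set of $\Gamma$ yields a recursive reduction of $\texttt{WP}_S(\Delta)$ to $\texttt{WP}(\Gamma)$, which is recursively enumerable by assumption. Hence $\texttt{WP}_S(\Delta)$ is recursively enumerable, so $\Delta$ is recursively presented, and Proposition~\ref{prop:SelfSimuImpNonAmenable} applies to give non-amenability.

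Combining these observations, $\Delta$ is a mediated self-simulable subgroup of the finitely generated recursively presented group $\Gamma$, so Theorem~\ref{thm:mediaintroduction} concludes that $\Gamma$ is self-simulable. There is no real obstacle here; the only subtle point is the brief verification that $\Delta$ inherits recursive presentability from $\Gamma$, which is standard.
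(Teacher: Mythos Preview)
Your proof is correct and follows exactly the argument the paper intends: normality gives $\Delta \cap t\Delta t^{-1} = \Delta$, Proposition~\ref{prop:SelfSimuImpNonAmenable} (together with the standard observation that finitely generated subgroups of recursively presented groups are recursively presented) yields non-amenability of $\Delta$, and Theorem~\ref{thm:mediaintroduction} finishes. The paper states this reasoning in a single sentence preceding the corollary; your version simply spells out the details.
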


In other words, whenever we have a short exact sequence of the form \[ 1 \to N \to \Gamma \to H \to 1, \]
where $N$ is self-simulable and $\Gamma$ is finitely generated and recursively presented, then $\Gamma$ is also self-simulable. It also follows that if $N$ is self-simulable then $N \times H$ is, for any finitely generated and recursively presented group $H$. 

\begin{example}
	Take $\Gamma = F_4$ the free group on $4$ generators. This group is not self-simulable due to it having infinitely many ends (Proposition~\ref{prop:infends}). However, it has as a quotient the self-simulable group $F_2 \times F_2$, therefore we have a short exact sequence of the form \[1 \to N \to F_4 \to F_2 \times F_2 \to 1.    \]
	Thus we conclude that admitting a self-simulable quotient does not imply that the group is self-simulable.\qee
\end{example}

We do not have a counterexample for split extensions, and under some conditions on the quotient and its action, extensions on the right are indeed self-simulable. 

\begin{corollary}
	Suppose $\Delta$ is a recursively presented and self-simulable group, and contains a direct product of two non-amenable groups. If $\Gamma$ is finitely generated and recursively presented and $\phi\colon \Delta \to \Aut(\Gamma)$ has amenable image, then $\Gamma \rtimes_\phi \Delta$ is self-simulable.
\end{corollary}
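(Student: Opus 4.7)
The plan is to apply Theorem~\ref{thm:mediaintroduction} to $G = \Gamma \rtimes_\phi \Delta$, with $\Delta$ (identified with $\{1_\Gamma\}\times \Delta \leqslant G$) as the mediated self-simulable subgroup. First I would observe that $G$ is finitely generated by $T = S_\Gamma \cup S_\Delta$, where $S_\Gamma, S_\Delta$ are finite generating sets of $\Gamma$ and $\Delta$, and that $G$ is recursively presented (one assumes $\phi$ is effectively given on generators, which is the natural computability setting under which the semidirect product is itself recursively presented).

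Next I would check mediation of $\Delta$ in $G$ with respect to $T$. For $s \in S_\Delta$ one has $s\Delta s^{-1} = \Delta$, so $\Delta \cap s\Delta s^{-1} = \Delta$ is non-amenable by Proposition~\ref{prop:SelfSimuImpNonAmenable}. For $\gamma \in S_\Gamma$, a direct semidirect-product computation yields
\[\gamma \cdot (1_\Gamma,\delta) \cdot \gamma^{-1} = \bigl(\gamma \cdot \phi(\delta)(\gamma^{-1}),\, \delta\bigr),\]
which lies in $\Delta$ exactly when $\phi(\delta)$ fixes $\gamma$. In particular $\ker(\phi) \leqslant \Delta \cap \gamma\Delta\gamma^{-1}$ for every $\gamma \in S_\Gamma$.

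The remaining issue, and the main (mild) obstacle, is to produce a non-amenable subgroup inside $\ker(\phi)$. Since $\Delta/\ker(\phi) \cong \phi(\Delta)$ is amenable by hypothesis and $\Delta$ itself is non-amenable by Proposition~\ref{prop:SelfSimuImpNonAmenable}, closure of amenability under group extensions already forces $\ker(\phi)$ to be non-amenable. The direct-product hypothesis $N_1 \times N_2 \leqslant \Delta$ furnishes an explicit witness: each quotient $N_i/(N_i \cap \ker(\phi))$ embeds into the amenable group $\phi(\Delta)$ and is therefore amenable, whence each $N_i \cap \ker(\phi)$ is non-amenable. Either way, $\Delta \cap \gamma\Delta\gamma^{-1}$ contains a non-amenable subgroup for every $\gamma \in S_\Gamma$, so $\Delta$ is mediated in $G$ in the sense of Definition~\ref{def:mediation}. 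Applying Theorem~\ref{thm:mediaintroduction} to the finitely generated recursively presented group $G$ and the mediated self-simulable subgroup $\Delta$ then concludes that $G = \Gamma \rtimes_\phi \Delta$ is self-simulable.
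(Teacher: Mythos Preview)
Your proof is correct and follows the same strategy as the paper: show that the natural copy of $\Delta$ inside $G=\Gamma\rtimes_\phi\Delta$ is mediated and invoke Theorem~\ref{thm:mediaintroduction}. Your computation $\gamma(1_\Gamma,\delta)\gamma^{-1}=(\gamma\,\phi(\delta)(\gamma^{-1}),\delta)$ and the conclusion $\ker(\phi)\leqslant\Delta\cap\gamma\Delta\gamma^{-1}$ match the paper's, and the paper likewise dispatches generators from $\Delta$ trivially.

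One genuine difference worth highlighting: your first argument, using only the short exact sequence $1\to\ker(\phi)\to\Delta\to\phi(\Delta)\to 1$ and closure of amenability under extensions, already shows $\ker(\phi)$ is non-amenable and hence that $\Delta$ is mediated. This does not use the direct-product hypothesis at all, so you have in effect observed that the hypothesis ``$\Delta$ contains a direct product of two non-amenable groups'' is redundant in the corollary. The paper does not make this observation; it passes instead through $K_i=N_i\cap\ker(\phi)$ and takes $\Sigma=K_1\times K_2$ as its witnessing non-amenable subgroup (your second argument). Both routes are valid; yours is slightly sharper.
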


\begin{proof}
	Observe that the fact that $\Gamma \rtimes_\phi \Delta$ is finitely generated and recursively presented follows from the assumptions. We show that the natural copy $\Delta \leqslant \Gamma \rtimes_\phi \Delta$ is mediated. Let $\Delta_1 \times \Delta_2 \leqslant \Delta$ be a direct product of non-amenable groups, and consider the action of $\Delta_i$ on $\Gamma$. We have an exact sequence
	\[ 1 \to K_i \to \Delta_i \to \phi(\Delta_i) \to 1, \]
	where $K_i$ is the kernel of $\phi$ restricted to $\Delta_i$. By the amenability of $\phi(\Delta_i)\subseteq \Aut(\Gamma)$ and non-amenability of $\Delta_i$ we have that $K_i$ is non-amenable. Then $\Sigma = K_1 \times K_2 \leqslant \Delta$ acts trivially by conjugation on $\Gamma$, i.e.\ $g^{(k_1, k_2)} = g$, in particular $g^{-1} \Sigma g = \Sigma$ for all $g \in \Gamma$ and of course we have $g^{-1} \Sigma g \leqslant \Delta$ for any $g \in \Delta$, thus this non-amenable subgroup satisfies the mediated condition.
\end{proof}

In particular, if $\Aut(\Gamma)$ is amenable, then $\Gamma \rtimes \Delta$ is self-simulable whenever $\Delta$ is self-simulable and contains a direct product of two non-amenable groups.

\begin{remark}
	In the proof of Theorem~\ref{thm:mediaintroduction} the condition that $\Delta$ is self-simulable can be weakened. What we essentially use is the weaker property that for any effectively closed $\Delta$-subshift $X$, the subshift $X^{\Gamma/\Delta} = \{x \in A^\Gamma : \mbox{ for every } g \in \Gamma, gx|_{\Delta} \in X\}$, where an independent copy of $X$ appears on each $\Delta$-coset, is $\Gamma$-sofic. Note that if $\Gamma$ is self-simulable and recursively presented, subshifts of the form $X^{\Gamma/\Delta}$ are always $\Gamma$-sofic for any finitely-generated subgroup $\Delta$ and $\Delta$-effectively closed subshift $X$, so a subgroup $\Delta$ can certainly have this property without being self-simulable (every finitely-generated group is a subgroup of a self-simulable group). We do not have any examples where this additional generality is needed.
\end{remark}

\section{Rigid classes of self-simulable groups}\label{sec:QIrigidity}

The goal of this section is to prove Theorems~\ref{thm:commensurable} and~\ref{thm:quasiisometry}. Recall that two groups $\Gamma_1$ and $\Gamma_2$ are called \define{commensurable} if there exist finite index subgroups $H_1 \leqslant \Gamma_1$ and $H_2 \leqslant \Gamma_2$ which are isomorphic.

\begin{lemma}\label{lem:normalfiniteindex_isSS}
	Let $\Gamma$ be a finitely generated self-simulable group and suppose $N \leqslant \Gamma$ is a finite index subgroup. Then $N$ is self-simulable.
\end{lemma}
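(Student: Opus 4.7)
The plan is to pass from $N$ up to the ambient self-simulable group $\Gamma$ via an induced action, apply self-simulability of $\Gamma$, and then restrict back down to $N$. Fix a transversal $R = \{r_1, \ldots, r_k\}$ of $N$ in $\Gamma$ with $r_1 = 1_{\Gamma}$, and let $N \curvearrowright X \subseteq \{\symb{0},\symb{1}\}^{\NN}$ be an arbitrary effectively closed action. I would endow $Y = X^R$ with the \emph{induced} $\Gamma$-action given by $(t \cdot x)_r = m \cdot x_{r'}$, where $r' \in R$ and $m \in N$ are the unique elements such that $t^{-1} r = r' m^{-1}$.

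The first technical step is to verify that $\Gamma \curvearrowright Y$ is effectively closed. Since a finite generating set $S_{\Gamma}$ of $\Gamma$ and the transversal $R$ are both finite, there are only finitely many pairs $(t,r)$ for which we need the data $(r', m)$. For each of them one can hardcode a word in the generators of $N$ representing the corresponding $m$, and effectiveness of the $N$-action on $X$ then lifts coordinatewise to effectiveness of $\Gamma \curvearrowright Y$. I expect this step to be the most delicate, precisely because it has to proceed without any recursive-presentability hypothesis on $\Gamma$ or $N$; the finiteness of both $S_{\Gamma}$ and $R$ is what makes the hardcoding possible.

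Once $\Gamma \curvearrowright Y$ is known to be effectively closed, self-simulability of $\Gamma$ produces a $\Gamma$-SFT $Z \subseteq A^{\Gamma}$ together with a topological factor map $\phi \colon Z \to Y$. The projection $\pi \colon Y \to X$ onto the $r_1$-coordinate is $N$-equivariant: for $n \in N$ the element $n^{-1} r_1 = n^{-1}$ lies in $N$, so its unique decomposition in the form $r' m^{-1}$ with $r' \in R, m \in N$ is $r' = r_1$ and $m = n$, whence $(n \cdot x)_{r_1} = n \cdot x_{r_1}$. Consequently $\pi \circ \phi \colon Z \to X$ is a continuous $N$-equivariant surjection.

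Finally, I would recode $Z$ as an $N$-subshift $\widetilde{Z}$ over the enlarged alphabet $A^{R}$, using the standard identification of $A^{\Gamma}$ with $(A^{R})^{N}$ provided by a transversal compatible with the $N$-shift on $\Gamma$. Finiteness of $[\Gamma : N]$ ensures that the finite window defining $Z$ as a $\Gamma$-SFT translates to a finite $N$-window, so that $\widetilde{Z}$ is an $N$-SFT. Composing this recoding with $\pi \circ \phi$ yields the desired $N$-factor map from $\widetilde{Z}$ onto $X$, completing the argument.
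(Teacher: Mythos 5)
Your proposal is correct and follows the same route as the paper: induce the effectively closed $N$-action to a $\Gamma$-action on $X^{R}$, hardcode the finitely many coset/generator pairs to retain effectiveness, apply self-simulability of $\Gamma$, project to the identity coset, and recode the restriction of the $\Gamma$-SFT to $N$ as an $N$-SFT via the finite-index higher-block identification. The paper phrases the induced action in formal-sum notation $X[L]$ and cites~\cite{CarrollPenland} for the final recoding step, but these are cosmetic differences from your argument.
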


\begin{proof}
	Let $N\curvearrowright X$ be an effectively closed action. As $N$ has finite index in $\Gamma$, we can find a representative of the left cosets $L = \{\ell_0 = 1_{\Gamma},\ell_1,\dots,\ell_k\}$ of $N$ in $\Gamma$. Every $g \in \Gamma$ can be written uniquely as a product $g=\ell h$ with $\ell \in L$ and $h \in N$.
	
	Consider the space $X[L]$ of formal sums $\ell_0 x_0 + \ell_1 x_1 + \dots + \ell_k x_k$ with $x_i \in X$ for $0 \leq i \leq k$. Notice that if $X \subseteq A^{\NN}$, then we can formally identify $X[L]$ with a subset of $(A^L)^{\NN}$, we shall use this space to obtain the classical action of $\Gamma$ induced from a subgroup of finite index. Indeed, we define $\Gamma\curvearrowright X[L]$ through \[  g(\ell_0 x_0 + \ell_1 x_1 + \dots + \ell_k x_k)  = \ell'_0(n_0 x_0) + \ell'_1(n_1 x_1) + \dots + \ell'_k(n_k x_k). \]
	Where for every $0 \leq i \leq k$, $\ell'_i \in L$ and $n_i\in N$ is the unique pair of elements such that $\ell'_i n_i = g \ell_i$. Notice that the map $(\ell_0,\dots,\ell_k) \overset{g}{\mapsto} (\ell'_0,\dots,\ell'_k)$ is a permutation, and thus the formal sum in the right is well defined.
	It is clear that $\Gamma\curvearrowright X[L]$ is indeed an action. Let us argue that it is effectively closed. Notice that if $S$ is a set of generators for $N$, then $S \cup L$ is a finite generating set for $\Gamma$. For each $s \in S \cup L$, we can find pairs $(\ell'_{i,s},w_{i,s})_{0 \leq i \leq k}$ where $\ell'_{i,s} \in L$ and $w_{i,s} \in S^*$ are such that $s\ell_i = \ell'_{i,s}\underline{w_{i,s}}$ and then use the Turing machine which computes $N \curvearrowright X$ to compute $\underline{w_{i,s}}x_i$. As there are finitely many generators, the pairs $(\ell'_{i,s},w_{i,s})_{0 \leq i \leq k}$ can be hard-coded in a Turing machine and thus we conclude that $\Gamma \curvearrowright X[L]$ is effectively closed.
	
	As $\Gamma$ is self-simulable, there exists a $\Gamma$-subshift of finite type $Z$ which factors onto $\Gamma\curvearrowright X[L]$. Observe that the $N$-subaction of a $\Gamma$-subshift of finite type is topologically conjugated to an $N$-subshift of finite type whenever $N$ has finite index in $\Gamma$ (this is usually called the \define{higher-block shift}. The proof of this last claim can be found in~\cite[Proposition 7]{CarrollPenland}). It follows that there is a $N$-subshift of finite type which factors onto the $N$-subaction of $\Gamma \curvearrowright X[L]$. Composing this factor map with the projection of $X[L]$ to the $\ell_0$-coordinate we obtain that $N \curvearrowright X$ is a factor of this $N$-subshift of finite type, and thus $N$ is self-simulable.
\end{proof}

\begin{lemma}\label{lem:SS_passesup}
	Let $\Gamma$ be a finitely generated group and suppose $N \trianglelefteq \Gamma$ is a finite index normal subgroup which is self-simulable. Then $\Gamma$ is self-simulable.
\end{lemma}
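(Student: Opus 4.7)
My plan is to construct, for an arbitrary effectively closed action $\Gamma \curvearrowright X \subseteq A^{\NN}$, a $\Gamma$-SFT $Z$ that topologically factors onto $X$, leveraging the self-simulability of $N$ together with the finite coset structure of $\Gamma/N$.

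First, I would observe that since $N$ has finite index in the finitely generated group $\Gamma$, it is itself finitely generated, and the restricted action $N \curvearrowright X$ is effectively closed (each generator of $N$ can be expressed as a word in generators of $\Gamma$, so its action is computable). Applying the self-simulability hypothesis on $N$, I obtain an $N$-SFT $Y \subseteq B^{N}$, defined by some finite set of forbidden patterns with support $K \Subset N$, together with an $N$-equivariant continuous surjection $\psi \colon Y \to X$.

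Next, I would choose a transversal $L = \{\ell_{0} = 1_{\Gamma}, \ell_{1}, \ldots, \ell_{k}\}$ for $N$ in $\Gamma$, let $\pi \colon \Gamma \to \Gamma/N$ denote the quotient map, and define a $\Gamma$-SFT $Z \subseteq (B \times L)^{\Gamma}$ by two finitary local rules on configurations $z = (b, c)$. Rule (i) forces $c$ to behave as a ``coset labeling shifted by a basepoint'': for every $g \in \Gamma$ and every generator $s$ of $\Gamma$, we require $\pi(c(gs)) = \pi(s)^{-1} \pi(c(g))$. Rule (ii) demands, at every $g$ with $c(g) = \ell_{0}$, that the local pattern $(b(gk))_{k \in K}$ is not a forbidden pattern of $Y$. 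Both rules involve only finitely many patterns with finite support, so $Z$ is genuinely a $\Gamma$-SFT. The candidate factor map $\phi \colon Z \to X$ is then $\phi(b, c) := \ell_{i} \cdot \psi(y)$, where $\ell_{i} := c(1_{\Gamma})$ and $y \in B^{N}$ is given by $y(n) := b(\ell_{i} n)$; rule (ii) is precisely what ensures $y \in Y$, so $\psi(y)$ is well defined.

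The remaining work is to verify that $\phi$ is continuous, surjective, and $\Gamma$-equivariant. Continuity is immediate since each coordinate of $\phi(b,c)$ depends on $c(1_{\Gamma})$ and only finitely many coordinates of $b$. Surjectivity follows by starting with any $x \in X$, picking $y \in \psi^{-1}(x)$, setting $c(g)$ to be the unique element of $L$ with $\pi(c(g)) = \pi(g)^{-1}$, defining $b|_{N} = y$, and filling $b$ arbitrarily elsewhere. The subtle point is $\Gamma$-equivariance: a shift by $g_{0}$ moves the basepoint from $\ell_{i} = c(1_{\Gamma})$ to $\ell_{j} = c(g_{0}^{-1})$, and rule (i) guarantees that $g_{0} \ell_{i} = \ell_{j} m$ for some $m \in N$; this forces the new $y$ to be $m \cdot y$, whence $N$-equivariance of $\psi$ gives $\phi(g_{0}(b, c)) = \ell_{j} m \cdot \psi(y) = g_{0} \ell_{i} \cdot \psi(y) = g_{0} \phi(b, c)$. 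I expect this equivariance check — reconciling the shift of the basepoint in $Z$ with the $\Gamma$-action on $X$ via the cocycle-like identity $g_{0} \ell_{i} = \ell_{j} m$ — to be the main subtlety, though it reduces to a direct computation once the coset-labeling rule is in place.
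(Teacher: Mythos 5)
Your proof is correct and follows essentially the same strategy as the paper: induce from the $N$-SFT by letting the configuration select a distinguished $N$-coset, place a $Y$-configuration along it, and define the factor map by reading off that coset and correcting for the coset shift via $N$-equivariance of $\psi$. The one bookkeeping difference is how the distinguished coset is marked: you make it explicit with a pointer component $c \colon \Gamma \to L$ constrained by a cocycle-like nearest-neighbor rule, whereas the paper uses the sparser alphabet $B \cup \{\varnothing\}$ and two local rules forcing the non-$\varnothing$ symbols to occupy exactly one $N$-coset carrying a $Y$-configuration; the two encodings are interchangeable (the paper's $\varnothing$-pattern implicitly determines your pointer $c$, and your rule (i) determines the same coset data), and your equivariance computation via $g_0\ell_i = \ell_j m$ mirrors the paper's case analysis for $n \in N$ and $\ell \in L$.
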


Notice that if $\Gamma$ is recursively presented, this follows immediately from Corollary~\ref{cor:normalSS} without the finite index assumption.

\begin{proof}
	Let $\Gamma \curvearrowright X$ be an effectively closed action. As $N$ is a finitely generated subgroup of $\Gamma$, it follows that the $N$-subaction $N\curvearrowright X$ of $\Gamma \curvearrowright X$ is also effectively closed. As $N$ is self-simulable, there exists an $N$-subshift of finite type $Y$ which factors onto $N \curvearrowright X$ through a map $\phi \colon Y \to X$. 
	
	Let $L\ni 1_{\Gamma}$ be a finite set of representatives of left cosets of $N$ such that $\Gamma = LN$. Let $B$ be the alphabet of $Y$ and consider the $\Gamma$-subshift $Z$ consisting on the configurations $z \in (B \cup \{\varnothing \})^{\Gamma}$ which satisfy the following two constraints:
	\begin{enumerate}
		\item If $z(g) \in B$, then $(z(gn))_{n \in N}\in Y$.
		\item For every $g \in \Gamma$, there is a unique $\ell^{-1} \in L^{-1}$ such that $z( g\ell^{-1}) \neq \varnothing$.
	\end{enumerate}
	As $Y$ is a subshift of finite type and $L^{-1}$ is finite, it follows that $Z$ is a subshift of finite type. Moreover, for every $y \in Y$ we can construct $z \in Z$ with $z|_N = y$ by letting \[z(g) = \begin{cases}
		y(g) & \mbox{ if } g \in N \\
		\varnothing & \mbox{ if } g \notin N.
	\end{cases}   \]
	
	By definition, there is a well-defined map $\psi\colon Z \to L$ such that $z|_{\psi(z)N} \in Y$. We define $\widetilde{\phi}\colon Z \to X$ by \[ \widetilde{\phi}(z) = \psi(z) \phi((\psi(z)^{-1} z)|_{N}  ) \mbox{ for every }z\in Z.  \]
	Where the inner product is the shift map, and the outer product is the action $\Gamma \curvearrowright X$.
	We claim $\widetilde{\phi}$ is a factor map. It is clearly continuous and surjective. To see that $\widetilde{\phi}$ is $\Gamma$-equivariant, suppose first that $n \in N$. Then we have that $\psi(nz) = \psi(z)$ and therefore we obtain \begin{align*}
		\widetilde{\phi}(nz) & = \psi(nz)\phi((\psi(nz)^{-1}nz)|_N)\\
		& = \psi(z)\phi( (\psi(z)^{-1} n \psi(z) \psi(z)^{-1}z)|_N ) \\ & = \psi(z)(\psi(z)^{-1} n \psi(z))\phi((\psi(z)^{-1}z)|_N)\\ & = n\widetilde{\phi}(z).
	\end{align*}
	
	Now let $\ell \in L$ and notice that there is $n \in N$ such that $\psi(\ell z) =  \ell\psi(z)n$. We can therefore write \begin{align*}
		\widetilde{\phi}(\ell z) & = \psi( \ell z)\phi((\psi( \ell z)^{-1}\ell z)|_N)\\
		& = \ell\psi(z)n \phi((   n^{-1}\psi(z)^{-1} \ell^{-1} \ell z)|_N ) \\ & = \ell\psi(z) \phi((\psi(z)^{-1} z)|_N ) \\ & = \ell \widetilde{\phi}(z).
	\end{align*}
	As $L$ and $N$ generate $\Gamma$, we conclude that $\widetilde{\phi}$ is $\Gamma$-equivariant and thus a factor map. Hence $\Gamma$ is self-simulable.
\end{proof}

An immediate consequence of Lemma~\ref{lem:SS_passesup} and Lemma~\ref{lem:normalfiniteindex_isSS} is that the property of being self-simulable is an invariant of commensurability. That is, we have proven Theorem~\ref{thm:commensurable}.

Let us recall that a function $f\colon X \to Y$ between metric spaces $(X,d_X)$ and $(Y,d_Y)$ is a \define{quasi-isometry} if there exist real constants $A \geq 1$ and $B,C\geq 0$ such that \begin{enumerate}
	\item $f$ is a \textbf{quasi-isometric embedding: }for every $x,y \in X$ \[ \frac{1}{A}d_X(x,y)-B \leq d_Y(f(x),f(y)) \leq Ad_X(x,y)+B.  \]
	\item $f(X)$ is \textbf{relatively dense:} for every $z \in Y$ there is $x \in X$ such that $d(z,f(x)) \leq C$.
\end{enumerate}

Let $\Gamma$ be a finitely generated group generated by $S\Subset \Gamma$. For $g \in \Gamma$, let $|g|_S$ be the minimum length of a word $w \in S^*$ such that $g = \underline{w}$. We have that $d_S(g,h) = |g^{-1}h|_S$ is a metric on $\Gamma_1$ called the \define{word metric}. Notice that if two finitely generated groups are quasi-isometric with respect to the word metrics induced by a specific choice of generating sets, then they are quasi-isometric with respect every choice of generating sets. In what follows, when referring to quasi-isometries of finitely generated groups we implicitly consider the group as a metric space with respect to some word metric.

The rest of the section is dedicated to the proof of Theorem~\ref{thm:quasiisometry}, namely, that for finitely presented groups the property of being self-simulable is stable under quasi-isometries. By Proposition~\ref{prop:SelfSimuImpNonAmenable}, we already know that Theorem~\ref{thm:quasiisometry} holds trivially for amenable groups and thus in our proof we may assume that both groups involved are non-amenable. This will greatly simplify our proof due to a result of Whyte~\cite[Theorem 4.1]{whyte_amenability_1999} which implies that every quasi-isometry between non-amenable finitely generated groups is at bounded distance from a bilipschitz map.

For the remainder, fix two non-amenable finitely presented groups $\Gamma_1$ and $\Gamma_2$, an effectively closed action $\Gamma_1 \curvearrowright X$ and a bilipschitz map $f \colon \Gamma_2 \to \Gamma_1$ such that $f(1_{\Gamma_2}) = 1_{\Gamma_1}$. We will assume that $\Gamma_2$ is self-simulable and prove the existence of a subshift of finite type which factors onto $\Gamma_1 \curvearrowright X$.

\begin{lemma}\label{lem:generatinggoodQI}
	There exist finite symmetric generating sets $S \Subset \Gamma_1$ and $T,U \Subset \Gamma_2$ such that $1_{\Gamma_1}\in S$, $1_{\Gamma_2}\in T \cap U$ and \begin{enumerate}
		\item $\Gamma_1$ can be presented by $S$ with relations of length at most $3$.
		\item $\Gamma_2$ can be presented by $T$ with relations of length at most $3$.
		\item $f(h T) \subseteq f(h)S$ for every $h \in \Gamma_2$.
		\item $f(h)S \subseteq f(hU)$ for every $h \in \Gamma_2$.
	\end{enumerate}
\end{lemma}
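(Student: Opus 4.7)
My plan is to construct the generating sets in three stages, combining bilipschitz volume estimates with a classical Tietze splitting trick that turns any finite presentation into one with short relators. Under the standing assumption of the section (invoking Whyte's theorem), I may assume that $f \colon \Gamma_2 \to \Gamma_1$ is in fact a bilipschitz \emph{bijection} with some constant $A \geq 1$, measured relative to word metrics induced by fixed finite symmetric generating sets $S_0 \Subset \Gamma_1$ and $T_0 \Subset \Gamma_2$.

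The first stage produces $T$ and its presentation satisfying (2). Starting from any finite presentation $\langle T_0 \mid R_0 \rangle$ of $\Gamma_2$, I iteratively apply the standard shortening move: a relator $r = t_1 t_2 \cdots t_k$ with $k \geq 4$ is replaced by the pair $\{y^{-1} t_1 t_2,\; y t_3 \cdots t_k\}$, introducing a new generator $y = t_1 t_2$. The first new relator has length $3$ and the second has length $k-1$, so finitely many iterations reduce every relator to length at most $3$. After closing under inverses and adjoining $1_{\Gamma_2}$ (which requires only short housekeeping relators), I obtain the desired finite symmetric $T \ni 1_{\Gamma_2}$, and I set $M_2 := \max_{t \in T} |t|_{T_0}$, which is finite by construction.

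Next I handle $\Gamma_1$. Let $S_1 = \{g \in \Gamma_1 : |g|_{S_0} \leq A M_2\} \cup \{1_{\Gamma_1}\}$, a finite symmetric generating set of $\Gamma_1$ containing the identity. For $h \in \Gamma_2$ and $t \in T$, the bilipschitz estimate $d_{\Gamma_1}(f(h), f(ht)) \leq A |t|_{T_0} \leq A M_2$ gives $f(h)^{-1} f(ht) \in S_1$, so (3) holds for any $S \supseteq S_1$. Applying the same Tietze shortening to a finite presentation of $\Gamma_1$ on $S_1$ yields a finite symmetric $S \supseteq S_1$ with a presentation whose relators all have length at most $3$, establishing (1). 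Set $M_1 := \max_{s \in S} |s|_{S_0}$.

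Finally, define $U = \{u \in \Gamma_2 : |u|_{T_0} \leq A M_1\} \cup \{1_{\Gamma_2}\}$. For $h \in \Gamma_2$ and $s \in S$, bijectivity of $f$ produces a unique $g \in \Gamma_2$ with $f(g) = f(h) s$; the bilipschitz bound yields $d_{\Gamma_2}(h, g) \leq A |s|_{S_0} \leq A M_1$, hence $u := h^{-1} g \in U$ and $f(h) s = f(hu) \in f(hU)$, proving (4). The main subtlety, which dictates the order above, is the interdependence of the three sets: the radius of $S_1$ depends on the $T_0$-word length of $T$, and the radius of $U$ depends on the $S_0$-word length of $S$. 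Processing $T$ first, then $S$, and finally $U$ resolves this chain of dependencies, leaving only routine Tietze manipulations and bilipschitz estimates.
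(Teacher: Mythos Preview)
Your proof is correct and follows the same strategy as the paper's: construct $T$ first, then choose $S$ large enough for the bilipschitz bound (3) and shorten its relators, and finally take $U$ large enough for (4). The only cosmetic difference is that you shorten relators by iterated Tietze splittings, while the paper enlarges each generating set to the full ball of radius equal to the maximal relator length and uses the length-$3$ triangulation relations $t_1 s t_2^{-1}$ (with $t_1 s = t_2$); both devices are standard and interchangeable here.
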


\begin{proof}
	Let us fix symmetric generating sets $S_1$ and $S_2$ of $\Gamma_1$ and 
	$\Gamma_2$ respectively. As $f$ is a bilipschitz map, it is a bijection and there is $D \geq 1$ such that \[ \frac{1}{D}d_{S_2}(h_1,h_2) \leq d_{S_1}(f(h_1),f(h_2)) \leq Dd_{S_2}(h_1,h_2),  \] 
	
	As $\Gamma_2$ is finitely presented, there is a finite set of words $W_2 \subseteq S_2^*$ which presents $\Gamma_2$. Let $K_2 \geq 1$ be the largest length of a word in $W_2$ and take $T$ as the symmetric closure of the set of all words on $S_2^*$ of length at most $K_2$. Take $R_2$ as the set of all words on $T$ such that
	\begin{itemize}
		\item if $w \in W_2$, then $w \in R_2$.
		\item for every $s \in S_2$ and $t_1,t_2 \in T$ such that ${t_1s} = {t_2}$ then $t_1st_2^{-1}\in R_2$.
	\end{itemize}
	It is clear that $R_2$ presents $\Gamma_2$ with the generating set $T$ and thus we have (2).
	
	Now let $h \in \Gamma_2$ and $t \in T$. notice that $d_{S_2}(h,ht) = |t|_{S_2}$ and thus $d_{S_1}(f(h),f(ht))\leq D|t|_{S_2}$. This means that we have $f(ht) \in f(h)S_1^{\lfloor DK_2\rfloor}$ for every $t \in T$ and thus $f(hT)\subseteq f(h)S_1^{\lfloor DK_2\rfloor}$.
	
	As $\Gamma_1$ is finitely presented, there is a finite set of words $W_1 \subseteq S_1^*$ which presents $\Gamma_1$. Let $K_1 \geq 1$ be the largest length of a word in $W_1 \cup S_1^{\lfloor DK_2\rfloor}$. Letting $S$ be the symmetric closure of the set of all words on $S_1$ of length at most $K_1$ we conclude, in the same way as with $T$ and $\Gamma_2$, that $\Gamma_1$ is presented by words of length at most $3$ in $S$. By the choice of $K_1$, it also follows that for every $h \in \Gamma_2$, $f(hT) \subseteq f(h)S_1^{\lfloor DK_2\rfloor} \subseteq f(h)S$ and so (1) and (3) hold.
	
	Finally, let $s\in S$ and $h \in \Gamma_2$ such that $f(h)s = f(hu)$ for some $u \in \Gamma_2$. It follows that $d_{S_1}(f(h),f(hu) ) = |s|_{S_1} \leq K_1$. Then we have that \[ d_{S_2}(h,hu)\leq DK_1  \]
	Therefore $|u|_{S_2} \leq DK_1$ Letting $U$ be the symmetric closure of the union of $T$ with all the words of length at most $DK_1$ we obtain (4).\end{proof}

Let $S,T,U$ be sets satisfying the conditions of Lemma~\ref{lem:generatinggoodQI} and let $Y \subseteq (S^T)^{\Gamma_2}$ be the set of configurations $y$ which satisfy the following cocycle-like conditions: 

\begin{enumerate}
	\item For every $h \in \Gamma_2$ and $t \in T$ we have \[ y(h)(1_{\Gamma_2}) = 1_{\Gamma_1} \mbox{ and } y(h)(t^{-1}) = (y(ht^{-1})(t))^{-1}. \]
	\item For every $h \in \Gamma_2$ and $t_1,t_2,t_3 \in T$ such that ${t_3t_2t_1} = 1_{\Gamma_2}$ we have 
	\begin{align*}
		& \mbox{ if } \left[ y(h t_1^{-1}t_2^{-1})(t_3^{-1}) = s_3 \mbox{ and } y(ht_1^{-1})(t_2^{-1}) = s_2 \mbox{ and } y(h)(t_1^{-1})=s_1 \right],\\ & \mbox{ then } {s_1s_2s_3} = 1_{\Gamma_1}.
	\end{align*}
\end{enumerate}

From the definition above, it follows that $Y$ equipped with the left shift action $\Gamma_2 \curvearrowright Y$ is a $\Gamma_2$-subshift of finite type. It is nonempty as a valid configuration may be defined by letting $y(h)(t) = f(h)^{-1}f(ht)$ for every $h \in \Gamma_2$ and $t \in T$.

Consider the map $\gamma \colon T \times Y \times X \to Y \times X$ given by \[ \gamma(t,(y,x)) = (ty, (y(1_{\Gamma_2})(t^{-1}))^{-1}x) \mbox{ for every } t \in T \mbox{ and } ({y},x) \in {Y}\times X.  \]
The term $ty$ in the expression above is given by the left shift action, and the second term is given by the action $\Gamma_1 \curvearrowright X$.

\begin{lemma}The map $\gamma$ induces a left action $\Gamma_2 \overset{\gamma}{\curvearrowright} Y \times X$ which is topologically conjugate to an effectively closed left action of $\Gamma_2$.
\end{lemma}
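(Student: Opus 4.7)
The plan is to prove the statement in two steps: first, verify that $\gamma$ extends to a well-defined left action of $\Gamma_2$ on $Y \times X$; second, exhibit a topologically conjugate effectively closed action on a subset of $\{\symb{0},\symb{1}\}^{\NN}$.

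For the first step, since $T$ is a symmetric generating set containing $1_{\Gamma_2}$ and $\Gamma_2$ is presented by $T$ with relations of length at most $3$ (Lemma~\ref{lem:generatinggoodQI}), it suffices to verify three identities on $Y \times X$: that $\gamma(1_{\Gamma_2}, \cdot)$ is the identity, that $\gamma(t^{-1}, \gamma(t, \cdot)) = \mathrm{Id}$ for every $t \in T$, and that $\gamma(t_3, \gamma(t_2, \gamma(t_1, \cdot))) = \mathrm{Id}$ whenever $t_3 t_2 t_1 = 1_{\Gamma_2}$ with $t_i \in T$. Unwinding each composition via the left shift identity $(ty)(g) = y(t^{-1}g)$ shows that the $X$-coordinate is acted upon by $s_3^{-1} s_2^{-1} s_1^{-1}$ where $s_1 = y(1_{\Gamma_2})(t_1^{-1})$, $s_2 = y(t_1^{-1})(t_2^{-1})$, and $s_3 = y(t_1^{-1}t_2^{-1})(t_3^{-1})$; these are precisely the quantities appearing in condition~(2) with $h = 1_{\Gamma_2}$, so $s_1 s_2 s_3 = 1_{\Gamma_1}$ and the product collapses. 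The other two identities are analogous, using condition~(1).

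For the second step, since $\Gamma_2$ is finitely presented it is recursively presented, so by Proposition~\ref{prop:conjugacy_rec_presented_effsubshift} the subshift of finite type $Y$ is topologically conjugate to an effectively closed action of $\Gamma_2$ on a closed subset $Y' \subseteq \{\symb{0},\symb{1}\}^{\NN}$. Interleaving coordinates then identifies $Y \times X$ with a closed subset $Z$ of $\{\symb{0},\symb{1}\}^{\NN}$, and the transported action is topologically conjugate to $\gamma$. To verify effective closedness of $\Gamma_2 \curvearrowright Z$, I would describe, for each $t \in T$, a Turing machine computing its action: the $Y$-component uses the already-effective shift action of $t$, while the $X$-component reads the finitely many bits of $y$ needed to identify $s = y(1_{\Gamma_2})(t^{-1}) \in S$ and then calls the algorithm witnessing effective closedness of $\Gamma_1 \curvearrowright X$ to compute $s^{-1}x$; because $T$ and $S$ are finite, all required behaviour can be hard-coded.

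The only real obstacle is bookkeeping in the first step: one must correctly track the arguments of $y$ arising in the iterated composition so they match the pattern $y(h), y(h t_1^{-1}), y(h t_1^{-1} t_2^{-1})$ required to invoke condition~(2). Once this alignment is made explicit, the second step is routine given the tools from Section~2.
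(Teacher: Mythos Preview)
Your proposal is correct and follows essentially the same approach as the paper: both verify the action is well-defined by checking that the cocycle conditions (1) and (2) on $Y$ force the length-$\leq 3$ relators of $\Gamma_2$ to act trivially, and both obtain effective closedness by invoking Proposition~\ref{prop:conjugacy_rec_presented_effsubshift} for the $Y$-component and noting that the element $y(1_{\Gamma_2})(t^{-1})$ can be read off finitely many bits, after which the effectively closed action $\Gamma_1 \curvearrowright X$ computes the second coordinate. The only cosmetic difference is that the paper records explicitly that the conjugacy can be chosen so that $\widetilde{y}_0 = \phi(\widetilde{y})(1_{\Gamma_2})$, whereas you phrase this as ``reads the finitely many bits of $y$ needed to identify $s$''; both amount to the same observation that the encoding in Proposition~\ref{prop:conjugacy_rec_presented_effsubshift} is via a computable bijection.
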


\begin{proof}
	Let us first argue that $\gamma$ induces an action of $\Gamma_2$. As $T$ is a set of generators of $\Gamma_2$, it suffices to show that every relation in a presentation by $T$ (as an abstract monoid) yields the trivial map. As $\Gamma_2$ can be presented by relators of length at most $3$, the conditions on configurations of $Y$ ensure that this condition holds, therefore $\gamma$ induces a left action of $\Gamma_2$ on $Y \times X$.
	
	The left shift action $\Gamma_2 \curvearrowright Y$ is a subshift of finite type and thus an effectively closed subshift (by patterns). As $\Gamma_2$ is finitely presented, by Proposition~\ref{prop:conjugacy_rec_presented_effsubshift}, $\Gamma_2 \curvearrowright Y$ is topologically conjugated to an effectively closed action $\Gamma_2 \curvearrowright \widetilde{Y} \subseteq (S^T)^{\NN}$ through a map $\phi \colon \widetilde{Y}\to Y$. Let us note that, by the construction given in the proof of Proposition~\ref{prop:conjugacy_rec_presented_effsubshift}, the map $\phi$ can be chosen (by letting $\rho(0)$ be the empty word in $T^*$) such that $\widetilde{y}_0 = \phi(\widetilde{y})(1_{\Gamma_2})$ for any $\widetilde{y}\in \widetilde{Y}$. It follows that the action $\Gamma_2 \overset{\gamma}{\curvearrowright} Y \times X$ is topologically conjugated to the action $\Gamma_2 \overset{\gamma}{\curvearrowright} \widetilde{Y} \times X$ given by \[ t(\widetilde{y},x) =  (t\widetilde{y}, \widetilde{y}_0(t^{-1})^{-1}x) \mbox{ for every } t \in \Gamma_2 \mbox{ and } (\widetilde{y},x) \in \widetilde{Y}\times X. \]
	
	Finally, this action is effectively closed because both $\Gamma_2 \curvearrowright \widetilde{Y}$ and $\Gamma_1 \curvearrowright X$ are effectively closed and $\widetilde{y}_0(t^{-1})^{-1}$ can be effectively computed from $\widetilde{y}$ and $t$. \end{proof}

Let us now suppose that $\Gamma_2$ is self-simulable. Then there exists a finite alphabet $A$, a $\Gamma_2$-subshift of finite type $Z \subseteq A^{\Gamma_2}$ and a topological factor map $\phi_2 \colon Z \to Y \times X$ such that $\Gamma_2 \overset{\gamma}{\curvearrowright} Y \times X$ is the factor of the left shift action $\Gamma_2 \curvearrowright Z$ under $\phi_2$.

Without loss of generality (by recoding $Z$) we may assume that $Z$ is described by a nearest neighbor set of forbidden patterns. In particular, there is $\mathcal{L} \subseteq A^T$ such that $z \in Z$ if and only if $(hz)|_{T} \in \mathcal{L}$ for every $h \in \Gamma_2$. 

Furthermore, as $\phi_2$ is continuous, if $\phi_2(z)=(y,x)$ then the symbol $y(1_{\Gamma_2})$ depends only on finitely many coordinates of $Z$. We may further recode $Z$ so that this information is contained in the coordinate $z(1_{\Gamma_2})$, that is, such that there is a function $\Psi\colon A \to S^T$ with the property that if $\phi_2(z)=(y,x)$ then $y(h) = \Psi(z(h))$ for every $h \in \Gamma_2$. Note that $\Psi$ extends to a $1$-block factor map $\psi \colon Z \to Y$.

In what follows, we shall describe a $\Gamma_1$-subshift of finite type $W$ which encodes configurations of $Z$ on $\Gamma_1$ using configurations that encode the space of bilipschitz maps which resemble $f$. Our description is essentially based on a construction of Cohen~\cite{Cohen2014} which describes a process to encode the structure of a finitely presented group into another quasi-isometric group through a subshift of finite type.

\subsection{Construction of $W$}

Consider the alphabet $A$ which defines $Z$ and recall there is a map $\Psi \colon A \to S^T$. The subshift of finite type $W$ is the set of configurations $w \in A^{\Gamma_1}$ which satisfy the following constraints.

\textbf{Constraint 1: } For every $g \in \Gamma_1$ and $t,t_1,t_2,t_3 \in T$ such that ${t_3t_2t_1}= 1_{\Gamma_2}$, we have

\begin{enumerate}
	\item  $\Psi(w(g))(1_{\Gamma_2}) = 1_{\Gamma_1}$.
	\item $\Psi(w(g))(t)\Psi(g\Psi(w(g))(t))(t^{-1}) = 1_{\Gamma_1}$.
	\item $\Psi(w(g)(t_3))\Psi(g\Psi(w(g)(t_3)))(t_2)\Psi(g\Psi(g\Psi(w(g)(t_3)))(t_2))(t_1) = 1_{\Gamma}$.
\end{enumerate}

While from this definition it is clear that this condition can be implemented using finitely many forbidden patterns, its meaning is not immediately clear and it is quite painful to parse. Given $w \in A^{\Gamma_1}$, we may define a map $*_w \colon \Gamma_1 \times T \to \Gamma_1$ given by $ g *_w t = g \Psi(w(g))(t)$. In terms of this action the three conditions above translate in following more intuitive presentation.

\begin{enumerate}
	\item  $*_w 1_{\Gamma_2} = \operatorname{id}_{\Gamma_1}$.
	\item $*_w{t}\circ *_w{t^{-1}} = \operatorname{id}_{\Gamma_1}$.
	\item $*_w{t_3} \circ *_w{t_2}\circ *_w{t_1} = \operatorname{id}_{\Gamma_1}$.
\end{enumerate}

Therefore this first constraint encodes the fact that this map extends to a well-defined right action $\Gamma_1 \overset{*_w}{\curvearrowleft} \Gamma_2$ for every $w \in W$. 
For $h \in \Gamma_2$, we will write $*_w h$ to denote this action.

\textbf{Constraint 2: } For every $g \in \Gamma_1$ and every $s \in S$ there exists $u \in U$ such that \[  g*_w u = gs.   \]

This constraint encodes the fact that the actions $\Gamma_1 \overset{*_w}{\curvearrowleft} \Gamma_2$ are transitive. Indeed, for $g_0,g \in \Gamma_1$ there are $s_1\dots s_n \in S^*$ such that $g = g_0s_1\dots s_n$. Let $g_i = g_0s_1\dots s_i$. By constraint 4, there is $u_i \in U$ such that $g_{i-1}*_w u_i = g_{i-1}s_i = g_{i}$ Letting $h= {u_{0}\dots u_{n-1}}$ we obtain that $g_0*_w h = g_n = g$.

Given $g \in \Gamma_1$ we can extract from $w$ a configuration $\chi(w,g) \in A^{\Gamma_2}$ given by $\chi(w,g)(h) = w(g *_w h)$. This configuration satisfies the following two useful equations. For every $g,g' \in \Gamma_1$ and $h \in \Gamma_2$, we have \[h^{-1}\chi(w,g) = \chi(w,g*_w h) \mbox{ and } \chi(gw,g') = \chi(w, g^{-1}g').\]

Recall that there is a finite set of patterns $\mathcal{L} \subseteq A^T$ such that $z \in Z$ if and only if for every $h \in \Gamma_2, (gz)|_{T} \in \mathcal{L}$. 

\textbf{Constraint 3: } For every $g \in \Gamma_1$, we have that $\chi(w,g)|_{T} \in \mathcal{L}$.

This constraint encodes the fact that $\chi(w,g) \in Z$. Indeed, for every $h \in \Gamma_2$ we have $h^{-1}\chi(w,g)|_T = \chi(w,g*_w h)|_T \in \mathcal{L}$ and thus $\chi(w,g)\in Z$.

From the considerations above, it is clear that $W$ is a $\Gamma_1$-subshift of finite type.

\subsection{Construction of the factor map}

Let us recall we have a factor map $\phi_2 \colon Z \to Y \times X$. Let $\zeta$ denote the projection of $\phi_2$ onto the second coordinate, that is, if $\phi_2(z)=(y,x)$ then $\zeta(z)=x$. 

Recall we have constructed a $\Gamma_1$-subshift of finite type $W$ whose configurations satisfy constraints (1)-(3). We define $\phi_1 \colon W \to X$ by \[ \phi_1(w) = \zeta(\chi(w,1_{\Gamma_1})). \]

We claim that $\phi_1$ is a topological factor map. This will be proven in the following three claims.

\begin{claim}\label{claim:1}
	The map $\phi_1$ is continuous.
\end{claim}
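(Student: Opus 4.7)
The plan is to view $\phi_1$ as the composition of two continuous maps. Concretely, $\phi_1(w) = \zeta(\chi(w, 1_{\Gamma_1}))$, and $\zeta \colon Z \to X$ is continuous as the projection onto $X$ of the topological factor map $\phi_2 \colon Z \to Y \times X$. It therefore suffices to show that the map $w \mapsto \chi(w, 1_{\Gamma_1}) \in A^{\Gamma_2}$ is continuous, that is, that for every $h \in \Gamma_2$ the symbol $\chi(w, 1_{\Gamma_1})(h) = w(1_{\Gamma_1} *_w h)$ is locally determined by $w$.

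First I will fix $h \in \Gamma_2$ and decompose it as $h = t_1 \cdots t_k$ with $t_i \in T$. By definition of the right action $\Gamma_1 \overset{*_w}{\curvearrowleft} \Gamma_2$ encoded by constraint (1) in the definition of $W$, the element $1_{\Gamma_1} *_w h$ is computed by the finite recursion $g_0 = 1_{\Gamma_1}$ and $g_i = g_{i-1} \Psi(w(g_{i-1}))(t_i)$ for $1 \leq i \leq k$, and then $\chi(w, 1_{\Gamma_1})(h) = w(g_k)$. This inspects $w$ only at the finite set $F = \{g_0, \dots, g_k\} \Subset \Gamma_1$: by induction on $i$, any $w' \in W$ with $w'|_F = w|_F$ produces the same sequence $g_0, \dots, g_k$ and hence the same output symbol. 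Thus $w \mapsto \chi(w, 1_{\Gamma_1})(h)$ is continuous for each $h$, so $w \mapsto \chi(w, 1_{\Gamma_1})$ is continuous into $A^{\Gamma_2}$.

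Putting the two steps together, for any $n \in \NN$ continuity of $\zeta$ provides a finite $F_n \Subset \Gamma_2$ on which $\zeta(\cdot)_n$ depends, and applying the recursion above to each $h \in F_n$ yields a finite $E_n \Subset \Gamma_1$ (a priori depending on $w$) such that $\phi_1(w')_n = \phi_1(w)_n$ whenever $w'|_{E_n} = w|_{E_n}$. I do not anticipate any real obstacle here; the only mildly subtle point is that the witnessing support $E_n$ depends on $w$, but for pointwise continuity this is exactly what one expects from a Curtis--Hedlund--Lyndon-type argument where the cocycle-like data in $W$ makes the movement $*_w$ locally readable from $w$ itself.
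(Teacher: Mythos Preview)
Your proof is correct and follows essentially the same approach as the paper: decompose $\phi_1$ as $\zeta$ composed with $w \mapsto \chi(w,1_{\Gamma_1})$, and argue that the latter is continuous because $\chi(w,1_{\Gamma_1})(h) = w(1_{\Gamma_1} *_w h)$ depends on only finitely many coordinates of $w$. The one minor refinement in the paper is that, since each step of the recursion multiplies by an element of $S$ (because $\Psi$ takes values in $S^T$), all of your $g_i$ lie in the fixed ball $S^{|h|_T}$, so the witnessing support can be taken independent of $w$; but your $w$-dependent support already suffices for continuity.
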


\begin{proof}
	If $h \in \Gamma_2$, then $\chi(w,1_{\Gamma_1})(h) = w(1_{\Gamma_1} *_w h)$ and thus it depends upon the values of $w$ on the set $S^{|h|_T}$. We conclude that $\phi_1$ is continuous. Furthermore, $\phi_2$ is continuous, we have that $\zeta$ is continuous and thus $\phi_1$ is continuous.
\end{proof}

\begin{claim}\label{claim:2}
	The map $\phi_1$ is $\Gamma_1$-equivariant.
\end{claim}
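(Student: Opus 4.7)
My plan is to reduce $\Gamma_1$-equivariance to checking a single generator $s \in S$, translate the translate-by-$s$ on $W$ into a translate-by-some $u \in U \subseteq \Gamma_2$ on $Z$ via the transitive right action $\Gamma_1 \overset{*_w}{\curvearrowleft} \Gamma_2$, and then push the identity through the already-established $\Gamma_2$-equivariance of $\phi_2$. Concretely, since $S$ generates $\Gamma_1$ it suffices to prove $\phi_1(sw)=s\phi_1(w)$ for $s\in S$. Using the identity $\chi(gw,g')=\chi(w,g^{-1}g')$ noted before Constraint $3$, we have
\[
\phi_1(sw)=\zeta(\chi(sw,1_{\Gamma_1}))=\zeta(\chi(w,s^{-1})),
\]
so the task is to compare $\chi(w,s^{-1})$ with $z\isdef \chi(w,1_{\Gamma_1})$.

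Here is where Constraint $2$ enters: it gives $u\in U$ with $1_{\Gamma_1}*_w u=s^{-1}$, and the other identity $h^{-1}\chi(w,g)=\chi(w,g*_w h)$ then yields $\chi(w,s^{-1})=u^{-1}z$. Writing $\phi_2(z)=(y,x)$ with $y=\psi(z)$ and $x=\phi_1(w)$, the $\Gamma_2$-equivariance of $\phi_2$ with respect to $\gamma$ gives $\phi_2(u^{-1}z)=u^{-1}\cdot_\gamma(y,x)$. Applying $\zeta$ thus reduces the claim to identifying the ``$X$-component'' of $u^{-1}\cdot_\gamma(y,x)$ as $sx$.

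The main (and really only) technical step is computing this cocycle for a non-generator $u\in U$, since $\gamma$ was defined only on the generating set $T$ by $\gamma(t,(y,x))=(ty,\,y(1_{\Gamma_2})(t^{-1})^{-1}x)$. I would handle this by interpreting a configuration $y\in Y$ as the discrete derivative of a function $f_y\colon\Gamma_2\to\Gamma_1$ normalized by $f_y(1_{\Gamma_2})=1_{\Gamma_1}$ and defined inductively along $T$-words by $f_y(ht)=f_y(h)\cdot y(h)(t)$; the cocycle-type conditions defining $Y$ (parts $(1)$ and $(2)$ of its definition) and the fact that $\Gamma_2$ is presented by length-$3$ relators in $T$ make $f_y$ well defined on $\Gamma_2$. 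A short induction on word length in $T$ then shows
\[
\gamma(h,(y,x))=\bigl(hy,\;f_y(h^{-1})^{-1}x\bigr)\qquad\text{for every }h\in\Gamma_2,
\]
the base case $h\in T$ being the definition and the inductive step matching exactly because $(ty)(1_{\Gamma_2})=y(t^{-1})$.

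To finish, observe that for our $z=\chi(w,1_{\Gamma_1})$ we have $y(h)(t)=\Psi(w(1_{\Gamma_1}*_w h))(t)=(1_{\Gamma_1}*_w h)^{-1}(1_{\Gamma_1}*_w ht)$, which means the integrated map is precisely $f_y(h)=1_{\Gamma_1}*_w h$. Specializing the cocycle formula to $u^{-1}$ gives
\[
\gamma(u^{-1},(y,x))=\bigl(u^{-1}y,\;f_y(u)^{-1}x\bigr)=\bigl(u^{-1}y,\;(1_{\Gamma_1}*_w u)^{-1}x\bigr)=(u^{-1}y,sx),
\]
using $1_{\Gamma_1}*_w u=s^{-1}$. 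Taking $\zeta$ yields $\phi_1(sw)=sx=s\phi_1(w)$, completing the equivariance check on $S$ and hence on $\Gamma_1$. I anticipate the only delicate point to be verifying the inductive cocycle identity, specifically that the definition of $f_y$ passes to $\Gamma_2$ (not just to $T^*$); this is where the length-$3$ presentation of $\Gamma_2$ by $T$ guaranteed by Lemma~\ref{lem:generatinggoodQI} together with condition $(2)$ in the definition of $Y$ is essential.
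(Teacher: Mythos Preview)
Your proof is correct and follows essentially the same route as the paper: both reduce to generators $s\in S$, use Constraint~2 to find $u\in U$ with $1_{\Gamma_1}*_w u=s^{-1}$, pass to $u^{-1}z$ via the identities for $\chi$, and then compute the $X$-component of $u^{-1}\cdot_\gamma(y,x)$ as a telescoping product along a $T$-word for $u$. The only difference is packaging: the paper writes out the telescope $\prod_j \ell_{j-1}^{-1}\ell_j$ explicitly with $\ell_j = 1_{\Gamma_1}*_w t_1\cdots t_j$, whereas you name this product $f_y(u)$ and observe $f_y(h)=1_{\Gamma_1}*_w h$; these are the same computation.
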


\begin{proof}
	As $\Gamma_1$ is generated by $S$, it suffices to show that for every $s \in S$ and $w \in W$, we have $s\phi_1(w) = \phi_1(sw)$.
	
	By condition 2, that there is $u \in U$ such that $1_{\Gamma_1} *_w u = s^{-1}$. From here, we obtain that \[ \chi(sw,1_{\Gamma_1}) = \chi(w, s^{-1}) = \chi(w, 1_{\Gamma_1}*_w u) = u^{-1} \chi( w, 1_{\Gamma}).    \]
	Using that $\phi_2$ is $\Gamma_2$-equivariant, we get 
	\[ \phi_2(\chi(sw,1_{\Gamma})) = u^{-1}\phi_2(\chi(w, 1_{\Gamma_1})). \]
	
	Let us introduce a bit of notation. Let $t_1\dots t_n \in T^*$ such that $u = \underline{t_1\dots t_n}$. Let $\ell_0 = 1_{\Gamma_1}$, $\ell_n = s^{-1}$ and for $1 \leq j < n$ define $\ell_j$ as the element such that $\ell_0*_w {t_1\dots t_j} = \ell_j$.
	
	Let us write $(y,x) = \phi_2(\chi(w, 1_{\Gamma_1}))$. By the previous equation we have that $\phi_2(\chi(sw,1_{\Gamma_1})) = u^{-1}(y,x)$. By definition of the action $\Gamma_2 \curvearrowright Y \times X$, we have that \[  u^{-1}(y,x) = (u^{-1}y, {\left(y(t_1t_2\dots t_{n-1})(t_n)\right)^{-1}\dots \left(y(t_1)(t_2)\right)^{-1}\left( y(1_{\Gamma_1})(t_1)\right)^{-1}}x ).  \]
	
	Recall that $\Psi\colon A \to S^T$ extends to a $1$-block factor map $\psi \colon Z \to Y$ and thus $y = \psi( \chi(w, 1_{\Gamma_1}) )$. Therefore we obtain that for every $h \in \Gamma_2$ we have \[y(h) =h^{-1}y(1_{\Gamma_2}) = \psi(h^{-1}\chi(w,1_{\Gamma_1}))(1_{\Gamma_2}) =  \Psi(\chi( w, 1_{\Gamma_1}*_w h )(1_{\Gamma_2})). \]
	
	In particular, for every $1 \leq j \leq n$, then 
	\begin{align*}
		y(t_1t_2\dots t_{j-1})(t_j) & = \Psi(\chi( w,(1_{\Gamma_1}*_w t_1t_2\dots t_{j-1})(1_{\Gamma_2}))(t_j) \\ & = \Psi(\chi( w, \ell_{j-1} )(1_{\Gamma_2}))(t_j)\\
		&= \ell_{j-1}^{-1}\ell_{j}.
	\end{align*}
	
	From this equation, we obtain that 
	\begin{align*}
		u^{-1}(y,x) & = (u^{-1}y, ( \ell_{n-1}^{-1}\ell_n)^{-1}(\ell_{n-2}^{-1}\ell_{n-1})^{-1}\dots(\ell_{0}^{-1}\ell_1 )^{-1}x)\\ & = (u^{-1}y, \ell_n^{-1} \ell_0 x)\\ &  = (u^{-1}y, sx).
	\end{align*}
	
	Therefore \[ \phi_1(sw) = \zeta( u^{-1} \chi(w, 1_{\Gamma_1})) = s\zeta(\chi(w, 1_{\Gamma_1})) = s \phi_1(w).   \]
	
	From where we deduce that $\phi_1$ is indeed $\Gamma_1$-equivariant.
\end{proof}

Notice that up to this point, we have not used $f$ in any meaningful way other that to determine the constant $D$. As far as the definition goes, the actions induced by elements of $w$ might not be free. The next claim proving surjectivity of $\phi_1$ relies strongly on the existence of this particular bilipschitz map $f$. 

\begin{claim}\label{claim:3}
	The map $\phi_1$ is surjective.
\end{claim}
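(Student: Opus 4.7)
The plan is to use the bilipschitz map $f$ to directly transport a configuration $z \in Z$ up to $\Gamma_1$. Given $x \in X$, I would first exhibit a distinguished cocycle $y_0 \in Y$ determined by $f$: define
\[ y_0(h)(t) = f(h)^{-1} f(ht) \quad \text{for } h \in \Gamma_2,\ t \in T. \]
By Lemma~\ref{lem:generatinggoodQI}(3) we have $f(ht) \in f(h)S$, so $y_0(h)(t) \in S$, and $y_0$ manifestly satisfies the two cocycle-like conditions defining $Y$ (the right-hand sides telescope since they are defined via $f$). Since $\phi_2 \colon Z \to Y \times X$ is surjective, we may pick $z \in Z$ with $\phi_2(z) = (y_0, x)$. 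Recall that $y_0 = \psi(z)$, i.e.\ $\Psi(z(h))(t) = y_0(h)(t) = f(h)^{-1}f(ht)$ for every $h \in \Gamma_2$, $t \in T$.

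Next, using that $f$ is a bijection (this is the content of Whyte's theorem applied to our non-amenable setting, which we already invoked to pass to a bilipschitz map), define $w \in A^{\Gamma_1}$ by $w(g) = z(f^{-1}(g))$. The key computation is that with this choice,
\[ g *_w t = g\,\Psi(w(g))(t) = g \cdot f(f^{-1}(g))^{-1} f(f^{-1}(g)\,t) = f(f^{-1}(g)\,t). \]
In other words, the right action $\Gamma_1 \overset{*_w}{\curvearrowleft} \Gamma_2$ is conjugate via $f$ to right multiplication of $\Gamma_2$ on itself. From this identity Constraint~1 is immediate (the three equations reduce to $1_{\Gamma_2} = 1_{\Gamma_2}$, $tt^{-1} = 1_{\Gamma_2}$, and $t_3 t_2 t_1 = 1_{\Gamma_2}$ inside $\Gamma_2$, so no verification is needed beyond the tautology $f \circ f^{-1} = \operatorname{id}$).

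For Constraint~2, given $g \in \Gamma_1$ and $s \in S$, set $h = f^{-1}(g)$. Lemma~\ref{lem:generatinggoodQI}(4) gives $u \in U$ with $f(hu) = f(h)s = gs$, and then $g *_w u = f(f^{-1}(g)\,u) = gs$ as required. For Constraint~3, unwinding the definition yields
\[ \chi(w,g)(h) = w(g *_w h) = z(f^{-1}(g)\,h) = \bigl(f^{-1}(g)^{-1} z\bigr)(h), \]
so $\chi(w,g) = f^{-1}(g)^{-1} z$ is a translate of $z \in Z$; in particular its restriction to $T$ lies in $\mathcal{L}$. Hence $w \in W$. Finally, since $f(1_{\Gamma_2}) = 1_{\Gamma_1}$, we have $\chi(w,1_{\Gamma_1}) = z$, so $\phi_1(w) = \zeta(z) = x$.

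The main (minor) conceptual step is recognizing that one should not just pick any $z$ with $\zeta(\phi_2(z)) = x$, but rather the specific $z$ whose $Y$-component records the differences $f(h)^{-1}f(ht)$; this choice is exactly what makes $f$ an $\Gamma_2$-equivariant bijection between the right action $*_w$ and ordinary right multiplication, after which all three constraints and the equality $\phi_1(w) = x$ follow from trivial manipulations. Everything else is bookkeeping using the properties of $S,T,U$ provided by Lemma~\ref{lem:generatinggoodQI}.
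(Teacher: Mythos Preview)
Your proposal is correct and follows essentially the same approach as the paper's proof: define the distinguished cocycle $y_0(h)(t)=f(h)^{-1}f(ht)$, lift $(y_0,x)$ to some $z\in Z$ via the surjectivity of $\phi_2$, transport $z$ to $w\in A^{\Gamma_1}$ by $w(g)=z(f^{-1}(g))$, and then verify that $g*_w h=f(f^{-1}(g)h)$ makes all three constraints and the equality $\phi_1(w)=x$ immediate. Your observation that $\chi(w,g)=f^{-1}(g)^{-1}z$ is a shift of $z$ is a clean way to dispatch Constraint~3, and your deduction of Constraint~1 directly from the conjugacy of actions is slightly slicker than the paper's line-by-line verification, but the argument is the same.
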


\begin{proof}
	Let $x \in X$ be arbitrary. Let us first define $y \in S^T$ by \[ y(h)(t) = f(h)^{-1}f(ht). \]
	By Lemma~\ref{lem:generatinggoodQI} part (3) we have that $y \in S^T$. It is clear by definition that $y(h)(1_{\Gamma_2})= 1_{\Gamma_1}$. Furthermore, for every $t$ we have \[(y(ht^{-1})(t))^{-1} = (f(ht^{-1})^{-1}f(ht^{-1}t))^{-1} = f(h)^{-1}f(ht^{-1}) = y(h)(t^{-1}).\]
	Finally, for $t_1,t_2,t_3 \in T$ such that ${t_3t_2t_1}= 1_{\Gamma_2}$ we have 
	\begin{align*}
		y(h)(t_1^{-1}) & = \underbracket{f(h)^{-1}f(ht_1^{-1})}_{s_1},\\ y(ht_1^{-1})(t_2^{-1}) & = \underbracket{f(ht_1^{-1})^{-1}f(ht_1^{-1}t_2^{-1})}_{s_2}\\  y(ht_1^{-1}t_2^{-1})(t_3^{-1}) & = \underbracket{f(ht_1^{-1}t_2^{-1})^{-1}f(ht_1^{-1}t_2^{-1}t_3^{-1})}_{s_3}.
	\end{align*}
	As ${t_1^{-1}t_2^{-1}t_3^{-1}}= 1_{\Gamma_2}$ we verify that \[{s_1s_2s_3} = f(h)^{-1}f(ht_1^{-1}t_2^{-1}t_3^{-1})= f(h)^{-1}f(h) = 1_{\Gamma_1}.
	\]
	Therefore we conclude that $y \in Y$, and thus $(y,x) \in Y \times X$. Let $z \in Z$ such that $\phi_2(z) = (y,x)$, it suffices to construct $w \in W$ such that $\chi(w,1_{\Gamma_1}) = z$, as then we would have $\phi_1(w) = \zeta(\chi(w,1_{\Gamma_1}))= \zeta(z) = x$.
	
	Let $w \in A^{\Gamma_1}$ be given by $w(g) = z(f^{-1}(g))$ for every $g \in \Gamma_1$. Notice that for every $t \in T$ and $g \in \Gamma_1$, we have 
	\begin{align*}
		g *_w t & = g\Psi(w(g))(t) \\ & =  g\Psi(z(f^{-1}(g)))(t)\\ & = g(f(f^{-1}(g))^{-1}f(f^{-1}(g)t)\\ & = f(f^{-1}(g)t).
	\end{align*}
	As $T$ is a generating set, we deduce that for any $h \in \Gamma_2$ then $g *_w h = f(f^{-1}(g)h)$.
	
	From this we deduce that constraint $1$ holds on $w$. Indeed, let $g \in \Gamma_1$  be arbitrary. For the first condition we have \[g *_{w} 1_{\Gamma_1} = f(f^{-1}(g)1_{\Gamma_1}) = g.\]
	
	For the second condition, let $t \in T$. We have \begin{align*}
		g *_{w} t *_{w} t^{-1} & = f(f^{-1}(g)t) *_w t^{-1}\\
		& = f(f^{-1}(g)tt^{-1}) = g.
	\end{align*}
	For the third condition, let $t_1,t_2,t_3 \in T$ such that $t_3t_2t_1 = 1_{\Gamma_2}$  we have \begin{align*}
		g *_{w} t_3 *_{w} t_2 *_{w} t_1 & = f(f^{-1}(g)t_3) *_{w} t_2 *_{w} t_1 \\
		& = f(f^{-1}(g)t_3t_2) *_{w} t_1 \\
		& = f(f^{-1}(g)t_3t_2t_1) = g.
	\end{align*}
	
	By Lemma~\ref{lem:generatinggoodQI} part (4), we get that for every $h \in \Gamma_2$ we have $f(h)S\subseteq f(hU)$ therefore for every $s \in S$ and $g \in \Gamma_1$ we obtain that $gs$ can be written as $f(f^{-1}(g)u) = g *_w u$ for some $u \in U$ and thus constraint 2 holds. 
	
	Finally, recall that we have asked that $f(1_{\Gamma_2}) = 1_{\Gamma_1}$. By construction we have that for every $h \in \Gamma_2$, 
	\[\chi(w,1_{\Gamma_1})(h) = w(1_{\Gamma} * h) = w(f(f^{-1}(1_{\Gamma_1})h)) = w(f(h)) = z(h).\] We conclude that $z = \chi(w,1_{\Gamma_1})$, and in particular that constraint $3$ holds and thus $w \in W$. This shows that $\phi_1$ is surjective.\end{proof}

We have shown that $W$ is a $\Gamma_1$-subshift of finite type and in Claims~\ref{claim:1},~\ref{claim:2} and~\ref{claim:3} that $\phi_1 \colon W \to X$ is a topological factor map. As $\Gamma_1 \curvearrowright X$ was an arbitrary effectively closed action, we obtain that $\Gamma_1$ is self-simulable. This completes the proof of Theorem~\ref{thm:quasiisometry}.

\section{Examples and applications}\label{sec:applications}

In this section we provide several handy versions of the theorems in the last sections which enables us to show that several well known non-amenable groups in the literature are self-simulable. 

\subsection{Branch groups}

An immediate consequence of Theorem~\ref{thm:commensurable} is that whenever a group is commensurable to a product of two finitely generated non-amenable groups, then it must be self-simulable. An interesting class of groups for which a slighter weaker property holds is the class of branch groups. An extensive survey about these groups can be found in~\cite{BartholdiGrigorchuk2003Branchgroups}.

We will not provide the definition of these groups here and just refer the reader to~\cite[Definition 1.1]{BartholdiGrigorchuk2003Branchgroups}. The sole property of these groups we will use, and which can be deduced directly from the definition, is that any branch group $\Gamma$ admits a finite index subgroup $H$ which is isomorphic to the direct product of at least two isomorphic copies of some group $L$.

\begin{corollary}\label{cor:branches}
	Let $\Gamma$ be a finitely generated branch group. If $\Gamma$ is non-amenable then $\Gamma$ is self-simulable. The converse holds whenever $\Gamma$ is recursively presented.
\end{corollary}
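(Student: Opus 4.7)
The proof plan is essentially to combine the structural property of branch groups quoted in the excerpt (a finite index subgroup isomorphic to a direct product of copies of some group $L$) with Theorem~\ref{thm:selfsimulation} and Theorem~\ref{thm:commensurable}. The converse direction follows immediately from Proposition~\ref{prop:SelfSimuImpNonAmenable}: a recursively presented self-simulable group is automatically non-amenable.

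For the forward direction, suppose $\Gamma$ is a finitely generated non-amenable branch group. By the structural property recalled right before the statement, there exists a finite index subgroup $H \leqslant \Gamma$ with $H \cong L \times L$ (or a product of more copies, but two suffice) for some group $L$. First I would check that $L$ is finitely generated and non-amenable. Finite generation: since $\Gamma$ is finitely generated and $H$ has finite index in $\Gamma$, the subgroup $H$ is finitely generated, and then $L$ is finitely generated as a quotient of $L \times L \cong H$ by a direct factor. Non-amenability: since amenability is preserved by passage to finite index subgroups, $H$ is non-amenable; since amenability is preserved by direct products, $L \times L$ being non-amenable forces $L$ to be non-amenable.

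Having established that $H$ is isomorphic to a direct product of two finitely generated non-amenable groups, Theorem~\ref{thm:selfsimulation} gives that $H$ is self-simulable. Since $H$ has finite index in $\Gamma$, the groups $\Gamma$ and $H$ are commensurable, and Theorem~\ref{thm:commensurable} then transfers self-simulability from $H$ to $\Gamma$. This proves the forward direction.

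For the converse, assume $\Gamma$ is finitely generated, recursively presented, and self-simulable. By Proposition~\ref{prop:SelfSimuImpNonAmenable}, $\Gamma$ is non-amenable. I do not expect any serious obstacle here: the structural fact about branch groups (existence of a finite index direct product subgroup) is exactly tailored to feed into the two theorems above, and all hypotheses (finite generation, non-amenability of both direct factors, commensurability with $\Gamma$) are immediate consequences of standard closure properties of amenability under finite index subgroups and direct products. The only point that deserves explicit verification in a full write-up is that the direct factor $L$ inherits finite generation from $H$, which is routine.
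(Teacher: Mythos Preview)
Your proposal is correct and follows essentially the same approach as the paper's proof: use the finite index subgroup $H \cong L^k$ (with $k \geq 2$), deduce that $L$ is finitely generated and non-amenable, apply Theorem~\ref{thm:selfsimulation} to get $H$ self-simulable, then use Theorem~\ref{thm:commensurable} to transfer this to $\Gamma$; the converse is Proposition~\ref{prop:SelfSimuImpNonAmenable}. Your write-up is in fact slightly more detailed than the paper's in justifying the finite generation and non-amenability of $L$.
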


\begin{proof}
	Suppose $\Gamma$ is non-amenable, then the finite index subgroup $H$ is non-amenable (and finitely generated). From here it follows that $L$ must be non-amenable and finitely generated, and thus $H$ is isomorphic to the direct product of at least two finitely generated non-amenable groups. By Theorem~\ref{thm:selfsimulation} it follows that $H$ is self-simulable. As $\Gamma$ is commensurable with $H$ we obtain that $\Gamma$ is self-simulable by Theorem~\ref{thm:commensurable}.
	
	The converse is a direct consequence of Proposition~\ref{prop:SelfSimuImpNonAmenable}.
\end{proof}

\subsection{Burger-Mozes groups}

A construction of Burger and Mozes~\cite{BurgerMozes2020} provides a class of finitely presented simple groups which act geometrically on the direct product of two infinite simplicial trees with constant bounded degree $\geq 3$. By the {\v S}varc-Milnor lemma, these groups are quasi-isometric to the direct product $F_2 \times F_2$, which is self-simulable by Theorem~\ref{thm:selfsimulation}. Putting this fact together with Theorem~\ref{thm:quasiisometry} yields the following corollary.

\begin{corollary}\label{cor:Burgermozes}
	The groups of Burger and Mozes are self-simulable. In particular, there exist finitely presented non-amenable simple groups which are self-simulable.
\end{corollary}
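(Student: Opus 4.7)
The plan is to invoke Theorem~\ref{thm:quasiisometry} together with the base case provided by Theorem~\ref{thm:selfsimulation}, so the only real work is identifying the quasi-isometry type of the Burger--Mozes groups. First I would recall the relevant feature of the Burger--Mozes construction from~\cite{BurgerMozes2020}: each such group $\Gamma$ is a finitely presented simple group realized as a cocompact lattice in $\mathrm{Aut}(T_m) \times \mathrm{Aut}(T_n)$ for suitable regular trees $T_m$ and $T_n$ of constant degrees $m,n \geq 3$, with $\Gamma$ acting properly discontinuously and cocompactly by isometries on $T_m \times T_n$ (endowed, say, with the $\ell^1$ product of the graph metrics).

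Next I would apply the {\v S}varc--Milnor lemma to this geometric action to conclude that $\Gamma$ is quasi-isometric to $T_m \times T_n$. Since any two regular trees of bounded degree $\geq 3$ are quasi-isometric to the Cayley graph of $F_2$ (with respect to a free generating set), taking the product of these quasi-isometries gives a quasi-isometry $T_m \times T_n \to F_2 \times F_2$ (this is where one uses that quasi-isometries pass to $\ell^1$-products, which is routine). Composing, $\Gamma$ is quasi-isometric to $F_2 \times F_2$.

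Now Theorem~\ref{thm:selfsimulation} asserts that a direct product of two finitely generated non-amenable groups is self-simulable, so in particular $F_2 \times F_2$ is self-simulable. Since both $\Gamma$ and $F_2 \times F_2$ are finitely presented, Theorem~\ref{thm:quasiisometry} transfers self-simulability across the quasi-isometry, yielding that $\Gamma$ itself is self-simulable. The ``in particular'' clause is then immediate, since by construction the Burger--Mozes groups are finitely presented, simple, and non-amenable (their non-amenability follows, for instance, from the fact that they contain non-abelian free subgroups, or more directly from containing a quasi-isometric copy of $F_2$).

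I do not expect a genuine obstacle here: the finite presentability of $\Gamma$ is built into the Burger--Mozes construction, and the only point requiring care is verifying that the $\ell^1$ product of two regular trees of degrees $\geq 3$ is quasi-isometric to (a Cayley graph of) $F_2 \times F_2$, which follows from the standard fact that any two such trees are quasi-isometric and that quasi-isometries are compatible with direct products of metric spaces.
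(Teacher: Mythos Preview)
Your proposal is correct and follows essentially the same approach as the paper: invoke the geometric action of the Burger--Mozes groups on a product of trees, apply the {\v S}varc--Milnor lemma to get a quasi-isometry with $F_2 \times F_2$, use Theorem~\ref{thm:selfsimulation} to see that $F_2 \times F_2$ is self-simulable, and then transfer this via Theorem~\ref{thm:quasiisometry}. You supply slightly more detail (e.g., on why products of regular trees of degree $\geq 3$ are quasi-isometric to $F_2 \times F_2$), but the argument is the same.
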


A reader interested in the groups by Burger and Mozes, will find plenty of information in the following chapter of Bartholdi~\cite{bartholdi_2018}.

\subsection{Thompson's groups}
\label{sec:V}

Next, we provide a handy tool for showing that a group is self-simulable, by simply producing a sufficiently rich faithful action on a zero-dimensional space.

\begin{lemma}
	\label{lem:acts_on_0d}
	Let $\Gamma$ be a finitely generated and recursively presented group that acts faithfully on a zero-dimensional space $X$ by homeomorphisms, such that for each nonempty open set $U \subseteq X$, the subgroup of elements of $\Gamma$ which act trivially on $X\setminus U$ is non-amenable. Then $\Gamma$ is self-simulable.
\end{lemma}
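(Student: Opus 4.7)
The plan is to apply Theorem~\ref{thm:mediaintroduction} by exhibiting a self-simulable mediated subgroup $\Delta \leqslant \Gamma$. For brevity, write $R(U)$ for the rigid stabilizer of a nonempty open $U \subseteq X$, i.e.\ the subgroup of $\Gamma$ acting trivially on $X \setminus U$; by hypothesis each such $R(U)$ is non-amenable. Since $\Gamma$ is non-trivial (being non-amenable) and acts faithfully on $X$, there are at least two points in $X$, so zero-dimensionality yields a clopen partition $X = U_1 \sqcup U_2$ with $U_1, U_2$ both nonempty. Fix a finite generating set $T$ of $\Gamma$. For each $t \in T$, the clopen sets $V_{t,i,j} := U_i \cap tU_j$, $(i,j) \in \{1,2\}^2$, partition $X$; at least one $V_{t,i,j}$ is nonempty, and for each such nonempty piece $R(V_{t,i,j})$ is non-amenable. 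Since every non-amenable countable group contains a finitely generated non-amenable subgroup (amenability of countable groups being preserved under directed unions), I would fix, for each nonempty $V_{t,i,j}$, a finitely generated non-amenable subgroup $\Sigma_{t,i,j} \leqslant R(V_{t,i,j})$.

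The key observation is that $\Sigma_{t,i,j} \leqslant R(U_i)$, while $t^{-1}\Sigma_{t,i,j}t \leqslant R(t^{-1}V_{t,i,j}) \leqslant R(U_j)$, because $t^{-1}V_{t,i,j} = t^{-1}U_i \cap U_j \subseteq U_j$. For $i \in \{1,2\}$, I would then let $H_i \leqslant R(U_i)$ be the subgroup of $\Gamma$ generated by the finite collection $\{\Sigma_{t,i,j} : t \in T,\ V_{t,i,j} \neq \varnothing\}$ together with the finite collection of conjugates $\{t^{-1}\Sigma_{t,j,i}t : t \in T,\ V_{t,j,i} \neq \varnothing\}$. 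By the key observation both families lie inside $R(U_i)$, so each $H_i$ is a finitely generated non-amenable subgroup of $R(U_i)$. Since $U_1$ and $U_2$ are disjoint, $R(U_1)$ and $R(U_2)$ commute elementwise and intersect trivially by faithfulness, so $\Delta := H_1 H_2 = H_1 \times H_2$ is an internal direct product of two finitely generated non-amenable groups. By Theorem~\ref{thm:selfsimulation}, $\Delta$ is self-simulable.

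It remains to verify that $\Delta$ is mediated with respect to $T$. Given $t \in T$, choose $(i,j)$ with $V_{t,i,j} \neq \varnothing$: then $\Sigma_{t,i,j} \leqslant H_i \leqslant \Delta$ by the first family of generators of $H_i$, and $t^{-1}\Sigma_{t,i,j}t \leqslant H_j \leqslant \Delta$ by the second family of generators of $H_j$. Via Remark~\ref{rem:sigmas}, this exhibits $\Sigma_{t,i,j}$ as a non-amenable subgroup of $\Delta \cap t\Delta t^{-1}$, so $\Delta$ is mediated in $\Gamma$. Since $\Gamma$ is finitely generated and recursively presented, Theorem~\ref{thm:mediaintroduction} then concludes that $\Gamma$ is self-simulable. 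The main subtlety in the argument is the bookkeeping in the definition of $H_i$: both the ``straight'' generators $\Sigma_{t,i,j}$ and the conjugated generators $t^{-1}\Sigma_{t,j,i}t$ must be built in, so that $\Delta$ simultaneously contains each $\Sigma_{t,i,j}$ and its $t$-conjugate, which is precisely what makes the mediation step work.
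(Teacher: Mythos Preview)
Your proof is correct and follows essentially the same approach as the paper: partition $X$ into two nonempty clopen pieces, use the commuting rigid stabilizers to build a self-simulable direct product $\Delta = H_1 \times H_2$, and for each generator $t$ add a finitely generated non-amenable rigid stabilizer (together with its $t$-conjugate) to the appropriate factor so that $\Delta$ becomes mediated. The only cosmetic difference is bookkeeping: the paper picks, for each $t$, a single clopen $U_t \subseteq U_1$ with $tU_t$ contained in one of the pieces, whereas you work with the full partition $\{V_{t,i,j}\}$ and then select one nonempty piece in the mediation step; both routes invoke Theorem~\ref{thm:mediaintroduction} in the same way.
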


\begin{proof}
	For an open set $U\subseteq X$, write $\Gamma_U$ for the subgroup of elements which act trivially on $X\setminus U$. As $X$ is zero-dimensional, we can find two nonempty disjoint clopen sets $A,B$ such that $X = A \cup B$. As $\Gamma\curvearrowright X$ is faithful, it follows that $\Gamma_A\cap\Gamma_B = \{1_{\Gamma}\}$ and thus they commute and form a direct product inside $\Gamma$, As both $\Gamma_A$ and $\Gamma_B$ are non-amenable, they contain finitely generated non-amenable groups $\Delta_A$ and $\Delta_B$, and we thus have that the finitely generated group $\Delta_A \times \Delta_B$ embeds into $\Gamma$. Let $T$ be a finite generating set for $\Gamma$ and for each $t\in T$ pick some arbitrary point $x_t \in A$. We have $t x_t \in C_t$ where $C_t \in \{A, B\}$. As $\Gamma$ acts by homeomorphisms, there's a clopen set $U_t$ which contains $x_t$ such that $U_t \subseteq A$ and $t U_t \subseteq C_t$.
	
	By our hypothesis, $\Gamma_{t  U_t}$ is non-amenable and thus we can pick a finitely generated non-amenable group $\Sigma_t$ inside $\Gamma_{t  U_t}$. Thus we have that $\Sigma_t \leqslant \Gamma_{t  U_t} \leqslant \Gamma_{C_t}$ and $t^{-1}(\Sigma_t)t  \leqslant \Gamma_{U_t} \leqslant \Gamma_A$. Let $\widetilde{\Delta}_A$ be the group generated by $\Delta_A$, the groups $\Sigma_t$ for $t \in T$ and the groups $t^{-1}(\Sigma_t)t$ for those $t$ such that $C_t =A$. Let $\widetilde{\Delta}_B$ be the group generated by $\Delta_B$ and the groups $t^{-1}(\Sigma_t)t$ for those $t$ such that $C_t =B$.
	
	By definition, the groups $\widetilde{\Delta}_A$ and $\widetilde{\Delta}_B$ are finitely generated. As they contain respectively $\Delta_A$ and $\Delta_B$, they are non-amenable and hence by Theorem~\ref{thm:selfsimulation}, the group $\widetilde{\Delta}_A \times \widetilde{\Delta}_B$ is self-simulable. Furthermore, we have that $\widetilde{\Delta}_A \times \widetilde{\Delta}_B \leqslant \Gamma_A \times \Gamma_B \leqslant \Gamma$. As $\widetilde{\Delta}_A \times \widetilde{\Delta}_B$ contains both $\Sigma_t$ and $t^{-1}(\Sigma_t)t$ for every $t \in T$, we conclude that it is mediated and thus $\Gamma$ is self-simulable by Theorem~\ref{thm:mediaintroduction}.
\end{proof}

The Thompson's groups are three finitely presented groups, commonly denoted by $F$, $T$ and $V$, which were introduced by R. Thompson as potential counterexamples for the now disproved Von Neumann-Day conjecture. The group $V$ was, to the best of our knowledge, the first example of an infinite and finitely presented simple group. The canonical reference for these groups are the notes~\cite{CanFloParr_thompson}. 

The group $V$ can be described as a subgroup of homeomorphisms of the Cantor space $C=\{\symb{0},\symb{1}\}^{\NN}$. An element $\phi$ of $V$ can be described by fixing two finite partitions $C = [w_1]\cup [w_2] \cup \dots \cup [w_n]$ and $C= [u_1]\cup [u_w] \cup \dots \cup [u_n]$, where $w_i,u_i \in \{\symb{0},\symb{1}\}^*$ for every $i \in \{1,\dots,n\}$. Then $\phi$ is the homeomorphism on $C$ which replaces the prefixes $w_i$ by $u_i$, that is, $\phi(w_i z) = u_iz$ for every $z \in \{\symb{0},\symb{1}\}^{\NN}$. This is usually described in terms of ``tree pairs'' made of carets and a finite permutation, see for instance~\cite{Corwin2013EmbeddingAN}. It is well-known that $V$ is non-amenable, a short proof being that the natural action of $V$ on $\{\symb{0},\symb{1}\}^{\NN}$ does not admit any Borel $V$-invariant probability measure, as every cylinder would necessarily have the same measure (and thus if $V$ were amenable it would contradict the Krylov-Bogolyubov theorem, see for instance~\cite[Theorem 4.4]{KerrLiBook2016}).

Brin-Thomson's group $nV$ was constructed in~\cite{Br04} as a higher dimensional analogue of Thompson's $V$ group which consists of homeomorphisms on $C^n$ where $C = \{\symb{0},\symb{1}\}^\NN$ is the Cantor set. We will not provide a precise definition of $nV$, but only use the following two simple facts: (1) $nV$ is a finitely generated and recursively presented group (in fact, it is finitely presented and has a decidable word problem), (2) $nV$ is non-amenable, and (3) if we fix any cylinder set $D = [w_1] \times [w_2] \times \cdots [w_n]$ for non-trivial $w_i \in \{\symb{0},\symb{1}\}^*$ then the subgroup of $nV$ which acts trivially on $C^n \setminus D$ is isomorphic to $nV$.

\begin{corollary}\label{cor:ThompsonV}
	Thompson's $V$, and more generally the higher Brin-Thompson groups $nV$ for $n \geq 1$, are self-simulable.
\end{corollary}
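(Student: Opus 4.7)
The plan is to apply Lemma~\ref{lem:acts_on_0d} to the natural faithful action of $nV$ on the zero-dimensional Cantor space $X = C^n$, where $C = \{\symb{0},\symb{1}\}^{\NN}$. The group $nV$ is finitely presented (in particular finitely generated and recursively presented) and non-amenable, and the case $n=1$ recovers Thompson's $V$, so proving the statement for all $n\geq 1$ suffices.

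The only nontrivial hypothesis to verify is that for every nonempty open set $U\subseteq C^n$, the subgroup $(nV)_U$ of elements acting trivially on $C^n\setminus U$ is non-amenable. First I would observe that any nonempty open set $U$ contains a basic cylinder of the form $D=[w_1]\times\dots\times[w_n]$ with each $w_i$ nontrivial; then $C^n\setminus U\subseteq C^n\setminus D$, so every element acting trivially on $C^n\setminus D$ automatically acts trivially on $C^n\setminus U$. This yields the inclusion
\[
(nV)_D \;\leqslant\; (nV)_U.
\]
By the fact recalled just before the statement, the rigid stabilizer $(nV)_D$ is isomorphic to $nV$ itself (one uses the canonical identification of $D$ with $C^n$ via the prefixes $w_i$ to transport elements of $nV$ into $(nV)_D$). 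Since $nV$ is non-amenable, so is $(nV)_D$, and consequently so is $(nV)_U$.

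With both hypotheses of Lemma~\ref{lem:acts_on_0d} satisfied, we conclude that $nV$ is self-simulable for every $n\geq 1$; specializing to $n=1$ gives the result for Thompson's group $V$. The only mildly delicate step is the identification $(nV)_D\cong nV$, but this is precisely one of the structural facts about $nV$ already quoted in the paragraph preceding the corollary, so no further work is needed.
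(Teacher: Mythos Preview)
Your proof is correct and follows essentially the same approach as the paper: both apply Lemma~\ref{lem:acts_on_0d} to the natural action of $nV$ on $C^n$, using the three stated facts about $nV$ (finite presentation, non-amenability, and the isomorphism $(nV)_D\cong nV$ for basic cylinders $D$). Your argument is slightly more explicit in passing from an arbitrary open set $U$ to a cylinder $D\subseteq U$ with nontrivial words, but this is the only difference.
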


\begin{proof}
	Consider the natural faithful action of $nV \curvearrowright C^n$ by homeomorphisms, where $C= \{\symb{0},\symb{1}\}^\NN$ is the Cantor set. By property (3) above, for every nonempty open set $U$ the restriction of $nV$ to elements which act trivially on $C^n \setminus U$ contains a copy of $nV$, thus is non-amenable by property (2). Since $nV$ is recursively presented by property (1), Lemma~\ref{lem:acts_on_0d} applies.
\end{proof}

The natural action $V \curvearrowright \{\symb{0},\symb{1}\}^{\NN}$ is expansive and effectively closed. Therefore the previous result has the consequence that the natural action of $V$ on $\{\symb{0},\symb{1}\}^\NN$ is topologically conjugate to a sofic subshift.

The groups $F$ and $T$ are more commonly described as subgroups of homeomorphisms of $[0,1]$ and $\RR/\ZZ$ respectively. More precisely, $F$ (resp.\ $T$) is the group of homeomorphisms of $[0,1]$ (resp.\ $\RR/\ZZ$) which consists of piecewise linear maps whose slopes are powers of $2$ and the endpoints of each linear part are dyadic. Of course, these two groups can equivalently be described as homeomorphisms of $C=\{\symb{0},\symb{1}\}^{\NN}$. In the case of $F$, it is the subgroup of $V$ where the homeomorphisms are defined by partitions $C = [w_1]\cup [w_2] \cup \dots \cup [w_n]$ and $C= [u_1]\cup [u_w] \cup \dots \cup [u_n]$ where the sequences $(w_i)_{1 \leq i \leq n}$ and $(u_i)_{1 \leq i \leq n}$ are both given in lexicographical order. $T$ is defined similarly but the sequence $(u_i)_{1 \leq i \leq n}$ is allowed to be in lexicographical order up to a cyclic permutation. 

It is well known that $T$ is non-amenable (the same proof of non-existence of a Borel $V$-invariant measure works for $T$), but the status of the amenability of $F$ is a famous open problem. In what follows, we shall establish a link between the amenability of $F$ and self-simulability of $F$ and $T$.

\begin{corollary}
	\label{cor:FT}
	Thompson's $F$ is self-simulable if and only if $F$ is non-amenable. If $F$ is non-amenable then Thompson's $T$ is self-simulable.
\end{corollary}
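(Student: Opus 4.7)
My plan is as follows. The forward direction ``$F$ self-simulable $\Rightarrow$ $F$ non-amenable'' is immediate from Proposition~\ref{prop:SelfSimuImpNonAmenable}, since $F$ is finitely presented and hence recursively presented. For the remaining two implications, namely that non-amenability of $F$ implies self-simulability of both $F$ and $T$, I will apply Lemma~\ref{lem:acts_on_0d} using the natural faithful actions of $F$ and $T$ on the Cantor set $C = \{\symb{0},\symb{1}\}^{\NN}$ via dyadic partitions introduced at the beginning of Section~\ref{sec:V}.

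Both $F$ and $T$ are finitely presented, and their standard action on $C$ is by homeomorphisms and is faithful (any non-trivial element of either group moves some cylinder), so the only hypothesis of Lemma~\ref{lem:acts_on_0d} that requires attention is the non-amenability of the subgroup $\Gamma_U$ of elements acting trivially on $C \setminus U$, for each nonempty open $U \subseteq C$. The case $U = C$ gives $\Gamma_U$ equal to the whole group, which is non-amenable by hypothesis for $F$ and known for $T$, so I may assume $U \subsetneq C$. Then $U$ contains some proper cylinder $[w]$, and by replacing $w$ with $w\symb{01}$ if necessary I may assume that $w$ is not of the form $\symb{0}^k$ or $\symb{1}^k$, so that the dyadic interval $[a,b] \subseteq [0,1]$ associated to $[w]$ satisfies $0 < a < b < 1$.

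The crux of the argument is to observe that the subgroup $H_{[w]}$ of either $F$ or $T$ which acts trivially outside $[w]$ is isomorphic to $F$ via the dyadic affine rescaling $[a,b] \to [0,1]$: the images are piecewise linear maps with dyadic breakpoints and slopes of the form $2^n$, and in the $T$-case the element must fix both $a$ and $b$ as points of the circle $\RR/\ZZ$, which (since our choice of $w$ keeps $[a,b]$ away from the identified basepoint of the circle) forbids any cyclic wrap-around and again yields an element of $F$. Under the assumption that $F$ is non-amenable, $H_{[w]}$ is therefore non-amenable; since $H_{[w]} \leqslant \Gamma_U$, the latter is non-amenable as well, and Lemma~\ref{lem:acts_on_0d} delivers self-simulability of $F$ and of $T$. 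The only step I expect to require real care is this last identification in the $T$-case: one must be sure that the chosen cylinder is separated from the circle's basepoint so that the local subgroup is genuinely isomorphic to $F$ (rather than to something carrying extra cyclic structure), which is precisely the purpose of the sub-cylinder trick described above.
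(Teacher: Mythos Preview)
Your proposal is correct and follows essentially the same approach as the paper: the forward direction via Proposition~\ref{prop:SelfSimuImpNonAmenable}, and the converse (for both $F$ and $T$) via Lemma~\ref{lem:acts_on_0d} applied to the natural action on the Cantor set, using that the pointwise stabilizer of the complement of any proper cylinder is a copy of $F$. Your sub-cylinder trick ($w\mapsto w\symb{01}$) and the separate treatment of $U=C$ are harmless but unnecessary: for \emph{any} non-trivial $w$ the complement $C\setminus[w]$ is nonempty, so an element of $T$ fixing it must, by continuity on the circle, fix the endpoints of the corresponding arc and hence already lies in a copy of $F$; and the case $U=C$ is subsumed since $C$ contains proper cylinders.
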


\begin{proof}
	Suppose that $F$ is amenable, as $F$ is finitely presented it follows by Proposition~\ref{prop:SelfSimuImpNonAmenable} that there exists an effectively closed action of $F$ which is not the factor of any subshift. Thus $F$ is not self-simulable. 
	
	Conversely, suppose $F$ is non-amenable, let $\Gamma$ denote either $F$ or $T$ and consider the natural faithful action of $\Gamma$ on $\{\symb{0},\symb{1}\}^{\NN}$ by homeomorphisms. For every non-trivial word $w \in \{\symb{0},\symb{1}\}^{*}$ the subgroup of $\Gamma$ which fixes $\{\symb{0},\symb{1}\}^{\NN}\setminus [w]$ is isomorphic to $F$ (this subgroup consists of the elements of $\Gamma$ which act as the identity on any element of $[0,1]$ (resp.\ $\RR/\ZZ$) whose binary expansion does not begin with $w$). By Lemma~\ref{lem:acts_on_0d}, we conclude that $\Gamma$ is self-simulable.
\end{proof}

\subsection{General linear groups}

We will now show that the general linear groups $\operatorname{GL}_n(\ZZ)$ are self-simulable in high enough dimension. Let us first introduce some helpful notation.

Let $\Gamma$ be a finitely generated group which acts faithfully on a Cartesian product $A_1 \times A_2 \times \cdots \times A_n$. For $I \subseteq \{1,2,\dots,n\}$, denote by $\Gamma_I$ the subgroup of all elements $g \in \Gamma$ such that for some bijection $h \colon \prod_{i \in I} A_i \to \prod_{i \in I} A_i$, we have $g(a_1, a_2,\dots, a_n) = (b_1, b_2, \dots, b_n)$ where $b_j = a_j$ if $j \notin I$, and $b_j = h((a_i)_{i \in I})_j$ otherwise.

\begin{lemma}
	\label{lem:acts_on_product}
	Let $\Gamma$ be a finitely generated and recursively presented group which acts faithfully on a Cartesian product $A_1 \times A_2 \times \cdots \times A_n$. Suppose $n \geq 5$ and $\Gamma$ is generated by the subgroups $\Gamma_I$ for $I \subseteq \{1,2,...,n\}$ with $|I| = 2$, which are all non-amenable and finitely-generated. Then $\Gamma$ is self-simulable.
\end{lemma}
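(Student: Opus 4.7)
The plan is to apply Theorem~\ref{thm:mediaintroduction} by exhibiting an explicit mediated self-simulable subgroup of $\Gamma$. Let $\Gamma' := \langle \Gamma_I : I \subseteq \{3,\ldots,n\},\ |I|=2 \rangle$ and set $\Delta = \Gamma_{\{1,2\}} \cdot \Gamma'$. Each generator of $\Gamma'$ sits in some $\Gamma_I$ with $I \subseteq \{3,\ldots,n\}$ and therefore acts trivially on coordinates $\{1,2\}$, while $\Gamma_{\{1,2\}}$ acts trivially on coordinates $\{3,\ldots,n\}$; hence the two subgroups commute, and by faithfulness of the action their intersection is trivial. Thus $\Delta$ is internally the direct product $\Gamma_{\{1,2\}} \times \Gamma'$. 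Both factors are finitely generated (each is a finite union of finitely-generated $\Gamma_I$'s) and non-amenable (the first by hypothesis; the second because it contains $\Gamma_{\{3,4\}}$), so Theorem~\ref{thm:selfsimulation} implies that $\Delta$ is self-simulable.

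Next I would verify that $\Delta$ is mediated in $\Gamma$. Take $T = \bigcup_{|K|=2} T_K$ with each $T_K$ a finite generating set of $\Gamma_K$; this is a finite generating set of $\Gamma$ by hypothesis. For each $t \in T_K$, Remark~\ref{rem:sigmas} reduces the task to exhibiting a non-amenable subgroup $\Sigma \leq \Delta$ satisfying $\Sigma^t \leq \Delta$. The natural choice is $\Sigma = \Gamma_I$ for some 2-subset $I$ disjoint from $K$ with $\Gamma_I \leq \Delta$ (i.e.\ $I = \{1,2\}$ or $I \subseteq \{3,\ldots,n\}$): the disjointness $I \cap K = \emptyset$ forces $\Gamma_I$ and $\Gamma_K$ to commute, which gives $\Sigma^t = \Sigma \leq \Delta$ automatically.

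The main obstacle is the combinatorial check that such an $I$ always exists, and this is exactly where the hypothesis $n \geq 5$ is essential. I would split on how $K$ meets $\{1,2\}$: if $K = \{1,2\}$ take any 2-subset $I \subseteq \{3,\ldots,n\}$ (needing only $n \geq 4$); if $K \cap \{1,2\} = \emptyset$ take $I = \{1,2\}$; and the delicate case is $K = \{j,k\}$ with $j \in \{1,2\}$ and $k \in \{3,\ldots,n\}$, for which I would take any 2-subset $I \subseteq \{3,\ldots,n\} \setminus \{k\}$, an option that exists precisely when $n - 3 \geq 2$. Once mediation is established, Theorem~\ref{thm:mediaintroduction} concludes the argument. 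Everything besides this case analysis is bookkeeping, and the analysis pinpoints why $n = 5$ is the sharp threshold: one needs enough coordinates to always find a disjoint pair that completely avoids the two coordinates moved by an arbitrary generator.
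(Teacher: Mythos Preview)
Your proof is correct and follows essentially the same approach as the paper: exhibit a self-simulable subgroup $\Delta$ as an internal direct product of two non-amenable pieces (one supported on $\{1,2\}$, the other on $\{3,\dots,n\}$), then verify mediation via the same three-case analysis on how the $2$-subset $K$ meets $\{1,2\}$. The only notable differences are cosmetic: you take $\Gamma' = \langle \Gamma_I : I \subseteq \{3,\dots,n\},\ |I|=2\rangle$ rather than the full $\Gamma_{\{3,\dots,n\}}$, which is slightly more careful about finite generation, and your mediation check is made uniform by always producing a commuting $\Sigma$ (the paper instead observes that when $K \subseteq \{1,2\}$ or $K \subseteq \{3,\dots,n\}$ one has $t \in \Delta$ outright, so mediation is trivial in those cases).
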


\begin{proof}
	Split $N = \{1,2,\dots,n\}$ into two disjoint sets $J$, $J'$ both with cardinality at least two. As the action is faithful, we have $\Gamma_J \cap \Gamma_{J'} = \{1_{\Gamma}\}$ and that $\Gamma_J$ commutes with $\Gamma_{J'}$. Therefore the subgroup $\Delta \leqslant \Gamma$ generated by both $\Gamma_J$ and $\Gamma_{J'}$ is isomorphic to $\Gamma_J \times \Gamma_{J'}$, which is a product of two finitely generated non-amenable groups and thus self-simulable by Theorem~\ref{thm:selfsimulation}. In what follows we shall show that $\Delta$ is mediated.
	
	By hypothesis, we can take the union of a generating set for each $\Gamma_I$ with $|I| = 2$ to get a generating set for $\Gamma$. Fix $I \subseteq N$. Then either (1) $I \subseteq J$, (2) $I \subseteq J'$ or (3) $I$ intersects both $J$ and $J'$. In cases (1) and (2) we clearly have that $\Gamma_I \leqslant \Gamma_J$ and $\Gamma_I \leqslant \Gamma_{J'}$ respectively. In case (3), as $n \geq 5$, we can find $I' = \{j, j'\}$ with $j \neq j'$ such that $I' \cap I = \varnothing$ and $I' \subseteq J$ or $I' \subseteq J'$. We conclude that $\Gamma_{I'} \leqslant \Gamma_J \times \Gamma_{J'}$ commutes with $\Gamma_{I}$ and thus can be taken as the group $\Sigma$ in Remark~\ref{rem:sigmas}.
\end{proof}

\begin{corollary}
	\label{cor:GL}
	The general linear group $\operatorname{GL}_n(\ZZ)$ and the special linear group $\operatorname{SL}_n(\ZZ)$ are self-simulable for $n \geq 5$.
\end{corollary}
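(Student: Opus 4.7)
The plan is to apply Lemma~\ref{lem:acts_on_product} directly, with the natural linear action of $\Gamma \in \{\operatorname{GL}_n(\ZZ), \operatorname{SL}_n(\ZZ)\}$ on the Cartesian product $\ZZ^n = \ZZ \times \cdots \times \ZZ$ by matrix multiplication. This action is faithful since distinct matrices induce distinct linear maps on $\ZZ^n$, and the assumption $n \geq 5$ is exactly what the lemma needs. Since $\operatorname{GL}_n(\ZZ)$ and $\operatorname{SL}_n(\ZZ)$ are finitely presented (hence finitely generated and recursively presented), the computability hypothesis is also satisfied.

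The substance of the argument lies in checking the two conditions on the subgroups $\Gamma_I$ for $|I|=2$. Given $I = \{i,j\}$, I would identify $\Gamma_I$ with the subgroup of matrices that are the identity outside the $2 \times 2$ block indexed by $I$; unpacking the determinant condition shows that this is a copy of $\operatorname{SL}_2(\ZZ)$ sitting inside $\operatorname{SL}_n(\ZZ)$, and a copy of $\operatorname{GL}_2(\ZZ)$ sitting inside $\operatorname{GL}_n(\ZZ)$. Both $\operatorname{SL}_2(\ZZ)$ and $\operatorname{GL}_2(\ZZ)$ are finitely generated and contain a non-abelian free subgroup (a standard ping-pong argument produces $F_2$ inside $\operatorname{SL}_2(\ZZ)$, e.g.\ generated by $\left(\begin{smallmatrix}1 & 2\\0 & 1\end{smallmatrix}\right)$ and $\left(\begin{smallmatrix}1 & 0\\2 & 1\end{smallmatrix}\right)$), so they are non-amenable.

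Next, I would verify that the $\Gamma_I$ with $|I|=2$ generate $\Gamma$. For $\operatorname{SL}_n(\ZZ)$ this is the classical fact that elementary matrices $E_{ij}(1) = I + e_{ij}$ generate the group, and each $E_{ij}(1)$ lies in $\Gamma_{\{i,j\}}$. For $\operatorname{GL}_n(\ZZ)$ one additionally needs a sign change such as $\operatorname{diag}(-1, 1, \ldots, 1)$; this matrix acts on the first two coordinates as $\operatorname{diag}(-1,1) \in \operatorname{GL}_2(\ZZ)$ and trivially on the rest, so it belongs to $\Gamma_{\{1,2\}}$.

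Having verified every hypothesis of Lemma~\ref{lem:acts_on_product}, the corollary follows. I do not foresee a genuine obstacle here: all the ingredients (faithfulness of the standard representation, identification of the block subgroups, non-amenability of $\operatorname{SL}_2(\ZZ)$, generation by elementary matrices, and finite presentability of $\operatorname{GL}_n(\ZZ)$ and $\operatorname{SL}_n(\ZZ)$) are standard, so the proof reduces to bookkeeping once the right lemma is in hand.
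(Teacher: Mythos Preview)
Your proof is correct and follows essentially the same approach as the paper: both apply Lemma~\ref{lem:acts_on_product} to the natural action on $\ZZ^n$, identify the $\Gamma_I$ with $\operatorname{SL}_2(\ZZ)$ (or $\operatorname{GL}_2(\ZZ)$), and invoke generation by elementary matrices. The only difference is that for $\operatorname{GL}_n(\ZZ)$ the paper first treats $\operatorname{SL}_n(\ZZ)$ and then passes to $\operatorname{GL}_n(\ZZ)$ via Corollary~\ref{cor:normalSS} (normality of $\operatorname{SL}_n(\ZZ)$), whereas you apply the lemma directly to $\operatorname{GL}_n(\ZZ)$ by adding the extra generator $\operatorname{diag}(-1,1,\ldots,1)\in\Gamma_{\{1,2\}}$; both routes work equally well.
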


\begin{proof}
	Let $\Gamma=\operatorname{SL}_n(\ZZ)$ for $n \geq 5$ and take $A_i = \ZZ$ for every $1 \leq i \leq 5$. Then clearly for every $I \subseteq \{1,\dots,n\}$ with $|I|=2$ we have that $\Gamma_I$ is isomorphic to $\operatorname{SL}_2(\ZZ)$, which is finitely generated and non-amenable. It is well known that for any Euclidean domain $R$, $\mathrm{SL}_n(R)$ is generated by the transvection matrices which add a multiple of a row (resp./ column) to another row (resp./ column), and therefore $\mathrm{SL}_n(\ZZ)$ is indeed generated by the $\Gamma_I$ for $|I|=2$. We conclude by Lemma~\ref{lem:acts_on_product} that $\operatorname{SL}_n(\ZZ)$ is self-simulable. As $\operatorname{SL}_n(\ZZ)$ is a normal subgroup of $\operatorname{GL}_n(\ZZ)$, we get that $\operatorname{GL}_n(\ZZ)$ is self-simulable by Corollary~\ref{cor:normalSS}.
\end{proof}

In Lemma~\ref{lem:acts_on_product} there is nothing special about using subgroup generators with $|I| = 2$, and indeed, we will now present a generalized method which extends this lemma and will allows us to prove that several new groups are also self-simulable.

\subsection{Groups generated by subgroups}

In this section, we shall present an abstract generalization of Lemma~\ref{lem:acts_on_product} that will enable us to cover (outer) automorphism groups of free groups (with enough generators), braid groups (with enough braids), and some classes of right-angled Artin groups. We also give an alternative proof for the Brin-Thompson's group $2V$ using this method.

Let $\mathcal{S}$ a set with two relations defined on it: an \define{orthogonality relation} $\bot \subseteq \mathcal{S}^2$ which is symmetric, and a \define{containment relation} ${\leq} \subseteq \mathcal{S}^2$ which is the order of a join-semilattice operation $\vee$. Also require that whenever $a \leq b$ and $b \bot c$ then $a \bot c$. 

\begin{definition}
	Let $\mathcal{S}$ be as above. We say $A \subseteq\mathcal{S}$ is \define{orthogonal} if for every pair of distinct $a,b \in A$ we have $a \bot b$. 
	
	For $A, B, \mathcal{N} \subseteq\mathcal{S}$, we say that the set $B$ is \define{weakly $\mathcal{N}$-suborthogonal} to $A$ if for every $b \in B$ there is $c \in \mathcal{N}$ and $a \in A$ such that $b \bot c$ and $c \leq a$.
\end{definition}

In the definition above, we should think of the set $\mathcal{S}$ as generalizing the power set of the interval $\{1,\dots,n\}$ in Lemma~\ref{lem:acts_on_product}. More precisely, let us say that a finitely-generated group $\Gamma$ is \define{compatible} with $(\mathcal{S},\bot,\leq)$ if for each $s \in \mathcal{S}$ we have a subgroup $\Gamma_s \leqslant \Gamma$, and the collection $(\Gamma_s)_{s \in \mathcal{S}}$ respects the operations in the sense that whenever $s \bot t$, then both $\Gamma_s\cap \Gamma_t = \{1_{\Gamma}\}$ and $[\Gamma_s, \Gamma_t] = 1$, and whenever $s \leq t$ then $\Gamma_s \leqslant \Gamma_t$. Say $B \subseteq\mathcal{S}$ is \define{generating} if $\Gamma$ is generated by the subgroups $(\Gamma_b)_{b \in B}$.

\begin{lemma}
	\label{lem:suborthogonal}
	Let $(\mathcal{S},\bot,\leq)$ be as above and $\Gamma$ be a finitely generated, recursively presented group which is compatible with $(\mathcal{S},\bot,\leq)$. Let $(\Gamma_s)_{s \in \mathcal{S}}$ be as above, and let $\mathcal{N},A,B \subseteq \mathcal{S}$ such that
	\begin{enumerate}
		\item $\Gamma_s$ is finitely generated and non-amenable for every $s \in \mathcal{N}$,
		\item $A \subseteq\mathcal{N}$ is finite, orthogonal and $|A|\geq 2$,
		\item $B$ is generating and weakly $\mathcal{N}$-suborthogonal to $A$.
	\end{enumerate} Then $\Gamma$ is self-simulable.
\end{lemma}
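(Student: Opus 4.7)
The plan is to exhibit inside $\Gamma$ a self-simulable mediated subgroup $\Delta$ and then invoke Theorem~\ref{thm:mediaintroduction}. This generalizes the strategy of Lemma~\ref{lem:acts_on_product}, with the orthogonal family $A$ playing the role of the two disjoint families of coordinates and weak $\mathcal{N}$-suborthogonality replacing the ``transvections generate'' argument.

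First I would construct $\Delta := \langle \Gamma_a : a \in A \rangle$ and establish its self-simulability. Since $A$ is orthogonal, compatibility forces the subgroups $\{\Gamma_a\}_{a \in A}$ to pairwise commute and intersect trivially, and invoking the join-semilattice structure, $\Delta$ should sit inside $\Gamma_{\bigvee A}$ as the internal direct product $\prod_{a \in A} \Gamma_a$. Because $A \subseteq \mathcal{N}$, each $\Gamma_a$ is finitely generated and non-amenable, so partitioning $A = A_1 \sqcup A_2$ into non-empty parts (possible since $|A| \geq 2$) realizes $\Delta$ as a direct product of two finitely generated non-amenable groups, and Theorem~\ref{thm:selfsimulation} yields that $\Delta$ is self-simulable. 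To show $\Delta$ is mediated, I would extract from $\bigcup_{b \in B}\Gamma_b$ a finite generating set $T$ of $\Gamma$; for each $t \in T$ fix $b_t \in B$ with $t \in \Gamma_{b_t}$ and use weak $\mathcal{N}$-suborthogonality to produce $c_t \in \mathcal{N}$ and $a_t \in A$ with $b_t \bot c_t$ and $c_t \leq a_t$. Compatibility forces $\Gamma_{c_t} \leqslant \Gamma_{a_t} \leqslant \Delta$ and $[\Gamma_{b_t}, \Gamma_{c_t}] = 1$, so $t\Gamma_{c_t}t^{-1} = \Gamma_{c_t} \leqslant \Delta$. Hence the finitely generated non-amenable group $\Gamma_{c_t}$ lives inside $\Delta \cap t\Delta t^{-1}$; by Remark~\ref{rem:sigmas} this suffices for mediation, and Theorem~\ref{thm:mediaintroduction} concludes.

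The main technical obstacle will be the claim that $\Delta$ equals the internal direct product $\prod_{a \in A} \Gamma_a$: pairwise trivial intersection among pairwise commuting subgroups does not in general force the whole collection into an internal direct product once it has more than two members. This is where the join-semilattice structure on $(\mathcal{S}, \bot, \leq)$ must be used essentially to decompose $\Gamma_{\bigvee A}$ compatibly with the indexing set $A$. In the intended applications, the $\Gamma_s$ are realized as subgroups ``supported'' on a disjoint structure associated with $s$, so orthogonal joins correspond to direct products and the decomposition holds transparently; the abstract statement should therefore either inherit such a decomposition from the axioms or explicitly be strengthened to record it.
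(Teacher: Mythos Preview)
Your proposal is essentially the paper's own proof: define $\Delta = \langle \Gamma_a : a \in A \rangle$, argue it is a direct product of the $\Gamma_a$ and hence self-simulable by Theorem~\ref{thm:selfsimulation}, then verify mediation by producing, for each generator $t \in \Gamma_b$, the non-amenable $\Sigma = \Gamma_c$ with $c \bot b$ and $c \leq a \in A$, so that $\Sigma^t = \Sigma \leqslant \Gamma_a \leqslant \Delta$.

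The concern you raise in your last paragraph is legitimate and is glossed over in the paper as well: the axioms on $(\mathcal{S},\bot,\leq)$ give pairwise commutation and pairwise trivial intersection of the $\Gamma_a$, but for $|A| > 2$ this does not abstractly force $\Gamma_{a_i} \cap \langle \Gamma_{a_j} : j \neq i \rangle = \{1\}$, and the downward-closure axiom on $\bot$ does not yield $a_i \bot \bigvee_{j \neq i} a_j$. Your proposed fix via a partition $A = A_1 \sqcup A_2$ suffers from the same issue. There is, however, a clean repair that avoids strengthening the hypotheses: pick any two elements $a_1,a_2 \in A$; then $\langle \Gamma_{a_1},\Gamma_{a_2}\rangle \cong \Gamma_{a_1}\times\Gamma_{a_2}$ genuinely (two subgroups suffice), and this product is \emph{normal} in $\Delta$ since every $\Gamma_a$ with $a \in A$ centralizes it. As $\Delta$ is finitely generated (each $\Gamma_a$ is, and $A$ is finite) and recursively presented (being a finitely generated subgroup of $\Gamma$), Corollary~\ref{cor:normalSS} gives that $\Delta$ is self-simulable, and your mediation argument then goes through unchanged. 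In practice the paper sidesteps the issue by always taking $|A|=2$ in applications.
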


\begin{proof}
	The group $\Delta = \langle \bigcup_{a \in A} \Gamma_a \rangle$ generated by the subgroups $\Gamma_a$ is isomorphic to the direct product $\prod_{a \in A} \Gamma_a$. As $A$ is finite and $|A|\geq 2$, then $\Delta$ is self-simulable by Theorem~\ref{thm:selfsimulation}. We shall show that it is mediated. Observe that because $\Gamma$ is finitely-generated and $B$ is generating, we can find a finite generating set $T$ for $\Gamma$ with $T \subseteq\bigcup_{b \in B} \Gamma_b$. If $t \in T \cap \Gamma_b$ with $b \in B$, by weak $\mathcal{N}$-suborthogonality there is $\Sigma = \Gamma_c$ with $c \in \mathcal{N}$, such that $c \bot b$ ($[\Gamma_c,\Gamma_b]=1$) and $c \leq a$ ($\Gamma_c \leqslant \Gamma_a$) for some $a \in A$. Now, as $c \in \mathcal{N}$ implies $\Sigma$ is non-amenable, the condition $c \bot b$ implies that $t^{-1}\Sigma t = \Sigma$ (even $t^{-1}gt = g$ for all $g \in \Sigma$), and $c \leq a$ implies $\Sigma \leqslant \Gamma_a \leqslant \Delta$. This proves that $\Delta$ is mediated and thus $\Gamma$ is self-simulable.
\end{proof}

Notice that we can always take $|A| = 2$ by taking the join of all but one element of $A$, and this approach will be taken in most examples.

\begin{corollary}\label{cor:AutFn}
	For all $n \geq 5$, the groups $\Aut(F_n)$ and $\Out(F_n)$ are self-simulable.
\end{corollary}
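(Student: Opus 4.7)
The plan is to apply Lemma~\ref{lem:suborthogonal} to both groups using the same combinatorial framework. Let $\mathcal{S} = 2^{\{1,\ldots,n\}}$, with $I \bot J \iff I \cap J = \varnothing$, $I \leq J \iff I \subseteq J$, and $I \vee J = I \cup J$. For each $I \subseteq \{1,\ldots,n\}$, let $\Aut(F_n;I) \leqslant \Aut(F_n)$ denote the subgroup of automorphisms fixing $x_k$ for every $k \notin I$; this subgroup is canonically isomorphic to $\Aut(F_{|I|})$. Both $\Aut(F_n)$ and $\Out(F_n)$ are finitely presented, hence recursively presented.

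For $\Aut(F_n)$, define $\Gamma_I = \Aut(F_n;I)$. For disjoint $I,J$, elements of $\Gamma_I$ and $\Gamma_J$ commute because each fixes the generators moved by the other, and any element of $\Gamma_I \cap \Gamma_J$ must fix every generator, hence is trivial. Set $\mathcal{N} = \{I : 2 \leq |I| \leq n-2\}$, so that $\Gamma_I \cong \Aut(F_{|I|})$ is finitely generated and non-amenable for $I \in \mathcal{N}$. For $n \geq 5$, take $A = \{\{1,2\},\{3,\ldots,n\}\} \subseteq \mathcal{N}$ (orthogonal with $|A| = 2$) and $B = \{\{i,j\} : 1 \leq i < j \leq n\}$. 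By the classical Nielsen presentation, the transpositions $\sigma_{ij}$, inversions $\iota_i$, and transvections $\lambda_{ij}$ each lie in $\Gamma_{\{i,j\}}$, so $B$ is generating. For weak suborthogonality, given $b = \{i,j\}\in B$: if $b \subseteq \{3,\ldots,n\}$ take $c = \{1,2\}$ and $a = \{1,2\}$; otherwise take $c = \{3,\ldots,n\} \setminus b$ and $a = \{3,\ldots,n\}$. In both cases $c \bot b$, $c \leq a$, and since $n \geq 5$ one has $|c| \geq n-3 \geq 2$ and $|c^c| \geq 2$, so $c \in \mathcal{N}$. Lemma~\ref{lem:suborthogonal} then yields that $\Aut(F_n)$ is self-simulable.

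For $\Out(F_n)$, I repeat the argument with $\Gamma_I$ redefined as the image of $\Aut(F_n;I)$ under the quotient $\pi \colon \Aut(F_n) \twoheadrightarrow \Out(F_n)$. Commutativity of $\Gamma_I$ and $\Gamma_J$ for disjoint $I,J$ is inherited from $\Aut(F_n)$. The substantive new point is verifying that $\Gamma_I \cap \Gamma_J = \{1\}$: if $\pi(\alpha) = \pi(\beta)$ for $\alpha \in \Aut(F_n;I)$ and $\beta \in \Aut(F_n;J)$, then $\alpha = \beta \circ \iota_w$ for some $w \in F_n$. Comparing the actions on the generators indexed by $J$ forces $\beta(w)$ to centralize every $x_k$ with $k \in J$; since centralizers in $F_n$ are cyclic and $|J| \geq 2$ in our applications, this forces $\beta(w) = 1$, hence $w = 1$ and $\alpha = \beta \in \Aut(F_n;I) \cap \Aut(F_n;J) = \{1\}$. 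The same centralizer argument shows that $\pi|_{\Aut(F_n;I)}$ is injective whenever $|I^c| \geq 2$, so for every $I \in \mathcal{N}$ the image $\Gamma_I \cong \Aut(F_{|I|})$ remains finitely generated and non-amenable. The choices of $\mathcal{N}$, $A$, $B$ and the suborthogonality check carry over verbatim, and Lemma~\ref{lem:suborthogonal} concludes.

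The main obstacle is the free-group bookkeeping in the $\Out(F_n)$ case: verifying compatibility requires carefully tracking how inner automorphisms can interact with the subgroups $\Aut(F_n;I)$, in particular the analysis of the kernel of $\pi|_{\Aut(F_n;I)}$ via centralizers. Everything else is a direct adaptation of the combinatorial setup used for $\operatorname{GL}_n(\ZZ)$ in Corollary~\ref{cor:GL}.
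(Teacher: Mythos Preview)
Your overall strategy matches the paper's: apply Lemma~\ref{lem:suborthogonal} with $\mathcal{S}=2^{\{1,\dots,n\}}$, $\bot$ disjointness, $A=\{\{1,2\},\{3,\dots,n\}\}$ and $B$ the two-element subsets. However, your definition of $\Gamma_I$ is too large and this breaks compatibility. You take $\Gamma_I=\Aut(F_n;I)$ to be the pointwise stabilizer of $\{x_k:k\notin I\}$ and assert it is isomorphic to $\Aut(F_{|I|})$. This is false: the partial conjugation $\alpha\colon x_1\mapsto x_3x_1x_3^{-1}$, $x_j\mapsto x_j$ for $j\neq 1$, lies in your $\Aut(F_n;\{1,2\})$ but sends $x_1$ outside $F_{\{1,2\}}$. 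Worse, your commutation claim fails with this definition: with $I=\{1,2\}$, $J=\{3,4\}$ and any $\beta\in\Gamma_J$ moving $x_3$, one has $\alpha\beta(x_1)=x_3x_1x_3^{-1}$ while $\beta\alpha(x_1)=\beta(x_3)\,x_1\,\beta(x_3)^{-1}$. The paper avoids this by letting $\Gamma_I$ be the \emph{natural copy} of $\Aut(F_I)$, i.e.\ automorphisms fixing $x_k$ for $k\notin I$ \emph{and} preserving $F_I$; with this smaller choice commutation and trivial intersection are immediate, and $B$ still generates because the Nielsen transformations already live in these copies.

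Your extra care in the $\Out(F_n)$ case is welcome (the paper simply asserts compatibility there), but the key step is not correct as written: from $x_k=\beta(w)\,\beta(x_k)\,\beta(w)^{-1}$ for $k\in J$ you only get that $\beta(w)$ conjugates $\beta(x_k)$ to $x_k$, not that it centralizes $x_k$. With the corrected $\Gamma_I$ a clean argument is available: writing $\alpha=\iota_w\beta$, for $k\in I$ one has $w x_k w^{-1}=\alpha(x_k)\in F_I$, so $w$ normalizes the free factor $F_I$; since proper free factors in a free group are malnormal this forces $w\in F_I$, and symmetrically $w\in F_J$, whence $w=1$. Your restriction $|I|\leq n-2$ in $\mathcal{N}$ is then unnecessary (though harmless).
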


\begin{proof}
	Consider a finite set $N = \{1,2,\dots,n\}$ that freely generates a free group $F_N$, and let $\Gamma$ be its automorphism group or outer automorphism group (these cases are almost identical). Let $\mathcal{S}$ be the power set $\mathcal{P}(N)$ of $N$ and define $\bot$ as disjointness and $\leq$ as containment. To $I \subseteq N$ associate the group $\Gamma_I$ which, in the case of $\Aut(F_N)$, is the natural copy of $\Aut(F_I)$ that fixes generators outside of $I$ (in the case of $\Out(F_N)$ it is the quotient of $\Aut(F_I)$ inside $\Gamma$). From this definition it is clear that whenever $I \bot I'$ then $\Gamma_I \cap \Gamma_{I'} = \{1_{F_N}\}$ and $[\Gamma_I,\Gamma_{I'}]=1$.
	
	Let $\mathcal{N} = \{I \subseteq N :|I| \geq 2\}$ and fix $I \in \mathcal{N}$. The group $\Gamma_I$ is non-amenable in both cases. Observe that $\Out(F_2)$ is isomorphic to $\operatorname{GL}_2(\ZZ)$ which is non-amenable, and thus $\Aut(F_2)$ is non-amenable as it admits a non-amenable quotient. Furthermore, the groups $\Gamma_I$ are finitely generated because for every $k \in \NN$, the group $\Aut(F_k)$ is generated by Nielsen transformations~\cite{Nielsen1924}, and thus a fortiori the quotient $\Out(F_k)$ is generated by them as well. For the same reason, the set $B = \{I \subseteq N : |I| = 2\}$ is generating (notice that the permutations of generators in the Nielsen transformations are generated by transpositions). Let $A = \{\{1,2\}, \{3,\dots,n\}\}$. It is clear that $A$ is orthogonal, therefore to satisfy the conditions of Lemma~\ref{lem:suborthogonal} it suffices to check that $B$ is weakly $\mathcal{N}$-suborthogonal to $A$. The argument for this is the same as in the proof of Lemma~\ref{lem:acts_on_product}, namely, either $b \in B$ is contained in $\{1,2\}$ or $\{3,\dots,n\}$ in which case we take $c = a$ as the set which does not intersect $b$, or $b$ intersects both of those sets in which case we take $a = \{3,\dots,n\}$ and $c = \{3,\dots,n\} \setminus b$, and note that $|c| \geq 2$ and thus $c \in \mathcal{N}$.
\end{proof}

Braid groups are a class of finitely presented groups whose elements can be represented visually as a set of intertwining strands. We will not give a proper definition of these groups here, readers interested in them can find a survey on braid groups in~\cite{Magnus1974} or~\cite{kassel2008braid}. We will only use the following facts about braid groups. The braid groups are all nested in the sense that for $m \leq n$ the group $B_m$ occurs as the subgroup of $B_n$ where exactly $m$ fixed contiguous strands can be intertwined, (2) The braid group $B_n$ is generated by the braids which pass the $i+1$-th braid over the $i$-th braid for $i = 1,\dots,n-1$, and (3) $B_n$ is non-amenable for $n \geq 3$. These facts are all elementary and can found in the previous two references.

\begin{corollary}\label{cor:braidgroups}
	Braid groups $B_n$ are self-simulable for $n \geq 7$.
\end{corollary}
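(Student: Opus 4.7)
The plan is to apply Lemma~\ref{lem:suborthogonal} to $\Gamma = B_n$, which is finitely presented (hence recursively presented) by the braid relations. I will use the natural parabolic structure of braid groups: take $\mathcal{S}$ to be the set of nonempty contiguous intervals of $\{1, 2, \ldots, n\}$, with $\bot$ given by disjointness and $\leq$ given by inclusion (the join $I \vee J$ being the smallest contiguous interval containing $I \cup J$), and assign to each $I = \{a, \ldots, b\} \in \mathcal{S}$ the standard parabolic subgroup $\Gamma_I = \langle \sigma_a, \ldots, \sigma_{b-1}\rangle \cong B_{|I|}$.

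First I would verify compatibility of $\Gamma$ with $(\mathcal{S}, \bot, \leq)$. Inclusion of intervals obviously gives inclusion of subgroups. If $I, J$ are disjoint intervals, then every generator $\sigma_i$ of $\Gamma_I$ and $\sigma_j$ of $\Gamma_J$ satisfies $|i - j| \geq 2$, so $[\Gamma_I, \Gamma_J] = 1$; the triviality of $\Gamma_I \cap \Gamma_J$ for disjoint $I, J$ follows from the classical fact that standard parabolic subgroups of Artin groups intersect parabolically, i.e.\ $\langle \sigma_i : i \in S\rangle \cap \langle \sigma_i : i \in T\rangle = \langle \sigma_i : i \in S \cap T\rangle$.

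Next I would instantiate the lemma. Let $\mathcal{N}$ consist of the intervals of cardinality at least $3$, so that each $\Gamma_I$ for $I \in \mathcal{N}$ is isomorphic to $B_{|I|}$ with $|I| \geq 3$ and is therefore finitely generated and non-amenable. Take $A = \{\{1, 2, 3\}, \{n-2, n-1, n\}\}$; the hypothesis $n \geq 7$ ensures $n - 2 \geq 5$, so these two intervals are disjoint, making $A \subseteq \mathcal{N}$ finite, orthogonal, and of size two. Take $B = \{\{i, i+1\} : 1 \leq i \leq n-1\}$, so that $\Gamma_b = \langle \sigma_i\rangle$ for $b = \{i, i+1\}$; the collection $(\Gamma_b)_{b \in B}$ generates $B_n$ since it contains every standard generator.

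The only substantive verification is that $B$ is weakly $\mathcal{N}$-suborthogonal to $A$. For each $b = \{i, i+1\} \in B$, I would observe that if $i \leq n - 4$ then $b$ is disjoint from $\{n-2, n-1, n\}$, while if $i \geq 4$ then $b$ is disjoint from $\{1, 2, 3\}$; the bound $n \geq 7$ is precisely what guarantees $n - 4 \geq 3$, so every $i \in \{1, \ldots, n-1\}$ lies in at least one of these two ranges, and the corresponding $a \in A$ can be used as the witness $c$ as well. Lemma~\ref{lem:suborthogonal} then yields that $B_n$ is self-simulable. The main (minor) obstacle is invoking the parabolic intersection property for disjoint $I, J$; the remainder is combinatorial bookkeeping that turns on the inequality $n \geq 7$, which also explains why this approach fails for $n = 6$.
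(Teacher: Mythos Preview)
Your proof is correct and follows the same strategy as the paper: both apply Lemma~\ref{lem:suborthogonal} with $\mathcal{S}$ the set of subintervals of $\{1,\ldots,n\}$, $\mathcal{N}$ the intervals of length $\geq 3$, and $B$ the length-$2$ intervals. The only cosmetic difference is the choice of $A$: the paper takes $A=\{\{1,2,3\},\{4,\ldots,n\}\}$ and allows $c$ to be a proper subinterval of $a$, whereas you take the symmetric $A=\{\{1,2,3\},\{n-2,n-1,n\}\}$ and always use $c=a$; both choices yield exactly the threshold $n\geq 7$. Your explicit mention of the parabolic intersection property to verify $\Gamma_I\cap\Gamma_J=\{1\}$ is a point the paper leaves implicit.
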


\begin{proof}
	Consider a finite set $N = \{1,2,\dots,n\}$, let $\mathcal{S}$ be the set of discrete sub-intervals of $N$, and to each $I \in \mathcal{S}$ associate the braid group $B_I$ on these strands. We have the natural inclusion $I \leq J \implies B_I \leq B_J$. Take $\mathcal{N} = \{I \in \mathcal{S} : |I| \geq 3\}$. By (1) and (3) it follows that every $B_I$ is a non-amenable and finitely generated group. The set $B = \{I : |I| = 2\}$ is generating by (2), and the partition $A = \{\{1,2,3\}, \{4,\dots,n\}\}$ satisfies the assumptions of Lemma~\ref{lem:suborthogonal}, the proof being analogous as those of Corollaries~\ref{lem:acts_on_product} and~\ref{cor:AutFn}.
\end{proof}

\begin{remark}
	With this technique is also possible to obtain again that Brin-Thompson $2V$ is self-simulable. Indeed, 
	consider the natural faithful action of Brin-Thompson $2V$, which we denote by $\Gamma$, on $C \times C$ where $C = \{\symb{0},\symb{1}\}^\NN$ is the Cantor set. Let $\mathcal{S}$ be the set of all clopen sets in $C \times C$. Let $\bot$ be disjointness and $\leq$ be containment. For $I \in \mathcal{S}$ let $\Gamma_I$ be the subgroup of elements which act trivially on $(C\times C)\setminus I$. It follows that whenever $I \leq J$ then $\Gamma_I \leqslant \Gamma_J$ and, as the action is faithful, that whenever $I \bot J$, then $\Gamma_I$ and $\Gamma_J$ commute and have trivial intersection.
	
	Take $\mathcal{N} = \{[w] \times C : w \in \{\symb{0},\symb{1}\}^*\}$, and observe that by (2) $\Gamma_I \cong \Gamma$ for all $I \in \mathcal{N}$, so these groups are indeed finitely-generated and non-amenable. Proposition~3.2 of \cite{Br04} shows that for $\varepsilon = 1/4$ the set $B = \{I \times C : \mu(I) < \varepsilon\}$ is generating, where $\mu$ is the uniform Bernoulli measure on $C$. Taking $A = \{[0] \times C, [1] \times C\}$, we have that $A \subseteq\mathcal{N}$ is a finite orthogonal set, and $B$ is weakly $\mathcal{N}$-suborthogonal to $A$ because for any $I \times C \in B$, $([0] \times C) \setminus (I \times C)$ is a positive measure clopen set, thus nonempty, thus for some $w \in \{\symb{0},\symb{1}\}^*$ we have $([w] \times C) \subseteq([0] \times C)$ with $([w] \times C) \bot B$, concluding the proof of weak suborthogonality.
\end{remark}

\subsection{Right-angled Artin groups} Let $G = (V,E)$ be a finite simple graph (undirected edges and no loops). The \define{right-angled Artin group} (RAAG) defined by $G$ is group $\Gamma[G]$ generated by the vertices $V$ which satisfies that $u,v \in V$ commute whenever $\{u,v\}\in E$.\[ \Gamma[G] = \langle  V \ \mid\ [u,v]=1 \mbox{ for } \{u,v\}\in E  \rangle. \]

RAAGs provide a formalism for mapping finite simple graphs into finitely generated groups, and thus some properties of graphs can be translated into properties of the groups they generate. A recent survey on RAAGs can be found in~\cite{Charney2007}. In what follows we shall provide conditions on a graph which imply that their corresponding RAAG is self-simulable.

Let us fix some notation beforehand. Let $G=(V,E)$ be a graph. We denote by $\overline{G}$ the \define{complement} of $G$, that is, the graph with vertices $V$ and where $\{u,v\}$ is an edge of $\overline{G}$ if and only if $\{u,v\}$ is not an edge of $G$. We say that an edge $\{u,v\}\in E$ is \define{adjacent} to a node $w$ if either $\{u,w\}\in E$ or $\{v,w\}\in E$. Let us also denote, for $A,B \subseteq V$ by $E(A,B)$ the set of edges $\{u,v\}$ for which $u \in A$ and $v \in B$.

Below, we make our statements in terms of the complement of a graph, as they are easier to describe.

\begin{corollary}\label{cor:raag1}
	Suppose $G = (V,E)$ is a finite connected graph which has two edges with the property that no $v \in V$ is adjacent to both of them. Then $\Gamma[\overline{G}]$ is self-simulable.
\end{corollary}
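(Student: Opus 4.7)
The plan is to exhibit a copy of $F_2\times F_2$ sitting inside $\Gamma[\overline{G}]$ as a mediated subgroup, and then to invoke Theorem~\ref{thm:mediaintroduction}. Let $e_1=\{a,b\}$ and $e_2=\{c,d\}$ be the two edges of $G$ provided by the hypothesis. The first step is to unpack the non-adjacency hypothesis: since $e_1=\{a,b\}\in E$, the edge $e_1$ is adjacent to both $a$ and $b$, and similarly $e_2$ is adjacent to both $c$ and $d$, so the hypothesis forces $\{a,b\}$ and $\{c,d\}$ to be disjoint and forces each of the pairs $\{a,c\},\{a,d\},\{b,c\},\{b,d\}$ to be a non-edge of $G$. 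Translating to $\overline{G}$, the generators $a,b$ do not commute in $\Gamma[\overline{G}]$, the generators $c,d$ do not commute, and each of $a,b$ commutes with each of $c,d$. The standard fact that the subgroup of a RAAG generated by a set of vertices is itself the RAAG of the induced subgraph therefore yields $\Delta:=\langle a,b,c,d\rangle\cong F_2\times F_2$, which is self-simulable by Theorem~\ref{thm:selfsimulation}.

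The second step is to verify that $\Delta$ is mediated in $\Gamma[\overline{G}]$ using the finite generating set $T=V$. Given any $v\in V$, the hypothesis applied to $v$ itself yields that $v$ is non-adjacent (in $G$) to at least one of $e_1,e_2$; without loss of generality assume $\{v,a\},\{v,b\}\notin E$. Then $v$ commutes with both $a$ and $b$ in $\Gamma[\overline{G}]$, so conjugation by $v$ fixes $\langle a,b\rangle$ pointwise, placing a copy of $F_2$ inside $\Delta\cap v\Delta v^{-1}$. The other case (when $v$ is non-adjacent to $e_2$) is entirely symmetric and places $\langle c,d\rangle\cong F_2$ into $\Delta\cap v\Delta v^{-1}$. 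In either case the intersection is non-amenable, so mediation holds.

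Since RAAGs are finitely presented, $\Gamma[\overline{G}]$ is finitely generated and recursively presented, and Theorem~\ref{thm:mediaintroduction} directly yields that $\Gamma[\overline{G}]$ is self-simulable. The only step that requires genuine combinatorial care is the first one, where one must correctly translate the ``distance at least three'' adjacency condition on $G$ into commutation relations in $\Gamma[\overline{G}]$; once the correct $F_2\times F_2$ is located, the mediation verification and the invocation of Theorem~\ref{thm:mediaintroduction} are automatic. The connectedness of $G$ does not seem to be used in this argument and is presumably included either to avoid a trivial direct-product decomposition of $\Gamma[\overline{G}]$ or for uniformity with neighbouring corollaries.
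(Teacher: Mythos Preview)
Your proof is correct and is essentially the same argument as the paper's, just with one layer of abstraction removed: the paper applies Lemma~\ref{lem:suborthogonal} (taking $\mathcal{S}=\mathcal{P}(V)$, $\mathcal{N}=E$, $A=\{e_1,e_2\}$, and $B$ the set of singletons), whereas you directly verify that $\Delta=\langle a,b\rangle\times\langle c,d\rangle\cong F_2\times F_2$ is a mediated self-simulable subgroup and invoke Theorem~\ref{thm:mediaintroduction}. Since the proof of Lemma~\ref{lem:suborthogonal} itself amounts to exactly this mediation check, the two arguments coincide step for step; your observation that connectedness is not used is also accurate.
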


\begin{proof}
	We shall employ Lemma~\ref{lem:suborthogonal}. Take $\mathcal{S}=\mathcal{P}(V)$ the set of subsets of vertices, let $a \bot b$ mean that $E(a,b)=\varnothing$ and interpret $\leq$ as inclusion. For $a\subseteq V$, let $\Gamma_a$ be the subgroup of $\Gamma[\overline{G}]$ generated by the vertices of $a$. 
	
	Notice that $(\Gamma_a)_{a \subseteq V}$ is compatible with $(\mathcal{S},\bot,\leq)$, as $a\bot b$ means that $[\Gamma_a,\Gamma_b]=1$ and that they have trivial intersection, and clearly $a\leq b$ implies that $\Gamma_a \leqslant \Gamma_b$.

	Take as $B$ the set of singleton subsets of $V$. Then $B$ is generating by definition. Take $\mathcal{N} = E$ and notice that they all give induced copies of the free group on two generators, therefore they are finitely generated and non-amenable. Finally, by assumption there are two edges $e=\{u_1,u_2\}$ and $e' = \{v_1,v_2\}$ such that no node is adjacent to both $e$ and $e'$. Take $A= \{e,e'\}$. Notice that this assumption implies that $A$ is orthogonal.
	
	Finally, we just need to check that $B$ is weakly $\mathcal{N}$-suborthogonal to $A$. if $b = \{v\}\in B$, then there is $e''\in \{e,e'\}$ which is not adjacent to $v$. Choosing $c = a = e''$ gives that $b\bot c \leq a$.
\end{proof}

\begin{example}
	Let $G$ be a finite connected simple graph with diameter at least five. Then by taking a shortest path of length $5$ between two vertices, it follows that the extremal edges satisfy the condition of Corollary~\ref{cor:raag1}. It follows that $\Gamma[\overline{G}]$ is self-simulable.\qee
\end{example}

\begin{example}
	Let $G$ be a cycle of length $n \geq 7$. Then $\Gamma[\overline{G}]$ is self-simulable. Indeed, if $n \geq 8$ it follows that $G$ satisfies the condition of Corollary~\ref{cor:raag1}. If $n = 7$, Let $(\mathcal{S},\bot,\leq)$ and $B$ be the set of singleton subsets of $V=\{1,\dots,7\}$ as in the proof of Corollary~\ref{cor:raag1}, take $\mathcal{N}$ as the set of vertices which induce connected subgraphs with at least two nodes (which are again, finitely generated and non-amenable), and pick $A  \{\{1,2\},\{4,5,6\}\}$. Again, it is clear that $A$ is orthogonal. 
	
	Now let $b \in B$, if $b = \{3\}$ pick $c = \{5,6\}$ and $a = \{4,5,6\}$. If $b = \{7\}$ pick $c = \{4,5\}$ and $a = \{4,5,6\}$. If $b \subseteq \{1,2\}$ pick $a = c = \{4,5,6\}$ and finally, if $b \subseteq \{4,5,6\}$ pick $a = c = \{1,2\}$. The result then follows from Lemma~\ref{lem:suborthogonal}.\qee
\end{example}

\section{Perspectives and questions}\label{sec:questions}

In this final section we present several topics which we either did not explore, or were unable to answer in this study. We hope some of these questions might motivate further studies about the class of self-simulable groups.

\subsection{Properties of the extension}

Our main theorem provides a subshift of finite type extension for any effectively closed action, but says very little about the properties of said extension. More precisely, one might wonder what kind of Borel invariant probability measures the extension can admit, or whether the extension can be made transitive or minimal provided the effectively closed action is transitive or minimal.

\begin{question}\label{Q:minimal}
	Let $\Gamma$ be a self-simulable group, and suppose the effectively closed action $\Gamma\curvearrowright X$ is minimal (resp.\ transitive). Is there a $\Gamma$-subshift of finite type extension which is minimal (resp.\ transitive)?
\end{question}

\begin{question}\label{Q:measure}
	Let $\Gamma$ be a self-simulable group and $\Gamma\curvearrowright X$ be an effectively closed action which admits a $\Gamma$-invariant Borel probability measure $\nu$. Is there a $\Gamma$-subshift of finite type extension $Z$ which admits a $\Gamma$-invariant Borel probability measure $\mu$? Can we have that $\Gamma \curvearrowright (X,\nu)$ is a measure-theoretic factor of $\Gamma \curvearrowright (Z,\mu)$ such that $\nu$ is the push-forward of $\mu$?
\end{question}

Regarding Question~\ref{Q:minimal}, a first step in a tentative adaptation of Theorem~\ref{thm:selfsimulation} would be to produce a minimal analogue of the paradoxical subshift. This can at least be done in the case when $\Gamma = F_2$ (or more generally, any finitely generated free group on at least two elements). Indeed, consider the alphabet $A$ given by the four tiles shown below. \[ \begin{tikzpicture}
	\begin{scope}[shift = {(0,0)}]
		\draw  (0,0) rectangle (1,1);
		\draw [very thick] (0.5,0.1) to (0.5,0.9);
		\draw [very thick, ->] (0.1,0.5) to (0.9,0.5);
	\end{scope}
	\begin{scope}[shift = {(2,0)}]
		\draw  (0,0) rectangle (1,1);
		\draw [very thick, ->] (0.5,0.1) to (0.5,0.9);
		\draw [very thick] (0.1,0.5) to (0.9,0.5);
	\end{scope}
	\begin{scope}[shift = {(4,0)}]
		\draw  (0,0) rectangle (1,1);
		\draw [very thick] (0.5,0.1) to (0.5,0.9);
		\draw [very thick, <-] (0.1,0.5) to (0.9,0.5);
	\end{scope}
	\begin{scope}[shift = {(6,0)}]
		\draw  (0,0) rectangle (1,1);
		\draw [very thick, <-] (0.5,0.1) to (0.5,0.9);
		\draw [very thick] (0.1,0.5) to (0.9,0.5);
	\end{scope}
\end{tikzpicture}   \]

We can define a subshift of finite type $\mathbf{M}$ on $F_2$ by imposing that along both generators of $F_2$ an arrowhead must be matched with an arrow tail. It is not hard to produce a collection of paradoxical path covers from any configuration in $\mathbf{M}$, and it can be shown that the shift action on $\mathbf{M}$ is minimal. However, adapting the rest of the proof seems hard. One important issue is that bi-infinite paths on any paradoxical subshift are unavoidable (by compactness) and therefore there is no evident way to restrict the contents of those paths in the computation layer. Furthermore, a second issue arises from the fact that the return times to a cylinder on the minimal system $F_2 \curvearrowright X$ we are simulating might coordinate with the return times of a pattern in $F_2 \curvearrowright \mathbf{M}$, thus introducing patterns in the subshift of finite type cover of $F_2 \curvearrowright X$ which do not occur with bounded gaps.

Concerning Question~\ref{Q:measure}, our methods are far from providing a useful approach, due to the fact that they strongly rely on constructions based on paradoxical decompositions, which thus admit no invariant measures.

\subsection{The class of self-simulable groups}

The rest of our questions are about the class of self-simulable groups. We are still quite far from having an idea of how to algebraically characterize this class of groups. 

The first candidate for a self-simulable group, which was studied by the first two named authors in collaboration with N. Aubrun, were the \define{surface groups}, that is, fundamental groups of closed orientable surfaces. A surface group is either virtually abelian or non-amenable, and all the non-amenable ones are commensurable. The non-amenable ones are strongly related to tilings of the hyperbolic plane, and a restricted variant of self-simulation has been proved in this setting~\cite{AubSab_2016_rowconstrained}. The methods of the present paper are completely different, and we are not aware of any method to apply them to surface groups.

\begin{question}
	Let $\Gamma$ be the fundamental group of a closed orientable surface of genus at least two. Is $\Gamma$ self-simulable? More generally, is every one-ended word-hyperbolic group self-simulable?
\end{question}

Another problem we tried to tackle and were unable to resolve is the following. By Corollary~\ref{cor:normalSS}, we have that if $\Gamma$ and $\Delta$ are finitely generated and recursively presented groups, and $\Gamma$ is self-simulable, then $\Gamma \times \Delta$ is self-simulable. We do not know whether the following restricted version of the converse holds. 

\begin{question}\label{Q:horriblyannyoing}
	Suppose $\Gamma \times \Delta$ is self-simulable and $\Delta$ is amenable, is $\Gamma$ self-simulable?
\end{question}

In fact, we do not know the answer to Question~\ref{Q:horriblyannyoing} even in the case when $\Delta=\ZZ$.

Another question is related to how much can Theorem~\ref{thm:mediaintroduction} be extended. We know that solely containing a self-simulable subgroup is not enough to be self-simulable (see Example~\ref{ex:trivialite}). However, our examples all rely on the few obstructions we constructed in Section~\ref{sec:counterexamples}. Therefore we are unable to answer the following question.

\begin{question}
	Let $\Gamma$ be a one-ended group which contains a self-simulable group. Is $\Gamma$ self-simulable?
\end{question}

Most of the theorems we proved in Sections~\ref{sec:stability_properties} and~\ref{sec:QIrigidity} used the hypothesis that $\Gamma$ was recursively presented. More importantly, the proof of quasi-isometric rigidity uses in a fundamental way that the groups involved are finitely presented. We do not know if this condition can be relaxed. 

\begin{question}
	Is the property of being self-simulable quasi-isometrically rigid for finitely generated recursively presented groups?
\end{question}

We showed that $\operatorname{GL}_n(\ZZ)$ is self-simulable when $n \geq 5$, and it is clearly not self-simulable when $n = 1$ (because it is finite) or when $n = 2$ (because it is virtually $\ZZ_2 * \ZZ_3$ and thus multi-ended).

\begin{question}
	Are the groups $\operatorname{GL}_3(\ZZ)$ and $\operatorname{GL}_4(\ZZ)$ self-simulable?
\end{question}

Similar questions can be posed for $\Aut(F_n)$ and $\Out(F_n)$ for $n\leq 4$ and $B_n$ for $n \leq 6$. In the same spirit, we also wonder what is exactly the class of RAAGs which is self-simulable.

\begin{question}
	For which finite simple graphs $G= (V,E)$ is the right-angled Artin group $\Gamma[G]$ self-simulable?
\end{question}

We showed that Thompson's $V$ is self-simulable. In light of Corollary~\ref{cor:FT}, the following question is interesting. 

\begin{question}
	Are Thompson's $F$ and $T$ self-simulable?
\end{question}

Notice that if one is able to show that either $F$ or $T$ are not self-simulable, it would imply that $F$ is amenable.

\Addresses

\bibliographystyle{abbrv}
\bibliography{ref}

\end{document}